\documentclass{amsart}

\usepackage[T1]{fontenc}
\usepackage{enumerate, amsmath, amsfonts, amssymb, amsthm, mathrsfs, wasysym, graphics, graphicx, xcolor, url, hyperref, hypcap,  shuffle, xargs, multicol, overpic, pdflscape, multirow, hvfloat, minibox, accents, array, xifthen, a4wide, ae, aecompl}
\usepackage{marginnote}
\hypersetup{colorlinks=true, citecolor=darkblue, linkcolor=darkblue}
\usepackage[all]{xy}
\usepackage[bottom]{footmisc}
\usepackage{tikz}
\usepackage{tkz-graph}
\usetikzlibrary{trees, decorations, decorations.markings, shapes, arrows, matrix, calc, fit, intersections, patterns}
\graphicspath{{figures/}}
\usepackage{caption}
\captionsetup{width=\textwidth}


\title[Geometric realizations of the accordion complex of a dissection]{Geometric realizations of the \\ accordion complex of a dissection}

\thanks{Partially supported by the French ANR grant SC3A~(15\,CE40\,0004\,01).}

\author{Thibault Manneville}
\address[Thibault Manneville]{LIX, \'Ecole Polytechnique}
\email{thibault.manneville@lix.polytechnique.fr}
\urladdr{\url{http://www.lix.polytechnique.fr/~manneville/}}

\author{Vincent Pilaud}
\address[Vincent Pilaud]{CNRS \& LIX, \'Ecole Polytechnique, Palaiseau}
\email{vincent.pilaud@lix.polytechnique.fr}
\urladdr{\url{http://www.lix.polytechnique.fr/~pilaud/}}


\newtheorem{theorem}{Theorem}
\newtheorem{corollary}[theorem]{Corollary}
\newtheorem{proposition}[theorem]{Proposition}
\newtheorem{lemma}[theorem]{Lemma}

\theoremstyle{definition}

\newtheorem{example}[theorem]{Example}
\newtheorem{remark}[theorem]{Remark}

\newcommand{\R}{\mathbb{R}} 
\newcommand{\Z}{\mathbb{Z}} 
\newcommand{\fS}{\mathfrak{S}} 
\newcommand{\cA}{\mathcal{A}} 
\newcommand{\rmX}{{\rm X}} 
\renewcommand{\b}[1]{\mathbf{#1}} 

\newcommand{\set}[2]{\left\{ #1 \;\middle|\; #2 \right\}} 
\newcommand{\bigset}[2]{\big\{ #1 \;\big|\; #2 \big\}} 
\newcommand{\ssm}{\smallsetminus} 
\newcommand{\dotprod}[2]{\left\langle \, #1 \; \middle| \; #2 \, \right\rangle} 
\newcommand{\symdif}{\,\triangle\,} 
\newcommand{\one}{{1\!\!1}} 
\newcommand{\eqdef}{\mbox{\,\raisebox{0.2ex}{\scriptsize\ensuremath{\mathrm:}}\ensuremath{=}\,}} 
\newcommand{\defeq}{\mbox{~\ensuremath{=}\raisebox{0.2ex}{\scriptsize\ensuremath{\mathrm:}} }} 
\newcommand{\transpose}[1]{{#1}^t} 

\newcommand{\Asso}{\mathsf{Asso}} 
\newcommand{\Acco}{\mathsf{Acco}} 
\newcommand{\Perm}{\mathsf{Perm}} 
\newcommand{\Para}{\mathsf{Para}} 
\newcommand{\Zono}{\mathsf{Zono}} 
\DeclareMathOperator{\face}{\mathbf{F}} 
\newcommand{\Fan}{\mathcal{F}} 
\newcommand{\Cone}{\mathrm{C}} 

\DeclareMathOperator{\conv}{conv} 

\newcommand{\fref}[1]{Figure~\ref{#1}} 
\newcommand{\ie}{\textit{i.e.}~} 
\newcommand{\eg}{\textit{e.g.}~} 
\definecolor{darkblue}{rgb}{0,0,0.7} 
\newcommand{\darkblue}{\color{darkblue}} 
\newcommand{\defn}[1]{\textsl{\darkblue #1}} 

\usepackage{todonotes}

\newcommand{\accordionComplex}{\mathcal{AC}} 
\newcommand{\accordionLattice}{\mathcal{AL}} 
\newcommand{\accordionFlipGraph}{\mathcal{AFG}} 
\newcommand{\polygon}{\mathrm{P}} 
\newcommand{\triangulation}{\mathrm{T}} 
\newcommand{\fanTriangulation}{\mathrm{F}} 
\newcommand{\quadrangulation}{\mathrm{Q}} 
\newcommand{\dissection}{\mathrm{D}} 
\newcommand{\cell}{\mathrm{C}} 
\newcommand{\quadrilateral}{\mathrm{Q}} 
\newcommand{\accordion}{\mathrm{A}} 
\newcommand{\zigzag}{\mathrm{Z}} 
\newcommand{\snake}{\reflectbox{$\mathrm{Z}$}} 
\newcommand{\signcirc}[3]{{\varepsilon_\circ \big( {#1} \in {#2} \;|\; {#3} \big)}} 
\newcommand{\signbullet}[3]{{\varepsilon_\bullet \big( {#1} \;|\; {#3} \in {#2} \big)}} 
\newcommand{\SSS}{\reflectbox{$\mathsf{Z}$}} 
\newcommand{\ZZZ}{\mathsf{Z}} 
\newcommand{\VVV}{{\mathsf{{V \hspace{-.1686cm} I\,}}}} 
\newcommand{\signature}{\varepsilon} 
\newcommand{\gvector}[2]{\mathbf{g}(#1 \,|\, #2)} 
\newcommand{\biggvector}[2]{\mathbf{g} \big( #1 \,|\, #2 \big)} 
\newcommand{\gvectors}[2]{\mathbf{g}(#1 \,|\, #2)} 
\newcommand{\biggvectors}[2]{\mathbf{g} \big( #1 \,|\, #2 \big)} 
\newcommand{\gvectorFan}{\mathcal{F}^\mathbf{g}} 

\newcommand{\comp}[2]{(#1 \,|\, #2)} 
\newcommand{\dvector}[2]{\mathbf{d}(#1 \,|\, #2)} 
\newcommand{\bigdvector}[2]{\mathbf{d} \big( #1  \,|\, #2 \big)} 
\newcommand{\dvectors}[2]{\mathbf{d}(#1 \,|\, #2)} 
\newcommand{\bigdvectors}[2]{\mathbf{d} \big( #1  \,|\, #2 \big)} 
\newcommand{\dvectorFan}{\mathcal{F}^\mathbf{d}} 
\newcommand{\cvector}[3]{\mathbf{c}(#1  \,|\, #3 \in #2)} 
\newcommand{\bigcvector}[3]{\mathbf{c} \big( #1  \,|\, #3 \in #2 \big)} 
\newcommand{\cvectors}[2]{\mathbf{c}(#1  \,|\, #2)} 
\newcommand{\bigcvectors}[2]{\mathbf{c} \big( #1  \,|\, #2 \big)} 
\newcommand{\allcvectors}[1]{\mathbf{C}(#1)} 
\newcommand{\cvectorFan}{\mathcal{F}^\mathbf{c}} 
\newcommand{\rhs}[2]{\omega(#1 \,|\, #2)} 
\newcommand{\bigrhs}[2]{\omega \big( #1  \,|\, #2 \big)} 
\newcommand{\point}[2]{\mathbf{p}(#1  \,|\, #2)} 
\newcommand{\bigpoint}[2]{\mathbf{p} \big( #1  \,|\, #2 \big)} 
\newcommand{\ray}{\mathbf{r}} 
\newcommand{\rays}{\mathbf{R}} 
\newcommand{\hs}{\mathbf{H}^{\le}} 
\newcommand{\HS}[2]{\mathbf{H}^{\le}(#1  \,|\, #2)} 
\newcommand{\bigHS}[2]{\mathbf{H}^{\le} \big( #1  \,|\, #2 \big)} 
\newcommand{\hyp}{\mathbf{H}^{=}} 
\newcommand{\Hyp}[2]{\mathbf{H}^{=}(#1 \,|\, #2 )} 
\newcommand{\bigHyp}[2]{\mathbf{H}^{=} \big( #1  \,|\, #2 \big)} 
\newcommand{\fix}[1]{\mathrm{Fix}(#1)} 
\newcommand{\quiver}{\mathrm{Q}} 
\newcommand{\CoxeterGroup}{\mathrm{W}} 
\newcommand{\mi}{-} 
\newcommand{\ma}{+} 
\newcommand{\ini}{\mathrm{ini}} 
\newcommand{\ex}{\mathrm{ex}} 
\newcommand{\projection}{\pi} 
\newcommand{\restrictedComplex}[3]{\Delta^{\b{#1}}(#2,#3)} 
\renewcommand{\restriction}[2]{\left.\kern-\nulldelimiterspace #1 \vphantom{\big|} \right|_{#2}}


\begin{document}

\begin{abstract}
Consider $2n$ points on the unit circle and a reference dissection~$\dissection_\circ$ of the convex hull of the odd points. The accordion complex of~$\dissection_\circ$ is the simplicial complex of non-crossing subsets of the diagonals with even endpoints that cross a connected subset of diagonals of~$\dissection_\circ$. In particular, this complex is an associahedron when~$\dissection_\circ$ is a triangulation and a Stokes complex when~$\dissection_\circ$ is a quadrangulation. In this paper, we provide geometric realizations (by polytopes and fans) of the accordion complex of any reference dissection~$\dissection_\circ$, generalizing known constructions arising from cluster algebras.

\medskip
\noindent
\textsc{keywords.} Permutahedra $\cdot$ Zonotopes $\cdot$ Associahedra $\cdot$ $\b{g}$-, $\b{c}$- and $\b{d}$-vectors.
\end{abstract}

\vspace*{.1cm}

\maketitle


The $(n-3)$-dimensional \defn{associahedron} is a simple polytope whose face poset is isomorphic to the reverse inclusion poset of non-crossing subsets of diagonals of a convex $n$-gon. Introduced in early works of D.~Tamari~\cite{Tamari} and \mbox{J.~Stasheff~\cite{Stasheff}}, it was first realized as a convex polytope by M.~Haiman~\cite{Haiman} and C.~Lee~\cite{Lee}, and later constructed by more systematic methods developed by several authors, in particular~\cite{GelfandKapranovZelevinsky, Loday, HohlwegLange, CeballosSantosZiegler}. Various relevant generalizations of the associahedron were introduced and studied, in particular secondary polytopes and fiber polytopes~\cite{GelfandKapranovZelevinsky, BilleraFillimanSturmfels}, generalized associahedra~\cite{FominZelevinsky-YSystems, ChapotonFominZelevinsky, HohlwegLangeThomas, Stella, Hohlweg} in connection to cluster algebras~\cite{FominZelevinsky-ClusterAlgebrasI, FominZelevinsky-ClusterAlgebrasII}, graph associahedra~\cite{CarrDevadoss, Postnikov, FeichtnerSturmfels, Zelevinsky, Pilaud-signedTreeAssociahedra, MannevillePilaud-compatibilityFans}, or brick polytopes~\cite{PilaudSantos-brickPolytope, PilaudStump-brickPolytope}.

In a different context, Y.~Baryshnikov~\cite{Baryshnikov} introduced the simplicial complex of crossing-free subsets of the set of diagonals of a polygon that are in some sense compatible with a reference quadrangulation~$\quadrangulation_\circ$. Although the precise definition of compatibility is a bit technical in~\cite{Baryshnikov}, it turns out that a diagonal is compatible with~$\quadrangulation_\circ$ if and only if it crosses a connected subset of diagonals of~$\quadrangulation_\circ$ that we call \defn{accordion} of~$\quadrangulation_\circ$. We thus call Y.~Baryshnikov's simplicial complex the \defn{accordion complex}~$\accordionComplex(\quadrangulation_\circ)$. A polytopal realization of~$\accordionComplex(\quadrangulation_\circ)$ was announced in~\cite{Baryshnikov}, but the explicit construction and its proof were never published as far as we know. Revisiting some combinatorial and algebraic properties of~$\accordionComplex(\quadrangulation_\circ)$, F.~Chapoton~\cite[Intro.\,p.4]{Chapoton-quadrangulations} raised three explicit challenges: first prove that the oriented dual graph of~$\accordionComplex(\quadrangulation_\circ)$ has a lattice structure extending the Tamari and Cambrian lattices~\cite{TamariFestschrift, Reading-CambrianLattices}; second construct geometric realizations of~$\accordionComplex(\quadrangulation_\circ)$ as fans and polytopes generalizing the known constructions of the associahedron; third show that the facets of~$\accordionComplex(\quadrangulation_\circ)$ are in bijection with other combinatorial objects \mbox{called serpent nests}~\cite[Sect.~4]{Chapoton-quadrangulations}.

In~\cite{GarverMcConville}, A.~Garver and T.~McConville defined and studied the accordion complex~$\accordionComplex(\dissection_\circ)$ of any reference dissection~$\dissection_\circ$ (their presentation slightly differs as they use a compatibility condition on the dual tree of the dissection~$\dissection_\circ$, but the simplicial complex is the same). In this context, they settled F.~Chapoton's lattice question, using lattice quotients of a lattice of biclosed sets. In this paper, we present geometric realizations of~$\accordionComplex(\dissection_\circ)$ for any reference dissection~$\dissection_\circ$, providing in particular an answer to F.~Chapoton's geometric question. In fact, we present three methods to realize~$\accordionComplex(\dissection_\circ)$ based on constructions of the classical associahedron.

Our first method is based on the $\b{g}$-vector fan. It belongs to a series of constructions of the (generalized) associahedra initiated by S.~Shnider and S.~Sternberg~\cite{ShniderSternberg}, popularised by \mbox{J.-L.~Loday~\cite{Loday}}, developed by C.~Hohlweg, C.~Lange and H.~Thomas~\cite{HohlwegLange, HohlwegLangeThomas} using works of N.~Reading and D.~Speyer~\cite{Reading-CambrianLattices, Reading-sortableElements, ReadingSpeyer}, and revisited by S.~Stella~\cite{Stella} and by V.~Pilaud, F.~Santos, and C.~Stump~\cite{PilaudSantos-brickPolytope, PilaudStump-brickPolytope}. It was recently extended by C.~Hohlweg, V.~Pilaud, and S.~Stella~\cite{HohlwegPilaudStella} to construct an associahedron parametrized by any initial triangulation. Here, we first extend to the $\dissection_\circ$-accordion complex~$\accordionComplex(\dissection_\circ)$ the $\b{g}$-vectors and $\b{c}$-vectors defined in the context of cluster algebras by S.~Fomin and A.~Zelevinski~\cite{FominZelevinsky-ClusterAlgebrasIV}. Note that $\b{c}$-vectors were already implicitly considered in~\cite{GarverMcConville}, while $\b{g}$-vectors are new in this context. When~$\dissection_\circ$ is a triangulation, our definitions coincide with those given in terms of triangulations and laminations for cluster algebras from surfaces by S.~Fomin and D.~Thurston~\cite{FominThurston}. We then show that the $\b{g}$-vectors with respect to the dissection~$\dissection_\circ$ support a complete simplicial fan~$\gvectorFan(\dissection_\circ)$ realizing the $\dissection_\circ$-accordion complex~$\accordionComplex(\dissection_\circ)$. Finally, we construct a $\dissection_\circ$-accordiohedron~$\Acco(\dissection_\circ)$ realizing the $\b{g}$-vector fan~$\gvectorFan(\dissection_\circ)$ by deleting inequalities from the facet description of the \mbox{$\dissection_\circ$-zonotope}~$\Zono(\dissection_\circ)$ obtained as the Minkowski sum of all $\b{c}$-vectors. See \fref{fig:exmAccordiohedra} for an illustration of $\dissection_\circ$-accordiohedra.

Our second method is based on the $\b{d}$-vector fan. This construction is inspired from the original cluster fan of S.~Fomin and A.~Zelevinsky~\cite{FominZelevinsky-ClusterAlgebrasII} later realized as a polytope by F.~Chapoton, S.~Fomin and A.~Zelevinsky~\cite{ChapotonFominZelevinsky}, and from the generalization of C.~Ceballos, F.~Santos and G.~Ziegler~\cite{CeballosSantosZiegler} to construct a compatibility fan and an associahedron from any initial triangulation. For any reference dissection~$\dissection_\circ$, we associate to each diagonal a $\b{d}$-vector which records the crossings of this diagonal with those of~$\dissection_\circ$. We show that the $\b{d}$-vectors support a complete simplicial fan realizing the $\dissection_\circ$-accordion complex~$\accordionComplex(\dissection_\circ)$ if and only if $\dissection_\circ$ contains no even interior cell. The polytopality of the resulting fan remains open in general, but was shown for arbitrary triangulations in~\cite{CeballosSantosZiegler}.

Finally, our third method is based on projections of associahedra. Namely, for any dissection~$\dissection_\circ$ and triangulation~$\triangulation_\circ$ such that~$\dissection_\circ \subseteq \triangulation_\circ$, the accordion complex~$\accordionComplex(\dissection_\circ)$ is a subcomplex of the simplicial associahedron~$\accordionComplex(\triangulation_\circ)$. It turns out that the $\b{g}$-vector fan~$\gvectorFan(\dissection_\circ)$ is then a section of the $\b{g}$-vector fan~$\gvectorFan(\triangulation_\circ)$ by a coordinate subspace. Therefore, the accordion complex~$\accordionComplex(\dissection_\circ)$ is realized by a projection of the associahedron~$\Asso(\triangulation_\circ)$ of~\cite{HohlwegPilaudStella}. This point of view provides a complementary perspective on accordion complexes that leads on the one hand to more concise but less instructive proofs of combinatorial and geometric properties of the accordion complex (pseudomanifold, $\b{g}$-vector fan, accordiohedron), and on the other hand to natural extensions to coordinate sections of the $\b{g}$-vector fan in arbitrary cluster algebras.

As recently observed in~\cite{GarverMcConville, PaluPilaudPlamondon, PilaudPlamondonStella, BrustleDouvilleMousavandThomasYildirim}, accordion complexes are prototypes of support $\tau$-tilting complexes introduced in~\cite{AdachiIyamaReiten}, for certain associative algebras called gentle algebras. In this context, $\b{g}$-vectors have a deep algebraic meaning and still define a $\b{g}$-vector fan. Although this fan is still polytopal for finite support $\tau$-tilting complexes, it is not in general obtained by deleting inequalities in the facet description of a zonotope. We refer to~\cite[Part~4]{PaluPilaudPlamondon} for details.

The paper is organized as follows. Section~\ref{sec:accordionComplex} introduces the accordion complex and accordion lattice of a dissection~$\dissection_\circ$. We essentially follow the definitions and arguments of A.~Garver and T.~McConville~\cite{GarverMcConville}, except that we prefer to work on the dissection~$\dissection_\circ$ rather than on its dual graph. Section~\ref{sec:gvectorFan} is devoted to the generalization of the $\b{g}$-vector fan and the associahedra of~\cite{HohlwegLange, HohlwegPilaudStella}. Section~\ref{sec:dvectorFan} discusses the generalization of the construction of the $\b{d}$-vector fan and associahedra of~\cite{FominZelevinsky-ClusterAlgebrasII, CeballosSantosZiegler}. Finally, Section~\ref{sec:projection} shows that the accordion complex is realized by a projection of a well-chosen associahedron and presents related questions on cluster algebras, subcomplexes of the cluster complex, and sections of the $\b{g}$-vector~fan.


\section{The accordion complex and the accordion lattice}
\label{sec:accordionComplex}

In this section, we define the accordion complex~$\accordionComplex(\dissection_\circ)$ of a dissection~$\dissection_\circ$, show that it is a pseudomanifold, and define an orientation of its dual graph. Our definitions and proofs are essentially translations of the arguments of A.~Garver and T.~McConville~\cite{GarverMcConville} given in terms of the dual tree of the dissection~$\dissection_\circ$. However our presentation in terms of dissections is more convenient for our purposes.


\subsection{The accordion complex}

\begin{figure}[t]
	\capstart
	\centerline{\includegraphics[scale=.8]{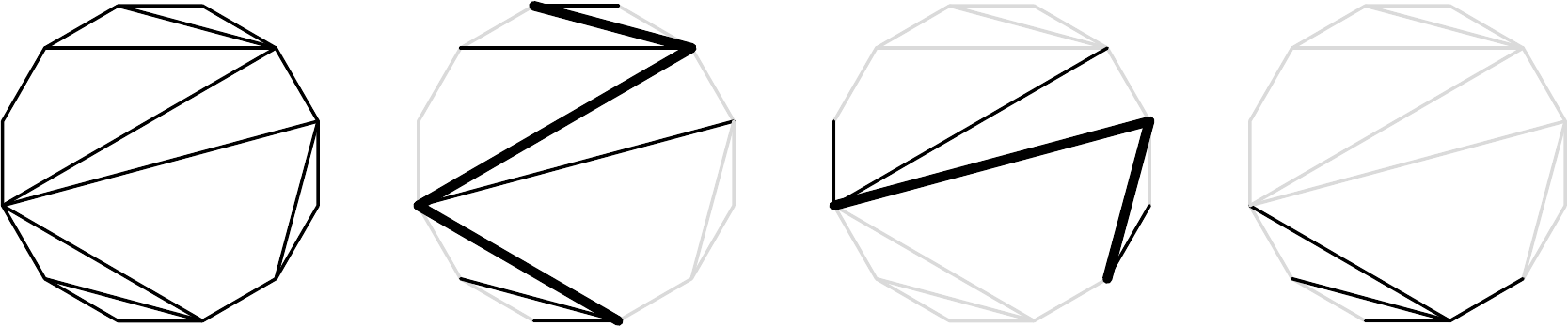}}
	\caption{A dissection~$\dissection$ (left) and three accordions whose zigzags are bolded (middle and right).}
	\label{fig:exmAccordions}
\end{figure}

Let~$\polygon$ be a convex polygon. We call \defn{diagonals} of~$\polygon$ the segments connecting two vertices of~$\polygon$. This includes both the internal diagonals and the external diagonals (or boundary edges) of~$\polygon$. A \defn{dissection} of~$\polygon$ is a set~$\dissection$ of non-crossing internal diagonals of~$\polygon$. The \defn{cells} of~$\dissection$ are the closures of the connected components of~$\polygon$ minus the diagonals of~$\dissection$. A \defn{triangulation} (resp.~\defn{quadrangulation}) is a dissection whose cells are all triangles (resp.~quadrangles).

We denote by~$\overline{\dissection}$ the dissection~$\dissection$ together with all boundary edges of~$\polygon$. A \defn{cut} of~$\dissection$ is the subset of~$\overline{\dissection}$ intersected by a line crossing two boundary edges of~$\polygon$. An \defn{accordion} is a connected cut. By definition, an accordion is a tree and contains two boundary edges of~$\polygon$. The \defn{zigzag} of an accordion~$\accordion$ is the chain obtained by deleting all degree~$1$ vertices of~$\accordion$. A \defn{subaccordion} of~$\dissection$ is a connected subset of~$\dissection$ intersected by a segment in the interior of~$\polygon$. Note that any subaccordion of an accordion~$\accordion$ consists of the diagonals of~$\accordion$ between two internal diagonals of~$\accordion$. Note that we include boundary edges of~$\polygon$ in the accordions of~$\dissection$, but not in the subaccordions nor in the zigzags of~$\dissection$. See \fref{fig:exmAccordions}.

We consider $2n$ points on the unit circle labeled clockwise by \mbox{$1_\circ$, $2_\bullet$, $3_\circ$, $4_\bullet$, \dots, $(2n-1)_\circ$, $(2n)_\bullet$}. We say that~$1_\circ, \dots, (2n-1)_\circ$ are the \defn{hollow vertices} while~$2_\bullet, \dots, (2n)_\bullet$ are the \defn{solid vertices}. The \defn{hollow polygon} is the convex hull~$\polygon_\circ$ of~$1_\circ, \dots, (2n-1)_\circ$ while the \defn{solid polygon} is the convex hull~$\polygon_\bullet$ of~$2_\bullet, \dots, (2n)_\bullet$. We simultaneously consider \defn{hollow diagonals}~$\delta_\circ$ (with two hollow vertices) and \defn{solid diagonals}~$\delta_\bullet$ (with two solid vertices), but we never consider diagonals with one hollow vertex and one solid vertex. Similarly, we consider \defn{hollow dissections}~$\dissection_\circ$ (of the hollow polygon, with only hollow diagonals) and \defn{solid dissections}~$\dissection_\bullet$ (of the solid polygon, with only solid diagonals), but never mix hollow and solid diagonals in a dissection. To help distinguish them, hollow (resp.~solid) vertices and diagonals appear red (resp.~blue) in all pictures.

We fix an arbitrary reference hollow dissection~$\dissection_\circ$. A solid diagonal~$\delta_\bullet$ is a \defn{$\dissection_\circ$-accordion diagonal} if the hollow diagonals of~$\overline{\dissection}_\circ$ crossed by~$\delta_\bullet$ form an accordion of~$\dissection_\circ$. In other words, $\delta_\bullet$ cannot enter and exit a cell of~$\dissection_\circ$ using two non-incident diagonals. For example, note that for any hollow diagonal~$i_\circ j_\circ \in \overline{\dissection}_\circ$, the solid diagonals~$(i-1)_\bullet (j-1)_\bullet$ and~$(i+1)_\bullet (j+1)_\bullet$ are $\dissection_\circ$-accordion diagonals (here and throughout, labels are considered modulo~$2n$). In particular, all boundary edges of the solid polygon are $\dissection_\circ$-accordion diagonals. A \defn{$\dissection_\circ$-accordion dissection} is a set of non-crossing internal $\dissection_\circ$-accordion diagonals. We define the \defn{$\dissection_\circ$-accordion complex} to be the simplicial complex~$\accordionComplex(\dissection_\circ)$ of $\dissection_\circ$-accordion dissections.

\begin{example}
As a running example, we consider the reference dissection~$\dissection_\circ^\ex$ of \fref{fig:exmAccordionDissections}\,(left). Examples of maximal $\dissection_\circ^\ex$-accordion dissections are given in \fref{fig:exmAccordionDissections}\,(right). The $\dissection_\circ^\ex$-accordion complex is illustrated in \fref{fig:exmAccordionComplex}\,(left).

\begin{figure}[h]
	\capstart
	\centerline{\includegraphics[scale=.8]{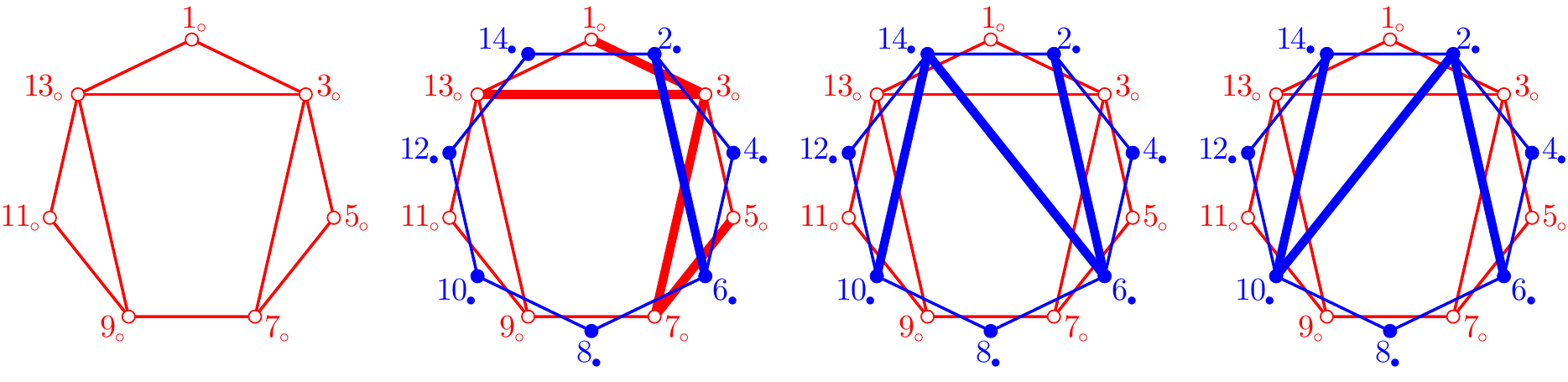}}
	\caption{A hollow dissection~$\dissection_\circ^\ex$, a solid $\dissection_\circ^\ex$-accordion diagonal whose corresponding hollow accordion is bolded, and two maximal solid $\dissection_\circ^\ex$-accordion dissections.}
	\label{fig:exmAccordionDissections}
\end{figure}

\begin{figure}
	\capstart
	\centerline{\includegraphics[scale=1]{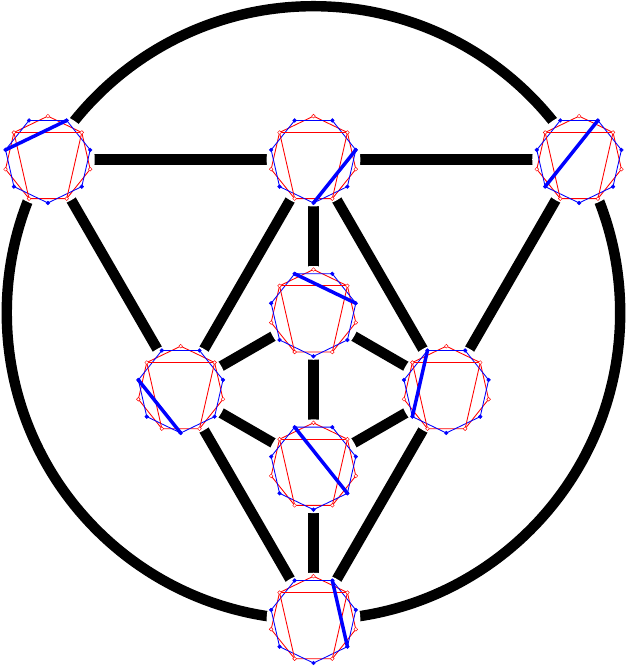} \qquad \includegraphics[scale=1]{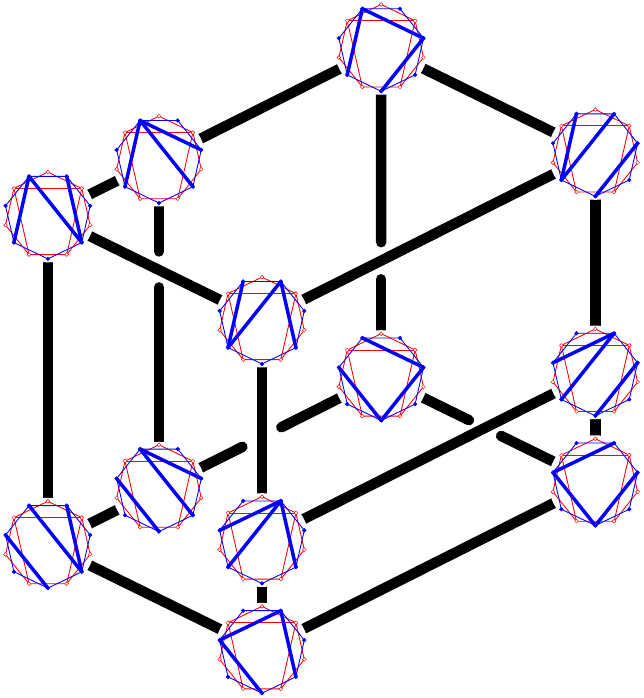}}
	\caption{The $\dissection_\circ^\ex$-accordion complex (left) and the $\dissection_\circ^\ex$-accordion lattice (right), oriented from bottom to top, for the reference hollow dissection~$\dissection_\circ^\ex$ of \fref{fig:exmAccordionDissections}\,(left).}
	\label{fig:exmAccordionComplex}
\end{figure}
\end{example}

\begin{example}
\label{exm:specialReferenceDissections}
Special reference hollow dissections~$\dissection_\circ$ give rise to special accordion complexes~$\accordionComplex(\dissection_\circ)$:
\begin{itemize}
\item If~$\dissection_\circ$ is the empty dissection with the whole hollow polygon as unique cell, then the \mbox{$\dissection_\circ$-accor}\-dion complex~$\accordionComplex(\dissection_\circ)$ is reduced to the empty $\dissection_\circ$-accordion dissection.
\item If~$\dissection_\circ$ has a unique internal diagonal, then the $\dissection_\circ$-accordion complex~$\accordionComplex(\dissection_\circ)$ consists of only two points.
\item For a hollow triangulation~$\triangulation_\circ$, all solid diagonals are $\triangulation_\circ$-accordions, so that the \mbox{$\triangulation_\circ$-accor}\-dion complex~$\accordionComplex(\triangulation_\circ)$ is the simplicial associahedron.
\item For a hollow quadrangulation~$\quadrangulation_\circ$, a solid diagonal is a $\quadrangulation_\circ$-accordion if and only if it does not cross two opposite edges of a quadrangle of~$\quadrangulation_\circ$. The $\quadrangulation_\circ$-accordion complex~$\accordionComplex(\quadrangulation_\circ)$ is thus the Stokes complex defined by Y.~Baryshnikov~\cite{Baryshnikov} and studied \mbox{by F.~Chapoton~\cite{Chapoton-quadrangulations}}.
\end{itemize}
\end{example}

\begin{remark}
Following the original definition of the non-crossing complex of A.~Garver and T.~McConville~\cite{GarverMcConville}, the accordion complex could equivalently be defined in terms of the dual tree~$\dissection_\circ^\star$ of~$\dissection_\circ$ (with one node in each cell of~$\dissection$ and one edge connecting two adjacent cells). 
More precisely, the duality provides the following dictionary between the two definitions:

\medskip
\centerline{
\begin{tabular}{ccc}
present paper & & A.~Garver and T.~McConville~\cite{GarverMcConville} \\
\hline
reference dissection~$\dissection_\circ$ & $\longleftrightarrow$ & embedded tree~$\dissection_\circ^\star$ \\
diagonal~$u_\bullet v_\bullet$ of~$\polygon_\bullet$ & $\longleftrightarrow$ & path connecting the leaves~$u_\bullet^\star$ and~$v_\bullet^\star$ of~$\dissection_\circ^\star$ \\
$\dissection_\circ$-accordion diagonal & $\longleftrightarrow$ & arc (path where any two consecutive edges belong to the \\ & & boundary of a face of the complement of~$\dissection_\circ^\star$ in the unit disk) \\
$\dissection_\circ$-subaccordion & $\longleftrightarrow$ & segment \\
$\dissection_\circ$-accordion complex & $\longleftrightarrow$ & non-crossing complex of~$\dissection_\circ^\star$
\end{tabular}
}

\medskip\noindent
The $\b{g}$-, $\b{c}$- and $\b{d}$-vectors defined in Section~\ref{subsec:gcvectors} could as well be defined in terms of~$\dissection_\circ^\star$. In fact, $\b{c}$-vectors were already implicitly considered in~\cite{GarverMcConville}, while $\b{g}$- and $\b{d}$-vectors are new in this context. For this paper, we find more convenient to work directly~with~dissections, in particular in Sections~\ref{sec:dvectorFan} and~\ref{sec:projection}.
\end{remark}


\subsection{Two structural observations}
\label{subsec:structuralObservations}

Before studying the accordion complex in details in Section~\ref{subsec:pseudomanifold}, we present two simple structural observations. For this, let us recall two classical notions on simplicial complexes. The \defn{join} of two simplicial complexes~$\Delta,\Delta'$ with disjoint ground sets~$X,X'$ is the simplicial complex~$\Delta * \Delta'$ with ground set~$X \sqcup X'$ whose faces are disjoint unions of faces of~$\Delta$ with faces of~$\Delta'$. For a face~$\dissection$ in a simplicial complex~$\Delta$ on~$X$, the \defn{link} of~$\dissection$ is the simplicial complex on~$X \ssm \dissection$ whose faces are the subsets~$\dissection'$ of~$X \ssm \dissection$ such that~$\dissection \cup \dissection'$ is a face of~$\Delta$.

\begin{proposition}
\label{prop:reduction}
If the reference hollow dissection~$\dissection_\circ$ has a cell containing~$p$ boundary edges of the hollow polygon~$\polygon_\circ$, then the $\dissection_\circ$-accordion complex~$\accordionComplex(\dissection_\circ)$ is the join of $p$ accordion complexes.
\end{proposition}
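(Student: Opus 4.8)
The plan is to realise the join explicitly by cutting~$\dissection_\circ$ along the distinguished cell. Let~$\cell$ be the cell of~$\dissection_\circ$ carrying the $p$ boundary edges~$e_1, \dots, e_p$ of~$\polygon_\circ$, numbered clockwise (indices taken cyclically modulo~$p$), and let~$w_1, \dots, w_p$ be the solid vertices lying immediately outside~$e_1, \dots, e_p$. The edges~$e_1, \dots, e_p$ cut the boundary of~$\polygon_\circ$ into $p$ arcs. For each~$j$, let~$\polygon_\circ^{(j)}$ be the sub-polygon of~$\polygon_\circ$ spanned by the hollow vertices running clockwise from the first endpoint of~$e_j$ to the last endpoint of~$e_{j+1}$, closed off by a single new boundary edge~$\bar e_j$, and let~$\dissection_\circ^{(j)}$ be the dissection of~$\polygon_\circ^{(j)}$ formed by the diagonals of~$\dissection_\circ$ with both endpoints in~$\polygon_\circ^{(j)}$. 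Concretely, $\dissection_\circ^{(j)}$ keeps the internal diagonals of~$\cell$ between~$e_j$ and~$e_{j+1}$ together with the dissections of all cells lying beyond them, while the rest of~$\cell$ is collapsed onto~$\bar e_j$ (for~$p = 1$ one simply takes $\polygon_\circ^{(1)} = \polygon_\circ$ and~$\dissection_\circ^{(1)} = \dissection_\circ$). I claim that
\[
\accordionComplex(\dissection_\circ) = \accordionComplex(\dissection_\circ^{(1)}) * \dots * \accordionComplex(\dissection_\circ^{(p)}).
\]

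The heart of the matter, and the step I expect to be the main obstacle, is that every internal $\dissection_\circ$-accordion diagonal~$\delta_\bullet$ is confined to a single arc. Since~$\cell$ is convex and the endpoints of~$\delta_\bullet$ lie outside it, $\delta_\bullet$ crosses either no edge or exactly two edges of~$\cell$. In the latter case, $\delta_\bullet$ enters and exits~$\cell$ through two edges~$d, d'$, which by the accordion condition (a $\dissection_\circ$-accordion diagonal enters and exits each cell through two incident diagonals) must share a hollow vertex. As two edges of the convex polygon~$\cell$ are incident only when cyclically consecutive, and as the boundary edges~$e_1, \dots, e_p$ partition the edges of~$\cell$ into the~$p$ arcs, $d$ and~$d'$ lie in the same arc. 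Since every cell other than~$\cell$ lies beyond exactly one edge of~$\cell$, and~$\delta_\bullet$ reaches such cells only through~$d$ or~$d'$ (or, when it crosses no edge of~$\cell$, stays within a single component of~$\polygon_\circ \ssm \cell$), the entire crossing accordion of~$\delta_\bullet$, and hence~$\delta_\bullet$ itself, lies in one arc~$j$; both endpoints of~$\delta_\bullet$ then lie in the block~$\{w_j, \dots, w_{j+1}\}$ of solid vertices delimited by that arc. As an internal accordion diagonal necessarily crosses an internal diagonal of~$\dissection_\circ$ (otherwise its accordion is a pair of adjacent boundary edges and~$\delta_\bullet$ is a boundary edge of~$\polygon_\bullet$), this assigns to each internal $\dissection_\circ$-accordion diagonal a well-defined index, partitioning the ground set of~$\accordionComplex(\dissection_\circ)$ into blocks~$G_1, \dots, G_p$.

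Next I would identify each block with a smaller accordion complex. Restricting to~$\polygon_\circ^{(j)}$ identifies the solid vertices in~$\{w_j, \dots, w_{j+1}\}$ with solid vertices of~$\polygon_\circ^{(j)}$, and since~$\dissection_\circ$ and~$\dissection_\circ^{(j)}$ agree inside the~$j$\ordinal{} arc, the crossing accordion of a diagonal of~$G_j$ is unchanged by the restriction; hence such a diagonal is a $\dissection_\circ$-accordion diagonal if and only if it is a $\dissection_\circ^{(j)}$-accordion diagonal, internality being preserved as the block endpoints retain their cyclic adjacencies. The one point to verify is that no internal $\dissection_\circ^{(j)}$-accordion diagonal uses the extra solid vertex sitting on~$\bar e_j$: such a diagonal would cross~$\bar e_j$, but no diagonal of~$\dissection_\circ^{(j)}$ is incident to an endpoint of~$\bar e_j$, so connectedness of its accordion would force it to cross only~$\bar e_j$ and one adjacent boundary edge, making it a boundary edge of~$\polygon_\bullet^{(j)}$ rather than an internal diagonal. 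Thus the subcomplex of~$\accordionComplex(\dissection_\circ)$ induced on~$G_j$ is exactly~$\accordionComplex(\dissection_\circ^{(j)})$.

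It remains to glue the blocks together. If two internal accordion diagonals lie in distinct arcs~$j \ne k$, their endpoints lie in the blocks~$\{w_j, \dots, w_{j+1}\}$ and~$\{w_k, \dots, w_{k+1}\}$, which are consecutive arcs of solid vertices meeting in at most one common endpoint; hence the four endpoints do not interleave around the circle and the two diagonals do not cross. Therefore a set of pairwise non-crossing internal accordion diagonals is a $\dissection_\circ$-accordion dissection if and only if each of its intersections with~$G_1, \dots, G_p$ is, which is exactly the assertion that~$\accordionComplex(\dissection_\circ)$ is the join of the~$\accordionComplex(\dissection_\circ^{(j)})$. Empty arcs, arising from two cyclically consecutive boundary edges of~$\cell$, contribute the trivial factor~$\{\emptyset\}$ and may be kept or dropped, so the decomposition has exactly~$p$ factors as claimed.
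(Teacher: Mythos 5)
Your proof is correct and follows essentially the same route as the paper's: both decompose the accordion diagonals according to the $p$ arcs cut out by the boundary edges of the distinguished cell, using the fact that an accordion traverses a cell through two incident edges, and both then observe that diagonals assigned to different arcs never cross. The only difference is presentational: the paper realizes each factor as a dissection of the full polygon (the cell together with the cells beyond one arc), whereas you cut out genuine sub-polygons, which forces your extra (correctly handled) check that the new vertex on $\bar e_j$ contributes no accordion diagonals.
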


\begin{proof}
Assume that~$\dissection_\circ$ has a cell~$\cell_\circ$ containing~$p$ boundary edges of the hollow polygon~$\polygon_\circ$. Let~$\cell_\circ^1, \dots, \cell_\circ^p$ denote the $p$ (possibly empty) connected components of the hollow polygon minus~$\cell_\circ$. For~$i \in [p] \eqdef \{1, \dots, p\}$, let~$\dissection_\circ^i$ denote the dissection formed by the cell~$\cell_\circ$ together with the cells of~$\dissection_\circ$ contained in the closure of~$\cell_\circ^i$. Observe that for~$i \ne j$, the internal diagonals of~$\dissection_\circ^i$ are not incident to the internal diagonals of~$\dissection_\circ^j$. Thus, no $\dissection_\circ$-accordion can contain internal diagonals from distinct dissections~$\dissection_\circ^i$ and~$\dissection_\circ^j$. Therefore, the set of $\dissection_\circ$-accordion diagonals is the union of the sets of $\dissection_\circ^i$-accordion diagonals for~$i \in [p]$. Moreover, for~$i \ne j$, the $\dissection_\circ^i$-accordion diagonals do not cross the $\dissection_\circ^j$-accordion diagonals. It follows that the $\dissection_\circ$-accordion complex is the join of the $\dissection_\circ^i$-accordion complexes:~${\accordionComplex(\dissection_\circ) = \accordionComplex(\dissection_\circ^1) * \dots * \accordionComplex(\dissection_\circ^p)}$.
\end{proof}

\begin{remark}
\label{rem:reduction}
In view of Proposition~\ref{prop:reduction}, we can do the following reductions:
\begin{enumerate}[(i)]
\item If a non-triangular cell of~$\dissection_\circ$ has two consecutive boundary edges~$\gamma_\circ, \delta_\circ$ of the hollow polygon, then contracting~$\gamma_\circ$ and~$\delta_\circ$ to a single boundary edge preserves the $\dissection_\circ$-accordion complex.
\item If a cell of~$\dissection_\circ$ has two non-consecutive boundary edges of the hollow polygon, then the $\dissection_\circ$-accordion complex is a join of smaller accordion complexes.
\end{enumerate}
In all the examples of the paper, we therefore only consider dissections where any non-triangular cell of~$\dissection_\circ$ has at most one boundary edge. All of our constructions work in general, but are just obtained as products or joins of the non-degenerate situation.
\end{remark}

\begin{proposition}
\label{prop:links}
The links in an accordion complex are joins of accordion complexes.
\end{proposition}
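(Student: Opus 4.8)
The plan is to reduce the statement to the link of a single vertex, and then bootstrap by induction. Recall the standard identity $\mathrm{link}_{\Delta * \Delta'}(\sigma \sqcup \sigma') = \mathrm{link}_{\Delta}(\sigma) * \mathrm{link}_{\Delta'}(\sigma')$ for a face $\sigma \sqcup \sigma'$ of a join. I would argue by induction on the number of diagonals of a face~$\dissection$ of~$\accordionComplex(\dissection_\circ)$. If~$\dissection = \varnothing$, its link is~$\accordionComplex(\dissection_\circ)$ itself, which is a (one-term) join of accordion complexes. Otherwise, pick any~$\delta_\bullet \in \dissection$ and write~$\sigma = \dissection \ssm \{\delta_\bullet\}$. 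The key step, stated below, is that~$\mathrm{link}(\delta_\bullet)$ is the join~$\accordionComplex(\dissection_\circ^+) * \accordionComplex(\dissection_\circ^-)$ of two genuine accordion complexes. Granting this, $\sigma$ splits as~$\sigma^+ \sqcup \sigma^-$ with~$\sigma^\pm$ a face of~$\accordionComplex(\dissection_\circ^\pm)$, and
\[
\mathrm{link}(\dissection) = \mathrm{link}_{\mathrm{link}(\delta_\bullet)}(\sigma) = \mathrm{link}_{\accordionComplex(\dissection_\circ^+)}(\sigma^+) * \mathrm{link}_{\accordionComplex(\dissection_\circ^-)}(\sigma^-).
\]
Since~$|\sigma^\pm| < |\dissection|$, the induction hypothesis applies to each factor, and a join of joins of accordion complexes is again a join of accordion complexes.

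It remains to treat the link of a single $\dissection_\circ$-accordion diagonal~$\delta_\bullet = u_\bullet v_\bullet$. This diagonal cuts the solid polygon~$\polygon_\bullet$ into two cells~$\polygon_\bullet^+$ and~$\polygon_\bullet^-$, and every solid accordion diagonal in~$\mathrm{link}(\delta_\bullet)$ lies in exactly one of them; as diagonals sitting in different cells never cross, $\mathrm{link}(\delta_\bullet)$ is automatically the join~$\Delta^+ * \Delta^-$ of the subcomplexes of $\dissection_\circ$-accordion diagonals internal to~$\polygon_\bullet^+$ and~$\polygon_\bullet^-$ respectively. I would then exhibit a reference hollow dissection~$\dissection_\circ^+$ with~$\Delta^+ = \accordionComplex(\dissection_\circ^+)$ (and symmetrically for~$-$), as follows. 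The hollow polygon~$\polygon_\circ^+$ carries the hollow vertices~$(u+1)_\circ, \dots, (v-1)_\circ$ lying on the~$+$ side, together with one new hollow vertex~$w_\circ$ inserted across the cut~$\delta_\bullet$; one checks that this yields the correct number of alternating hollow and solid vertices. Because~$\delta_\bullet$ is an accordion diagonal, each hollow diagonal of~$\dissection_\circ$ that it crosses has exactly one endpoint~$p_\circ$ in the open arc~$(u,v)$. I set~$\dissection_\circ^+$ to be the set consisting of the hollow diagonals of~$\dissection_\circ$ with both endpoints in~$(u,v)$, kept unchanged, together with the truncated diagonals~$p_\circ w_\circ$ obtained from each crossed diagonal by moving its outer endpoint to~$w_\circ$. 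Read as a set, this construction automatically identifies several crossed diagonals sharing the same inner endpoint~$p_\circ$, which is exactly the behaviour needed (such an identified diagonal~$p_\circ w_\circ$ may even become a boundary edge of~$\polygon_\circ^+$ and thus disappear).

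The heart of the argument, and the step I expect to be the main obstacle, is to prove that~$\dissection_\circ^+$ is a genuine dissection and that~$\Delta^+ = \accordionComplex(\dissection_\circ^+)$. Non-crossingness of~$\dissection_\circ^+$ follows from that of~$\dissection_\circ$ once one checks that a kept diagonal cannot cross a truncated diagonal~$p_\circ w_\circ$. For the equality of complexes it suffices to fix a solid diagonal~$a_\bullet b_\bullet$ internal to~$\polygon_\bullet^+$ and to compare the hollow diagonals of~$\overline{\dissection}_\circ$ crossed by~$a_\bullet b_\bullet$ with the hollow diagonals of~$\overline{\dissection}_\circ^+$ crossed by the corresponding diagonal of~$\polygon_\circ^+$. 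That the individual crossings match is a direct computation with circular arcs, using that any chord separating~$a$ and~$b$ (with~$a,b \in [u,v]$) has an endpoint in~$(a,b) \subseteq (u,v)$. The delicate point is the connectivity: passing to~$\dissection_\circ^+$ identifies all outer endpoints of crossed diagonals at the single vertex~$w_\circ$, so I must show that this identification neither destroys nor wrongly creates connectedness of the crossed set. Here the accordion hypothesis on~$\delta_\bullet$ is essential: the crossed diagonals sharing a~$w_\circ$-endpoint all belong to the connected accordion of~$\delta_\bullet$, and one uses the tree structure underlying~$\dissection_\circ$ (equivalently, the dual tree~$\dissection_\circ^\star$) to argue that the set of diagonals crossed by~$a_\bullet b_\bullet$ is connected in~$\dissection_\circ$ if and only if its image is connected in~$\dissection_\circ^+$. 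This equivalence yields~$\Delta^+ = \accordionComplex(\dissection_\circ^+)$, completing the induction.
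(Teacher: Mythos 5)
Your proposal is correct and follows essentially the same route as the paper: your reference dissection~$\dissection_\circ^+$ (keep the hollow diagonals inside the arc, send every outer endpoint of a crossed diagonal to a single new vertex~$w_\circ$) is exactly the paper's dissection obtained by contracting all hollow boundary edges that do not cross the cell~$\polygon_\bullet^+$, and the join structure comes in both cases from the fact that diagonals lying in distinct cells of the face never cross. The only differences are organizational --- the paper treats all cells of the face~$\dissection_\bullet$ simultaneously instead of inducting one diagonal at a time, and it asserts without detail the equivalence ``$\dissection_\circ$-accordion diagonal iff $\dissection_\circ^i$-accordion diagonal'' that you rightly single out as the delicate connectivity step.
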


\begin{proof}
Consider a $\dissection_\circ$-accordion dissection~$\dissection_\bullet$ with cells~$\cell_\bullet^1, \dots, \cell_\bullet^p$. Let~$\dissection_\circ^i$ denote the hollow dissection obtained from~$\dissection_\circ$ by contracting all hollow boundary edges which do not cross~$\cell_\bullet^i$. Then a diagonal~$\delta_\bullet$ of a cell~$\cell_\bullet^i$ is a $\dissection_\circ$-accordion diagonal if and only if it is a~$\dissection_\circ^i$-accordion diagonal. Moreover, for~$i \ne j$, the diagonals of~$\cell_\bullet^i$ do not cross the diagonals of~$\cell_\bullet^j$. It follows that the link of~$\dissection_\bullet$ in~$\accordionComplex(\dissection_\circ)$ is isomorphic to the join~$\accordionComplex(\dissection_\circ^1) * \dots * \accordionComplex(\dissection_\circ^p)$.
\end{proof}


\subsection{Pseudo-manifold}
\label{subsec:pseudomanifold}

We now prove that the accordion complex~$\accordionComplex(\dissection_\circ)$ is a \defn{pseudomanifold}, \ie that it is:
\begin{enumerate}[(i)]
\item \defn{pure}: all maximal $\dissection_\circ$-accordion dissections have the same number of diagonals as~$\dissection_\circ$, and
\item \defn{thin}: any codimension~$1$ simplex of~$\accordionComplex(\dissection_\circ)$ is contained in exactly two maximal $\dissection_\circ$-accordion dissections.
\end{enumerate}
We follow the arguments of A.~Garver and T.~McConville~\cite{GarverMcConville} (except that they work on the dual tree of the dissection~$\dissection_\circ$). A much more concise but less instructive proof of the pseudomanifold property will be derived from geometric considerations in Remark~\ref{rem:simplerProofs}.

Recall that we denote by~$\overline{\dissection}_\circ$ the set formed by~$\dissection_\circ$ together with all boundary edges of the hollow polygon. An \defn{angle}~$u_\circ v_\circ w_\circ$ of~$\overline{\dissection}_\circ$ is a pair~$\{u_\circ v_\circ, v_\circ w_\circ\}$ of two consecutive diagonals of~$\overline{\dissection}_\circ$ around a common vertex~$v_\circ$, called the \defn{apex}. Note that~$\overline{\dissection}_\circ$ has~${2|\dissection_\circ|+n = 2|\overline{\dissection}_\circ|-n}$ angles. Observe also that an accordion~$\accordion_\circ$ of~$\dissection_\circ$ can be seen as a sequence of $|\accordion_\circ|-1$ angles where two consecutive angles are separated by a diagonal of~$\accordion_\circ$. We say that a solid vertex~$p_\bullet$ belongs to an angle~$u_\circ v_\circ w_\circ$ if it lies in the cone generated by the edges~$v_\circ u_\circ$ and~$v_\circ w_\circ$ of the angle. The main observation is given in the following statement.

\begin{lemma}
\label{lem:sameAngle}
Let~$\dissection_\bullet$ be a maximal $\dissection_\circ$-accordion dissection, and let~$p_\bullet, q_\bullet, r_\bullet, s_\bullet$ denote four consecutive vertices of a cell~$\cell_\bullet$ of~$\dissection_\bullet$ (with possibly~$p_\bullet = s_\bullet$ if~$\cell_\bullet$ is a triangle). Then~$p_\bullet$ and~$s_\bullet$ belong to the same angle of the accordion of~$\overline{\dissection}_\circ$ which is crossed by~$q_\bullet r_\bullet$.
\end{lemma}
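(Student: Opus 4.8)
The plan is to translate the statement into a condition on circular arcs and then to locate a common angle. First I would record two elementary facts. Since all $2n$ points lie on the circle and the apex of an angle is one of them, a solid vertex $x_\bullet$ belongs to an angle $u_\circ v_\circ w_\circ$ if and only if $x_\bullet$ and the apex $v_\circ$ lie on opposite sides of the chord $u_\circ w_\circ$, equivalently $x_\bullet$ lies on the open arc from $u_\circ$ to $w_\circ$ avoiding $v_\circ$. Next, I would write the accordion crossed by $q_\bullet r_\bullet$ as a path of hollow vertices $w_0, v_1, \dots, v_m, w_{m+1}$, where the boundary edges are $d_0 = w_0 v_1$ and $d_m = v_m w_{m+1}$, where $d_i = v_i v_{i+1}$ for $0 < i < m$, and whose angles are exactly $v_1, \dots, v_m$. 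Because $q_\bullet r_\bullet$ crosses each $d_i = v_i v_{i+1}$, the endpoints $v_i$ and $v_{i+1}$ are separated by the line $q_\bullet r_\bullet$, so the apexes \emph{strictly alternate} between the two sides of this line; this is the zigzag. Finally, since $p_\bullet, q_\bullet, r_\bullet, s_\bullet$ are consecutive vertices of the convex cell $\cell_\bullet$, the vertices $p_\bullet$ and $s_\bullet$ lie on the same side of $q_\bullet r_\bullet$, namely the side containing the interior of $\cell_\bullet$; call it the \emph{inner} side.

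By the first fact, the only angles whose far arc can contain the inner vertices $p_\bullet, s_\bullet$ are those $v_i$ lying on the \emph{outer} side, and by the alternation the far arcs of these outer apexes subdivide the inner boundary arc of $\cell_\bullet$, meeting exactly at the inner apexes. Thus $p_\bullet$ lies in the far arc of some outer apex and $s_\bullet$ in that of another, and the two arcs coincide precisely when no inner apex lies between them. In other words, the statement is equivalent to: \emph{no apex of the zigzag lying on the inner side falls strictly between $p_\bullet$ and $s_\bullet$ along the boundary arc of $\cell_\bullet$}. To control $p_\bullet$ and $s_\bullet$ I would use that $p_\bullet q_\bullet$ and $r_\bullet s_\bullet$ are themselves accordion diagonals, and that each shares with $q_\bullet r_\bullet$ its extreme crossed diagonal: every solid diagonal issued from $q_\bullet$ crosses the boundary edge $d_0$ cutting $q_\bullet$ off the hollow polygon $\polygon_\circ$, and likewise $r_\bullet s_\bullet$ and $q_\bullet r_\bullet$ both cross $d_m$; hence $p_\bullet$ sits in the extreme inner arc near $w_0$ and $s_\bullet$ in the extreme inner arc near $w_{m+1}$.

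The hard part is exactly this last reduction, and I would settle it by contradiction. Suppose an inner apex $v = v_k$ (necessarily interior, $1 < k < m$) lies strictly between $p_\bullet$ and $s_\bullet$. Then $v$ is the common endpoint of $d_{k-1} = v_{k-1}v$ and $d_k = v\, v_{k+1}$, which are internal diagonals of $\dissection_\circ$ both crossed by $q_\bullet r_\bullet$, with $v_{k-1}, v_{k+1}$ on the outer side. Walking along the boundary of $\cell_\bullet$ from $s_\bullet$ to $p_\bullet$, let $e$ be the edge of $\cell_\bullet$ covering $v$, i.e. cutting $v$ off from the interior of $\cell_\bullet$. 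Since $v$ is separated from $\cell_\bullet$ by $e$ while $v_{k-1}, v_{k+1}$ lie beyond $q_\bullet r_\bullet$, the edge $e$ crosses both $d_{k-1}$ and $d_k$. When $e$ cuts off exactly $v$, it also crosses the two boundary edges of $\polygon_\circ$ incident to $v$, so $e$ crosses four pairwise incident diagonals at the single vertex $v$; these cannot form a path, contradicting that $e$ is an accordion diagonal. I expect the genuine obstacle to be the remaining case, in which $e$ spans several vertices so that its far arc already contains the boundary edges at $v$: here $e$ runs alongside the deep part of the zigzag, crossing $d_{k-1}, d_k, d_{k+1}, \dots$ together with $q_\bullet r_\bullet$, and ruling this out requires combining the strict alternation of apexes with the convexity of $\cell_\bullet$ (two non-crossing edges of a convex cell cannot cross a common nested family of hollow diagonals while $v$ lies between them). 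It is precisely the global fact that $\cell_\bullet$ closes up into a cell bounded by accordion diagonals — not merely the accordion property of $p_\bullet q_\bullet$, $q_\bullet r_\bullet$, $r_\bullet s_\bullet$ — that forces the conclusion; indeed a convex configuration with those three edges accordion but the closing boundary not accordion can place $p_\bullet$ and $s_\bullet$ in different angles.
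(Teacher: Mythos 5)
Your strategy has a genuine gap, and it cannot be completed in the form you describe because it never invokes the maximality of~$\dissection_\bullet$. Maximality is indispensable here, not a convenience: take the empty solid dissection~$\dissection_\bullet = \emptyset$, whose unique cell is the whole solid polygon~$\polygon_\bullet$ and whose edges (the boundary edges of~$\polygon_\bullet$) are all $\dissection_\circ$-accordion diagonals. If~$\dissection_\circ$ has an internal diagonal incident to the hollow vertex~$(q+1)_\circ$, then the boundary edge~$q_\bullet r_\bullet = q_\bullet(q+2)_\bullet$ crosses the star of~$\overline{\dissection}_\circ$ at~$(q+1)_\circ$, which has at least two angles, and~$p_\bullet = (q-2)_\bullet$ and~$s_\bullet = (q+4)_\bullet$ land in different ones. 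This refutes your closing claim that the conclusion is forced by ``the global fact that~$\cell_\bullet$ closes up into a cell bounded by accordion diagonals'': that hypothesis holds in this example and the conclusion fails. The paper instead uses maximality as the engine of the contradiction: if a diagonal~$\varepsilon_\circ$ of the accordion separates~$p_\bullet$ from~$s_\bullet$, there is a second edge~$u_\bullet v_\bullet$ of~$\cell_\bullet$ also crossed by~$\varepsilon_\circ$ and with~$q_\bullet, u_\bullet$ on one side and~$r_\bullet, v_\bullet$ on the other; splicing the accordion of~$q_\bullet r_\bullet$ up to~$\varepsilon_\circ$ with the accordion of~$u_\bullet v_\bullet$ beyond~$\varepsilon_\circ$ shows that~$q_\bullet v_\bullet$ is a $\dissection_\circ$-accordion diagonal lying inside~$\cell_\bullet$, so~$\dissection_\bullet$ was not maximal. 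Your proof sketch contains no step where a new addable diagonal (or any other use of maximality) appears, so no correct completion of it can exist as written.

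Two further concrete problems. First, your ``easy case'' rests on a false premise: an accordion is a connected cut, hence a tree containing two boundary edges --- only its zigzag is required to be a chain. A solid diagonal crossing four pairwise incident hollow diagonals at a single vertex~$v$ crosses a star, which is a perfectly valid accordion; this is precisely why the boundary edge~$(v-1)_\bullet(v+1)_\bullet$ is always a $\dissection_\circ$-accordion diagonal. So ``these cannot form a path'' does not contradict the accordion property, and no contradiction is obtained even in the case you consider settled. Second, you explicitly leave the ``hard case'' (where the covering edge~$e$ cuts off several hollow vertices) unresolved; you correctly sense that this is where the real content lies, but the missing ingredient is not a refined convexity or alternation argument --- it is the construction of an addable accordion diagonal inside~$\cell_\bullet$, which is exactly what the paper's proof supplies.
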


\begin{proof}
Let~$\accordion_\circ$ be the accordion of~$\overline{\dissection}_\circ$ which is crossed by~$q_\bullet r_\bullet$. Assume that $p_\bullet$ and~$s_\bullet$ belong to distinct angles of~$\accordion_\circ$. Then they are separated by a diagonal~$\varepsilon_\circ$ of~$\accordion_\circ$. Therefore, there are two boundary edges~$q_\bullet r_\bullet$ and~$u_\bullet v_\bullet$ of~$\cell_\bullet$ with distinct vertices such that the hollow diagonal~$\varepsilon_\circ$ separates the vertices~$q_\bullet, u_\bullet$ from the vertices~$r_\bullet, v_\bullet$. Let~$\gamma_\circ^1, \dots, \gamma_\circ^i = \varepsilon_\circ, \dots, \gamma_\circ^a$ (resp.~${\delta_\circ^1, \dots, \delta_\circ^j = \varepsilon_\circ, \dots, \delta_\circ^b}$) denote the diagonals of~$\dissection_\circ$ crossed by~$q_\bullet r_\bullet$ from~$q_\bullet$ to~$r_\bullet$ (resp.~ crossed by~$u_\bullet v_\bullet$ from~$u_\bullet$ to~$v_\bullet$). Then the hollow diagonals~$\gamma_\circ^1, \dots, \gamma_\circ^i = \varepsilon_\circ = \delta_\circ^j, \dots, \delta_\circ^b$ which are crossed by~$q_\bullet v_\bullet$ also form an accordion. It follows that~$\dissection_\bullet$ is not maximal as we can still include~$q_\bullet v_\bullet$.
\end{proof}

Consider now an angle~$u_\circ v_\circ w_\circ$ of~$\overline{\dissection}_\circ$. In any maximal $\dissection_\circ$-accordion dissection~$\dissection_\bullet$, the set~$\rmX_\bullet$ of diagonals of~$\overline{\dissection}_\bullet$ that cross both $u_\circ v_\circ$ and~$v_\circ w_\circ$ is non-empty (since it contains the boundary edge~$(v-1)_\bullet (v+1)_\bullet$) and totally ordered (since the diagonals of~$\dissection_\bullet$ do not cross). Let~$\delta_\bullet$ be the largest diagonal of~$\rmX_\bullet$ (meaning the farthest from~$v_\circ$). We say that the diagonal~$\delta_\bullet$ \defn{closes} the angle~$u_\circ v_\circ w_\circ$. Note that each angle of~$\overline{\dissection}_\circ$ is closed by precisely one diagonal~of~$\overline{\dissection}_\bullet$. The following lemma is stated and proved in~\cite{GarverMcConville} in terms of the dual tree~$\dissection_\circ^\star$ of the dissection~$\dissection_\circ$.

\begin{lemma}[\cite{GarverMcConville}]
\label{lem:closeTwoAngles}
For any maximal $\dissection_\circ$-accordion dissection~$\dissection_\bullet$, each internal diagonal~$\delta_\bullet$ of~$\dissection_\bullet$ closes two angles of~$\overline{\dissection}_\circ$ (one apex on each side of~$\delta_\bullet$) while each boundary edge of the solid polygon closes one angle of~$\overline{\dissection}_\circ$. Therefore the accordion complex~$\accordionComplex(\dissection_\circ)$ is pure of dimension~$|\dissection_\circ|$.
\end{lemma}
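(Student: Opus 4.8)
The plan is a double count of the incidences between the angles of~$\overline{\dissection}_\circ$ and the diagonals of~$\overline{\dissection}_\bullet$ that close them. The excerpt already provides that each angle of~$\overline{\dissection}_\circ$ is closed by exactly one diagonal of~$\overline{\dissection}_\bullet$, and that~$\overline{\dissection}_\circ$ has~$2|\dissection_\circ|+n$ angles. I will upgrade this into a bijection between the angles of~$\overline{\dissection}_\circ$ and the incidences~$(\cell_\bullet,e_\bullet)$, where~$\cell_\bullet$ is a cell of~$\dissection_\bullet$ and~$e_\bullet$ is an edge of~$\cell_\bullet$. Since an internal diagonal of~$\dissection_\bullet$ bounds two cells and each of the~$n$ solid boundary edges bounds one, there are~$2|\dissection_\bullet|+n$ such incidences; the bijection then forces~$2|\dissection_\bullet|+n = 2|\dissection_\circ|+n$, that is~$|\dissection_\bullet|=|\dissection_\circ|$, which is purity. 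Reading the bijection over a fixed edge~$e_\bullet$ moreover shows that~$e_\bullet$ closes as many angles as it bounds cells, namely two angles (one apex on each side) if~$e_\bullet$ is an internal diagonal and one angle if~$e_\bullet$ is a solid boundary edge; this is exactly the statement of the lemma.

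It remains to build the bijection. Given an incidence~$(\cell_\bullet,e_\bullet)$ with~$e_\bullet=q_\bullet r_\bullet$ and cell-neighbours~$p_\bullet,s_\bullet$ of~$q_\bullet,r_\bullet$ in~$\cell_\bullet$, Lemma~\ref{lem:sameAngle} puts~$p_\bullet$ and~$s_\bullet$ in a common angle~$\alpha_{\cell_\bullet}=u_\circ v_\circ w_\circ$ of the accordion crossed by~$e_\bullet$; as~$p_\bullet,s_\bullet$ lie in the cone of this angle on the~$\cell_\bullet$-side of~$e_\bullet$, its apex~$v_\circ$ lies on the opposite side. I map~$(\cell_\bullet,e_\bullet)\mapsto\alpha_{\cell_\bullet}$; the inverse map sends an angle~$\alpha$ to its closing diagonal~$\delta_\bullet$ together with the cell of~$\dissection_\bullet$ on the side of~$\delta_\bullet$ opposite the apex of~$\alpha$. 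That these are mutually inverse reduces to one claim: $e_\bullet$ closes~$\alpha_{\cell_\bullet}$. Half of this is immediate, since the two edges~$u_\circ v_\circ$ and~$v_\circ w_\circ$ are consecutive diagonals of the accordion crossed by~$e_\bullet$, so~$e_\bullet$ crosses both and is a crosser of~$\alpha_{\cell_\bullet}$.

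The hard half, and the main obstacle, is that~$e_\bullet$ is the \emph{farthest} crosser of~$\alpha_{\cell_\bullet}$ on the~$\cell_\bullet$-side, i.e.\ that no diagonal of~$\dissection_\bullet$ lying beyond~$e_\bullet$ on that side crosses both~$u_\circ v_\circ$ and~$v_\circ w_\circ$. I expect to prove this by contradiction, reusing the maximality mechanism from the proof of Lemma~\ref{lem:sameAngle}. Note first that the edges~$q_\bullet p_\bullet$ and~$r_\bullet s_\bullet$ of~$\cell_\bullet$ cross~$u_\circ v_\circ$ and~$v_\circ w_\circ$ respectively, since~$q_\bullet,r_\bullet$ lie outside the cone of~$\alpha_{\cell_\bullet}$ while~$p_\bullet,s_\bullet$ lie inside it. A crosser~$\delta'_\bullet\in\dissection_\bullet$ strictly beyond~$e_\bullet$ would then, together with these cell edges, produce two edges of~$\dissection_\bullet$ whose crossing sequences meet along a single hollow diagonal separating their endpoints; splicing the two sub-accordions at that diagonal yields a new~$\dissection_\circ$-accordion diagonal crossing none of~$\dissection_\bullet$, contradicting maximality. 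The same device rules out a second closed angle with apex on a given side, securing the uniqueness needed for the two maps to be inverse. Once this geometric step is in place, the bijection is established and the double count above completes the proof.
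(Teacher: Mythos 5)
Your overall strategy is the same as the paper's: use Lemma~\ref{lem:sameAngle} to attach to each incidence $(\cell_\bullet, e_\bullet)$ the angle containing the two cell-neighbours $p_\bullet, s_\bullet$, argue that $e_\bullet$ closes it, and double-count against the $2|\dissection_\circ|+n$ angles, each closed by exactly one diagonal of $\overline{\dissection}_\bullet$. The paper compresses all of this into one sentence (``the first sentence is a consequence of Lemma~\ref{lem:sameAngle}''), so your explicit bijection and your identification of the farthest-crosser claim as the real content are welcome additions rather than deviations.

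The one place where your plan would stumble is precisely the step you flag as the main obstacle. Your proposed mechanism --- splice the crossing sequences of two diagonals of $\dissection_\bullet$ along a common hollow diagonal and invoke maximality --- does not go through as stated: unlike in the proof of Lemma~\ref{lem:sameAngle}, where the two solid edges bound a \emph{common} cell so that the spliced diagonal lies inside that cell and is automatically compatible with $\dissection_\bullet$, here the hypothetical farther crosser $\delta'_\bullet$ is not an edge of $\cell_\bullet$, and a diagonal joining an endpoint of $\delta'_\bullet$ to an endpoint of $p_\bullet q_\bullet$ may well cross other diagonals of $\dissection_\bullet$, so no contradiction with maximality is obtained. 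Fortunately the claim needs no maximality at all, only non-crossing: since every vertex of $\cell_\bullet$ other than $q_\bullet, r_\bullet$ lies in the cone of the angle $u_\circ v_\circ w_\circ$, a crosser $\delta'_\bullet \in \overline{\dissection}_\bullet$ of that angle lying beyond $q_\bullet r_\bullet$ must have one endpoint on the arc between $q_\bullet$ and $u_\circ$ and the other on the arc between $w_\circ$ and $r_\bullet$, and such a chord necessarily crosses $p_\bullet q_\bullet$ or $r_\bullet s_\bullet$, contradicting the fact that all of these belong to $\overline{\dissection}_\bullet$. With that substitution (which also gives the uniqueness you need for the two maps to be mutually inverse), your bijection and double count are correct and complete the proof.
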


\begin{proof}
The first sentence is a consequence of Lemma~\ref{lem:sameAngle}: for any four consecutive vertices~$p_\bullet, q_\bullet, r_\bullet, s_\bullet$ of a cell of~$\overline{\dissection}_\bullet$, the diagonal~$q_\bullet r_\bullet$ closes the unique angle of the accordion of~$\overline{\dissection}_\circ$ crossed by~$q_\bullet r_\bullet$ that contains the vertices~$p_\bullet$ and~$s_\bullet$. Therefore, $q_\bullet r_\bullet$ closes precisely two angles (resp.~one angle) of~$\dissection_\circ$ if it is an internal diagonal (resp.~a boundary edge of the solid polygon). We finally obtain by double-counting that $2|\dissection_\circ|+n = |\{\text{angles of } \overline{\dissection}_\circ\}| = 2|\dissection_\bullet|+n$ and thus~$|\dissection_\bullet| = |\dissection_\circ|$ for any maximal $\dissection_\circ$-accordion dissection~$\dissection_\bullet$.
\end{proof}

We are now ready to prove that the $\dissection_\circ$-accordion complex is thin, \ie that each internal diagonal of a maximal $\dissection_\circ$-accordion dissection can be flipped into a unique other internal diagonal to form a new maximal $\dissection_\circ$-accordion dissection. Here and throughout the paper, $X \symdif Y$ denotes the symmetric difference of two sets~$X,Y$ defined by~${X \symdif Y \eqdef (X \ssm Y) \cup (Y \ssm X)}$.

The following notations are illustrated in \fref{fig:exmFlip}. 
Let~$\dissection_\bullet$ be a maximal $\dissection_\circ$-accordion dissection and~$\delta_\bullet$ be a diagonal of~$\dissection_\bullet$. Let~$u_\circ$ and~$v_\circ$ be the apices of the angles of~$\dissection_\circ$ closed by~$\delta_\bullet$, let~$\mu_\bullet$ and~$\nu_\bullet$ denote the edges of the cells of~$\dissection_\bullet$ containing~$\delta_\bullet$, which separate~$\delta_\bullet$ from~$u_\circ$ and~$v_\circ$ respectively, and let~$\quadrilateral_\bullet$ denote the quadrilateral defined by the four vertices of~$\mu_\bullet$ and~$\nu_\bullet$. Note that $\delta_\bullet$ is a diagonal of~$\quadrilateral_\bullet$, and let~$\delta_\bullet'$ denote the other diagonal. 

\begin{figure}
	\capstart
	\centerline{\includegraphics[scale=1]{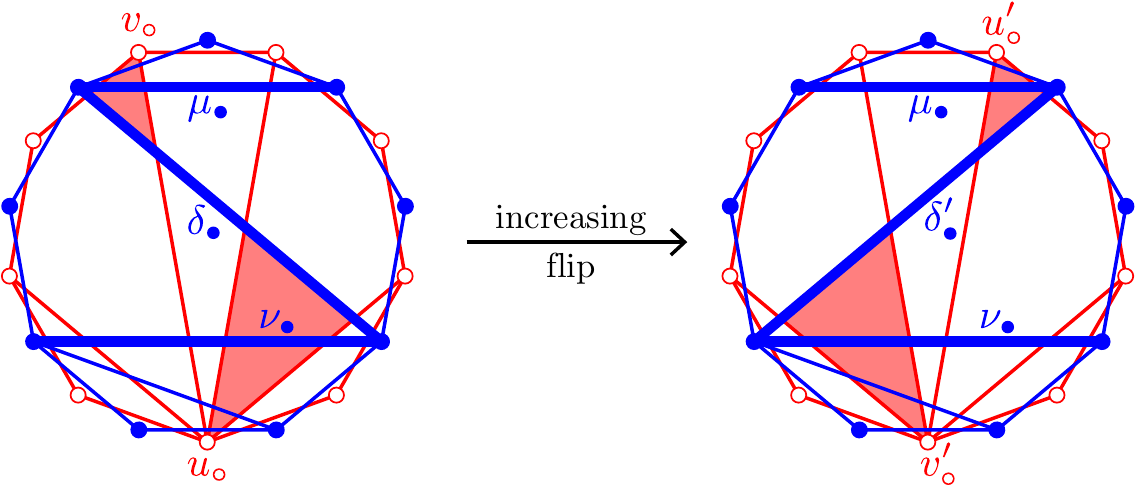}}
	\caption{Two maximal $\dissection_\circ$-accordion dissection~$\dissection_\bullet$ (left) and~$\dissection_\bullet'$ (right) related by the flip of~$\delta_\bullet$ to~$\delta_\bullet'$. The angles of~$\dissection_\circ$ closed by~$\delta_\bullet$ and~$\delta_\bullet'$ are shaded. The flip is oriented from~$\dissection_\bullet$ to~$\dissection_\bullet'$.}
	\label{fig:exmFlip}
\end{figure}

\begin{lemma}[\cite{GarverMcConville}]
\label{lem:flip}
With the previous notations, the collection of diagonals~$\dissection_\bullet' \eqdef \dissection_\bullet \symdif \{\delta_\bullet, \delta_\bullet'\}$ is a maximal $\dissection_\circ$-accordion dissection, and~$\dissection_\bullet$ and~$\dissection_\bullet'$ are the only maximal $\dissection_\circ$-accordion dissections containing~$\dissection_\bullet \ssm \{\delta_\bullet\}$. In other words, the accordion complex~$\accordionComplex(\dissection_\circ)$ is thin.
\end{lemma}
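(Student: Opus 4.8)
The plan is to prove the two assertions separately: first that $\dissection_\bullet'$ is a facet of $\accordionComplex(\dissection_\circ)$, and then that $\dissection_\bullet$ and $\dissection_\bullet'$ are the only facets containing the codimension-one face $\dissection_\bullet \ssm \{\delta_\bullet\}$. Throughout I would exploit the purity already established in Lemma~\ref{lem:closeTwoAngles}: since every maximal $\dissection_\circ$-accordion dissection has exactly $|\dissection_\circ|$ diagonals, any $\dissection_\circ$-accordion dissection of cardinality $|\dissection_\circ|$ is automatically maximal (being contained in a maximal one of the same size). This reduces ``maximality'' to a cardinality count and spares me from re-examining purity.

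For the forward direction, I would first check that $\dissection_\bullet'$ is non-crossing. The two diagonals of the convex quadrilateral $\quadrilateral_\bullet$ cross, so $\delta_\bullet'$ crosses $\delta_\bullet$ and therefore has one endpoint on the cell $\cell_\bullet^1$ containing $\mu_\bullet$ and the other on the cell $\cell_\bullet^2$ containing $\nu_\bullet$. Since $\cell_\bullet^1$ and $\cell_\bullet^2$ are convex and share the edge $\delta_\bullet$, the segment $\delta_\bullet'$ is contained in $\cell_\bullet^1 \cup \cell_\bullet^2$ and hence crosses none of the remaining diagonals of $\dissection_\bullet \ssm \{\delta_\bullet\}$. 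The substantial point is to check that $\delta_\bullet'$ is itself a $\dissection_\circ$-accordion diagonal, that is, that the hollow diagonals of $\overline{\dissection}_\circ$ it crosses form an accordion. For this I would describe that set explicitly by rerouting the accordion of $\delta_\bullet$ around the two apices $u_\circ$ and $v_\circ$: away from the angles at $u_\circ, v_\circ$ the segment $\delta_\bullet'$ meets the same hollow diagonals as $\delta_\bullet$, while near each apex it crosses the complementary edge of the angle that $\delta_\bullet$ closes. Connectivity of the resulting path, and hence the accordion property, would follow from Lemma~\ref{lem:sameAngle}, which places the relevant solid vertices inside a single angle and thereby guarantees that the pieces coming from $\mu_\bullet$, $\delta_\bullet$ and $\nu_\bullet$ glue along a consecutive run of diagonals of $\dissection_\circ$. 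Once $\delta_\bullet'$ is known to be an accordion diagonal, $\dissection_\bullet'$ has cardinality $|\dissection_\bullet| = |\dissection_\circ|$ and is therefore maximal.

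For uniqueness, let $\dissection_\bullet''$ be any maximal $\dissection_\circ$-accordion dissection containing $\dissection_\bullet \ssm \{\delta_\bullet\}$. By purity it has cardinality $|\dissection_\circ|$, so it is obtained from $\dissection_\bullet \ssm \{\delta_\bullet\}$ by adding a single accordion diagonal $\delta_\bullet''$. As $\delta_\bullet''$ crosses none of the fixed diagonals, it must be a chord of the polygon $R$ obtained by merging $\cell_\bullet^1$ and $\cell_\bullet^2$ across $\delta_\bullet$, and it then remains to show that the only chords of $R$ which are $\dissection_\circ$-accordion diagonals are $\delta_\bullet$ and $\delta_\bullet'$. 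I would argue this by reinstating the angle-closing bookkeeping of Lemma~\ref{lem:closeTwoAngles}: removing $\delta_\bullet$ frees exactly the two angles $\alpha$ (apex $u_\circ$) and $\beta$ (apex $v_\circ$) it used to close, so for $\dissection_\bullet''$ to close every angle of $\overline{\dissection}_\circ$ again, the new diagonal $\delta_\bullet''$ must close both $\alpha$ and $\beta$. Applying Lemma~\ref{lem:sameAngle} to the two cells into which $\delta_\bullet''$ splits $R$ then forces the four vertices neighbouring $\delta_\bullet''$ to lie in $\alpha$ and $\beta$, which identifies the endpoints of $\delta_\bullet''$ with those of $\mu_\bullet$ and $\nu_\bullet$ and leaves only the two diagonals $\delta_\bullet$ and $\delta_\bullet'$ of the quadrilateral $\quadrilateral_\bullet$.

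I expect the main obstacle to be the positive accordion check for $\delta_\bullet'$ together with the matching uniqueness step: both hinge on controlling precisely which diagonals of $\dissection_\circ$ a chord of the merged region $R$ crosses, and in particular on showing that when $R$ is larger than a quadrilateral the accordion condition fails for every chord of $R$ other than $\delta_\bullet$ and $\delta_\bullet'$. Making the rerouting-around-the-apices argument rigorous, rather than reading it off the picture, is where the care is needed, and Lemma~\ref{lem:sameAngle} is the tool I would lean on to convert the local angle information into global connectivity of the crossed diagonals.
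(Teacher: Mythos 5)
Your first half is essentially the paper's argument: the accordion property of~$\delta_\bullet'$ is obtained there too by merging the three subaccordions of~$\dissection_\circ$ crossed by $\mu_\bullet$ alone, by all of $\mu_\bullet,\delta_\bullet,\nu_\bullet$, and by $\nu_\bullet$ alone, and reducing maximality to a cardinality count via Lemma~\ref{lem:closeTwoAngles} is fine. The problem is your uniqueness step. You claim that since removing~$\delta_\bullet$ frees the two angles $\alpha$ and $\beta$ it closed, the new diagonal~$\delta_\bullet''$ must close both of them. This is false: the closing assignment of Lemma~\ref{lem:closeTwoAngles} is not stable under the exchange, because the angle closed by a diagonal $q_\bullet r_\bullet$ is determined by its neighbours $p_\bullet, s_\bullet$ in the cells of the \emph{current} dissection, and the neighbours of $\mu_\bullet$ and $\nu_\bullet$ inside the merged region~$R$ change when $\delta_\bullet$ is replaced. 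In fact the genuine flip partner~$\delta_\bullet'$ does \emph{not} close $\alpha$ and $\beta$ in~$\dissection_\bullet'$: there $\alpha$ and $\beta$ are taken over by $\mu_\bullet$ and $\nu_\bullet$ (whose relevant neighbours are now endpoints of~$\delta_\bullet'$), while $\delta_\bullet'$ closes two other angles, sharing an edge with $\alpha$ and $\beta$ at the same apices but not equal to them. You can check this on the hollow pentagon with fan triangulation $\{1_\circ 5_\circ, 1_\circ 7_\circ\}$, flipping $2_\bullet 6_\bullet$ to $4_\bullet 8_\bullet$ inside $\{2_\bullet 6_\bullet, 2_\bullet 8_\bullet\}$: there $2_\bullet 6_\bullet$ closes the angles $\{1_\circ 3_\circ, 1_\circ 5_\circ\}$ and $\{5_\circ 1_\circ, 5_\circ 7_\circ\}$, which after the flip are closed by $2_\bullet 8_\bullet$ and $4_\bullet 6_\bullet$ respectively, whereas $4_\bullet 8_\bullet$ closes $\{1_\circ 5_\circ, 1_\circ 7_\circ\}$ and $\{3_\circ 5_\circ, 5_\circ 1_\circ\}$. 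Carried out literally, your argument would therefore exclude $\delta_\bullet'$ and conclude that $\dissection_\bullet$ is the only facet containing $\dissection_\bullet \ssm \{\delta_\bullet\}$.

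The paper avoids this bookkeeping entirely. For uniqueness it observes that any $\dissection_\circ$-accordion diagonal compatible with $\dissection_\bullet \ssm \{\delta_\bullet\}$ and distinct from $\delta_\bullet, \delta_\bullet'$ would have to cross both $\delta_\bullet$ and $\delta_\bullet'$ (otherwise it could be added to the maximal dissection $\dissection_\bullet$ or $\dissection_\bullet'$); since it cannot cross $\mu_\bullet$ or $\nu_\bullet$, it must then cross the entire subaccordion $\accordion_\circ$ of~$\dissection_\circ$ crossed by both $\delta_\bullet$ and $\delta_\bullet'$, and hence enter and leave the two cells of~$\dissection_\circ$ containing exactly one diagonal of~$\accordion_\circ$ through non-incident edges, contradicting the accordion condition. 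If you want to retain an angle-based uniqueness proof, you would have to track how the closing assignment rotates around the boundary of~$R$, which is considerably more delicate than this crossing argument.
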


\begin{proof}
We first observe that~$\delta_\bullet'$ is a $\dissection_\circ$-accordion diagonal, since the edges of~$\overline{\dissection}_\circ$ crossed by~$\delta_\bullet'$ are obtained by merging three subaccordions of~$\dissection_\circ$: the subaccordion formed by the diagonals of~$\overline{\dissection}_\circ$ crossed by~$\mu_\bullet$ but not~$\delta_\bullet$ nor~$\nu_\bullet$, the subaccordion formed by the diagonals of~$\overline{\dissection}_\circ$ crossed by~$\delta_\bullet$, $\mu_\bullet$ and~$\nu_\bullet$, and the subaccordion formed by the diagonals of~$\overline{\dissection}_\circ$ crossed by~$\nu_\bullet$ but not~$\delta_\bullet$ nor~$\mu_\bullet$. Moreover, $\delta_\bullet$ and $\delta_\bullet'$ are the only $\dissection_\circ$-accordion diagonals compatible with~$\dissection_\bullet \ssm \{\delta_\bullet\}$. Indeed, any other such diagonal would cross~$\delta_\bullet$ and~$\delta_\bullet'$ (by maximality of~$\dissection_\bullet$ and~$\dissection_\bullet'$), and thus also the subaccordion~$\accordion_\circ$ of~$\dissection_\circ$ crossed by~$\delta_\bullet$ and~$\delta_\bullet'$ (because it cannot cross~$\mu$ and~$\nu$). But it would then improperly intersect the two cells of~$\dissection_\circ$ containing precisely one diagonal of~$\accordion_\circ$.
\end{proof}

The \defn{$\dissection_\circ$-accordion flip graph} is the dual graph~$\accordionFlipGraph(\dissection_\circ)$ of the $\dissection_\circ$-accordion complex: its~vertices are the maximal $\dissection_\circ$-accordion dissections, and its edges are the \defn{flips} between them, \ie the pairs~$\{\dissection_\bullet, \dissection_\bullet'\}$ of maximal $\dissection_\circ$-accordion dissections with~$\dissection_\bullet \ssm \{\delta_\bullet\} = \dissection_\bullet' \ssm \{\delta_\bullet'\}$. See~\mbox{\fref{fig:exmAccordionComplex}\,(right)}.


\subsection{The accordion lattice}
\label{subsec:accordionLattice}

We now define a natural orientation on the $\dissection_\circ$-accordion flip graph. We use the same notations as in Lemma~\ref{lem:flip} (see also \fref{fig:exmFlip}), where~$\dissection_\bullet \ssm \{\delta_\bullet\} = \dissection_\bullet' \ssm \{\delta_\bullet'\}$ and~$\delta_\bullet, \delta_\bullet'$ are the two diagonals of the quadrilateral defined by~$\mu_\bullet, \nu_\bullet$. Observe that one of the path~$\mu_\bullet \delta_\bullet \nu_\bullet$ and~$\mu_\bullet \delta_\bullet' \nu_\bullet$ forms a~$\SSS$ while the other forms a~$\ZZZ$, see \fref{fig:exmFlip}. We then orient the flip from the dissection containing the~$\SSS$ to that containing the~$\ZZZ$. See \fref{fig:exmAccordionComplex}\,(right) for an illustration of $\dissection_\circ$-accordion oriented flip graph (where the graph is oriented from bottom to top).

A.~Garver and T.~McConville introduced a natural closure on sets of $\dissection_\circ$-subaccordions, and showed that the inclusion poset of biclosed sets of $\dissection_\circ$-subaccordions is a well-behaved lattice (namely, semidistributive, congruence-uniform and polygonal). Then, they introduced a lattice quotient map from biclosed sets of $\dissection_\circ$-subaccordions to maximal $\dissection_\circ$-accordion dissections, which imply the following statement.

\begin{theorem}[\cite{GarverMcConville}]
The $\dissection_\circ$-accordion oriented flip graph is the Hasse diagram of a lattice, that we call the \defn{$\dissection_\circ$-accordion lattice} and denote by~$\accordionLattice(\dissection_\circ)$.
\end{theorem}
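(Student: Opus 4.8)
The plan is to realize~$\accordionLattice(\dissection_\circ)$ as a lattice quotient of a more transparent auxiliary lattice, following the strategy of A.~Garver and T.~McConville~\cite{GarverMcConville} but carried out directly on dissections rather than on the dual tree~$\dissection_\circ^\star$. (One can also simply transport their theorem through the dictionary of the Remark above, but I would rather reprove it in the present language so that the orientation of Section~\ref{subsec:accordionLattice} is matched explicitly.)

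First I would equip the set of all $\dissection_\circ$-subaccordions with a closure operator. The natural choice declares a collection to be \emph{closed} when it is stable under concatenation: whenever it contains two subaccordions sharing a boundary diagonal whose union is again a subaccordion, it contains that union. A collection is \emph{biclosed} when both it and its complement are closed. I would then check that the biclosed collections, ordered by inclusion, form a lattice~$\mathcal{B}(\dissection_\circ)$, with join obtained by closing the union and meet computed dually, and that~$\mathcal{B}(\dissection_\circ)$ is semidistributive, congruence-uniform, and polygonal. These properties come from the atomic nature of the covering relations — each cover inserts or deletes a single minimal subaccordion — which lets one label covers and identify the join- and meet-irreducibles explicitly, yielding congruence-uniformity by the standard criterion.

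The heart of the argument is the passage from biclosed collections to maximal accordion dissections. I would define a map~$\eta$ sending a biclosed collection to the maximal $\dissection_\circ$-accordion dissection whose diagonals close exactly the angles prescribed by the collection (in the sense of Lemma~\ref{lem:closeTwoAngles}), and show that the fibers of~$\eta$ form a \emph{lattice congruence} of~$\mathcal{B}(\dissection_\circ)$: each fiber is an interval, and the partition is compatible with meets and joins. Since a finite lattice modulo a lattice congruence is again a lattice, whose Hasse diagram is obtained by contracting each fiber to a point, the quotient of~$\mathcal{B}(\dissection_\circ)$ by the fibers of~$\eta$ is a lattice supported on the maximal $\dissection_\circ$-accordion dissections. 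The final step is to verify that the cover relations of this quotient are exactly the flips of~$\accordionFlipGraph(\dissection_\circ)$ oriented as in Section~\ref{subsec:accordionLattice}, from the dissection containing the~$\SSS$ to the one containing the~$\ZZZ$; this amounts to matching a cover of~$\mathcal{B}(\dissection_\circ)$ that crosses the congruence with a single flip and tracking which side of the quadrilateral carries the~$\SSS$.

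The main obstacle is proving that~$\eta$ is a lattice congruence — establishing that each fiber is an interval and that~$\eta$ is order-compatible — together with the final orientation check, since this is where the geometry of the $\SSS$/$\ZZZ$ flips must be reconciled with the combinatorics of the concatenation closure. The remaining verifications (that the biclosed sets form a lattice and that it is congruence-uniform and polygonal) are technical but routine once the closure operator is pinned down.
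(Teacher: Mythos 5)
Your proposal follows exactly the strategy that the paper attributes to~\cite{GarverMcConville} and does not reprove: a closure operator on $\dissection_\circ$-subaccordions, the lattice of biclosed sets (semidistributive, congruence-uniform, polygonal), and a surjection onto maximal $\dissection_\circ$-accordion dissections whose fibers form a lattice congruence, so that the oriented flip graph is the Hasse diagram of the quotient lattice. The paper itself simply cites this argument rather than carrying it out, so your outline is a faithful (if still only sketched) reconstruction of the same approach, with the heavy lifting concentrated precisely where you identify it, namely in verifying the congruence property and matching covers with oriented flips.
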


In particular, the $\dissection_\circ$-accordion oriented flip graph is connected and acyclic, and has a unique source~$\dissection_\bullet^\mi \eqdef \set{(i-1)_\bullet (j-1)_\bullet}{i_\circ j_\circ \in \dissection_\circ}$ (obtained by slightly rotating~$\dissection_\circ$ counterclockwise) and a unique sink~$\dissection_\bullet^\ma \eqdef \set{(i+1)_\bullet (j+1)_\bullet}{i_\circ j_\circ \in \dissection_\circ}$ (obtained by slightly rotating~$\dissection_\circ$ clockwise).

\begin{example}
Following Example~\ref{exm:specialReferenceDissections}, note that special reference hollow dissections~$\dissection_\circ$ give rise to special accordion lattices~$\accordionLattice(\dissection_\circ)$, as it was already observed in~\cite{GarverMcConville}:
\begin{itemize}
\item For a fan triangulation~$\fanTriangulation_\circ$ (\ie where all internal diagonals are incident to a common vertex), the $\fanTriangulation_\circ$-accordion lattice~$\accordionLattice(\fanTriangulation_\circ)$ is the famous Tamari lattice~\cite{Tamari, TamariFestschrift} defined equivalently by slope increasing flips on triangulations of a convex polygon, by right rotations on binary trees, or by flips on Dyck paths.
\item In general, accordion lattices of accordion triangulations (\ie with no interior triangle) precisely correspond to type~$A$ Cambrian lattices defined by N.~Reading~\cite{Reading-CambrianLattices}.
\item For an arbitrary triangulation~$\triangulation_\circ$ (with or without interior triangle), the $\triangulation_\circ$-accordion oriented flip graph~$\accordionFlipGraph(\accordion_\circ)$ is a particular instance of the oriented exchange graphs of $2$-acyclic quivers defined by T.~Br\"ustle, G.~Dupont and M.~P\'erotin~\cite{BrustleDupontPerotin}. These oriented exchange graphs are far more general and their transitive closures are in general not lattices.
\item For a quadrangulation~$\quadrangulation_\circ$, the $\quadrangulation_\circ$-accordion lattice~$\accordionLattice(\quadrangulation_\circ)$ is the Stokes poset on \mbox{$\quadrangulation_\circ$-com}\-patible quadrangulations studied by F.Chapoton~\cite{Chapoton-quadrangulations}.
\end{itemize}
\end{example}

The following statement is a direct consequence of Proposition~\ref{prop:reduction}.

\begin{proposition}
If the reference hollow dissection~$\dissection_\circ$ has a cell containing~$p$ boundary edges of the hollow polygon~$\polygon_\circ$, then the $\dissection_\circ$-accordion lattice~$\accordionLattice(\dissection_\circ)$ is a Cartesian product of $p$ accordion lattices.
\end{proposition}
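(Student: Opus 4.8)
The plan is to promote the join decomposition of Proposition~\ref{prop:reduction} to an isomorphism of oriented flip graphs, and then read off the product lattice structure. Proposition~\ref{prop:reduction} provides hollow dissections~$\dissection_\circ^1, \dots, \dissection_\circ^p$ such that~$\accordionComplex(\dissection_\circ) = \accordionComplex(\dissection_\circ^1) * \dots * \accordionComplex(\dissection_\circ^p)$. Since a face of a join is a disjoint union of faces of its factors, I would first observe that every maximal $\dissection_\circ$-accordion dissection decomposes uniquely as~$\dissection_\bullet = \dissection_\bullet^1 \sqcup \dots \sqcup \dissection_\bullet^p$ with each~$\dissection_\bullet^i$ maximal in~$\accordionComplex(\dissection_\circ^i)$. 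This identifies the vertices of the flip graph~$\accordionFlipGraph(\dissection_\circ)$ with the tuples of vertices of the factor flip graphs~$\accordionFlipGraph(\dissection_\circ^i)$.

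Next I would analyze the flips. A codimension~$1$ face of a join is obtained by deleting a single diagonal from a single factor, so each flip of~$\accordionFlipGraph(\dissection_\circ)$ alters exactly one coordinate~$i$ and leaves the others fixed. Moreover, I would argue that such a flip coincides with the flip of~$\delta_\bullet$ performed inside~$\accordionComplex(\dissection_\circ^i)$: by the description in Lemma~\ref{lem:flip}, all the data governing the flip---the edges~$\mu_\bullet,\nu_\bullet$, the quadrilateral~$\quadrilateral_\bullet$, the replacement diagonal~$\delta_\bullet'$, and the two apices closed by~$\delta_\bullet$---lie in the sub-polygon associated with~$\dissection_\circ^i$. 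This shows that the underlying (unoriented) graph~$\accordionFlipGraph(\dissection_\circ)$ is the Cartesian product of the graphs~$\accordionFlipGraph(\dissection_\circ^i)$.

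It then remains to match orientations and conclude. Since the orientation of a flip is the purely local rule that one of the paths~$\mu_\bullet\delta_\bullet\nu_\bullet$ and~$\mu_\bullet\delta_\bullet'\nu_\bullet$ forms an~$\SSS$ while the other forms a~$\ZZZ$, and since all these diagonals lie in the single active factor, the orientation of a flip in~$\accordionFlipGraph(\dissection_\circ)$ agrees with its orientation in~$\accordionFlipGraph(\dissection_\circ^i)$. Thus the vertex bijection above is an isomorphism onto the Cartesian product of the oriented graphs~$\accordionFlipGraph(\dissection_\circ^i)$, where the product carries the product orientation. Invoking that each~$\accordionFlipGraph(\dissection_\circ^i)$ is the Hasse diagram of~$\accordionLattice(\dissection_\circ^i)$, together with the standard fact that the Hasse diagram of a product of lattices is the product of their Hasse diagrams, yields~$\accordionLattice(\dissection_\circ) = \accordionLattice(\dissection_\circ^1) \times \dots \times \accordionLattice(\dissection_\circ^p)$.

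The only genuine point requiring care---and hence what I would treat as the crux---is the claim that a flip in the join restricts to a single factor and respects its $\SSS/\ZZZ$ orientation; everything else is formal once the join decomposition is available. Because both the flip operation and its orientation are determined by data confined to one factor, this verification should be routine rather than difficult.
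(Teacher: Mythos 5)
Your proposal is correct and takes essentially the same route as the paper: the paper's proof also invokes the join decomposition of Proposition~\ref{prop:reduction} and observes that any (increasing) flip in~$\accordionComplex(\dissection_\circ)$ is an (increasing) flip in exactly one factor~$\accordionComplex(\dissection_\circ^i)$, whence the lattice is the Cartesian product. You merely spell out in more detail the verification that the paper leaves implicit, namely that the flip data and its $\SSS/\ZZZ$ orientation are confined to the active factor.
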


\begin{proof}
Consider the dissections~$\dissection_\circ^1, \dots, \dissection_\circ^p$ as in the proof of Proposition~\ref{prop:reduction}. Since any increasing flip in~$\accordionComplex(\dissection_\circ)$ is an increasing flip in one of the~$\accordionComplex(\dissection_\circ^i)$, we obtain that the $\dissection_\circ$-accordion lattice is the Cartesian product of the $\dissection_\circ^i$-accordion lattices:~${\accordionLattice(\dissection_\circ) = \accordionLattice(\dissection_\circ^1) \times \dots \times \accordionLattice(\dissection_\circ^p)}$.
\end{proof}

In particular, if two consecutive boundary edges~$\gamma_\circ, \delta_\circ$ of the hollow polygon belong to the same non-triangular cell of~$\dissection_\circ$, then contracting~$\gamma_\circ$ and~$\delta_\circ$ to a single boundary edge preserves the \mbox{$\dissection_\circ$-accor}\-dion lattice. This shows the following statement conjectured for quadrangulations in~\cite{Chapoton-quadrangulations} and proved in~\cite{BateniMannevillePilaud}.

\begin{corollary}
Consider an accordion dissection~$\accordion_\circ$, \ie a dissection where each cell has at most $2$ edges which are internal diagonals of~$\polygon_\circ$.
Then the $\accordion_\circ$-accordion lattice is a Cambrian lattice.
\end{corollary}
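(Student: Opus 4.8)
The plan is to reduce any accordion dissection to a disjoint collection of accordion triangulations by repeatedly applying the two reductions already available to us, and then to invoke the identification of accordion triangulations with type~$A$ Cambrian lattices. The starting observation is that a non-triangular cell of an accordion dissection~$\accordion_\circ$ always carries at least two boundary edges of~$\polygon_\circ$: indeed such a cell has at least four edges, of which at most two are internal diagonals of~$\polygon_\circ$ by the accordion hypothesis, so at least two are boundary edges. I would use this fact to feed the reductions of Remark~\ref{rem:reduction}.

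The reduction step runs as follows. Suppose~$\accordion_\circ$ still has a non-triangular cell~$\cell_\circ$. Its (at least two) boundary edges lie on at most two arcs of~$\partial\cell_\circ$ delimited by the internal diagonals of~$\cell_\circ$. If some arc carries two consecutive boundary edges, I contract them using Remark~\ref{rem:reduction}\,(i) together with its lattice refinement stated just before the corollary; this removes one vertex and preserves~$\accordionLattice(\accordion_\circ)$. Otherwise the two boundary edges of~$\cell_\circ$ are non-consecutive, and I split~$\accordionLattice(\accordion_\circ)$ as a Cartesian product of strictly smaller accordion lattices using the Cartesian product decomposition (the lattice analogue of Proposition~\ref{prop:reduction}); note that in each factor the internal diagonal of~$\cell_\circ$ facing the removed component becomes a boundary edge, so that~$\cell_\circ$ reappears with two consecutive boundary edges and becomes contractible. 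Both operations strictly decrease the number of vertices and never increase the number of internal diagonals incident to a cell, so they preserve the accordion property and the process terminates.

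At termination every non-triangular cell would have at most one boundary edge, which is incompatible with the starting observation; hence no non-triangular cell survives, and each factor is a triangulation whose cells carry at most two internal diagonals, \ie an accordion triangulation (one with no interior triangle). Each such factor therefore realizes a type~$A$ Cambrian lattice by the Example following Example~\ref{exm:specialReferenceDissections}, and~$\accordionLattice(\accordion_\circ)$ is the Cartesian product of these Cambrian lattices. I would conclude by recalling that Cambrian lattices are closed under Cartesian products, the Cambrian lattice of a reducible Coxeter group being the product of the Cambrian lattices of its irreducible type~$A$ factors; thus~$\accordionLattice(\accordion_\circ)$ is itself a Cambrian lattice.

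The routine part is the termination bookkeeping. The point to get right is the interplay between the two reductions in the degenerate case of two opposite boundary edges in a quadrilateral cell: contraction alone cannot act there, and it is precisely the Cartesian product decomposition that turns the opposite boundary edges into consecutive ones in each factor. The only genuinely external input is the stability of (type~$A$) Cambrian lattices under Cartesian products, which I would treat as a standard fact about Cambrian lattices of reducible Coxeter groups rather than reprove.
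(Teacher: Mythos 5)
Your proof is correct and follows essentially the same route as the paper, which derives the corollary from the Cartesian product decomposition of accordion lattices together with the observation that contracting two consecutive boundary edges of a non-triangular cell preserves the accordion lattice, reducing to the case of accordion triangulations whose lattices are type~$A$ Cambrian. Your write-up simply makes explicit the details the paper leaves implicit (the termination of the reduction, the handling of opposite boundary edges via the product decomposition, and the closure of Cambrian lattices under Cartesian products).
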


\begin{remark}
Call \defn{cell-sequence} of a dissection the sequence whose $i$th entry is its number \mbox{of~$(i+2)$-cells}. For example, the dissection of \fref{fig:exmAccordionDissections}\,(left) has cell-sequence~$3,1,0^\infty$ and all $(p+2)$-angulations of a $(pm+2)$-gon have cell-sequence~$0^{p-1},m,0^\infty$. Observe that the flip preserves the cell-sequence. Thus, all maximal $\dissection_\circ$-accordion dissections have the same cell-sequence~as~$\dissection_\circ$.
\end{remark}

We conclude this section with a reciprocity result on accordion dissections.

\begin{proposition}
\label{prop:reciprocity}
Let~$\dissection_\circ$ be a hollow dissection and~$\dissection_\bullet$ be a solid dissection. Then~$\dissection_\bullet$ is a maximal $\dissection_\circ$-accordion dissection if and only if~$\dissection_\circ$ is a maximal $\dissection_\bullet$-accordion dissection.
\end{proposition}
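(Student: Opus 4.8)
The plan is to exploit the evident symmetry of the configuration to reduce the equivalence to a single implication, and then to reduce that implication to a crossing statement about individual diagonals, which I will settle with Lemma~\ref{lem:sameAngle}. Consider the rotation $\sigma$ relabelling each vertex $k$ by $k+1$ modulo~$2n$. Geometrically $\sigma$ is a symmetry of the $2n$ points that exchanges hollow and solid vertices while preserving all crossings, so it sends a hollow dissection to a solid dissection and induces an isomorphism $\accordionComplex(\dissection_\circ) \cong \accordionComplex(\sigma\dissection_\circ)$ between accordion complexes. In particular $\sigma$ transports Lemmas~\ref{lem:sameAngle} and~\ref{lem:closeTwoAngles} (and hence purity of the accordion complex) to the case of a \emph{solid} reference dissection. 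As a consequence the two implications of the statement are equivalent under $\sigma$, and it suffices to prove the single implication: if~$\dissection_\bullet$ is a maximal $\dissection_\circ$-accordion dissection, then~$\dissection_\circ$ is a maximal $\dissection_\bullet$-accordion dissection.

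First I would reduce this implication to the claim that \emph{every} diagonal~$\gamma_\circ \in \dissection_\circ$ is a $\dissection_\bullet$-accordion diagonal, that is, crosses a connected subset of~$\overline{\dissection}_\bullet$. Indeed, $\dissection_\circ$ is a dissection, hence a non-crossing family of internal diagonals; once each of its diagonals is a $\dissection_\bullet$-accordion diagonal, $\dissection_\circ$ is a $\dissection_\bullet$-accordion dissection. Since $\dissection_\bullet$ is a maximal $\dissection_\circ$-accordion dissection, Lemma~\ref{lem:closeTwoAngles} gives $|\dissection_\circ| = |\dissection_\bullet|$; and by the $\sigma$-transported version of Lemma~\ref{lem:closeTwoAngles} the complex~$\accordionComplex(\dissection_\bullet)$ is pure of dimension~$|\dissection_\bullet|$. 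A $\dissection_\bullet$-accordion dissection of cardinality~$|\dissection_\bullet|$ is therefore already a facet, so~$\dissection_\circ$ is automatically maximal.

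The crux is thus to show that no~$\gamma_\circ \in \dissection_\circ$ enters and exits a solid cell through two non-incident edges. Suppose, for contradiction, that~$\gamma_\circ$ crosses a cell~$\cell_\bullet$ of~$\dissection_\bullet$ through two non-incident edges~$e \eqdef w_a w_{a+1}$ and~$f \eqdef w_b w_{b+1}$, where $w_1, \dots, w_k$ are the vertices of~$\cell_\bullet$ in cyclic order and the labelling is chosen so that~$\gamma_\circ$ separates~$w_{a+1}, \dots, w_b$ from~$w_{b+1}, \dots, w_a$. Non-incidence of~$e$ and~$f$ forces~$k \ge 4$, and makes~$w_{a-1}, w_a, w_{a+1}, w_{a+2}$ four distinct consecutive vertices of~$\cell_\bullet$ with $w_{a-1}, w_a$ on one side of~$\gamma_\circ$ and $w_{a+1}, w_{a+2}$ on the other; in particular~$\gamma_\circ$ separates~$w_{a-1}$ from~$w_{a+2}$. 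Now apply Lemma~\ref{lem:sameAngle} to the four consecutive vertices $w_{a-1}, w_a, w_{a+1}, w_{a+2}$ (with middle edge~$e = w_a w_{a+1}$): it yields that~$w_{a-1}$ and~$w_{a+2}$ belong to the same angle of the accordion~$\accordion_\circ$ of~$\overline{\dissection}_\circ$ crossed by~$e$. But~$\gamma_\circ$ crosses~$e$, so~$\gamma_\circ$ is one of the diagonals of~$\accordion_\circ$, and it separates~$w_{a-1}$ from~$w_{a+2}$. This is impossible: writing the accordion as a linearly ordered sequence of angles separated by its diagonals, two solid vertices lying in a common angle lie on the same side of every diagonal of~$\accordion_\circ$. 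This contradiction completes the argument.

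The main obstacle, and the point I would stress, is that the naive route of contradicting maximality by adjoining a diagonal inside~$\cell_\bullet$ fails: the diagonals of~$\cell_\bullet$ crossing~$\gamma_\circ$ are precisely the ones that maximality forbids, and for a quadrilateral cell crossed through opposite edges no other diagonal is available, so no diagonal can be added. The correct rigidity comes instead from Lemma~\ref{lem:sameAngle}, which pins the two flanking vertices~$w_{a-1}, w_{a+2}$ into a single angle of the accordion crossed by~$e$. The only delicate verifications are that non-incidence guarantees $k \ge 4$ together with the distinctness of these four vertices and their placement on opposite sides of~$\gamma_\circ$, and the elementary observation that a diagonal of an accordion never separates two solid vertices sitting in one of its angles; both are straightforward from the definition of an accordion as a connected cut.
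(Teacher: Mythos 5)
Your proof is correct, but its core step differs genuinely from the paper's. The paper proves the key implication by propagation along the flip graph: it first checks directly that~$\dissection_\circ$ is a $\dissection_\bullet^\mi$-accordion dissection, then shows via the local picture of \fref{fig:exmFlip} that the property ``$\dissection_\circ$ is a $\dissection_\bullet$-accordion dissection'' is invariant under flips, and concludes by connectedness of the flip graph --- a fact the paper imports from the Garver--McConville lattice theorem. You instead argue locally and directly: assuming~$\dissection_\bullet$ maximal, you rule out a bad crossing of a cell~$\cell_\bullet$ by any~$\gamma_\circ \in \dissection_\circ$ by applying Lemma~\ref{lem:sameAngle} to the four vertices flanking the entry edge and observing that no diagonal of an accordion separates two solid vertices lying in a common angle of that accordion. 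This buys independence from the connectivity of the flip graph (hence from the lattice theorem), at the price of that final geometric observation, which you call straightforward but which is the one place deserving a written line: if~$\{u_\circ v_\circ, v_\circ w_\circ\}$ is the angle in question, the diagonals of the accordion crossed before (resp.\ after) it have both endpoints on the closed arc from~$u_\circ$ to~$v_\circ$ avoiding~$w_\circ$ (resp.\ from~$v_\circ$ to~$w_\circ$ avoiding~$u_\circ$), so none has an endpoint in the open arc from~$u_\circ$ to~$w_\circ$ avoiding~$v_\circ$ --- which is exactly where the solid vertices of that angle sit --- and a chord with no endpoint strictly inside an arc cannot separate two of its points. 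Your symmetry reduction via the rotation by one vertex and the maximality count via Lemma~\ref{lem:closeTwoAngles} coincide with what the paper does; your closing remark that one cannot simply contradict maximality by inserting a diagonal of~$\cell_\bullet$ is accurate and well taken.
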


\begin{proof}
Since~$\dissection_\bullet^\mi \eqdef \set{(i-1)_\bullet (j-1)_\bullet}{i_\circ j_\circ \in \dissection_\circ}$ and~$\dissection_\bullet^\ma \eqdef \set{(i+1)_\bullet (j+1)_\bullet}{i_\circ j_\circ \in \dissection_\circ}$ are both \mbox{$\dissection_\circ$-ac}\-cordion dissections, we already know that~$\dissection_\circ$ is a \mbox{$\dissection_\bullet^\mi$-accor}\-dion dissection. Observe now in \fref{fig:exmFlip} that if~$\dissection_\bullet$ and~$\dissection_\bullet'$ are maximal $\dissection_\circ$-accordion dissections connected by a flip, then $\dissection_\circ$ is a $\dissection_\bullet$-accordion dissection if and only if it is a $\dissection_\bullet'$-accordion dissection. Indeed, if $\delta_\bullet$ belongs to the zigzag of the $\dissection_\bullet$-accordion~$\accordion_\bullet$ of a hollow diagonal~$\delta_\circ$, then~$\delta_\circ$ crosses both~$\mu_\bullet$ and~$\nu_\bullet$,  but then $\delta_\circ$ also crosses~$\delta_\bullet'$, and thus $\delta_\circ$ crosses the $\dissection_\bullet'$-accordion $\accordion_\bullet \symdif \{\delta_\bullet, \delta_\bullet'\}$. Since the $\dissection_\circ$-accordion flip graph is connected, we obtain that $\dissection_\circ$ is a $\dissection_\bullet$-accordion dissection for any maximal $\dissection_\circ$-accordion dissection~$\dissection_\bullet$. Finally, maximality follows since all maximal $\dissection_\circ$-accordion dissections have $|\dissection_\circ|$ diagonals. The equivalence follows by symmetry.
\end{proof}


\section{The $\b{g}$-vector fan}
\label{sec:gvectorFan}

In this Section, we construct accordiohedra using $\b{g}$- and $\b{c}$-vectors. Our construction is in the same spirit as the Cambrian fans of N.~Reading and D.~Speyer~\cite{Reading-CambrianLattices, Reading-sortableElements, ReadingSpeyer} and their polytopal realizations by C.~Hohlweg, C.~Lange and H.~Thomas~\cite{HohlwegLange, HohlwegLangeThomas}, recently extended in~\cite{HohlwegPilaudStella} to any initial triangulation, acyclic or not. A different approach to the $\b{g}$-vector fan together with an alternative polytopal realization will be presented in Section~\ref{sec:projection}.


\subsection{$\b{g}$- and $\b{c}$-vectors}
\label{subsec:gcvectors}

Consider a hollow dissection~$\dissection_\circ$ and a solid dissection~$\dissection_\bullet$ that are maximal accordion dissections of each other (see Proposition~\ref{prop:reciprocity}), and let~$\delta_\circ \in \dissection_\circ$ and~$\delta_\bullet \in \dissection_\bullet$. When~$\delta_\circ$ crosses~$\delta_\bullet$, we let~$\mu_\circ$ and~$\nu_\circ$ be the other diagonals of~$\overline{\dissection}_\circ$ crossed by~$\delta_\bullet$ in the two cells of~$\dissection_\circ$ containing~$\delta_\circ$. We say that~$\delta_\bullet$ \defn{slaloms} on~$\delta_\circ$ if~$\mu_\circ \delta_\circ \nu_\circ$ forms a path, and we define~$\signcirc{\delta_\circ}{\dissection_\circ}{\delta_\bullet}$ to be $1$, $-1$, or~$0$ depending on whether~$\mu_\circ \delta_\circ \nu_\circ$ forms a~$\ZZZ$, a~$\SSS$, or a~$\VVV$. Similarly we let~$\mu_\bullet$ and~$\nu_\bullet$ be the other diagonals of~$\overline{\dissection}_\bullet$ crossed by~$\delta_\circ$ in the two cells of~$\dissection_\bullet$ containing~$\delta_\bullet$, we say that~$\delta_\circ$ slaloms on~$\delta_\bullet$ if~$\mu_\bullet \delta_\bullet \nu_\bullet$ forms a path, and we define~$\signbullet{\delta_\circ}{\dissection_\bullet}{\delta_\bullet}$ to be $1$, $-1$, or~$0$ depending on whether~$\mu_\bullet \delta_\bullet \nu_\bullet$ forms a $\SSS$, a $\ZZZ$, or a~$\VVV$. Note that the sign convention for~$\signcirc{\delta_\circ}{\dissection_\circ}{\delta_\bullet}$ and~$\signbullet{\delta_\circ}{\dissection_\bullet}{\delta_\bullet}$ is opposite: the reciprocity already observed in Proposition~\ref{prop:reciprocity} naturally reverses the orientation. More informally, we exchange the role of hollow and solid dissections by looking at the picture from the opposite side of the blackboard, which of course reverses the orientation. Finally, if~$\delta_\circ$ and~$\delta_\bullet$ do not cross, then we let~$\signcirc{\delta_\circ}{\dissection_\circ}{\delta_\bullet} = \signbullet{\delta_\circ}{\dissection_\bullet}{\delta_\bullet} = 0$. Let~$(\b{e}_{\delta_\circ})_{\delta_\circ \in \dissection_\circ}$ denote the canonical basis of~$\R^{\dissection_\circ}$. As in~\cite{HohlwegPilaudStella}, we define the following vectors:

\begin{enumerate}[(i)]

\item the \defn{$\b{g}$-vector} of~$\delta_\bullet$ with respect to~$\dissection_\circ$ is
\({
\biggvector{\dissection_\circ}{\delta_\bullet} \eqdef \sum_{\delta_\circ \in \dissection_\circ} \signcirc{\delta_\circ}{\dissection_\circ}{\delta_\bullet} \, \b{e}_{\delta_\circ}.
}\)
We also define~$\biggvectors{\dissection_\circ}{\dissection_\bullet} \eqdef \bigset{\biggvector{\dissection_\circ}{\delta_\bullet}}{\delta_\bullet \in \dissection_\bullet}$. \\[-.2cm]

\item the \defn{$\b{c}$-vector} of~$\delta_\bullet \in \dissection_\bullet$ with respect to~$\dissection_\circ$ is
\({
\bigcvector{\dissection_\circ}{\dissection_\bullet}{\delta_\bullet} \eqdef \sum_{\delta_\circ \in \dissection_\circ} \signbullet{\delta_\circ}{\dissection_\bullet}{\delta_\bullet} \, \b{e}_{\delta_\circ}.
}\)
We denote by~$\bigcvectors{\dissection_\circ}{\dissection_\bullet} \eqdef \bigset{\bigcvector{\dissection_\circ}{\dissection_\bullet}{\delta_\bullet}}{\delta_\bullet \in \dissection_\bullet}$ the set of~$\b{c}$-vectors of the diagonals of~$\dissection_\bullet$  and by~$\allcvectors{\dissection_\circ} \eqdef \bigcup_{\dissection_\bullet} \bigcvectors{\dissection_\circ}{\dissection_\bullet}$ the set of all $\b{c}$-vectors with respect to~$\dissection_\circ$.

\end{enumerate}

\begin{example}
Consider the hollow dissection~$\dissection_\circ^\ex = \{3_\circ 7_\circ, 3_\circ 13_\circ, 9_\circ 13_\circ\}$ and the rightmost solid dissection~$\dissection_\bullet^\ex = \{2_\bullet 6_\bullet, 2_\bullet 10_\bullet, 10_\bullet 14_\bullet\}$ of \fref{fig:exmAccordionDissections}. Then we have for example
\begin{itemize}
\item $\signcirc{3_\circ 13_\circ}{\dissection_\circ^\ex}{2_\bullet 10_\bullet} = 1$ since the path~$1_\circ - 3_\circ - 13_\circ - 9_\circ$ forms a~$\ZZZ$, 
\item $\signcirc{9_\circ 13_\circ}{\dissection_\circ^\ex}{2_\bullet 10_\bullet} = -1$ since the path~$3_\circ - 13_\circ - 9_\circ - 11_\circ$ forms a~$\SSS$, and
\item $\signcirc{3_\circ 13_\circ}{\dissection_\circ^\ex}{2_\bullet 6_\bullet} = 0$ since~$3_\circ$ connects~$1_\circ, 13_\circ, 7_\circ$ as a~$\VVV$.
\end{itemize}
Moreover, we have
\[
\begin{array}{l@{\hspace{2cm}}l}
	\biggvector{\dissection_\circ^\ex}{2_\bullet 6_\bullet} = \b{e}_{3_\circ 7_\circ}, &
	\bigcvector{\dissection_\circ^\ex}{\dissection_\bullet^\ex}{2_\bullet 6_\bullet} = \b{e}_{3_\circ 7_\circ}, \\
	\biggvector{\dissection_\circ^\ex}{2_\bullet 10_\bullet} = \b{e}_{3_\circ 13_\circ} - \b{e}_{9_\circ 13_\circ}, &
	\bigcvector{\dissection_\circ^\ex}{\dissection_\bullet^\ex}{2_\bullet 10_\bullet} = \b{e}_{3_\circ 13_\circ}, \\
	\biggvector{\dissection_\circ^\ex}{10_\bullet 14_\bullet} = - \b{e}_{9_\circ 13_\circ}, &
	\bigcvector{\dissection_\circ^\ex}{\dissection_\bullet^\ex}{10_\bullet 14_\bullet} = - \b{e}_{3_\circ 13_\circ} - \b{e}_{9_\circ 13_\circ}.
\end{array}
\]
\end{example}

\begin{example}
\label{exm:gcvectors}
For any hollow diagonal~$i_\circ j_\circ \in \dissection_\circ$, we have
\[
\begin{array}{@{\hspace{-.2cm}}l@{\hspace{1.9cm}}l}
	\biggvector{\dissection_\circ}{(i-1)_\bullet (j-1)_\bullet} = - \b{e}_{i_\circ j_\circ}, &
	\bigcvector{\dissection_\circ}{\dissection_\bullet^\mi}{(i-1)_\bullet (j-1)_\bullet} = - \b{e}_{i_\circ j_\circ}, \\
	\biggvector{\dissection_\circ}{(i+1)_\bullet (j+1)_\bullet} =   \b{e}_{i_\circ j_\circ}, &
	\bigcvector{\dissection_\circ}{\dissection_\bullet^\ma}{(i+1)_\bullet (j+1)_\bullet} =   \b{e}_{i_\circ j_\circ}.
\end{array}
\]

\end{example}

\begin{remark}
For a hollow triangulation~$\triangulation_\circ$, our definitions of $\b{g}$- and $\b{c}$-vectors coincide with the shear coordinates of S.~Fomin and D.~Thurston~\cite{FominThurston}, defined in the much more general context of cluster algebras on surfaces~\cite{FominShapiroThurston}.
\end{remark}

\begin{remark}
\label{rem:quiver}
Consider the quiver~$\quiver(\dissection_\circ)$ of the reference dissection~$\dissection_\circ$, with one node on each internal diagonal of~$\dissection_\circ$ and one arrow between two diagonals counter-clockwise consecutive around a cell of~$\dissection_\circ$. Let~$\CoxeterGroup(\dissection_\circ)$ be the reflection group whose Dynkin diagram is the underlying graph of~$\quiver(\dissection_\circ)$. Then all $\b{g}$-vectors of the $\dissection_\circ$-accordion diagonals are weights of~$\CoxeterGroup(\dissection_\circ)$ and all $\b{c}$-vectors of~$\allcvectors{\dissection_\circ}$ are roots~of~$\CoxeterGroup(\dissection_\circ)$.
\end{remark}

\begin{remark}
\label{rem:informalgcvectors}
Informally, the $\b{g}$- and $\b{c}$-vectors can be interpreted as follows:
\begin{enumerate}[(i)]
\item The $\b{g}$-vector~$\gvector{\dissection_\circ}{\delta_\bullet}$ has coordinate $1$ and $-1$ alternating along the zigzag of the accordion crossed by~$\delta_\bullet$ in~$\dissection_\circ$, and coordinate~$0$ on all other diagonals of~$\dissection_\circ$.
\item The $\b{c}$-vector~$\cvector{\dissection_\circ}{\dissection_\bullet}{\delta_\bullet}$ is, up to a sign, the characteristic vector of the diagonals 
of the subaccordion of~$\dissection_\circ$ crossed by both diagonals~$\mu_\bullet$ and~$\nu_\bullet$ of Lemma~\ref{lem:flip} (see also \fref{fig:exmFlip}). Thus, any $\b{c}$-vector is either \defn{positive} (only non-negative coordinates) or \defn{negative} (only non-positive~coordinates).
\end{enumerate}
\end{remark}

In fact, the $\b{g}$-vectors are clearly in bijection with the accordions and with the zigzags in~$\dissection_\circ$. In contrast, many pairs~$(\delta_\bullet, \dissection_\bullet)$ produce the same $\b{c}$-vector ${\cvector{\dissection_\circ}{\dissection_\bullet}{\delta_\bullet}}$. For example, if two dissections $\dissection_\bullet, \dissection_\bullet'$ contain~$\delta_\bullet$ and have the same cells incident to~$\delta_\bullet$, then~${\cvector{\dissection_\circ}{\dissection_\bullet}{\delta_\bullet} = \cvector{\dissection_\circ}{\dissection_\bullet'}{\delta_\bullet}}$. The set of $\b{c}$-vectors~$\allcvectors{\dissection_\circ}$ without repetitions can be understood~as~follows.

\begin{lemma}
\label{lem:bijectioncvectors}
There are bijections between:
\begin{itemize}
\item the negative (resp.~positive) $\b{c}$-vectors of~$\allcvectors{\dissection_\circ}$,
\item the subaccordions of~$\dissection_\circ$,
\item the $\dissection_\circ$-accordion diagonals not in the source dissection~$\dissection_\bullet^\mi \eqdef \set{(i-1)_\bullet (j-1)_\bullet}{i_\circ j_\circ \in \dissection_\circ}$ (resp.~not in the sink dissection~$\dissection_\bullet^\ma \eqdef \set{(i+1)_\bullet (j+1)_\bullet}{i_\circ j_\circ \in \dissection_\circ}$).
\end{itemize}
\end{lemma}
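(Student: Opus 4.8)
The plan is to use the subaccordions of~$\dissection_\circ$ as a common index set, bijecting both the negative $\b c$-vectors and the non-source $\dissection_\circ$-accordion diagonals with them; the \emph{resp.} statements then follow from the blackboard-reflection symmetry noted when defining the signs in Section~\ref{subsec:gcvectors}, which swaps~$\dissection_\bullet^\mi$ with~$\dissection_\bullet^\ma$, exchanges $\SSS$ and~$\ZZZ$, and hence reverses the sign of every $\b c$-vector. For the first bijection I would invoke Remark~\ref{rem:informalgcvectors}(ii): every $\b c$-vector equals $\pm\one_S$, the signed characteristic vector of the subaccordion~$S$ crossed by both edges $\mu_\bullet,\nu_\bullet$ of Lemma~\ref{lem:flip}. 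Thus $S\mapsto-\one_S$ is a well-defined injection from subaccordions to negative $\b c$-vectors (a set being recovered from its characteristic vector), and the only real content is its surjectivity, namely that every subaccordion is realized as a negative $\b c$-vector.

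For the bijection with non-source diagonals I would pass through the accordion lattice. Fix a $\dissection_\circ$-accordion diagonal~$\delta_\bullet$. By Proposition~\ref{prop:links} its link in~$\accordionComplex(\dissection_\circ)$ is a join $\accordionComplex(\dissection_\circ^1)*\accordionComplex(\dissection_\circ^2)$, one factor per side of~$\delta_\bullet$; accordingly the maximal $\dissection_\circ$-accordion dissections containing~$\delta_\bullet$ form a sublattice of~$\accordionLattice(\dissection_\circ)$ isomorphic to $\accordionLattice(\dissection_\circ^1)\times\accordionLattice(\dissection_\circ^2)$, and in particular have a unique minimum~$\dissection_\bullet^{\min}(\delta_\bullet)$. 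If $\delta_\bullet\notin\dissection_\bullet^\mi$ then $\dissection_\bullet^{\min}(\delta_\bullet)$ is not the global source, so it admits a decreasing flip; by minimality that flip must remove~$\delta_\bullet$ itself (removing any other diagonal would yield a smaller dissection still containing~$\delta_\bullet$), whence the associated $\b c$-vector $\cvector{\dissection_\circ}{\dissection_\bullet^{\min}(\delta_\bullet)}{\delta_\bullet}$ is negative. I would then define the map $\delta_\bullet\mapsto\cvector{\dissection_\circ}{\dissection_\bullet^{\min}(\delta_\bullet)}{\delta_\bullet}$ from non-source diagonals to negative $\b c$-vectors and prove it is a bijection; since proving surjectivity here also supplies the surjectivity missing above, this closes both correspondences at once.

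The hard part is the bijectivity of this last map: reading off the subaccordion $S(\delta_\bullet)$ crossed by the two sides $\mu_\bullet,\nu_\bullet$ in $\dissection_\bullet^{\min}(\delta_\bullet)$, one must show each subaccordion occurs for exactly one non-source diagonal. I expect the delicate case to be cells of~$\dissection_\circ$ carrying several incident diagonals (fan-like vertices, interior cells), where the \emph{naive} invariants fail: the crossed subaccordion (the later $\b d$-vector) and the support of the $\b g$-vector are \emph{not} in bijection with subaccordions, since a diagonal may cross, or slalom on, a non-contiguous family of diagonals of~$\dissection_\circ$ (for instance across a vertex incident to three diagonals, the middle crossing is a~$\VVV$ and drops out of the $\b g$-vector support). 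This is precisely why the construction must go through the $\b c$-vector of the canonical flip rather than through crossings or $\b g$-vectors. Concretely, for surjectivity I would, given a subaccordion $S=(\gamma_\circ^1,\dots,\gamma_\circ^k)$, build a local configuration of two solid diagonals $\mu_\bullet,\nu_\bullet$ whose common crossed subaccordion is exactly~$S$ and which are joined in an~$\SSS$, extend it to a global maximal $\dissection_\circ$-accordion dissection via the flip analysis of Lemma~\ref{lem:flip}, and verify that the resulting decreasing flip has $\b c$-vector~$-\one_S$; the bulk of the effort is the case analysis guaranteeing that such a configuration exists, and is unique up to the choice of diagonal, for every~$S$.
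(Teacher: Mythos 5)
Your architecture is sound---index both extreme sets by subaccordions and dispose of the ``resp.'' statements by the reflection symmetry that swaps~$\dissection_\bullet^\mi$ with~$\dissection_\bullet^\ma$ and negates every $\b{c}$-vector---and you have correctly located the crux: since $S \mapsto -\one_S$ is trivially injective and Remark~\ref{rem:informalgcvectors}\,(ii) already bounds the negative $\b{c}$-vectors by the subaccordions, everything reduces to showing that each subaccordion is realized as the support of the negative $\b{c}$-vector of exactly one non-source diagonal. But that is exactly where your proposal stops: the last paragraph announces a local configuration to be built and a case analysis to be carried out ``for every~$S$'' without doing it, so the entire mathematical content of the lemma is deferred. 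For comparison, the paper resolves this with a one-line formula rather than a case analysis: letting~$\cell_\circ$ and~$\cell_\circ'$ be the two cells of~$\dissection_\circ$ containing exactly one diagonal of the subaccordion~$\accordion_\circ$, and~$p_\circ, q_\circ, r_\circ, s_\circ$ (resp.~$p_\circ', q_\circ', r_\circ', s_\circ'$) consecutive clockwise vertices of~$\cell_\circ$ (resp.~$\cell_\circ'$) with~$q_\circ r_\circ$ (resp.~$q_\circ' r_\circ'$) the diagonal of~$\accordion_\circ$ in that cell, one takes~$\delta_\bullet \eqdef (s-1)_\bullet (s'-1)_\bullet$ together with~$\mu_\bullet \eqdef (p+1)_\bullet (s'-1)_\bullet$ and~$\nu_\bullet \eqdef (p'+1)_\bullet (s-1)_\bullet$; any maximal accordion dissection~$\dissection_\bullet$ containing these three diagonals satisfies~$\cvector{\dissection_\circ}{\dissection_\bullet}{\delta_\bullet} = -\one_{\accordion_\circ}$, the diagonal~$\delta_\bullet$ is not in~$\dissection_\bullet^\mi$ (else~$s_\circ s_\circ'$ would cross~$q_\circ r_\circ$), and the inverse map recovers~$\accordion_\circ$ from~$\delta_\bullet$ as the diagonals of~$\dissection_\circ$ crossed by~$\delta_\bullet$ and not incident to~$s_\circ$ or~$s_\circ'$. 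Your instinct that the delicate configurations are interior cells and fan-like vertices, and that neither the $\b{d}$-vector nor the $\b{g}$-vector support can serve as the invariant, is correct and well observed---but observing where the difficulty lies is not the same as resolving it.

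A secondary problem is the lattice detour used to produce a negative $\b{c}$-vector from a non-source diagonal. You need a \emph{unique} minimum~$\dissection_\bullet^{\min}(\delta_\bullet)$ among the maximal dissections containing~$\delta_\bullet$ for your map to be well defined, and you justify this by asserting that this subposet is a sublattice isomorphic to a product of accordion lattices. Proposition~\ref{prop:links} is a statement about simplicial complexes, not about the induced order, so this assertion is proved neither in the paper nor by you (a minimal element exists by finiteness, but different minimal elements could a priori give different $\b{c}$-vectors). The detour is also avoidable: once the triple~$\mu_\bullet, \delta_\bullet, \nu_\bullet$ is fixed, the $\b{c}$-vector of~$\delta_\bullet$ is the same in every maximal dissection containing it, since it only depends on the two cells incident to~$\delta_\bullet$; this is why the paper can say ``consider \emph{any} dissection containing~$\{\mu_\bullet, \delta_\bullet, \nu_\bullet\}$'' and never needs the accordion lattice at this point.
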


\begin{proof}
By Remark~\ref{rem:informalgcvectors}\,(ii), the support of any $\b{c}$-vector is a subaccordion of~$\dissection_\circ$. Reciprocally, let~$\accordion_\circ$ be a subaccordion of~$\dissection_\circ$, let~$\cell_\circ$ and~$\cell_\circ'$ denote the two cells of~$\dissection_\circ$ containing exactly one diagonal of~$\accordion_\circ$, and let~$p_\circ, q_\circ, r_\circ, s_\circ$ (resp.~$p_\circ', q_\circ', r_\circ', s_\circ'$) denote the four consecutive vertices in clockwise order around~$\cell_\circ$ (resp.~around~$\cell_\circ'$) such that~$q_\circ r_\circ$ (resp.~$q_\circ' r_\circ'$) is the diagonal of~$\accordion_\circ$ in~$\cell_\circ$ (resp.~in~$\cell_\circ'$). Let~$\delta_\bullet \eqdef (s-1)_\bullet (s'-1)_\bullet$, ${\mu_\bullet \eqdef (p+1)_\bullet (s'-1)_\bullet}$ and~$\nu_\bullet \eqdef (p'+1)_\bullet (s-1)_\bullet$ and consider any $\dissection_\circ$-accordion dissection~$\dissection_\bullet$ containing~$\{\mu_\bullet, \delta_\bullet, \nu_\bullet\}$. Then~$\accordion_\circ$ is precisely the support of the negative $\b{c}$-vector~$\cvector{\dissection_\circ}{\dissection_\bullet}{\delta_\bullet}$. Finally, we have associated to the subaccordion~$\accordion_\circ$ of~$\dissection_\circ$ a $\dissection_\circ$-diagonal~$\delta_\bullet = (s-1)_\bullet (s'-1)_\bullet$ which cannot be in~$\dissection_\bullet^\mi$ as otherwise~$s_\circ s_\circ'$ would cross~$q_\circ r_\circ$. Reciprocally, $\accordion_\circ$ is precisely the set of diagonals of~$\dissection_\circ$ crossed by~$\delta_\bullet$ and not incident to~$s_\circ$ or~$s_\circ'$.
\end{proof}

The $\b{g}$-vectors and $\b{c}$-vectors are connected in the following two statements, inspired and motivated by classical analogues in cluster algebra theory.

\begin{proposition}
\label{prop:gvectorscvectorsDualBases}
For any maximal $\dissection_\circ$-accordion dissection~$\dissection_\bullet$, the set of $\b{g}$-vectors~$\gvectors{\dissection_\circ}{\dissection_\bullet}$ and the set of $\b{c}$-vectors~$\cvectors{\dissection_\circ}{\dissection_\bullet}$ form dual bases.
\end{proposition}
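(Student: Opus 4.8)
The plan is to unwind the notion of dual bases into a single family of scalar products and to evaluate each of them combinatorially. First I would record that both $\gvectors{\dissection_\circ}{\dissection_\bullet}$ and $\cvectors{\dissection_\circ}{\dissection_\bullet}$ are families of $|\dissection_\bullet|$ vectors of $\R^{\dissection_\circ}$, both naturally indexed by the diagonals of~$\dissection_\bullet$, and that $|\dissection_\bullet| = |\dissection_\circ| = \dim \R^{\dissection_\circ}$ by Lemma~\ref{lem:closeTwoAngles}. Hence it suffices to prove that $\dotprod{\gvector{\dissection_\circ}{\delta_\bullet}}{\cvector{\dissection_\circ}{\dissection_\bullet}{\delta_\bullet'}}$ equals~$1$ if $\delta_\bullet = \delta_\bullet'$ and $0$ otherwise, for all $\delta_\bullet, \delta_\bullet' \in \dissection_\bullet$. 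Writing $G$ and $C$ for the matrices whose columns are the $\b{g}$- and $\b{c}$-vectors, this is precisely the identity $\transpose{G} C = \mathrm{Id}$, which already forces both $G$ and $C$ to be invertible (their determinants multiply to~$1$), hence both families to be bases, dual to one another; so the whole statement reduces to computing these scalar products.

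Next I would use the geometric descriptions of Remark~\ref{rem:informalgcvectors}. The vector $\gvector{\dissection_\circ}{\delta_\bullet}$ is supported on the zigzag~$\zigzag$ of the accordion of~$\overline{\dissection}_\circ$ crossed by~$\delta_\bullet$, with coordinates alternating between $+1$ and $-1$ along~$\zigzag$. The vector $\cvector{\dissection_\circ}{\dissection_\bullet}{\delta_\bullet'}$ is, up to a global sign $\eta \in \{+1,-1\}$, the characteristic vector of the subaccordion~$S'$ of~$\dissection_\circ$ crossed by both diagonals $\mu_\bullet, \nu_\bullet$ attached to~$\delta_\bullet'$ as in Lemma~\ref{lem:flip}. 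Therefore the scalar product equals $\eta$ times the alternating sum of the coordinates of $\gvector{\dissection_\circ}{\delta_\bullet}$ restricted to the overlap $\zigzag \cap S'$, and everything reduces to understanding this overlap.

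The heart of the argument, and the step I expect to be the main obstacle, is the following combinatorial claim: $\zigzag \cap S'$ is a (possibly empty) contiguous subchain of the zigzag~$\zigzag$. Granting this, the alternation of the coordinates of $\gvector{\dissection_\circ}{\delta_\bullet}$ makes the restricted sum vanish whenever $\zigzag \cap S'$ has even length, and equal to $\pm 1$ (the coordinate at an extreme diagonal of the subchain) when it has odd length. I would then show that $\zigzag \cap S'$ has odd length exactly when $\delta_\bullet = \delta_\bullet'$, by analyzing the position of $\mu_\bullet$ and $\nu_\bullet$ relative to the accordion crossed by~$\delta_\bullet$: when $\delta_\bullet = \delta_\bullet'$ the two diagonals $\mu_\bullet, \nu_\bullet$ pinch the zigzag so that the overlap reduces to a single diagonal (the one closing the relevant angle of~$\overline{\dissection}_\circ$), whereas when $\delta_\bullet \neq \delta_\bullet'$ the subaccordion~$S'$ enters and exits the zigzag of~$\delta_\bullet$ along an even number of its diagonals. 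Finally, in the diagonal case I would check that the global sign~$\eta$ of the $\b{c}$-vector and the coordinate of the $\b{g}$-vector at the surviving diagonal coincide, so that their product is~$+1$ and not~$-1$; this is exactly where the deliberately opposite sign conventions of $\signcirc{\cdot}{\dissection_\circ}{\cdot}$ (with $\ZZZ \mapsto +1$) and $\signbullet{\cdot}{\dissection_\bullet}{\cdot}$ (with $\SSS \mapsto +1$) are needed, and it is consistent with the values of $\gvector{\dissection_\circ}{(i\pm 1)_\bullet (j\pm 1)_\bullet}$ and $\cvector{\dissection_\circ}{\dissection_\bullet^\pm}{(i\pm 1)_\bullet (j\pm 1)_\bullet}$ computed in Example~\ref{exm:gcvectors}.

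As an alternative that circumvents the overlap analysis, one could argue by induction along the $\dissection_\circ$-accordion flip graph, which is connected by the lattice structure of Section~\ref{subsec:accordionLattice}. The base case is $\dissection_\bullet = \dissection_\bullet^\mi$, where Example~\ref{exm:gcvectors} gives $G = C = -\mathrm{Id}$, so that $\transpose{G} C = \mathrm{Id}$. For the inductive step one flips~$\delta_\bullet$ to~$\delta_\bullet'$; since $\gvector{\dissection_\circ}{\delta_\bullet}$ depends only on~$\delta_\bullet$, exactly one $\b{g}$-vector changes, and the main task becomes establishing the mutation rule governing how the $\b{c}$-vectors of the diagonals bounding the flipped quadrilateral~$\quadrilateral_\bullet$ transform, then verifying that this rule preserves $\transpose{G} C = \mathrm{Id}$. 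Deriving this mutation rule is the exact analogue of the obstacle above, so I would ultimately carry out the direct computation.
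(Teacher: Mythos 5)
Your overall route is the same as the paper's: reduce duality of the two families to the single collection of inner products $\dotprod{\gvector{\dissection_\circ}{\gamma_\bullet}}{\cvector{\dissection_\circ}{\dissection_\bullet}{\delta_\bullet}}$, evaluate each one via the descriptions of Remark~\ref{rem:informalgcvectors} (coordinates $\pm1$ alternating along the zigzag of~$\gamma_\bullet$ against a signed characteristic vector of the diagonals slaloming on~$\delta_\bullet$), and conclude by a parity analysis of the overlap. The paper does exactly this: it asserts that the number of diagonals of the zigzag of~$\gamma_\bullet$ that slalom on~$\delta_\bullet$ is even when $\gamma_\bullet\ne\delta_\bullet$ and odd when $\gamma_\bullet=\delta_\bullet$, that all of them slalom on~$\delta_\bullet$ the same way, and that in the diagonal case the \emph{first} of them slaloms on~$\delta_\bullet$ in the way opposite to how $\delta_\bullet$ slaloms on it --- which is where the deliberately opposite $\ZZZ$/$\SSS$ conventions enter, as you anticipated. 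Your identification of the overlap analysis as the crux is accurate; neither you nor the paper proves it in full detail (the paper supports it with \fref{fig:exmProof}).

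There is, however, one concretely wrong step. When $\delta_\bullet=\delta_\bullet'$ the overlap does \emph{not} in general reduce to a single diagonal: it can be any odd number. For example, take for~$\dissection_\circ$ a snake triangulation of an octagon and for~$\delta_\bullet$ the long solid diagonal transversal to the whole snake; choosing~$\dissection_\bullet$ so that the two cells adjacent to~$\delta_\bullet$ have their third vertices next to the endpoints of~$\delta_\bullet$, all five diagonals of~$\dissection_\circ$ slalom on~$\delta_\bullet$, while the zigzag of~$\delta_\bullet$ also consists of all five diagonals, so the overlap has five elements. Consequently both your justification of the odd-length claim (``the two diagonals $\mu_\bullet,\nu_\bullet$ pinch the zigzag so that the overlap reduces to a single diagonal'') and your final sign check ``at the surviving diagonal'' fail as stated. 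The repair stays inside your own framework: the alternating sum over an odd-length contiguous overlap equals the $\b{g}$-coordinate at its first diagonal, and that first diagonal slaloms on~$\delta_\bullet$ oppositely to how $\delta_\bullet$ slaloms on it, so the opposite sign conventions make the product equal to~$+1$; this is precisely how the paper closes the argument. Your alternative inductive route is also viable --- the required $\b{c}$-vector mutation rule is exactly Lemma~\ref{lem:factorcvectorFlip} of the paper --- but, as you note, proving it requires the same local analysis.
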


\begin{proof}
Let~$\dotprod{\cdot}{\cdot}$ denote the standard Euclidean inner product of~$\R^{\dissection_\circ}$.
Given two solid diagonals~$\gamma_\bullet, \delta_\bullet$ of~$\dissection_\bullet$, we want to compute~$\dotprod{\gvector{\dissection_\circ}{\gamma_\bullet}}{\cvector{\dissection_\circ}{\dissection_\bullet}{\delta_\bullet}}$. By Remark~\ref{rem:informalgcvectors}\,(i), the $\b{g}$-vector~$\gvector{\dissection_\circ}{\gamma_\bullet}$ has coordinate $\pm 1$ alternating along the zigzag~$\zigzag_\circ$ of the accordion crossed by~$\gamma_\bullet$ in~$\dissection_\circ$, and coordinate~$0$ on all other diagonals of~$\dissection_\circ$. Moreover, by Remark~\ref{rem:informalgcvectors}\,(ii), the $\b{c}$-vector~$\cvector{\dissection_\circ}{\dissection_\bullet}{\delta_\bullet}$ has coordinate~$\pm 1$ on the diagonals of~$\dissection_\circ$ which slalom on~$\delta_\bullet$ in~$\dissection_\bullet$, and coordinate~$0$ on all other diagonals of~$\dissection_\circ$. We thus need to understand how the diagonals of~$\zigzag_\circ$ slalom on~$\delta_\bullet$ in~$\dissection_\bullet$. See \fref{fig:exmProof} for a schematic illustration. Observe that there is an even (resp.~odd) number of hollow diagonals of~$\zigzag_\circ$ that slalom on~$\delta_\bullet$ when~$\delta_\bullet \ne \gamma_\bullet$ (resp.~when~$\delta_\bullet = \gamma_\bullet$). Moreover, since they are non-crossing, all hollow diagonals of~$\zigzag_\circ$ slaloming on~$\delta_\bullet$ do it the same way (either all as a~$\SSS$ or all as a~$\ZZZ$). Finally, when~$\gamma_\bullet = \delta_\bullet$, consider the first hollow diagonal~$\delta_\circ$ of the zigzag~$\zigzag_\circ$ which slaloms on~$\delta_\bullet$. Then~$\delta_\circ$ slaloms on~$\delta_\bullet$ in the opposite way as~$\delta_\bullet$ slaloms on~$\delta_\circ$. This shows that
\[
\dotprod{\biggvector{\dissection_\circ}{\gamma_\bullet}}{\bigcvector{\dissection_\circ}{\dissection_\bullet}{\delta_\bullet}} = \sum_{\delta_\circ \in \dissection_\circ} \signcirc{\delta_\circ}{\dissection_\circ}{\gamma_\bullet} \cdot \signbullet{\delta_\circ}{\dissection_\bullet}{\delta_\bullet} = \one_{\gamma = \delta},
\]
since we sum an even number of alternating~$\pm 1$ when~$\gamma_\bullet \ne \delta_\bullet$, and an odd number of alternating~$\pm 1$ starting by a~$1$ when~$\gamma_\bullet \ne \delta_\bullet$. In other words, $\gvectors{\dissection_\circ}{\dissection_\bullet}$ and~$\cvectors{\dissection_\circ}{\dissection_\bullet}$ form dual bases.
\begin{figure}[t]
	\capstart
	\centerline{\includegraphics[scale=1.5]{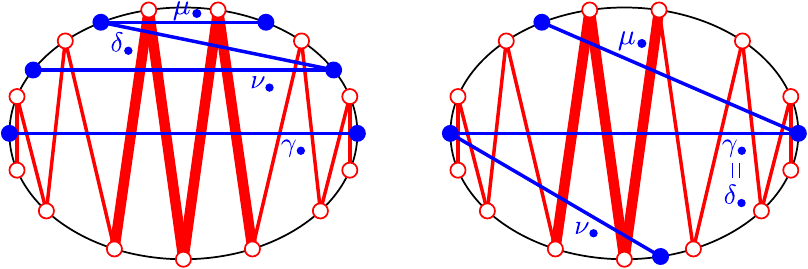}}
	\caption{Illustration of the proof of Proposition~\ref{prop:gvectorscvectorsDualBases}. The red hollow diagonals form the zigzag of~$\gamma_\bullet$, and the bolded ones are slaloming on~$\delta_\bullet$. There are an even number of bolded diagonals when~$\gamma_\bullet \ne \delta_\bullet$ (left) and an odd number when~$\gamma_\bullet = \delta_\bullet$ (right).}
	\label{fig:exmProof}
\end{figure}
\end{proof}

\begin{proposition}
\label{prop:reciprocitygcvectors}
Let~$\dissection_\circ$ be a hollow dissection and~$\dissection_\bullet$ be a solid dissection such that~$\dissection_\circ$ and~$\dissection_\bullet$ are maximal accordion dissections of each other (see Proposition~\ref{prop:reciprocity}). Then
\[
\biggvectors{\dissection_\circ}{\dissection_\bullet} = -\transpose{\bigcvectors{\dissection_\bullet}{\dissection_\circ}}
\qquad\text{and}\qquad
\bigcvectors{\dissection_\circ}{\dissection_\bullet} = -\transpose{\biggvectors{\dissection_\bullet}{\dissection_\circ}},
\]
where we consider the sets of $\b{g}$-vectors~$\gvectors{\dissection_\circ}{\dissection_\bullet}$ and $\b{c}$-vectors~$\cvectors{\dissection_\circ}{\dissection_\bullet}$ as matrices in~$\R^{\dissection_\circ \times \dissection_\bullet}$, and $\transpose{M}$ denotes the transpose of a matrix~$M$.
\end{proposition}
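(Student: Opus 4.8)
The plan is to verify the first identity $\biggvectors{\dissection_\circ}{\dissection_\bullet} = -\transpose{\bigcvectors{\dissection_\bullet}{\dissection_\circ}}$ entry by entry; the second identity then follows upon exchanging the roles of $\dissection_\circ$ and $\dissection_\bullet$ and transposing, which is permitted because these two dissections are maximal accordion dissections of each other (Proposition~\ref{prop:reciprocity}), so that all four families of vectors are well defined. Fixing $\delta_\circ \in \dissection_\circ$ and $\delta_\bullet \in \dissection_\bullet$ and reading off the matrix entries, the goal reduces to the scalar identity
\[
\signcirc{\delta_\circ}{\dissection_\circ}{\delta_\bullet} = -\big[ \bigcvector{\dissection_\bullet}{\dissection_\circ}{\delta_\circ} \big]_{\delta_\bullet},
\]
where $[\,\cdot\,]_{\delta_\bullet}$ denotes the coordinate on $\b{e}_{\delta_\bullet}$.

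The crux is that both sides are read off from the very same local picture, for two complementary reasons. In $\biggvectors{\dissection_\circ}{\dissection_\bullet}$ the diagonal $\delta_\circ$ plays the role of a \emph{reference} diagonal, so by definition $\signcirc{\delta_\circ}{\dissection_\circ}{\delta_\bullet}$ is the sign of the path $\mu_\circ \delta_\circ \nu_\circ$ formed by $\delta_\circ$ and the two diagonals $\mu_\circ, \nu_\circ$ of $\overline{\dissection}_\circ$ crossed by $\delta_\bullet$ on either side of $\delta_\circ$. In $\bigcvectors{\dissection_\bullet}{\dissection_\circ}$ the same diagonal $\delta_\circ$ plays the role of the \emph{taken} diagonal, and unfolding the $\b{c}$-vector definition with the subscripts $\circ$ and $\bullet$ exchanged shows that its $\delta_\bullet$-coordinate is governed by the pattern of $\delta_\circ$ together with its two neighbors in its own dissection $\dissection_\circ$ crossed by $\delta_\bullet$ --- that is, by exactly the same path $\mu_\circ \delta_\circ \nu_\circ$. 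When $\delta_\circ$ and $\delta_\bullet$ do not cross, or when the path degenerates to a $\VVV$, both sides vanish, so only the genuine slalom case remains.

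It then remains to compare the two signs attached to this common path. The $\b{g}$-vector reads it through the convention $\ZZZ \mapsto +1$, $\SSS \mapsto -1$ carried by $\varepsilon_\circ$, whereas the transposed $\b{c}$-vector reads it through the opposite convention $\SSS \mapsto +1$, $\ZZZ \mapsto -1$ inherited from $\varepsilon_\bullet$ when the subscripts are swapped in the $\b{c}$-vector definition. This is precisely the opposite orientation of the two sign functions recorded immediately after their definition, encoding the reversal of orientation when one swaps the hollow and solid dissections. Consequently the two quantities are negatives of one another, which establishes the scalar identity and hence $\biggvectors{\dissection_\circ}{\dissection_\bullet} = -\transpose{\bigcvectors{\dissection_\bullet}{\dissection_\circ}}$.

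The only delicate point, and the one I would check most carefully, is the middle step: one must confirm that swapping the reference dissection genuinely preserves which neighbors $\mu_\circ, \nu_\circ$ of $\delta_\circ$ enter the computation (they do, since in both pictures they are the diagonals of $\overline{\dissection}_\circ$ crossed by $\delta_\bullet$ on the two sides of $\delta_\circ$), so that the entire effect of the exchange is the single reversal of the $\ZZZ$/$\SSS$ convention. Once this matching is pinned down, the degenerate cases are automatic and the global minus sign is forced.
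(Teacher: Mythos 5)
Your proof is correct and takes essentially the same route as the paper's: both verify the first identity entry by entry, observing that $\signcirc{\delta_\circ}{\dissection_\circ}{\delta_\bullet}$ and the $\delta_\bullet$-coordinate of $\cvector{\dissection_\bullet}{\dissection_\circ}{\delta_\circ}$ are read off the same path $\mu_\circ\delta_\circ\nu_\circ$ with opposite $\ZZZ$/$\SSS$ sign conventions, and then deduce the second identity by exchanging $\dissection_\circ$ and $\dissection_\bullet$. The paper compresses this into the single chain $\signcirc{\delta_\circ}{\dissection_\circ}{\delta_\bullet} = -\signbullet{\delta_\bullet}{\dissection_\circ}{\delta_\circ}$, which is exactly the scalar identity you isolate and justify.
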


\begin{proof}
We immediately derive from the definitions that for any~$\delta_\circ \in \dissection_\circ$ and~$\delta_\bullet \in \dissection_\bullet$,
\[
\biggvectors{\dissection_\circ}{\dissection_\bullet}_{(\delta_\circ, \delta_\bullet)} = \signcirc{\delta_\circ}{\dissection_\circ}{\delta_\bullet} = -\signbullet{\delta_\bullet}{\dissection_\circ}{\delta_\circ} = -\bigcvectors{\dissection_\bullet}{\dissection_\circ}_{(\delta_\bullet, \delta_\circ)},
\]
which shows~$\gvectors{\dissection_\circ}{\dissection_\bullet} = -\transpose{\cvectors{\dissection_\bullet}{\dissection_\circ}}$. The other equality follows by exchanging~$\dissection_\circ$~and~$\dissection_\bullet$.
\end{proof}

\begin{corollary}
\label{coro:signCoherence}
For any maximal $\dissection_\circ$-accordion dissection~$\dissection_\bullet$, we have the following \defn{sign coherence}:
\begin{enumerate}[(i)]
\item for any~$\delta_\bullet \in \dissection_\bullet$, all coordinates of the $\b{c}$-vector~$\cvector{\dissection_\circ}{\dissection_\bullet}{\delta_\bullet}$ have the same sign,
\item for any~$\delta_\circ \in \dissection_\circ$, the $\delta_\circ$-coordinates of all $\b{g}$-vectors~$\gvector{\dissection_\circ}{\delta_\bullet}$ for~$\delta_\bullet \in \dissection_\bullet$ have the~same~sign.
\end{enumerate}
\end{corollary}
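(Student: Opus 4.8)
The plan is to establish part~(i) directly from the geometry of slaloming diagonals, and then to deduce part~(ii) formally from the reciprocity of Proposition~\ref{prop:reciprocitygcvectors}. The whole content is in part~(i), which is precisely the claim announced informally in Remark~\ref{rem:informalgcvectors}\,(ii); once it is proven, part~(ii) is a one-line transposition argument.

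For part~(i), fix $\delta_\bullet \in \dissection_\bullet$ and recall that the $\delta_\circ$-coordinate of $\cvector{\dissection_\circ}{\dissection_\bullet}{\delta_\bullet}$ equals $\signbullet{\delta_\circ}{\dissection_\bullet}{\delta_\bullet}$, which is nonzero exactly when $\delta_\circ$ slaloms on $\delta_\bullet$, with value $+1$ for a $\SSS$ and $-1$ for a $\ZZZ$. I would prove that any two hollow diagonals $\delta_\circ, \delta_\circ'$ slaloming on $\delta_\bullet$ slalom the same way. Write $\delta_\bullet = a_\bullet b_\bullet$ and let $\cell_\bullet, \cell_\bullet'$ be the two cells of $\dissection_\bullet$ incident to $\delta_\bullet$. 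A diagonal slaloming as a $\SSS$ cuts off the corner at one endpoint of $\delta_\bullet$ inside $\cell_\bullet$ and the corner at the other endpoint inside $\cell_\bullet'$, whereas one slaloming as a $\ZZZ$ cuts off the opposite corners. Assuming $\delta_\circ$ were a $\SSS$ and $\delta_\circ'$ a $\ZZZ$, I would compare their intersection points with the segment $\delta_\bullet$: avoiding a crossing inside $\cell_\bullet$ forces one of the two to meet $\delta_\bullet$ strictly closer to $a_\bullet$, while avoiding a crossing inside $\cell_\bullet'$ forces the reverse inequality. These two constraints are incompatible, so $\delta_\circ$ and $\delta_\circ'$ must cross, which is impossible in the dissection~$\dissection_\circ$. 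Hence all nonzero coordinates of $\cvector{\dissection_\circ}{\dissection_\bullet}{\delta_\bullet}$ share a single sign.

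For part~(ii), I would first invoke Proposition~\ref{prop:reciprocity} so that $\dissection_\circ$ is a maximal $\dissection_\bullet$-accordion dissection, making Proposition~\ref{prop:reciprocitygcvectors} applicable and yielding $\gvectors{\dissection_\circ}{\dissection_\bullet} = -\transpose{\cvectors{\dissection_\bullet}{\dissection_\circ}}$. Fixing $\delta_\circ \in \dissection_\circ$, the family of $\delta_\circ$-coordinates $\bigl( \signcirc{\delta_\circ}{\dissection_\circ}{\delta_\bullet} \bigr)_{\delta_\bullet \in \dissection_\bullet}$ is then, up to the global factor $-1$, exactly the $\b{c}$-vector $\cvector{\dissection_\bullet}{\dissection_\circ}{\delta_\circ}$ computed with the solid reference dissection~$\dissection_\bullet$. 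Applying part~(i) with the roles of hollow and solid diagonals exchanged shows this $\b{c}$-vector has constant-sign coordinates, which is the asserted claim.

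The main obstacle is making the corner-cutting comparison in part~(i) precise: one must track, for each slaloming diagonal, which endpoint of $\delta_\bullet$ it separates inside each of the two adjacent cells, and verify that the two no-crossing constraints coming from $\cell_\bullet$ and $\cell_\bullet'$ are genuinely contradictory whenever the diagonals slalom in opposite ways. This is the same non-crossing observation already used inside the proof of Proposition~\ref{prop:gvectorscvectorsDualBases}, here applied to all diagonals slaloming on $\delta_\bullet$ simultaneously rather than only to those lying along one zigzag.
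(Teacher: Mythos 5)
Your proposal is correct and follows the same route as the paper: part~(i) is exactly the non-crossing observation recorded informally in Remark~\ref{rem:informalgcvectors}\,(ii) (and already used inside the proof of Proposition~\ref{prop:gvectorscvectorsDualBases}), which you simply carry out in full detail, and part~(ii) is deduced from Proposition~\ref{prop:reciprocitygcvectors} just as in the paper. Your corner-cutting contradiction for two diagonals slaloming in opposite ways is a valid and welcome expansion of what the paper leaves implicit.
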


\begin{proof}
Point~(i) was already seen in Remark~\ref{rem:informalgcvectors}\,(ii), and Point~(ii) follows by Proposition~\ref{prop:reciprocitygcvectors}.
\end{proof}


\subsection{$\b{c}$-vector fan and $\dissection_\circ$-zonotope}

Define the \defn{$\b{c}$-vector fan} of~$\dissection_\circ$ to be the complete polyhedral fan~$\cvectorFan(\dissection_\circ)$ given by the arrangement of the linear hyperplanes orthogonal to the $\b{c}$-vectors of~$\allcvectors{\dissection_\circ}$. Be careful: in contrast to the $\b{g}$- and $\b{d}$-vector fans defined later, the $\b{c}$-vectors are not the rays of~$\cvectorFan(\dissection_\circ)$ but the normal vectors of the hyperplanes supporting the facets of~$\cvectorFan(\dissection_\circ)$.

We call \defn{$\dissection_\circ$-zonotope} the Minkowski sum~$\Zono(\dissection_\circ)$ of all $\b{c}$-vectors:
\[
\Zono(\dissection_\circ) \eqdef \sum_{\b{c} \in \allcvectors{\dissection_\circ}} \b{c}.
\]
The normal fan of the $\dissection_\circ$-zonotope~$\Zono(\dissection_\circ)$ is the $\b{c}$-vector fan~$\cvectorFan(\dissection_\circ)$. Note that the $\b{c}$-vector fan is not always simplicial, and thus the $\dissection_\circ$-zonotope~$\Zono(\dissection_\circ)$ is not always simple. See \fref{fig:exmAccordiohedra}.

\begin{example}
Consider an accordion dissection~$\accordion_\circ$ (where each cell has at most $2$ edges which are internal diagonals of~$\polygon_\circ$). Label its internal diagonals by~$\delta_\circ^1, \dots, \delta_\circ^{|\accordion_\circ|}$ such that~$\delta_\circ^k$ and~$\delta_\circ^{k+1}$ belong to the same cell of~$\accordion_\circ$ for all~$k$. Identifying~$\b{e}_{\delta_\circ^k}$ to the simple root~$\b{f}_k - \b{f}_{k+1}$ of type~$A_{|\accordion_\circ|}$, the $\b{c}$-vectors of~$\allcvectors{\accordion_\circ}$ are all roots~$\pm (\b{f}_i - \b{f}_j) = \pm \sum_{i \le k \le j} \b{e}_{\delta_\circ^k}$ of type~$A_{|\accordion_\circ|}$. Therefore, the $\b{c}$-vector fan is the type~$A_{|\accordion_\circ|}$ Coxeter fan and the $\accordion_\circ$-zonotope is a permutahedron. More precisely,
\begin{align*}
\Zono(\accordion_\circ) & = \sum_{k \in [|\accordion_\circ|+1]} k(|\accordion_\circ|+1-k) \, [-\b{e}_{\delta_\circ^k}, \b{e}_{\delta_\circ^k}] = 2 \, \Perm(|\accordion_\circ|) - (|\accordion_\circ|+2) \sum_{i \in [|\accordion_\circ|+1]} \b{f}_i,
\end{align*}
where~$\Perm(|\accordion_\circ|) \eqdef \conv \bigset{ \sum_{i \in [|\accordion_\circ|+1]} \sigma(i) \, \b{f}_i}{\sigma \in \fS_{|\accordion_\circ|+1}}$ is the classical permutahedron.
\end{example}

The vertices of~$\Zono(\dissection_\circ)$ correspond to \defn{separable} subsets of~$\allcvectors{\dissection_\circ}$, \ie those which can be strictly separated from their complement by a hyperplane. Although we could work out all facets of~$\Zono(\dissection_\circ)$, we will only need the following specific inequalities.

\begin{proposition}
\label{prop:inequalitiesZonotope}
For any $\dissection_\circ$-accordion diagonal~$\gamma_\bullet$, the $\dissection_\circ$-zonotope~$\Zono(\dissection_\circ)$ has a facet defined by the inequality
\[
\dotprod{\biggvector{\dissection_\circ}{\gamma_\bullet}}{\b{x}} \le \bigrhs{\dissection_\circ}{\gamma_\bullet},
\]
where~$\rhs{\dissection_\circ}{\gamma_\bullet}$ is the \defn{$\dissection_\circ$-height} of~$\gamma_\bullet$, \ie the number of $\dissection_\circ$-accordion diagonals that cross~$\gamma_\bullet$.
\end{proposition}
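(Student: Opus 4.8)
The plan is to read off both the facet-defining direction and the right-hand side from the support function of the zonotope. Since $\Zono(\dissection_\circ)$ is a Minkowski sum of the $\b c$-vectors and (by Lemma~\ref{lem:bijectioncvectors}) the set $\allcvectors{\dissection_\circ}$ is centrally symmetric, its support function is $\b x \mapsto \max_{\b y \in \Zono(\dissection_\circ)} \dotprod{\b x}{\b y} = \sum_{\b c \in \allcvectors{\dissection_\circ}} \dotprod{\b c}{\b x}_+$, where $t_+ \eqdef \max(t,0)$, and the face of $\Zono(\dissection_\circ)$ maximizing $\b x$ is a facet precisely when $\b x$ spans a ray of the normal fan~$\cvectorFan(\dissection_\circ)$. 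So I would split the proof into two independent tasks for $\b g \eqdef \gvector{\dissection_\circ}{\gamma_\bullet}$: first, that $\R_{\ge 0}\b g$ is a ray of~$\cvectorFan(\dissection_\circ)$ (hence the inequality is facet-defining and tight), and second, that the support value equals the $\dissection_\circ$-height~$\rhs{\dissection_\circ}{\gamma_\bullet}$.

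For the facet claim I would use the dual-basis property of Proposition~\ref{prop:gvectorscvectorsDualBases}. Extend $\gamma_\bullet$ to a maximal $\dissection_\circ$-accordion dissection~$\dissection_\bullet$. Then $\gvectors{\dissection_\circ}{\dissection_\bullet}$ and $\cvectors{\dissection_\circ}{\dissection_\bullet}$ are dual bases, so $\b g$ is orthogonal to each of the $|\dissection_\circ|-1$ vectors $\cvector{\dissection_\circ}{\dissection_\bullet}{\delta_\bullet}$ with $\delta_\bullet \in \dissection_\bullet \ssm \{\gamma_\bullet\}$. These are linearly independent (being part of a basis) and all belong to $\allcvectors{\dissection_\circ}$, so the $|\dissection_\circ|-1$ hyperplanes of the arrangement orthogonal to them meet exactly along the line~$\R\b g$. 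Hence $\R_{\ge 0}\b g$ is a face, i.e.\ a ray, of $\cvectorFan(\dissection_\circ)$, and the corresponding face of $\Zono(\dissection_\circ)$ is a facet. (If $\gamma_\bullet$ is a boundary edge of the solid polygon one argues on a maximal dissection containing it in $\overline{\dissection}_\bullet$, or invokes the reduction of Remark~\ref{rem:reduction}.)

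For the right-hand side, central symmetry lets me pair each $\b c$ with $-\b c$ and rewrite the support value as $\sum_{\b c} |\dotprod{\b g}{\b c}|$ over the positive $\b c$-vectors only. By Remark~\ref{rem:informalgcvectors} each positive $\b c$-vector is the characteristic vector of a subaccordion~$\accordion_\circ$ of~$\dissection_\circ$ (Lemma~\ref{lem:bijectioncvectors}), whereas $\b g$ carries entries $\pm 1$ alternating along the zigzag~$\zigzag_\circ$ of $\gamma_\bullet$ and $0$ elsewhere. Thus $\dotprod{\b g}{\b c}$ is the alternating sum of these $\pm 1$ over $\accordion_\circ \cap \zigzag_\circ$. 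The key structural observation is that both a subaccordion and a zigzag are paths in the dual tree~$\dissection_\circ^\star$, so their intersection is a (possibly empty) subpath; being a contiguous stretch of the alternating pattern, this alternating sum lies in $\{-1,0,1\}$ and equals $\pm 1$ exactly when $|\accordion_\circ \cap \zigzag_\circ|$ is odd. Therefore the support value counts the subaccordions of~$\dissection_\circ$ meeting~$\zigzag_\circ$ in an odd number of diagonals.

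It remains to match these subaccordions with the $\dissection_\circ$-accordion diagonals crossing $\gamma_\bullet$, and this bijection is the main obstacle. I would send a crossing accordion diagonal~$\delta_\bullet$ to the subaccordion $\zigzag(\delta_\bullet)$ of interior hollow diagonals it crosses, noting that $\zigzag(\delta_\bullet) \cap \zigzag_\circ$ is exactly the set of hollow diagonals crossed by both $\delta_\bullet$ and $\gamma_\bullet$. The heart is a parity lemma: two solid accordion diagonals cross if and only if their zigzags overlap in a subpath of odd length, the odd parity being forced by the alternation of the apices of a zigzag on the two sides of the crossing diagonal, in the spirit of Lemma~\ref{lem:sameAngle}. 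Granting this, crossing $\gamma_\bullet$ is equivalent to odd overlap, and one checks that each odd-overlap subaccordion is realized as $\zigzag(\delta_\bullet)$ for a \emph{unique} accordion diagonal $\delta_\bullet$ crossing $\gamma_\bullet$ (the two ends of the overlap and the forced turns pin down the endpoints of $\delta_\bullet$, as in the endpoint bookkeeping of Lemma~\ref{lem:bijectioncvectors}). This gives $\sum_{\b c}\dotprod{\b g}{\b c}_+ = \rhs{\dissection_\circ}{\gamma_\bullet}$ and finishes the proof. I expect the careful verification of the parity lemma, and of the injectivity and surjectivity of $\delta_\bullet \mapsto \zigzag(\delta_\bullet)$ restricted to diagonals crossing $\gamma_\bullet$, to be the delicate part; the facet statement, by contrast, is immediate from the dual-basis property.
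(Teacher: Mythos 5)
Your first half --- that the inequality is facet-defining --- is correct and is essentially the paper's argument: both of you extend $\gamma_\bullet$ to a maximal $\dissection_\circ$-accordion dissection and use the dual-basis property of Proposition~\ref{prop:gvectorscvectorsDualBases} to produce $|\dissection_\circ|-1$ linearly independent $\b{c}$-vectors orthogonal to $\gvector{\dissection_\circ}{\gamma_\bullet}$. The reduction of the right-hand side to a count of positive $\b{c}$-vectors with $\dotprod{\gvector{\dissection_\circ}{\gamma_\bullet}}{\b{c}} = \pm 1$, i.e.\ of subaccordions meeting the zigzag $\zigzag_\circ$ of $\gamma_\bullet$ in an odd number of diagonals, is also sound (the intersection of two paths in the dual tree is indeed a contiguous subpath).

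The gap is in the final step, and the one concrete lemma you offer to close it is false. Take the hexagon with hollow vertices $1_\circ, 3_\circ, \dots, 11_\circ$ and the fan triangulation $\triangulation_\circ = \{1_\circ 5_\circ, 1_\circ 7_\circ, 1_\circ 9_\circ\}$. The solid diagonals $\gamma_\bullet = 4_\bullet 12_\bullet$ and $\delta_\bullet = 2_\bullet 8_\bullet$ cross. Yet the zigzag of $\gamma_\bullet$ is the single diagonal $\{1_\circ 5_\circ\}$ (so $\gvector{\triangulation_\circ}{\gamma_\bullet} = \pm\b{e}_{1_\circ 5_\circ}$) while the zigzag of $\delta_\bullet$ is $\{1_\circ 7_\circ\}$: the two zigzags are disjoint, an overlap of even length $0$. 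If instead ``zigzag'' is read as your $\zigzag(\cdot)$, the set of interior diagonals crossed, the overlap is $\{1_\circ 5_\circ, 1_\circ 7_\circ\}$, again of even length. So ``two solid accordion diagonals cross iff their zigzags overlap in an odd-length subpath'' fails in either reading. What the count actually requires is an asymmetric statement --- $\delta_\bullet$ crosses $\gamma_\bullet$ iff the support of a $\b{c}$-vector attached to $\delta_\bullet$ meets the \emph{zigzag of} $\gamma_\bullet$ oddly --- and even then the supports in question are not the full crossed-sets $\zigzag(\delta_\bullet)$ but the trimmed accordions $\accordion_\circ^{\pm}$ of Lemma~\ref{lem:bijectioncvectors}, which omit the diagonals incident to the neighbours of the endpoints of $\delta_\bullet$. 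This is exactly why the paper proceeds differently: it parametrizes \emph{all} $\b{c}$-vectors by accordion diagonals from the outset (as $\b{c}^{-}(\delta_\bullet)$ and $\b{c}^{+}(\delta_\bullet)$) and, for each $\delta_\bullet$, runs a three-way case analysis (disjoint, sharing an endpoint, crossing) showing that the pair contributes exactly one negative inner product precisely when $\delta_\bullet$ crosses $\gamma_\bullet$. Summing over diagonals rather than over subaccordions also dispenses with the injectivity and surjectivity of your map $\delta_\bullet \mapsto \zigzag(\delta_\bullet)$, which you correctly flag as delicate but leave unproved. As it stands, the right-hand-side computation is not established.
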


\begin{proof}
Let~$\rhs{\dissection_\circ}{\gamma_\bullet}$ denote the maximum of~$\dotprod{\gvector{\dissection_\circ}{\gamma_\bullet}}{\b{x}}$ over~$\Zono(\dissection_\circ)$. As $\Zono(\dissection_\circ)$ is the Minkowski sum of all $\b{c}$-vectors, we have
\[
\bigrhs{\dissection_\circ}{\gamma_\bullet} = \sum_{\substack{\b{c} \in \allcvectors{\dissection_\circ} \\ \dotprod{\gvector{\dissection_\circ}{\gamma_\bullet}}{\b{c}} > 0}} \dotprod{\biggvector{\dissection_\circ}{\gamma_\bullet}}{\b{c}}.
\]
By Remark~\ref{rem:informalgcvectors}, we have~$\dotprod{\gvector{\dissection_\circ}{\gamma_\bullet}}{\b{c}} \in \{-1, 0, 1\}$ for any~$\b{c} \in \allcvectors{\dissection_\circ}$. We thus just need to count the distinct $\b{c}$-vectors~$\b{c}$ such that~$\dotprod{\gvector{\dissection_\circ}{\gamma_\bullet}}{\b{c}} > 0$. It turns out that it is more convenient and equivalent (since~$\allcvectors{\dissection_\circ} = -\allcvectors{\dissection_\circ}$) to count the distinct $\b{c}$-vectors~$\b{c}$ such that~$\dotprod{\gvector{\dissection_\circ}{\gamma_\bullet}}{\b{c}} < 0$. 
For that, let~$\zigzag_\circ$ denote the zigzag of the accordion crossed by~$\gamma_\bullet$ in~$\dissection_\circ$, and decompose~${\zigzag_\circ = \zigzag_\circ^- \sqcup \zigzag_\circ^+}$ such that~$\gvector{\dissection_\circ}{\gamma_\bullet} = \one_{\zigzag_\circ^+} - \one_{\zigzag_\circ^-}$ (where~$\one_{\rmX_\circ} \eqdef \sum_{\delta_\circ \in \rmX_\circ} \b{e}_{\delta_\circ}$ for~${\rmX_\circ \subseteq \dissection_\circ}$).

Let~$\delta_\bullet$ be a \mbox{$\dissection_\circ$-accordion} diagonal. Let~$\accordion_\circ^-$ (resp.~$\accordion_\circ^+$) denote the accordion crossed by~$\delta_\bullet = u_\bullet v_\bullet$ in~$\dissection_\circ$ and not including~$(u+1)_\circ$ or~$(v+1)_\circ$ (resp.~$(u-1)_\circ$ or~$(v-1)_\circ$). Let~$\b{c}^-(\delta_\bullet) \eqdef -\one_{\accordion_\circ^-}$ and~$\b{c}^+(\delta_\bullet) \eqdef \one_{\accordion_\circ^+}$. Recall from Lemma~\ref{lem:bijectioncvectors} that the negative (resp.~positive) $\b{c}$-vectors of~$\allcvectors{\dissection_\circ}$ are given by~$\b{c}^-(\delta_\bullet)$ (resp.~$\b{c}^+(\delta_\bullet)$) for all $\dissection_\circ$-accordion diagonal~$\delta_\bullet$ not in~$\dissection_\bullet^\mi$  (resp.~$\dissection_\bullet^\ma$).
We let the reader check that:
\begin{itemize}
\item If~$\gamma_\bullet$ and~$\delta_\bullet$ do not cross and have no common endpoint, both~$|\zigzag_\circ \cap \accordion_\circ^-|$ and~$|\zigzag_\circ \cap \accordion_\circ^+|$ are even. Thus $\dotprod{\gvector{\dissection_\circ}{\gamma_\bullet}}{\b{c}^-(\delta_\bullet)} = \dotprod{\gvector{\dissection_\circ}{\gamma_\bullet}}{\b{c}^+(\delta_\bullet)} = 0$.
\item If~$\gamma_\bullet$ and~$\delta_\bullet$ have a common endpoint, and~$\gamma_\bullet \delta_\bullet$ form a counterclockwise angle, then~$|{\zigzag_\circ \cap \accordion_\circ^-}|$ is even while $\zigzag_\circ \cap \accordion_\circ^+$ is empty or starts and ends in~$\zigzag_\circ^+$. Thus $\dotprod{\gvector{\dissection_\circ}{\gamma_\bullet}}{\b{c}^-(\delta_\bullet)} = 0$ while $\dotprod{\gvector{\dissection_\circ}{\gamma_\bullet}}{\b{c}^+(\delta_\bullet)} \ge 0$. The situation is similar if~$\gamma_\bullet \delta_\bullet$ form a clockwise angle.
\item If~$\gamma_\bullet$ and~$\delta_\bullet$ cross,~$\zigzag_\circ \cap \accordion_\circ^-$ and~${\zigzag_\circ \cap \accordion_\circ^+}$ are empty or start and end both in~$\zigzag_\circ^-$ or both in~$\zigzag_\circ^+$. Thus, either~$\dotprod{\gvector{\dissection_\circ}{\gamma_\bullet}}{\b{c}^-(\delta_\bullet)} < 0$ and~$\dotprod{\gvector{\dissection_\circ}{\gamma_\bullet}}{\b{c}^+(\delta_\bullet)} \ge 0$ or conversely.
\end{itemize}
We conclude from this case analysis that
\[
\rhs{\dissection_\circ}{\gamma_\bullet} = |\set{\b{c} \in \allcvectors{\dissection_\circ}}{\dotprod{\gvector{\dissection_\circ}{\gamma_\bullet}}{\b{c}} < 0}| = |\{\text{$\dissection_\circ$-accordion diagonals crossing~$\gamma_\bullet$}\}|.
\]
Finally, the inequality~$\dotprod{\gvector{\dissection_\circ}{\gamma_\bullet}}{\b{x}} \le \rhs{\dissection_\circ}{\gamma_\bullet}$ defines a priori a face~$\face(\gamma_\bullet)$ of the zonotope~$\Zono(\dissection_\circ)$. This face~$\face(\gamma_\bullet)$ is the Minkowski sum of the $\b{c}$-vectors of~$\allcvectors{\dissection_\circ}$ orthogonal to~$\gvector{\dissection_\circ}{\gamma_\bullet}$. Proposition~\ref{prop:gvectorscvectorsDualBases} ensures that any $\dissection_\circ$-accordion dissection~$\dissection_\bullet$ containing~$\gamma_\bullet$ already provides~$|\dissection_\bullet|-1$ linearly independent such $\b{c}$-vectors~$\cvector{\dissection_\circ}{\dissection_\bullet}{\delta_\bullet}$ for~${\delta_\bullet \in \dissection_\bullet \ssm \{\gamma_\bullet\}}$. We obtain that~$\face(\gamma_\bullet)$ has dimension~$|\dissection_\bullet|-1 = |\dissection_\circ|-1$ and is therefore a facet of the zonotope~$\Zono(\dissection_\circ)$.
\end{proof}

Define the half-space and the hyperplane corresponding to a solid $\dissection_\circ$-accordion diagonal~$\gamma_\bullet$~by
\begin{align*}
\bigHS{\dissection_\circ}{\gamma_\bullet} & \eqdef \bigset{\b{x} \in \R^{\dissection_\circ}}{\dotprod{\biggvector{\dissection_\circ}{\gamma_\bullet}}{\b{x}} \le \bigrhs{\dissection_\circ}{\gamma_\bullet}}, \\
\text{and}\qquad
\bigHyp{\dissection_\circ}{\gamma_\bullet} & \eqdef \bigset{\b{x} \in \R^{\dissection_\circ}}{\dotprod{\biggvector{\dissection_\circ}{\gamma_\bullet}}{\b{x}} = \bigrhs{\dissection_\circ}{\gamma_\bullet}}.
\end{align*}


\subsection{$\b{g}$-vector fan and $\dissection_\circ$-accordiohedron}
\label{subsec:gvectorFan}

In this section, we give a geometric realization of the $\dissection_\circ$-accordion complex. We start by realizing this simplicial complex as a complete simplicial fan in~$\R^{\dissection_\circ}$. We denote by~$\R_{\ge0} \b{R}$ the nonnegative span of a set~$\b{R}$ of vectors in~$\R^{\dissection_\circ}$.

\begin{theorem}
\label{thm:gvectorFan}
The collection of cones
\[
\gvectorFan(\dissection_\circ) \eqdef \bigset{\R_{\ge0} \biggvectors{\dissection_\circ}{\dissection_\bullet}}{\dissection_\bullet \text{ any $\dissection_\circ$-accordion dissection}}
\]
forms a complete simplicial fan, that we call the \defn{$\b{g}$-vector fan} of~$\dissection_\circ$.
\end{theorem}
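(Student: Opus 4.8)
The plan is to check that $\gvectorFan(\dissection_\circ)$ satisfies the hypotheses of the standard local-to-global criterion for simplicial fans used in the approach of~\cite{HohlwegPilaudStella}: a collection of cones indexed by the facets of a pseudomanifold, each spanned by a linear basis, assembles into a complete simplicial fan provided its dual graph is connected and any two adjacent maximal cones lie on opposite sides of their common wall (the remaining global step being a covering argument). The accordion complex~$\accordionComplex(\dissection_\circ)$ is already a pseudomanifold by Lemmas~\ref{lem:closeTwoAngles} and~\ref{lem:flip}, with connected flip graph~$\accordionFlipGraph(\dissection_\circ)$, so it remains to supply the two linear-algebraic ingredients and then upgrade the local gluing to a genuine fan.

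First, that each cone $\R_{\ge0}\gvectors{\dissection_\circ}{\dissection_\bullet}$ is a full-dimensional simplicial cone is immediate from Proposition~\ref{prop:gvectorscvectorsDualBases}: the $\b{g}$-vectors~$\gvectors{\dissection_\circ}{\dissection_\bullet}$ and the $\b{c}$-vectors~$\cvectors{\dissection_\circ}{\dissection_\bullet}$ form dual bases of~$\R^{\dissection_\circ}$, so in particular $\gvectors{\dissection_\circ}{\dissection_\bullet}$ is a basis, and $|\dissection_\bullet| = |\dissection_\circ|$ by Lemma~\ref{lem:closeTwoAngles}.

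The heart of the argument is the wall condition. Consider a flip exchanging~$\delta_\bullet$ for~$\delta_\bullet'$ between adjacent maximal dissections~$\dissection_\bullet$ and~$\dissection_\bullet'$, so that $\dissection_\bullet \ssm \{\delta_\bullet\} = \dissection_\bullet' \ssm \{\delta_\bullet'\}$. Their cones share the wall spanned by $\{\gvector{\dissection_\circ}{\epsilon_\bullet} : \epsilon_\bullet \in \dissection_\bullet \ssm \{\delta_\bullet\}\}$, and by Proposition~\ref{prop:gvectorscvectorsDualBases} the $\b{c}$-vector~$\cvector{\dissection_\circ}{\dissection_\bullet}{\delta_\bullet}$ is the normal vector of this wall lying on the $\dissection_\bullet$-side, with $\dotprod{\cvector{\dissection_\circ}{\dissection_\bullet}{\delta_\bullet}}{\gvector{\dissection_\circ}{\delta_\bullet}} = 1$. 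The opposite-side condition thus reduces to the single inequality $\dotprod{\cvector{\dissection_\circ}{\dissection_\bullet}{\delta_\bullet}}{\gvector{\dissection_\circ}{\delta_\bullet'}} < 0$. I would obtain this from the behaviour of $\b{c}$-vectors under a flip. By Remark~\ref{rem:informalgcvectors}\,(ii) and Lemma~\ref{lem:flip}, both $\cvector{\dissection_\circ}{\dissection_\bullet}{\delta_\bullet}$ and $\cvector{\dissection_\circ}{\dissection_\bullet'}{\delta_\bullet'}$ are supported on the same subaccordion of~$\dissection_\circ$, namely the one crossed by both sides~$\mu_\bullet$ and~$\nu_\bullet$ of the flip quadrilateral~$\quadrilateral_\bullet$, but with opposite orientations, since the flip exchanges the roles of~$\SSS$ and~$\ZZZ$ in the definition of the $\b{c}$-vectors. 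Hence $\cvector{\dissection_\circ}{\dissection_\bullet'}{\delta_\bullet'} = -\cvector{\dissection_\circ}{\dissection_\bullet}{\delta_\bullet}$, and combining this with the dual-basis identity $\dotprod{\cvector{\dissection_\circ}{\dissection_\bullet'}{\delta_\bullet'}}{\gvector{\dissection_\circ}{\delta_\bullet'}} = 1$ applied in~$\dissection_\bullet'$ yields $\dotprod{\cvector{\dissection_\circ}{\dissection_\bullet}{\delta_\bullet}}{\gvector{\dissection_\circ}{\delta_\bullet'}} = -1 < 0$, as required. Equivalently, the exchange relation $\gvector{\dissection_\circ}{\delta_\bullet} + \gvector{\dissection_\circ}{\delta_\bullet'} \in \vect\{\gvector{\dissection_\circ}{\epsilon_\bullet} : \epsilon_\bullet \in \dissection_\bullet \ssm \{\delta_\bullet\}\}$ has both exchanged $\b{g}$-vectors entering with positive coefficient.

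With both local conditions in hand, the criterion upgrades the local gluing to a complete simplicial fan, using connectivity of~$\accordionFlipGraph(\dissection_\circ)$. I expect the subtle point to be precisely this final global step rather than the combinatorics: the opposite-side condition guarantees that adjacent cones fill a full neighborhood of the relative interior of their common wall, so the support of the collection has no boundary and therefore covers all of~$\R^{\dissection_\circ}$; what still has to be excluded is that the cones wind around a codimension-$2$ face and \emph{multiply} cover the space, which would make them overlap rather than tile. A convenient anchor is that the source and sink dissections~$\dissection_\bullet^\mi$ and~$\dissection_\bullet^\ma$ realize the negative and positive orthants of~$\R^{\dissection_\circ}$ (Example~\ref{exm:gcvectors}); starting a degree/covering argument from these orthants and propagating it across flips via the wall condition forces the support to be covered exactly once, which is exactly the statement that the cones and their faces form a genuine complete simplicial fan.
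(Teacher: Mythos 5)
Your proof runs the same two-condition criterion as the paper (Proposition~\ref{prop:characterizationFan}), and your treatment of the wall-crossing condition is correct, just routed differently. The paper derives the explicit linear dependence $\gvector{\dissection_\circ}{\delta_\bullet} + \gvector{\dissection_\circ}{\delta_\bullet'} = \gvector{\dissection_\circ}{\mu_\bullet} + \gvector{\dissection_\circ}{\nu_\bullet}$ straight from the combinatorics of the flip; you instead pair against the wall normal $\cvector{\dissection_\circ}{\dissection_\bullet}{\delta_\bullet}$ and use the identity $\cvector{\dissection_\circ}{\dissection_\bullet'}{\delta_\bullet'} = -\cvector{\dissection_\circ}{\dissection_\bullet}{\delta_\bullet}$, which is the first half of Lemma~\ref{lem:factorcvectorFlip} (proved later in the paper for the polytopal realization); your justification of that identity via Remark~\ref{rem:informalgcvectors}\,(ii) is sound. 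Pairing the unique dependence $\alpha\,\gvector{\dissection_\circ}{\delta_\bullet} + \alpha'\,\gvector{\dissection_\circ}{\delta_\bullet'} + \sum \beta_y\,\gvector{\dissection_\circ}{y} = 0$ with this $\b{c}$-vector gives $\alpha - \alpha' = 0$, which is exactly Condition~(2). So far, so good.

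The genuine gap is in Condition~(1), the anchoring of the covering degree. Knowing from Example~\ref{exm:gcvectors} that $\R_{\ge0}\gvectors{\dissection_\circ}{\dissection_\bullet^\ma}$ \emph{is} the positive orthant does not pin the degree at $1$: you must also show that no \emph{other} open cone $\R_{>0}\gvectors{\dissection_\circ}{\dissection_\bullet}$ meets the interior of that orthant, and your final paragraph never establishes this --- ``starting a degree argument from these orthants'' presupposes exactly the fact to be proved, since a priori several cones could overlap there and the constant degree would then be larger than $1$. The missing ingredient is sign coherence, Corollary~\ref{coro:signCoherence}\,(ii): if a strictly positive combination of the basis $\gvectors{\dissection_\circ}{\dissection_\bullet}$ has all coordinates positive, then for each $\delta_\circ$ the common sign of the $\delta_\circ$-coordinates of these $\b{g}$-vectors must be nonnegative, so every $\gvector{\dissection_\circ}{\delta_\bullet}$ with $\delta_\bullet \in \dissection_\bullet$ lies in the nonnegative orthant; since a nonzero $\b{g}$-vector alternates $+1$ and $-1$ along its zigzag (Remark~\ref{rem:informalgcvectors}\,(i)), each of them is a coordinate vector $\b{e}_{\delta_\circ}$, and as they form a basis and $\b{g}$-vectors determine their accordions, $\dissection_\bullet = \dissection_\bullet^\ma$. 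With this one observation added, your argument is complete and coincides in substance with the paper's proof of Theorem~\ref{thm:gvectorFan}.
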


The proof uses the following characterization of complete simplicial fans~\cite[Coro.~4.5.20]{DeLoeraRambauSantos}. We will provide as well an alternative proof in Remark~\ref{rem:simplerProofs} based on sections of Cambrian fans.

\begin{proposition}
\label{prop:characterizationFan}
Consider a pseudomanifold~$\Delta$ on a finite vertex set~$X$ and a set of vectors $\rays \eqdef (\ray_x)_{x \in X}$ of~$\R^d$. For~$\dissection \in \Delta$, define the cone~$\rays_\dissection \eqdef \set{\ray_x}{x \in \dissection}$. The collection of cones~$\bigset{\R_{\ge 0}\rays_\dissection}{\dissection \in \Delta}$ forms a complete simplicial fan if and~only~if
\begin{enumerate}
\item there exists a facet~$\dissection$ of~$\Delta$ such that~$\rays_\dissection$ is a basis of~$\R^d$ and such that the open cones~$\R_{> 0}\rays_\dissection$ and~$\R_{> 0}\b{R}_{\dissection'}$ are disjoint for any facet~$\dissection'$ of~$\Delta$ distinct from~$\dissection$;
\item for two adjacent facets~$\dissection, \dissection'$ of~$\Delta$ with~$\dissection \ssm \{x\} = \dissection' \ssm \{x'\}$, there is a linear dependence
\[
\alpha \, \ray_x + \alpha' \, \ray_{x'} + \sum_{y \in \dissection \cap \dissection'} \beta_y \, \ray_y = 0
\]
on~$\b{R}_{\dissection \cup \dissection'}$ where the coefficients~$\alpha$ and~$\alpha'$ have the same sign. (When these conditions hold, these coefficients do not vanish and the linear dependence is unique up to rescaling.)
\end{enumerate}
\end{proposition}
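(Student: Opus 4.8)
The plan is to prove the two implications separately, the forward direction (fan $\implies$ conditions) being routine bookkeeping and the backward direction (conditions $\implies$ fan) carrying all the difficulty.

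For necessity, suppose $\bigset{\R_{\ge0}\rays_\dissection}{\dissection \in \Delta}$ is a complete simplicial fan. Completeness makes each maximal cone full-dimensional, and simpliciality forces $\rays_\dissection$ to consist of $d$ linearly independent vectors; hence $\rays_\dissection$ is a basis of~$\R^d$ for every facet~$\dissection$, and since in a fan two distinct maximal cones meet only along a common proper face, their open cones are disjoint. This is condition~(1) for any choice of facet. For condition~(2), two adjacent facets $\dissection, \dissection'$ with $\dissection \ssm \{x\} = \dissection' \ssm \{x'\}$ correspond to two maximal cones glued along the wall $\R_{\ge0}\rays_{\dissection \cap \dissection'}$, spanning a hyperplane~$H$. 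The $d+1$ vectors $\rays_{\dissection \cup \dissection'}$ contain a basis, so their space of linear dependences is one-dimensional (giving the uniqueness and nonvanishing claimed in the parenthetical); evaluating a linear form vanishing on~$H$ against the dependence, and using that $\ray_x$ and $\ray_{x'}$ lie on opposite sides of~$H$, forces $\alpha$ and $\alpha'$ to share a sign.

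For sufficiency I would proceed in three stages. First, propagate the basis property: if $\rays_\dissection$ is a basis and $\dissection'$ is adjacent to~$\dissection$, then since the coefficient~$\alpha$ in condition~(2) is nonzero one may solve for $\ray_{x'}$ in terms of $\ray_x$ and the shared rays, so $\rays_{\dissection'}$ is again a basis; starting from the seed facet of~(1) and using connectedness of the dual (flip) graph of the pseudomanifold, every $\rays_{\dissection'}$ is a basis, so every maximal cone is full-dimensional and simplicial. Second, establish local gluing across a wall: by thinness the relative interior of a wall $\R_{\ge0}\rays_{\dissection \cap \dissection'}$ lies in exactly the two facets $\dissection, \dissection'$, and the same-sign condition places $\ray_x$ and $\ray_{x'}$ on opposite sides of the hyperplane it spans, so near a generic wall point the two cones together cover a full neighborhood, each exactly once and with disjoint interiors.

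The third stage, which is the genuine obstacle, is to upgrade these local statements to global disjointness and completeness. The plan is to encode the gluing in the continuous radial map~$\phi$ from the geometric realization~$|\Delta|$ to the unit sphere $\mathbb{S}^{d-1} \subset \R^d$, sending a point of facet~$\dissection'$ with barycentric coordinates $(\lambda_x)_{x \in \dissection'}$ to the normalization of $\sum_x \lambda_x \ray_x$; this is well defined because adjacent facets share their rays over the common face. Stages one and two show that $\phi$ is a local homeomorphism on $|\Delta|$ minus its codimension-$2$ skeleton, which by thinness is a connected $(d-1)$-manifold without boundary. Since $|\Delta|$ is compact the map $\phi$ is proper, and its restriction is then a covering of $\mathbb{S}^{d-1}$ minus the image of the low skeleton, a subset of codimension at least~$2$ and hence with simply connected complement; a covering of a simply connected space is trivial, and condition~(1) pins the degree to~$1$ on the seed cone, so the degree is~$1$ everywhere. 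Unwinding $\phi$ back to cones shows that every generic point of~$\R^d$ lies in exactly one open cone, yielding both disjointness of open cones and completeness, hence the fan property. I expect the careful handling of the low-dimensional skeleton — verifying properness after its removal, the codimension bound making the complement simply connected, and the legitimacy of the degree count — to be the main technical hurdle; a purely combinatorial alternative would instead sweep a generic point along a path and track, using condition~(2) at each wall crossing, that the number of cones containing it never departs from~$1$.
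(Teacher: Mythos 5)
The paper does not actually prove this proposition: it is imported verbatim from \cite[Coro.~4.5.20]{DeLoeraRambauSantos}, so your attempt has to be measured against the standard proof in the literature rather than against anything in the text. Your necessity direction is correct, and so are the first two stages of sufficiency, up to one mislabelled coefficient: to conclude that $\rays_{\dissection'}$ is a basis you solve for $\ray_{x'}$, which requires $\alpha' \neq 0$, not $\alpha \neq 0$; this holds because a dependence with $\alpha' = 0$ would be a nontrivial dependence on the basis $\rays_\dissection$, and the same-sign hypothesis then forces $\alpha \neq 0$ as well. (You also silently use strong connectivity of the pseudomanifold -- connectedness of the dual graph -- which the paper's definition of pseudomanifold does not list but which is part of the usual convention and holds in the application.)

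The genuine gap is in your third stage. You assert that the image of the low-dimensional skeleton has codimension at least $2$ in $\mathbb{S}^{d-1}$ ``and hence'' its complement is simply connected, so that the covering is trivial. That implication is false: removing a codimension-$2$ subset preserves connectedness but not simple connectivity. Already for $d=3$ the removed set is a finite collection of points of $\mathbb{S}^{2}$ (the directions of the rays $\ray_x$), and $\mathbb{S}^{2}$ minus two or more points has nontrivial, indeed free, fundamental group; likewise $\R^{3}$ minus a line has fundamental group $\Z$. So your covering could a priori be nontrivial and the degree argument as written collapses. The repair is cheap and needs no new idea: a covering of a \emph{connected} base has constant fiber cardinality, and condition~(1) exhibits a base point -- a generic direction in the interior of the seed cone -- whose fiber is a single point; hence the degree is $1$ everywhere, which is all you ever use. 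With that substitution the topological route goes through, except that your final ``unwinding'' (degree one on generic points implies that pairwise intersections of cones are common faces, i.e.\ the fan property) is asserted rather than argued and requires a short limiting argument on the closed cones. Note finally that the ``purely combinatorial alternative'' you relegate to your last clause -- sweeping a generic point along a path avoiding all cones of dimension at most $d-2$ and checking, via thinness and condition~(2), that the number of open cones containing it stays equal to $1$ across each wall crossing -- is essentially the standard proof behind \cite[Coro.~4.5.20]{DeLoeraRambauSantos}, and it sidesteps the covering-space machinery (and this pitfall) entirely.
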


\begin{proof}[Proof of Theorem~\ref{thm:gvectorFan}]
By Corollary~\ref{coro:signCoherence}, the cone $\R_{\ge0} \gvectors{\dissection_\circ}{\dissection_\bullet^\mi}$ is the only cone of~$\gvectorFan(\dissection_\circ)$ intersecting the interior of the positive orthant~$(\R_{\ge0})^{\dissection_\circ}$. 
Consider now two adjacent maximal $\dissection_\circ$-accordion dissections~$\dissection_\bullet, \dissection_\bullet'$. Let~$\delta_\bullet \in \dissection_\bullet$ and~${\delta_\bullet' \in \dissection_\bullet'}$ be such that~$\dissection_\bullet \ssm \{\delta_\bullet\} = \dissection_\bullet' \ssm \{\delta_\bullet'\}$, and let~$\mu_\bullet$ and~$\nu_\bullet$ be the other diagonals as in Lemma~\ref{lem:flip} (see also \fref{fig:exmFlip}). Note that a diagonal of~$\dissection_\circ$ crosses none of (resp.~one of, resp.~both) the diagonals~$\delta_\bullet, \delta_\bullet'$ if and only if it crosses none of (resp.~one of, resp.~both) the diagonals~$\mu_\bullet, \nu_\bullet$. The same holds for a~$\ZZZ$ or a~$\SSS$ of~$\dissection_\circ$. Therefore, we have the linear dependence~$\gvector{\dissection_\circ}{\delta_\bullet} + \gvector{\dissection_\circ}{\delta_\bullet'} = \gvector{\dissection_\circ}{\mu_\bullet} + \gvector{\dissection_\circ}{\mu_\bullet}$. This shows that~$\gvectorFan(\dissection_\circ)$ satisfies the two conditions of Proposition~\ref{prop:characterizationFan}, and thus concludes the proof.
\end{proof}

\begin{remark}
The linear dependence $\gvector{\dissection_\circ}{\delta_\bullet} + \gvector{\dissection_\circ}{\delta_\bullet'} = \gvector{\dissection_\circ}{\mu_\bullet} + \gvector{\dissection_\circ}{\mu_\bullet}$ relating the $\b{g}$-vectors of two adjacent maximal \mbox{$\dissection_\circ$-accordion} dissections~$\dissection_\bullet, \dissection_\bullet'$ with~${\dissection_\bullet \ssm \{\delta_\bullet\} = \dissection_\bullet' \ssm \{\delta_\bullet'\}}$ shows that~${\det \big( \gvector{\dissection_\circ}{\dissection_\bullet} \big) = - \det \big( \gvector{\dissection_\circ}{\dissection_\bullet'} \big)}$. Since the initial cone~$\R_{\ge0} \gvectors{\dissection_\circ}{\dissection_\bullet^\mi}$ is generated by the coordinate vectors (see Example~\ref{exm:gcvectors}), we obtain that~$\det \big( \gvector{\dissection_\circ}{\dissection_\bullet} \big) = \pm 1$ for all \mbox{$\dissection_\circ$-accordion} dissection~$\dissection_\bullet$, so that the $\b{g}$-vector fan~$\gvectorFan(\dissection_\circ)$ is always \defn{smooth}. 
\end{remark}

By Proposition~\ref{prop:gvectorscvectorsDualBases}, any non-maximal cone of~$\gvectorFan(\dissection_\circ)$ is supported by a hyperplane orthogonal to a $\b{c}$-vector of~$\allcvectors{\dissection_\circ}$. We thus obtain the following consequence.

\begin{corollary}
The $\b{g}$-vector fan~$\gvectorFan(\dissection_\circ)$ coarsens the $\b{c}$-vector~fan~$\cvectorFan(\dissection_\circ)$.
\end{corollary}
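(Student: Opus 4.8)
The plan is to invoke the standard fact that a complete fan coarsens the fan induced by a hyperplane arrangement~$\mathcal{H}$ as soon as each of its walls (that is, each of its codimension-$1$ cones) is contained in some hyperplane of~$\mathcal{H}$. Here the complete fan is~$\gvectorFan(\dissection_\circ)$, which is complete and simplicial by Theorem~\ref{thm:gvectorFan}, and $\mathcal{H}$ is the arrangement of linear hyperplanes orthogonal to the $\b{c}$-vectors of~$\allcvectors{\dissection_\circ}$, whose associated fan is by definition~$\cvectorFan(\dissection_\circ)$. Thus the corollary reduces to the claim that every wall of~$\gvectorFan(\dissection_\circ)$ lies in a hyperplane of~$\mathcal{H}$, which is precisely the observation recorded immediately before the statement.

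First I would spell out that observation. A wall of~$\gvectorFan(\dissection_\circ)$ is a cone $\R_{\ge0}\set{\gvector{\dissection_\circ}{\gamma_\bullet}}{\gamma_\bullet \in \dissection_\bullet \ssm \{\delta_\bullet\}}$, obtained from a maximal $\dissection_\circ$-accordion dissection~$\dissection_\bullet$ by removing the ray spanned by~$\gvector{\dissection_\circ}{\delta_\bullet}$ for some~$\delta_\bullet \in \dissection_\bullet$. By Proposition~\ref{prop:gvectorscvectorsDualBases}, the sets $\gvectors{\dissection_\circ}{\dissection_\bullet}$ and~$\cvectors{\dissection_\circ}{\dissection_\bullet}$ form dual bases, so each vector~$\gvector{\dissection_\circ}{\gamma_\bullet}$ with~$\gamma_\bullet \ne \delta_\bullet$ is orthogonal to the single $\b{c}$-vector~$\cvector{\dissection_\circ}{\dissection_\bullet}{\delta_\bullet}$. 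Hence the whole wall lies in the hyperplane $\set{\b{x} \in \R^{\dissection_\circ}}{\dotprod{\cvector{\dissection_\circ}{\dissection_\bullet}{\delta_\bullet}}{\b{x}} = 0}$, which belongs to~$\mathcal{H}$ since~$\cvector{\dissection_\circ}{\dissection_\bullet}{\delta_\bullet} \in \allcvectors{\dissection_\circ}$.

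Then I would run the coarsening argument. The maximal cones of~$\cvectorFan(\dissection_\circ)$ are the closures of the connected components of~$\R^{\dissection_\circ} \ssm \bigcup \mathcal{H}$. By the previous paragraph every wall of~$\gvectorFan(\dissection_\circ)$ lies in~$\bigcup\mathcal{H}$, so the boundary of each maximal cone of~$\gvectorFan(\dissection_\circ)$, being the union of its walls, is contained in~$\bigcup\mathcal{H}$. Since~$\gvectorFan(\dissection_\circ)$ is complete, its maximal cones cover~$\R^{\dissection_\circ}$ and overlap only along walls; therefore a connected component of~$\R^{\dissection_\circ} \ssm \bigcup\mathcal{H}$, being connected and disjoint from all these boundaries, lies entirely inside the interior of a single maximal cone of~$\gvectorFan(\dissection_\circ)$. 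Taking closures, every maximal cone of~$\cvectorFan(\dissection_\circ)$ is contained in a maximal cone of~$\gvectorFan(\dissection_\circ)$, which is exactly the statement that~$\gvectorFan(\dissection_\circ)$ coarsens~$\cvectorFan(\dissection_\circ)$.

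I expect no serious obstacle: the only substantial input beyond general fan topology is the dual-basis orthogonality of Proposition~\ref{prop:gvectorscvectorsDualBases}, already available, together with the completeness of~$\gvectorFan(\dissection_\circ)$ from Theorem~\ref{thm:gvectorFan}. The single point demanding a little care is the connectivity step, namely that a connected set avoiding all walls cannot meet two distinct maximal cones of a complete fan; but this is a routine consequence of completeness, since the interiors of the maximal cones are disjoint open sets whose union is the complement of the (lower-dimensional) union of walls.
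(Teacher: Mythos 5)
Your proposal is correct and follows the same route as the paper: the paper's entire proof is the one-sentence observation preceding the corollary, namely that by Proposition~\ref{prop:gvectorscvectorsDualBases} every non-maximal cone of~$\gvectorFan(\dissection_\circ)$ lies in a hyperplane orthogonal to a $\b{c}$-vector of~$\allcvectors{\dissection_\circ}$, which is exactly your wall-containment step. The remaining paragraphs of your write-up just spell out the standard fan-topology argument that the paper leaves implicit.
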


\begin{example}
Following Example~\ref{exm:specialReferenceDissections}, we observe that special reference dissections give rise to the following relevant fans:
\begin{itemize}
\item For an accordion triangulation~$\accordion_\circ$ (\ie with no interior triangle), the $\b{g}$-vector fan~$\gvectorFan(\accordion_\circ)$ coincides with a type~$A$ Cambrian fan of N.~Reading and D.~Speyer~\cite{ReadingSpeyer}.
\item For an arbitrary triangulation~$\triangulation_\circ$ (with or without interior triangle), the $\b{g}$-vector fan~$\gvectorFan(\triangulation_\circ)$ was recently constructed in~\cite{HohlwegPilaudStella}.
\end{itemize}
\end{example}

\begin{example}
\fref{fig:exmgvectorFans} illustrates the $\b{g}$-vector fans~$\gvectorFan(\dissection_\circ)$ for various reference dissections~$\dissection_\circ$: the fan, the snake, and the cyclic triangulation of the hexagon, and a dissection of the heptagon. More precisely, we have represented the stereographic projection of the fans from the point~$[\, 1, 1, 1 \,]$. Therefore, the external face of the projection corresponds to the $\dissection_\circ$-accordion dissection~$\dissection_\bullet^\mi$. We have labeled all vertices of the projection (\ie the rays of the fan) by the corresponding $\dissection_\circ$-accordion diagonals.

\begin{figure}[t]
	\capstart
	\centerline{
		\begin{tabular}{l@{\hspace{.5cm}}l}
			\includegraphics[scale=1.3]{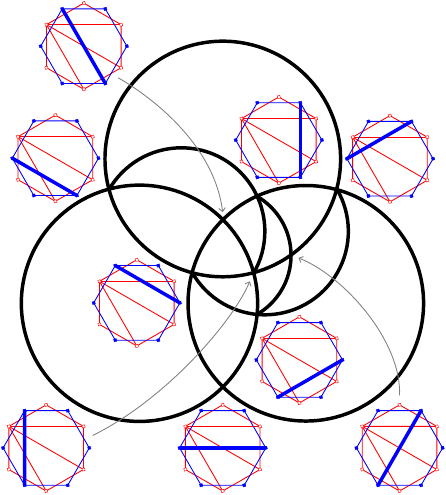} & \includegraphics[scale=1.3]{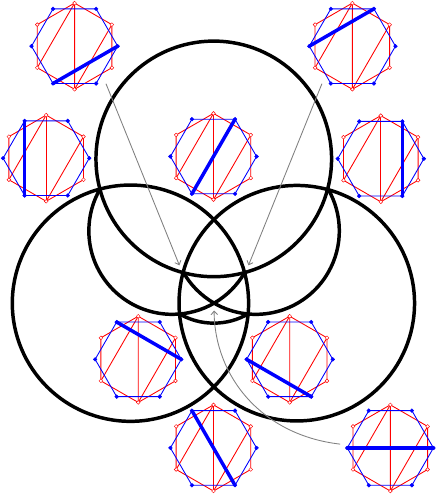} \\[.1cm]
			\includegraphics[scale=1.3]{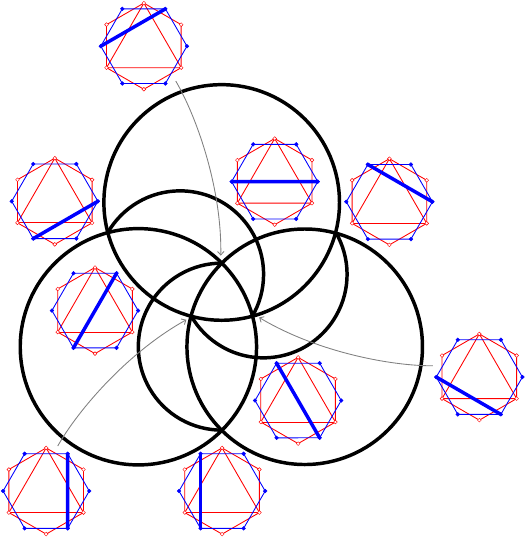} & \includegraphics[scale=1.3]{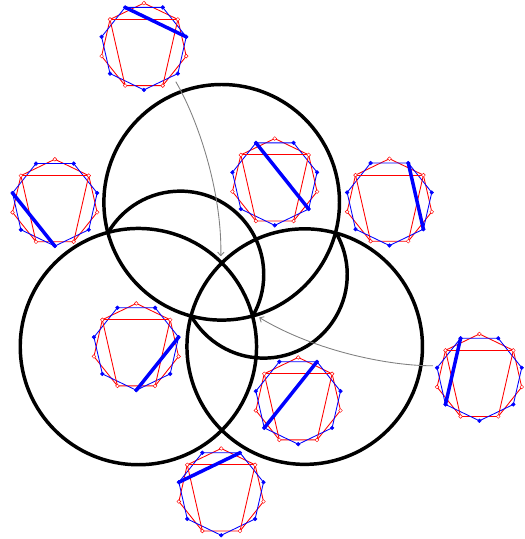}
		\end{tabular}
	}
	\caption{Stereographic projections of the $\b{g}$-vector fans~$\gvectorFan(\dissection_\circ)$ for various reference hollow dissections~$\dissection_\circ$. See \fref{fig:exmdvectorFans} for alternative simplicial fan realizations of these accordion complexes.}
	\label{fig:exmgvectorFans}
\end{figure}
\end{example}

We now provide a first polytopal realization of the $\b{g}$-vector fan~$\gvectorFan(\dissection_\circ)$ (see also Section~\ref{sec:projection}). This fan has a maximal cone for each maximal $\dissection_\circ$-accordion dissection and a ray for each $\dissection_\circ$-accordion diagonal. For a maximal $\dissection_\circ$-accordion dissection~$\dissection_\bullet$, we define a point~${\bigpoint{\dissection_\circ}{\dissection_\bullet} \in \R^{\dissection_\circ}}$~by
\[
\bigpoint{\dissection_\circ}{\dissection_\bullet} \eqdef \sum_{\delta_\bullet \in \dissection_\bullet} \bigrhs{\dissection_\circ}{\delta_\bullet} \cdot \bigcvector{\dissection_\circ}{\dissection_\bullet}{\delta_\bullet},
\]
where~$\rhs{\dissection_\circ}{\delta_\bullet}$ still denotes the \defn{$\dissection_\circ$-height} of~$\delta_\bullet$ defined as the number of $\dissection_\circ$-accordion diagonals that cross~$\delta_\bullet$. We will need the following two technical lemmas in the proof of Theorem~\ref{thm:accordiohedron}.

\begin{lemma}
\label{lem:pointInHyp}
For any maximal $\dissection_\circ$-accordion dissection~$\dissection_\bullet$, the point~$\point{\dissection_\circ}{\dissection_\bullet}$ is the intersection of all hyperplanes~$\Hyp{\dissection_\circ}{\delta_\bullet}$ with~$\delta_\bullet \in \dissection_\bullet$.
\end{lemma}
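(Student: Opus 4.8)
The plan is to establish two things: first that the point $\point{\dissection_\circ}{\dissection_\bullet}$ satisfies the defining equation of every hyperplane $\Hyp{\dissection_\circ}{\delta_\bullet}$ with $\delta_\bullet \in \dissection_\bullet$, and second that these $|\dissection_\circ|$ hyperplanes meet in a single point, so that containment forces equality. The whole argument is a direct application of the duality established earlier.

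For the containment, I would expand the definition of $\point{\dissection_\circ}{\dissection_\bullet}$ and pair it against a $\b{g}$-vector. Fixing $\gamma_\bullet \in \dissection_\bullet$, bilinearity gives
\[
\dotprod{\biggvector{\dissection_\circ}{\gamma_\bullet}}{\bigpoint{\dissection_\circ}{\dissection_\bullet}} = \sum_{\delta_\bullet \in \dissection_\bullet} \bigrhs{\dissection_\circ}{\delta_\bullet} \cdot \dotprod{\biggvector{\dissection_\circ}{\gamma_\bullet}}{\bigcvector{\dissection_\circ}{\dissection_\bullet}{\delta_\bullet}}.
\]
The crucial input is Proposition~\ref{prop:gvectorscvectorsDualBases}, which asserts that $\gvectors{\dissection_\circ}{\dissection_\bullet}$ and $\cvectors{\dissection_\circ}{\dissection_\bullet}$ form dual bases, hence $\dotprod{\gvector{\dissection_\circ}{\gamma_\bullet}}{\cvector{\dissection_\circ}{\dissection_\bullet}{\delta_\bullet}} = \one_{\gamma_\bullet = \delta_\bullet}$. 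Only the term $\delta_\bullet = \gamma_\bullet$ survives, leaving $\dotprod{\gvector{\dissection_\circ}{\gamma_\bullet}}{\point{\dissection_\circ}{\dissection_\bullet}} = \rhs{\dissection_\circ}{\gamma_\bullet}$, which is exactly the defining equation of $\Hyp{\dissection_\circ}{\gamma_\bullet}$. This places $\point{\dissection_\circ}{\dissection_\bullet}$ in each of the hyperplanes $\Hyp{\dissection_\circ}{\delta_\bullet}$, $\delta_\bullet \in \dissection_\bullet$.

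For uniqueness of the intersection point, I would again invoke Proposition~\ref{prop:gvectorscvectorsDualBases}: being one half of a pair of dual bases, $\gvectors{\dissection_\circ}{\dissection_\bullet}$ is a basis of $\R^{\dissection_\circ}$. Consequently the $|\dissection_\circ|$ linear forms $\b{x} \mapsto \dotprod{\gvector{\dissection_\circ}{\delta_\bullet}}{\b{x}}$ are linearly independent, so the affine hyperplanes $\Hyp{\dissection_\circ}{\delta_\bullet}$ intersect transversally in a single point. Since $\point{\dissection_\circ}{\dissection_\bullet}$ lies in all of them, it must be that point, which is the assertion.

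I do not expect any genuine obstacle here: the statement carries essentially no difficulty of its own, its entire content being the dual-basis identity of Proposition~\ref{prop:gvectorscvectorsDualBases}, whose proof (the parity analysis of the zigzag diagonals slaloming on $\delta_\bullet$) was the real combinatorial work. The only point meriting a moment of care is that the definition of $\point{\dissection_\circ}{\dissection_\bullet}$ uses the $\b{c}$-vectors $\cvector{\dissection_\circ}{\dissection_\bullet}{\delta_\bullet}$ attached to the specific dissection $\dissection_\bullet$, rather than the de-duplicated set $\allcvectors{\dissection_\circ}$ of Lemma~\ref{lem:bijectioncvectors}; it is precisely this indexing by $\dissection_\bullet$ that makes the dual-basis relation applicable termwise, so no subtlety about repeated or sign-split $\b{c}$-vectors intervenes.
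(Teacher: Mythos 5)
Your proposal is correct and follows exactly the paper's own argument: the paper likewise observes that the hyperplanes $\Hyp{\dissection_\circ}{\delta_\bullet}$, $\delta_\bullet \in \dissection_\bullet$, meet in a unique point because $\gvectors{\dissection_\circ}{\dissection_\bullet}$ is a basis, and then computes $\dotprod{\gvector{\dissection_\circ}{\gamma_\bullet}}{\point{\dissection_\circ}{\dissection_\bullet}} = \rhs{\dissection_\circ}{\gamma_\bullet}$ via the dual-basis identity of Proposition~\ref{prop:gvectorscvectorsDualBases}. Your closing remark about the indexing of the $\b{c}$-vectors by $\dissection_\bullet$ rather than by $\allcvectors{\dissection_\circ}$ is an accurate observation, though the paper does not dwell on it.
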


\begin{proof}
Observe first that the hyperplanes~$\Hyp{\dissection_\circ}{\delta_\bullet}$ with~$\delta_\bullet \in \dissection_\bullet$ have a unique intersection point, since~$\gvectors{\dissection_\circ}{\dissection_\bullet}$ is a basis. Moreover, since~$\gvectors{\dissection_\circ}{\dissection_\bullet}$ and~$\cvectors{\dissection_\circ}{\dissection_\bullet}$ form dual bases by Proposition~\ref{prop:gvectorscvectorsDualBases}, we have for any~$\gamma_\bullet \in \dissection_\bullet$:
\begin{align*}
\dotprod{\biggvector{\dissection_\circ}{\gamma_\bullet}}{\bigpoint{\dissection_\circ}{\dissection_\bullet}}
& = \sum_{\delta_\bullet \in \dissection_\bullet} \bigrhs{\dissection_\circ}{\delta_\bullet} \cdot \dotprod{\biggvector{\dissection_\circ}{\gamma_\bullet}}{\bigcvector{\dissection_\circ}{\dissection_\bullet}{\delta_\bullet}} \\
& = \sum_{\delta_\bullet \in \dissection_\bullet} \bigrhs{\dissection_\circ}{\delta_\bullet} \cdot \one_{\gamma_\bullet = \delta_\bullet} \; = \; \bigrhs{\dissection_\circ}{\gamma_\bullet}.
\qedhere
\end{align*}
\end{proof}

\begin{lemma}
\label{lem:factorcvectorFlip}
If~$\dissection_\bullet, \dissection_\bullet'$ are two adjacent maximal $\dissection_\circ$-accordion dissections, and~$\delta_\bullet \in \dissection_\bullet$ and~${\delta_\bullet' \in \dissection_\bullet'}$ are such that~${\dissection_\bullet \ssm \{\delta_\bullet\} = \dissection_\bullet' \ssm \{\delta_\bullet'\}}$, then
\[
\bigcvector{\dissection_\circ}{\dissection_\bullet}{\delta_\bullet} = - \bigcvector{\dissection_\circ}{\dissection_\bullet'}{\delta_\bullet'}
\quad\text{and}\quad
\bigpoint{\dissection_\circ}{\dissection_\bullet'} - \bigpoint{\dissection_\circ}{\dissection_\bullet} \in \Z_{<0} \cdot \bigcvector{\dissection_\circ}{\dissection_\bullet}{\delta_\bullet}.
\]
\end{lemma}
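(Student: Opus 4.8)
The plan is to treat the two assertions separately, reducing everything to the duality between $\b{g}$- and $\b{c}$-vectors (Proposition~\ref{prop:gvectorscvectorsDualBases}), to the linear dependence $\gvector{\dissection_\circ}{\delta_\bullet} + \gvector{\dissection_\circ}{\delta_\bullet'} = \gvector{\dissection_\circ}{\mu_\bullet} + \gvector{\dissection_\circ}{\nu_\bullet}$ established in the proof of Theorem~\ref{thm:gvectorFan}, and to Lemma~\ref{lem:pointInHyp}. For the first equality I would first note that $\cvector{\dissection_\circ}{\dissection_\bullet}{\delta_\bullet}$ and $\cvector{\dissection_\circ}{\dissection_\bullet'}{\delta_\bullet'}$ have the same support: by Remark~\ref{rem:informalgcvectors}\,(ii) each is, up to sign, the characteristic vector of the subaccordion of~$\dissection_\circ$ crossed by both $\mu_\bullet$ and $\nu_\bullet$, and $\mu_\bullet,\nu_\bullet$ are common to $\dissection_\bullet$ and $\dissection_\bullet'$. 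Hence both are $\pm$ the same $0/1$ vector, so they are proportional (this also follows from Proposition~\ref{prop:gvectorscvectorsDualBases}, as both are orthogonal to the $|\dissection_\circ|-1$ independent vectors $\gvector{\dissection_\circ}{\gamma_\bullet}$, $\gamma_\bullet \in \dissection_\bullet \cap \dissection_\bullet'$). To fix the sign, I would pair the linear dependence with $\cvector{\dissection_\circ}{\dissection_\bullet}{\delta_\bullet}$: since $\mu_\bullet,\nu_\bullet \in \dissection_\bullet \ssm \{\delta_\bullet\}$, duality gives $\dotprod{\gvector{\dissection_\circ}{\delta_\bullet'}}{\cvector{\dissection_\circ}{\dissection_\bullet}{\delta_\bullet}} = -1$, while pairing with $\cvector{\dissection_\circ}{\dissection_\bullet'}{\delta_\bullet'}$ gives $\dotprod{\gvector{\dissection_\circ}{\delta_\bullet'}}{\cvector{\dissection_\circ}{\dissection_\bullet'}{\delta_\bullet'}} = 1$; comparing forces $\cvector{\dissection_\circ}{\dissection_\bullet}{\delta_\bullet} = -\cvector{\dissection_\circ}{\dissection_\bullet'}{\delta_\bullet'}$.

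For the second assertion, set $\b{w} \eqdef \point{\dissection_\circ}{\dissection_\bullet'} - \point{\dissection_\circ}{\dissection_\bullet}$. By Lemma~\ref{lem:pointInHyp}, $\dotprod{\gvector{\dissection_\circ}{\gamma_\bullet}}{\b{w}} = \rhs{\dissection_\circ}{\gamma_\bullet} - \rhs{\dissection_\circ}{\gamma_\bullet} = 0$ for every $\gamma_\bullet \in \dissection_\bullet \cap \dissection_\bullet'$, so $\b{w}$ lies on the line orthogonal to these $|\dissection_\circ|-1$ independent $\b{g}$-vectors, which is directed by $\cvector{\dissection_\circ}{\dissection_\bullet}{\delta_\bullet}$ by duality. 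Thus $\b{w} = \lambda\,\cvector{\dissection_\circ}{\dissection_\bullet}{\delta_\bullet}$ for some $\lambda$, necessarily in~$\Z$ since the points $\point{\dissection_\circ}{\dissection_\bullet}$ have integer coordinates and the $\b{c}$-vector is a primitive $0/{\pm}1$ vector. It remains to compute $\lambda$ and show $\lambda < 0$.

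Pairing $\b{w}$ with $\gvector{\dissection_\circ}{\delta_\bullet}$ and using $\dotprod{\gvector{\dissection_\circ}{\delta_\bullet}}{\cvector{\dissection_\circ}{\dissection_\bullet}{\delta_\bullet}} = 1$ gives $\lambda = \dotprod{\gvector{\dissection_\circ}{\delta_\bullet}}{\point{\dissection_\circ}{\dissection_\bullet'}} - \rhs{\dissection_\circ}{\delta_\bullet}$ via Lemma~\ref{lem:pointInHyp}. Substituting $\gvector{\dissection_\circ}{\delta_\bullet} = \gvector{\dissection_\circ}{\mu_\bullet} + \gvector{\dissection_\circ}{\nu_\bullet} - \gvector{\dissection_\circ}{\delta_\bullet'}$ and applying Lemma~\ref{lem:pointInHyp} to the diagonals $\delta_\bullet',\mu_\bullet,\nu_\bullet$ of $\overline{\dissection}_\bullet'$ yields
\[
\lambda = \rhs{\dissection_\circ}{\mu_\bullet} + \rhs{\dissection_\circ}{\nu_\bullet} - \rhs{\dissection_\circ}{\delta_\bullet} - \rhs{\dissection_\circ}{\delta_\bullet'}.
\]
Here I must check that $\dotprod{\gvector{\dissection_\circ}{\mu_\bullet}}{\point{\dissection_\circ}{\dissection_\bullet'}} = \rhs{\dissection_\circ}{\mu_\bullet}$ even when $\mu_\bullet$ is a boundary edge of the solid polygon (Lemma~\ref{lem:pointInHyp} only covers $\mu_\bullet \in \dissection_\bullet'$): this holds because a boundary solid edge crosses only hollow diagonals incident to one hollow vertex (a fan of $\VVV$'s), so its $\b{g}$-vector vanishes, and so does its $\dissection_\circ$-height, making both sides~$0$.

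Finally, since $\rhs{\dissection_\circ}{\gamma_\bullet}$ counts the $\dissection_\circ$-accordion diagonals crossing $\gamma_\bullet$, one has $-\lambda = \sum_{\epsilon_\bullet} h(\epsilon_\bullet)$, the sum over all $\dissection_\circ$-accordion diagonals of $h(\epsilon_\bullet) \eqdef [\epsilon_\bullet \text{ crosses } \delta_\bullet] + [\epsilon_\bullet \text{ crosses } \delta_\bullet'] - [\epsilon_\bullet \text{ crosses } \mu_\bullet] - [\epsilon_\bullet \text{ crosses } \nu_\bullet]$. The crux is a local analysis in the quadrilateral $\quadrilateral_\bullet$: as $\mu_\bullet,\nu_\bullet$ are two opposite sides of $\quadrilateral_\bullet$ and $\delta_\bullet,\delta_\bullet'$ its two diagonals, a finite case check on how a chord meets a convex quadrilateral (according to how its endpoints separate the four vertices of $\quadrilateral_\bullet$) shows $h(\epsilon_\bullet) \ge 0$ for every chord, so $-\lambda \ge 0$. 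Strictness is then immediate, since $\delta_\bullet$ and $\delta_\bullet'$ cross each other, so each is itself an accordion diagonal contributing $h = 1$; hence $-\lambda \ge 2$ and $\lambda \le -2 < 0$. The main obstacle is precisely this sign, \ie the inequality $\rhs{\dissection_\circ}{\delta_\bullet} + \rhs{\dissection_\circ}{\delta_\bullet'} > \rhs{\dissection_\circ}{\mu_\bullet} + \rhs{\dissection_\circ}{\nu_\bullet}$: the two delicate points are the vanishing of $\b{g}$-vectors of boundary edges (needed for the clean formula for $\lambda$) and the verification of $h \ge 0$ also for chords incident to the vertices of $\quadrilateral_\bullet$, where $h$ may equal $1$, not only for chords crossing its interior transversally.
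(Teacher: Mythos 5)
Your proof is correct, and it reaches the paper's key identity
$\point{\dissection_\circ}{\dissection_\bullet'} - \point{\dissection_\circ}{\dissection_\bullet} = \big(\rhs{\dissection_\circ}{\mu_\bullet} + \rhs{\dissection_\circ}{\nu_\bullet} - \rhs{\dissection_\circ}{\delta_\bullet} - \rhs{\dissection_\circ}{\delta_\bullet'}\big)\cdot\cvector{\dissection_\circ}{\dissection_\bullet}{\delta_\bullet}$
and the final inequality~$\le -2$ by the same crossing comparison in the quadrilateral~$\quadrilateral_\bullet$. Where you diverge is in how you get there. The paper first records the full mutation rule for $\b{c}$-vectors under the flip (not only $\cvector{\dissection_\circ}{\dissection_\bullet'}{\delta_\bullet'} = -\cvector{\dissection_\circ}{\dissection_\bullet}{\delta_\bullet}$, but also $\cvector{\dissection_\circ}{\dissection_\bullet'}{\gamma_\bullet} = \cvector{\dissection_\circ}{\dissection_\bullet}{\gamma_\bullet} + \cvector{\dissection_\circ}{\dissection_\bullet}{\delta_\bullet}$ for $\gamma_\bullet \in \{\mu_\bullet,\nu_\bullet\}$ and invariance elsewhere) and then sums the defining expression of $\point{\dissection_\circ}{\cdot}$ term by term. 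You instead stay entirely on the dual side: proportionality and the sign of the two $\b{c}$-vectors come from Proposition~\ref{prop:gvectorscvectorsDualBases} together with the linear dependence $\gvector{\dissection_\circ}{\delta_\bullet} + \gvector{\dissection_\circ}{\delta_\bullet'} = \gvector{\dissection_\circ}{\mu_\bullet} + \gvector{\dissection_\circ}{\nu_\bullet}$, and the scalar $\lambda$ is extracted by evaluating $\b{g}$-vectors on the difference of points via Lemma~\ref{lem:pointInHyp}. This buys you two things: you never need the (unproved in the paper) case analysis behind the mutation of $\cvector{\dissection_\circ}{\dissection_\bullet}{\mu_\bullet}$ and $\cvector{\dissection_\circ}{\dissection_\bullet}{\nu_\bullet}$, and you explicitly handle the degenerate situation where $\mu_\bullet$ or $\nu_\bullet$ is a boundary edge of the solid polygon (where $\gvector{\dissection_\circ}{\mu_\bullet} = 0 = \rhs{\dissection_\circ}{\mu_\bullet}$), a point the paper's summation silently absorbs. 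The cost is that your argument leans on results from Section~\ref{subsec:gvectorFan} (the linear dependence from the proof of Theorem~\ref{thm:gvectorFan} and Lemma~\ref{lem:pointInHyp}), whereas the paper's computation is self-contained at the level of $\b{c}$-vectors; both dependencies are legitimate since those results precede the lemma and do not use it.
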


\begin{proof}
Let~$\dissection_\bullet, \dissection_\bullet'$ be two adjacent maximal $\dissection_\circ$-accordion dissections, let~$\delta_\bullet \in \dissection_\bullet$ and~${\delta_\bullet' \in \dissection_\bullet'}$ be such that~$\dissection_\bullet \ssm \{\delta_\bullet\} = \dissection_\bullet' \ssm \{\delta_\bullet'\}$, and let~$\mu_\bullet$ and~$\nu_\bullet$ be the other diagonals as in Lemma~\ref{lem:flip} (see also \fref{fig:exmFlip}). A quick case analysis then shows that
\[
\bigcvector{\dissection_\circ}{\dissection_\bullet'}{\gamma_\bullet} = 
\begin{cases}
\bigcvector{\dissection_\circ}{\dissection_\bullet}{\gamma_\bullet} & \text{ for all diagonal } \gamma_\bullet \in \dissection_\bullet \ssm \{\delta_\bullet, \mu_\bullet, \nu_\bullet\}, \\
-\bigcvector{\dissection_\circ}{\dissection_\bullet}{\delta_\bullet} & \text{ if } \gamma_\bullet = \delta_\bullet', \\
\bigcvector{\dissection_\circ}{\dissection_\bullet}{\gamma_\bullet} + \bigcvector{\dissection_\circ}{\dissection_\bullet}{\delta_\bullet} & \text{ if } \gamma_\bullet \in \{\mu_\bullet, \nu_\bullet\}.
\end{cases}
\]
Summing the contribution of all $\b{c}$-vectors with their coefficients~$\rhs{\dissection_\circ}{\gamma_\bullet}$, we obtain
\[
\bigpoint{\dissection_\circ}{\dissection_\bullet'} - \bigpoint{\dissection_\circ}{\dissection_\bullet} = \big( \bigrhs{\dissection_\circ}{\mu_\bullet} + \bigrhs{\dissection_\circ}{\nu_\bullet} - \bigrhs{\dissection_\circ}{\delta_\bullet} - \bigrhs{\dissection_\circ}{\delta_\bullet'} \big) \cdot \bigcvector{\dissection_\circ}{\dissection_\bullet}{\delta_\bullet}.
\]
Finally, note that any diagonal of~$\polygon_\bullet$ that crosses one of (resp.~both) the diagonals~$\mu_\bullet, \nu_\bullet$ also crosses one of (resp.~both) the diagonals~$\delta_\bullet, \delta_\bullet'$. Moreover, $\delta_\bullet$ and~$\delta_\bullet'$ cross each other but do not cross~$\mu_\bullet$ and~$\nu_\bullet$. It follows that~$\rhs{\dissection_\circ}{\mu_\bullet} + \rhs{\dissection_\circ}{\nu_\bullet} - \rhs{\dissection_\circ}{\delta_\bullet} - \rhs{\dissection_\circ}{\delta_\bullet'} \le -2 < 0$.
\end{proof}

\begin{theorem}
\label{thm:accordiohedron}
The $\b{g}$-vector fan is the normal fan of the \defn{$\dissection_\circ$-accordiohedron}~$\Acco(\dissection_\circ)$ defined equivalently as
\begin{itemize}
\item the convex hull of the points~$\point{\dissection_\circ}{\dissection_\bullet}$ for all maximal $\dissection_\circ$-accordion dissection~$\dissection_\bullet$, or
\item the intersection of the half-spaces~$\HS{\dissection_\circ}{\gamma_\bullet}$ for all $\dissection_\circ$-accordion diagonals~$\gamma_\bullet$.
\end{itemize}
Thus, the polar dual of~$\Acco(\dissection_\circ)$ is a polytopal realization of the $\dissection_\circ$-accordion complex~$\accordionComplex(\dissection_\circ)$.
\end{theorem}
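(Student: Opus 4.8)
The plan is to realize both descriptions as one and the same polytope $P$ whose normal fan is $\gvectorFan(\dissection_\circ)$, and then dualize. Throughout I write $C(\dissection_\bullet) \eqdef \R_{\ge0} \gvectors{\dissection_\circ}{\dissection_\bullet}$ for the maximal cones of the $\b{g}$-vector fan, which by Theorem~\ref{thm:gvectorFan} form a complete simplicial fan; in particular $\gvectors{\dissection_\circ}{\dissection_\bullet}$ is a linear basis of~$\R^{\dissection_\circ}$ for every maximal $\dissection_\circ$-accordion dissection~$\dissection_\bullet$. Let $P \eqdef \bigcap_{\gamma_\bullet} \HS{\dissection_\circ}{\gamma_\bullet}$ be the polytope from the halfspace description, the intersection ranging over all $\dissection_\circ$-accordion diagonals~$\gamma_\bullet$; it is bounded since the $\b{g}$-vectors positively span~$\R^{\dissection_\circ}$ by completeness of the fan.

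First I would prove that every candidate vertex lies in~$P$, \ie that $\dotprod{\gvector{\dissection_\circ}{\gamma_\bullet}}{\point{\dissection_\circ}{\dissection_\bullet}} \le \rhs{\dissection_\circ}{\gamma_\bullet}$ for all accordion diagonals~$\gamma_\bullet$ and all maximal dissections~$\dissection_\bullet$. Fixing~$\gamma_\bullet$, I consider a dissection~$\dissection_\bullet^*$ maximizing $f(\dissection_\bullet) \eqdef \dotprod{\gvector{\dissection_\circ}{\gamma_\bullet}}{\point{\dissection_\circ}{\dissection_\bullet}}$ over the finitely many maximal $\dissection_\circ$-accordion dissections. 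At~$\dissection_\bullet^*$ every flip is non-increasing, and Lemma~\ref{lem:factorcvectorFlip} gives $f(\dissection_\bullet') - f(\dissection_\bullet^*) = \kappa \, \dotprod{\gvector{\dissection_\circ}{\gamma_\bullet}}{\cvector{\dissection_\circ}{\dissection_\bullet^*}{\delta_\bullet}}$ with~$\kappa < 0$ across the flip of~$\delta_\bullet$; since every diagonal of~$\dissection_\bullet^*$ is flippable by Lemma~\ref{lem:flip}, this forces $\dotprod{\gvector{\dissection_\circ}{\gamma_\bullet}}{\cvector{\dissection_\circ}{\dissection_\bullet^*}{\delta_\bullet}} \ge 0$ for every~$\delta_\bullet \in \dissection_\bullet^*$. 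As $\gvectors{\dissection_\circ}{\dissection_\bullet^*}$ and~$\cvectors{\dissection_\circ}{\dissection_\bullet^*}$ are dual bases (Proposition~\ref{prop:gvectorscvectorsDualBases}), the expansion $\gvector{\dissection_\circ}{\gamma_\bullet} = \sum_{\delta_\bullet \in \dissection_\bullet^*} \dotprod{\gvector{\dissection_\circ}{\gamma_\bullet}}{\cvector{\dissection_\circ}{\dissection_\bullet^*}{\delta_\bullet}} \, \gvector{\dissection_\circ}{\delta_\bullet}$ then shows $\gvector{\dissection_\circ}{\gamma_\bullet} \in C(\dissection_\bullet^*)$. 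Because the fan is simplicial, the ray~$\R_{\ge0}\gvector{\dissection_\circ}{\gamma_\bullet}$, being a ray of the fan contained in the maximal cone~$C(\dissection_\bullet^*)$, must be one of its generating rays, so~$\gamma_\bullet \in \dissection_\bullet^*$; Lemma~\ref{lem:pointInHyp} then gives $f(\dissection_\bullet^*) = \rhs{\dissection_\circ}{\gamma_\bullet}$. Hence the maximum of~$f$ equals~$\rhs{\dissection_\circ}{\gamma_\bullet}$, which yields all the inequalities.

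Next I would promote each~$\point{\dissection_\circ}{\dissection_\bullet}$ to a vertex of~$P$ with the expected normal cone. By Lemma~\ref{lem:pointInHyp} the point saturates exactly the~$|\dissection_\circ|$ inequalities indexed by~$\delta_\bullet \in \dissection_\bullet$, whose outer normals~$\gvectors{\dissection_\circ}{\dissection_\bullet}$ are linearly independent; thus~$\point{\dissection_\circ}{\dissection_\bullet}$ is a simple vertex of~$P$ whose normal cone is precisely~$C(\dissection_\bullet)$. Since the cones~$C(\dissection_\bullet)$ already cover~$\R^{\dissection_\circ}$ (completeness, Theorem~\ref{thm:gvectorFan}), these are all the vertices of~$P$ and the normal fan of~$P$ is exactly~$\gvectorFan(\dissection_\circ)$; in particular the convex hull and halfspace descriptions define the same polytope~$\Acco(\dissection_\circ) = P$. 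Finally, as~$P$ is simple with normal fan~$\gvectorFan(\dissection_\circ)$ realizing the accordion complex, its polar dual is a simplicial polytope whose boundary complex is~$\accordionComplex(\dissection_\circ)$.

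I expect the main obstacle to be the global step that \emph{all} inequalities hold: Lemma~\ref{lem:factorcvectorFlip} only controls a single wall at a time, and the difficulty is converting this wall-by-wall data into a genuinely global statement. The maximizer argument above sidesteps an induction along flip paths: rather than propagating an inequality through the (connected) flip graph, it reads membership of~$\gvector{\dissection_\circ}{\gamma_\bullet}$ in~$C(\dissection_\bullet^*)$ off the signs of the dual-basis coordinates at an optimum, which is exactly where completeness and simpliciality of the $\b{g}$-vector fan do the decisive work.
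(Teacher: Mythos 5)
Your argument is correct, but it follows a genuinely different route from the paper. The paper's proof is a two-line application of the black-box criterion of Theorem~\ref{theo:HohlwegLangeThomas}: Lemma~\ref{lem:pointInHyp} identifies~$\b{a}(\Cone(\dissection_\bullet))$ with~$\point{\dissection_\circ}{\dissection_\bullet}$, and Lemma~\ref{lem:factorcvectorFlip} shows that across a flip the difference of these points is a negative multiple of~$\cvector{\dissection_\circ}{\dissection_\bullet}{\delta_\bullet}$, hence points from~$\Cone(\dissection_\bullet)$ to~$\Cone(\dissection_\bullet')$ by the dual-basis property; the criterion then does all the local-to-global work. You instead inline that local-to-global step: your maximizer argument (take~$\dissection_\bullet^*$ optimizing~$\dotprod{\gvector{\dissection_\circ}{\gamma_\bullet}}{\point{\dissection_\circ}{\cdot}}$, read off nonnegativity of the dual-basis coordinates from the non-increasing flips, deduce~$\gvector{\dissection_\circ}{\gamma_\bullet} \in \Cone(\dissection_\bullet^*)$ and hence~$\gamma_\bullet \in \dissection_\bullet^*$ by simpliciality) is in essence a self-contained proof of the relevant direction of Theorem~\ref{theo:HohlwegLangeThomas} in this setting, built from exactly the same ingredients (Lemmas~\ref{lem:pointInHyp} and~\ref{lem:factorcvectorFlip}, Proposition~\ref{prop:gvectorscvectorsDualBases}, and the completeness and simpliciality from Theorem~\ref{thm:gvectorFan}). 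What your version buys is transparency about \emph{why} the wall-by-wall condition globalizes; what the paper's version buys is brevity and reusability of the general criterion. Two small points to tighten: Lemma~\ref{lem:pointInHyp} only gives that~$\point{\dissection_\circ}{\dissection_\bullet}$ saturates \emph{at least} the inequalities indexed by~$\dissection_\bullet$ (which suffices for vertexhood), not ``exactly''; and the claim that its normal cone is \emph{precisely}~$\Cone(\dissection_\bullet)$ needs the observation that adjacent dissections yield distinct points --- this follows from the strict negativity in Lemma~\ref{lem:factorcvectorFlip}, so the gap is purely expository, but as written the word ``precisely'' precedes the covering argument that justifies it.
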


The proof of Theorem~\ref{thm:accordiohedron} is based on the following characterization of polytopal realizations of a complete simplicial fan, whose proof can be found \eg in~\cite[Thm.~4.1]{HohlwegLangeThomas}.

\begin{theorem}
\label{theo:HohlwegLangeThomas}
Given a complete simplicial fan~$\Fan$ in~$\R^d$, consider for each ray~$\ray$ of~$\Fan$ a half-space~$\hs_\ray$ of~$\R^d$ containing the origin and defined by a hyperplane~$\hyp_\ray$ orthogonal to~$\ray$. For each maximal cone~$\Cone$ of~$\Fan$, let~$\b{a}(\Cone) \in \R^d$ be the intersection of all hyperplanes~$\hyp_\ray$ with~$\ray \in \Cone$. Then the following assertions are equivalent:
\begin{enumerate}[(i)]
\item The vector~$\b{a}(\Cone') - \b{a}(\Cone)$ points from~$\Cone$ to~$\Cone'$ for any two adjacent maximal cones~$\Cone$, $\Cone'$~of~$\Fan$.
\item The polytopes
\[
\conv\set{\b{a}(\Cone)}{\Cone \text{ maximal cone of } \Fan} \quad\text{ and }\quad
\bigcap_{\ray \text{ ray of } \Fan} \hs_\ray
\]
coincide and their normal fan is~$\Fan$.
\end{enumerate}
\end{theorem}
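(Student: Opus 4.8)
The plan is to prove the two implications separately, with almost all of the work lying in $(i)\implies(ii)$. Throughout, write~$\hyp_\ray = \set{\b{x} \in \R^d}{\dotprod{\ray}{\b{x}} = h_\ray}$ and~$\hs_\ray = \set{\b{x} \in \R^d}{\dotprod{\ray}{\b{x}} \le h_\ray}$ for the prescribed data (so~$h_\ray \ge 0$ since~$\hs_\ray$ contains the origin). Since~$\Fan$ is a complete simplicial fan in~$\R^d$, each maximal cone~$\Cone$ is generated by~$d$ linearly independent rays, so~$\b{a}(\Cone)$ is the unique point with~$\dotprod{\ray}{\b{a}(\Cone)} = h_\ray$ for all rays~$\ray$ of~$\Cone$. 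Set~$P \eqdef \conv\set{\b{a}(\Cone)}{\Cone \text{ maximal cone of } \Fan}$ and~$Q \eqdef \bigcap_\ray \hs_\ray$. For adjacent maximal cones~$\Cone, \Cone'$ sharing a wall~$W$, both~$\b{a}(\Cone)$ and~$\b{a}(\Cone')$ lie on the~$d-1$ hyperplanes~$\hyp_\ray$ indexed by the rays of~$W$, so~$\b{a}(\Cone')-\b{a}(\Cone)$ is automatically parallel to the normal of~$\mathrm{span}(W)$; assertion~$(i)$, which I read as \emph{$\b{a}(\Cone')-\b{a}(\Cone)$ is a positive multiple of the normal pointing from the~$\Cone$-side to the~$\Cone'$-side}, only fixes its sign.

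For~$(i)\implies(ii)$, the key step is to show that for every maximal cone~$\Cone$ and every~$\b{y}$ in the interior of~$\Cone$, the functional~$\b{y}$ is \emph{strictly} maximized over~$\set{\b{a}(\Cone'')}{\Cone''}$ at the single point~$\b{a}(\Cone)$. To prove this I would fix another maximal cone~$\Cone''$, choose generic interior points~$\b{p} \in \Cone$ and~$\b{q} \in \Cone''$, and follow the straight segment~$[\b{q}, \b{p}]$, which meets the walls of~$\Fan$ transversally and so crosses a sequence of maximal cones~$\Cone'' = \Cone_0, \Cone_1, \dots, \Cone_k = \Cone$. At the~$j$-th crossing the segment passes through the supporting hyperplane of the wall~$W_j$ separating~$\Cone_{j-1}$ from~$\Cone_j$; since a segment meets a hyperplane at most once, all later cones, in particular~$\Cone = \Cone_k$, remain on the~$\Cone_j$-side of~$\mathrm{span}(W_j)$. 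Writing~$\b{w}_j$ for the normal of~$\mathrm{span}(W_j)$ pointing to that side, we get~$\dotprod{\b{w}_j}{\b{y}} > 0$ because~$\b{y} \in \mathrm{int}(\Cone)$. By assertion~$(i)$, $\b{a}(\Cone_j) - \b{a}(\Cone_{j-1})$ is a positive multiple of~$\b{w}_j$, hence~$\dotprod{\b{y}}{\b{a}(\Cone_j) - \b{a}(\Cone_{j-1})} > 0$, and telescoping over~$j = 1, \dots, k$ yields~$\dotprod{\b{y}}{\b{a}(\Cone) - \b{a}(\Cone'')} > 0$. In particular the points~$\b{a}(\Cone)$ are pairwise distinct, each is a vertex of the (automatically bounded) polytope~$P$, and its normal cone~$N_P(\b{a}(\Cone))$ contains~$\mathrm{int}(\Cone)$, hence contains~$\Cone$. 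As~$\Fan$ is complete, its maximal cones cover~$\R^d$ with pairwise disjoint interiors, and so do the maximal cones of the normal fan of~$P$; the inclusions~$\Cone \subseteq N_P(\b{a}(\Cone))$ together with this bijective covering force equalities, so the normal fan of~$P$ is exactly~$\Fan$.

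It then remains to identify~$P$ with~$Q$ and to close the other implication. For each ray~$\ray$, choosing any maximal cone~$\Cone_\ray \ni \ray$ gives~$\max_{\b{x} \in P} \dotprod{\ray}{\b{x}} = \dotprod{\ray}{\b{a}(\Cone_\ray)} = h_\ray$, so every vertex of~$P$ lies in~$\hs_\ray$ and thus~$P \subseteq Q$. Conversely, since the normal fan of~$P$ is~$\Fan$, the facets of~$P$ are indexed by the rays of~$\Fan$ and the facet of outer normal~$\ray$ is supported by~$\hyp_\ray$; hence~$P = \bigcap_\ray \hs_\ray = Q$ with normal fan~$\Fan$, which is assertion~$(ii)$. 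For~$(ii)\implies(i)$, adjacent maximal cones~$\Cone, \Cone'$ sharing a wall correspond under the normal fan to adjacent vertices~$\b{a}(\Cone), \b{a}(\Cone')$ joined by an edge of~$P$; taking~$\b{y} \in \mathrm{int}(\Cone)$, strict maximality at~$\b{a}(\Cone)$ gives~$\dotprod{\b{y}}{\b{a}(\Cone') - \b{a}(\Cone)} < 0$, and since~$\b{y}$ lies on the~$\Cone$-side of the wall while~$\b{a}(\Cone')-\b{a}(\Cone)$ is parallel to the wall normal, this forces the latter to point from~$\Cone$ to~$\Cone'$, which is assertion~$(i)$.

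I expect the main obstacle to be the genericity and transversality bookkeeping in the segment argument of the second paragraph: one must choose~$\b{p}, \b{q}$ so that~$[\b{q},\b{p}]$ avoids all codimension-$2$ cones and crosses the walls one at a time, and then argue cleanly that a single crossing commits all subsequent cones to one side of the wall's supporting hyperplane. This transversality, together with the sign matching between assertion~$(i)$ and the side on which~$\b{y}$ lies, is where all the geometric content sits; the two inclusions~$P \subseteq Q$, $Q \subseteq P$ and the implication~$(ii)\implies(i)$ are then routine consequences of the normal fan description.
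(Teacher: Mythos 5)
Your overall strategy is exactly the standard one: the paper itself contains no proof of this statement (it is quoted with the proof deferred to~\cite[Thm.~4.1]{HohlwegLangeThomas}), and the known proof of this folklore characterization proceeds precisely by your wall-crossing and telescoping argument along a generic segment. So in substance you are reproducing the cited proof, and the global architecture (strict maximization forces~$\Cone \subseteq$ normal cone of~$\b{a}(\Cone)$, completeness forces equality of fans, facet tightness gives~$P = \bigcap_\ray \hs_\ray$, and the routine reversal for~(ii)$\implies$(i)) is sound. Your reading of~(i) as a sign condition on the wall normal is also the intended one, and in~(ii)$\implies$(i) your appeal to ``$\b{y}$ lies on the~$\Cone$-side of the wall'' is legitimate there, since that wall is a facet of~$\Cone$ itself; you should only add the one-line check that the vertex of the polytope whose normal cone is~$\Cone$ is indeed~$\b{a}(\Cone)$ (it lies in the convex hull and attains equality in~$\dotprod{\ray}{\b{x}} \le h_\ray$ for all rays~$\ray \in \Cone$, while the polytope satisfies these inequalities).

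There is, however, one step in~(i)$\implies$(ii) that is false as written: the claim that after the~$j$-th crossing ``all later cones, in particular~$\Cone = \Cone_k$, remain on the~$\Cone_j$-side of~$\mathrm{span}(W_j)$'', from which you deduce~$\dotprod{\b{w}_j}{\b{y}} > 0$ for \emph{every}~$\b{y}$ in the interior of~$\Cone$. Only the later portion of the segment, in particular its endpoint~$\b{p}$, is committed to that side; the cone~$\Cone$ itself may straddle~$\mathrm{span}(W_j)$, since that hyperplane is spanned by a wall of some \emph{other} cone of the fan and need not support~$\Cone$. Concretely, take the complete simplicial fan in~$\R^2$ with rays at angles~$0^\circ$, $100^\circ$ and~$200^\circ$: a segment from the cone spanned by the~$200^\circ$ and~$0^\circ$ rays to a point at angle~$90^\circ$ crosses the walls at~$200^\circ$ and then~$100^\circ$, yet the final cone (angles~$0^\circ$ to~$100^\circ$) straddles the line spanned by the~$200^\circ$ ray, so an interior point at angle~$10^\circ$ violates your asserted inequality for~$j = 1$. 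The repair is immediate and stays entirely within your framework: run the telescoping with~$\b{y} \eqdef \b{p}$, obtaining~$\dotprod{\b{p}}{\b{a}(\Cone) - \b{a}(\Cone'')} > 0$ for all~$\Cone''$ and all~$\b{p}$ in a dense subset of the interior of~$\Cone$. This already makes the points~$\b{a}(\Cone)$ pairwise distinct vertices of~$P$ and shows that the normal cone of~$\b{a}(\Cone)$ contains a dense subset of the interior of~$\Cone$, hence (normal cones being closed) contains~$\Cone$ --- which is all that the remainder of your argument uses. If you want the strict inequality for an arbitrary interior~$\b{y}$, note that~$\dotprod{\cdot}{\b{a}(\Cone) - \b{a}(\Cone'')}$ is nonnegative on~$\Cone$ by density and continuity, and a nonzero linear form nonnegative on a full-dimensional cone cannot vanish at an interior point.
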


\begin{proof}[Proof of Theorem~\ref{thm:accordiohedron}]
The $\b{g}$-vector fan~$\gvectorFan(\dissection_\circ)$ has a ray~$\gvector{\dissection_\circ}{\delta_\bullet}$ for each $\dissection_\circ$-accordion diagonal~$\delta_\bullet$ and a maximal cone~$\Cone(\dissection_\bullet) = \R_{\ge0} \gvectors{\dissection_\circ}{\dissection_\bullet}$ for each maximal $\dissection_\circ$-accordion dissection~$\dissection_\bullet$. Consider the half-spaces~$\HS{\dissection_\circ}{\gamma_\bullet}$ for all $\dissection_\circ$-accordion diagonals~$\gamma_\bullet$. Lemma~\ref{lem:pointInHyp} ensures that the point~$\b{a}(\Cone(\dissection_\bullet))$ coincides with~$\point{\dissection_\circ}{\dissection_\bullet}$ for each maximal $\dissection_\circ$-accordion dissection~$\dissection_\bullet$. Finally, Lemma~\ref{lem:factorcvectorFlip} shows that the conditions of application of Theorem~\ref{theo:HohlwegLangeThomas} are fulfilled.
\end{proof}

\begin{example}
Following Example~\ref{exm:specialReferenceDissections}, observe that special reference hollow dissections give rise to the following relevant polytopes, illustrated in \fref{fig:exmAccordiohedra}:
\begin{itemize}
\item For a fan triangulation~$\triangulation_\circ$, the $\triangulation_\circ$-accordiohedron~$\Acco(\triangulation_\circ)$ is the classical associahedron constructed by S.~Shnider and S.~Sternberg~\cite{ShniderSternberg} and J.-L.~Loday~\cite{Loday}.
\item The $\accordion_\circ$-accordiohedra~$\Acco(\accordion_\circ)$ for all accordion triangulations~$\accordion_\circ$ are precisely the associahedra constructed by C.~Hohlweg and C.~Lange in~\cite{HohlwegLange}.
\item For a triangulation~$\triangulation_\circ$ with an interior triangle, the $\triangulation_\circ$-accordiohedron~$\Acco(\triangulation_\circ)$ was recently constructed in~\cite{HohlwegPilaudStella}. For example, for the triangulation of the hexagon with an interior triangle, this associahedron appeared as a mysterious realization in~\cite{CeballosSantosZiegler}.
\item For a quadrangulation~$\quadrangulation_\circ$, the $\quadrangulation_\circ$-accordiohedron~$\Acco(\quadrangulation_\circ)$ is a realization of the Stokes polytope announced by Y.~Baryshnikov~\cite{Baryshnikov} and discussed by F.~Chapoton in~\cite{Chapoton-quadrangulations}.
\end{itemize}

\begin{figure}[t]
	\capstart
	\centerline{\includegraphics[width=\textwidth]{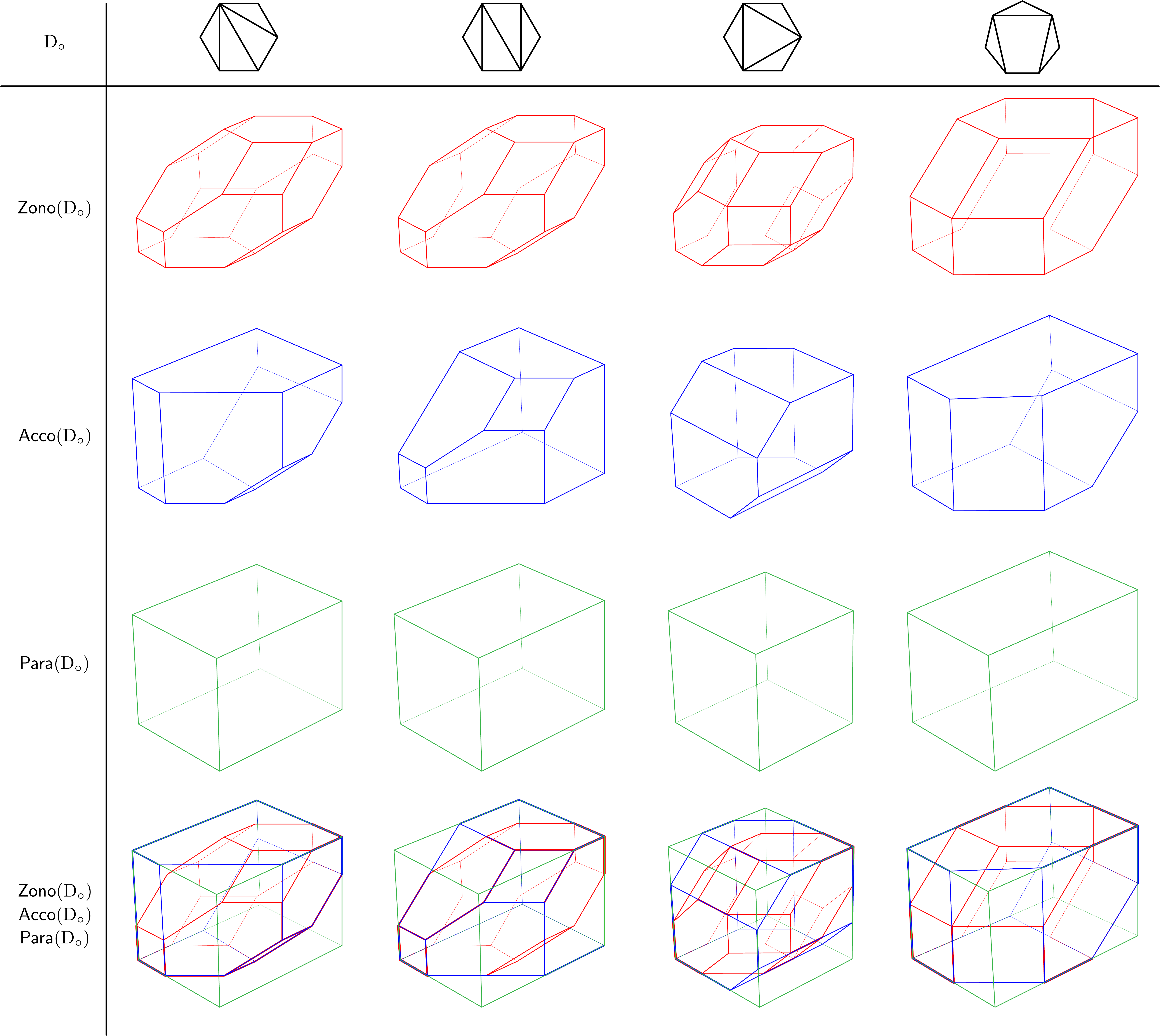}}
	\caption{The zonotope~$\Zono(\dissection_\circ)$, $\dissection_\circ$-accordiohedron~$\Acco(\dissection_\circ)$ and parallelepiped~$\Para(\dissection_\circ)$ for different reference dissections~$\dissection_\circ$. The first column is J.-L.~Loday's associahedron~\cite{Loday}, the second column is one of C.~Hohlweg and C.~Lange's associahedra~\cite{HohlwegLange}, the third column appeared in a discussion in C.~Ceballos, F.~Santos and G.~Ziegler's survey on associahedra~\cite[Fig.~3]{CeballosSantosZiegler} and was explained in C.~Hohlweg, V.~Pilaud and S.~Stella's recent paper~\cite{HohlwegPilaudStella}, and the last column is a Stokes complex discussed by F.~Chapoton in~\cite{Chapoton-quadrangulations} and illustrated in \fref{fig:exmAccordionComplex}.}
	\label{fig:exmAccordiohedra}
\end{figure}
\end{example}

We conclude this section by an immediate consequence of Theorem~\ref{thm:accordiohedron}. To our knowledge, this property of accordion complexes was not observed before. However, using the connection between accordion complexes and support $\tau$-tilting complexes~\cite{GarverMcConville, PaluPilaudPlamondon, PilaudPlamondonStella, BrustleDouvilleMousavandThomasYildirim}, it can also be obtained from~\cite[Thm.~1.7]{DemonetIyamaJasso}.

\begin{corollary}
For any reference dissection~$\dissection_\circ$, the $\dissection_\circ$-accordion complex~$\accordionComplex(\dissection_\circ)$ is shellable.
\end{corollary}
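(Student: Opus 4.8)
The plan is to read off shellability directly from the polytopal realization established in Theorem~\ref{thm:accordiohedron}. That theorem asserts that the polar dual $\Acco(\dissection_\circ)\polar$ of the $\dissection_\circ$-accordiohedron is a polytopal realization of the accordion complex; in other words, $\accordionComplex(\dissection_\circ)$ is isomorphic, as an abstract simplicial complex, to the boundary complex of $\Acco(\dissection_\circ)\polar$. Under this identification a $\dissection_\circ$-accordion diagonal $\delta_\bullet$ corresponds to a vertex of $\Acco(\dissection_\circ)\polar$ (dually, to the ray $\gvector{\dissection_\circ}{\delta_\bullet}$ of the $\b{g}$-vector fan and to a facet of $\Acco(\dissection_\circ)$), while a maximal $\dissection_\circ$-accordion dissection corresponds to a facet of $\Acco(\dissection_\circ)\polar$ (dually, to a maximal cone of $\gvectorFan(\dissection_\circ)$ and to a vertex of $\Acco(\dissection_\circ)$). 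Moreover $\Acco(\dissection_\circ)\polar$ is genuinely simplicial: its normal fan $\gvectorFan(\dissection_\circ)$ is a complete simplicial fan by Theorem~\ref{thm:gvectorFan}, so every vertex of $\Acco(\dissection_\circ)$ is simple and each facet of the polar dual is a simplex.

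With this in hand, I would invoke the classical theorem of Bruggesser and Mani that the boundary complex of any convex polytope admits a line shelling, and is therefore shellable: one fixes a generic directed line through the polytope and orders the facets according to the order in which they become visible as a point travels to infinity along the line and returns from the other side. Applying this to the simplicial polytope $\Acco(\dissection_\circ)\polar$ yields a shelling order of its facets, hence of the maximal faces of $\accordionComplex(\dissection_\circ)$. Since shellability is a combinatorial property preserved by the simplicial isomorphism between the two complexes, $\accordionComplex(\dissection_\circ)$ is shellable.

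There is no serious obstacle once Theorem~\ref{thm:accordiohedron} is granted; the argument is essentially the observation that boundary complexes of polytopes are shellable. The only point meriting care is the degenerate situation of Remark~\ref{rem:reduction}, in which a non-triangular cell of $\dissection_\circ$ carries several boundary edges and $\accordionComplex(\dissection_\circ)$ decomposes as a join of smaller accordion complexes. There one may either appeal to the standard fact that a join of shellable complexes is again shellable, or simply observe that Theorem~\ref{thm:accordiohedron} continues to provide a polytopal realization (the accordiohedron being a product in this case), so that the Bruggesser--Mani argument applies verbatim.
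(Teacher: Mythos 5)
Your argument is correct and is exactly the one the paper intends: the corollary is stated as an immediate consequence of Theorem~\ref{thm:accordiohedron}, the implicit reasoning being that $\accordionComplex(\dissection_\circ)$ is the boundary complex of the simplicial polytope $\Acco(\dissection_\circ)\polar$ and boundary complexes of polytopes are shellable by Bruggesser--Mani line shellings. Your remarks on simpliciality of the polar dual and on the degenerate case of Remark~\ref{rem:reduction} are accurate but not needed, since Theorem~\ref{thm:accordiohedron} already covers an arbitrary reference dissection.
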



\subsection{Some properties of~$\Acco(\dissection_\circ)$}

We conclude this section by pointing out some relevant combinatorial and geometric properties and observations on the $\dissection_\circ$-accordiohedron.

\begin{proposition}
The graph of the $\dissection_\circ$-accordiohedron~$\Acco(\dissection_\circ)$ linearly oriented in the direction~$-\one \eqdef - \sum_{\delta_\circ \in \dissection_\circ} \b{e}_{\delta_\circ}$ is the Hasse diagram of the accordion lattice~$\accordionLattice(\dissection_\circ)$.
\end{proposition}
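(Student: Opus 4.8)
The plan is to identify the graph of $\Acco(\dissection_\circ)$ with the $\dissection_\circ$-accordion flip graph $\accordionFlipGraph(\dissection_\circ)$, and then to check that the linear orientation induced by $-\one$ coincides with the $\SSS$-to-$\ZZZ$ orientation defining the accordion lattice $\accordionLattice(\dissection_\circ)$. First, by Theorem~\ref{thm:accordiohedron} the $\b{g}$-vector fan is the normal fan of $\Acco(\dissection_\circ)$, so the vertices of $\Acco(\dissection_\circ)$ are exactly the points $\point{\dissection_\circ}{\dissection_\bullet}$ indexed by maximal $\dissection_\circ$-accordion dissections $\dissection_\bullet$, and two vertices are joined by an edge precisely when the corresponding maximal cones share a wall, that is, when $\dissection_\bullet$ and $\dissection_\bullet'$ differ by a flip. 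Hence the underlying graph of $\Acco(\dissection_\circ)$ is $\accordionFlipGraph(\dissection_\circ)$, and by the accordion lattice theorem its $\SSS$-to-$\ZZZ$ orientation is the Hasse diagram of $\accordionLattice(\dissection_\circ)$. It therefore suffices to prove that, on every edge, the orientation induced by $-\one$ agrees with the one sending the dissection carrying the $\SSS$ to the one carrying the $\ZZZ$.

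The core is a local computation on a single flip. Fix adjacent maximal dissections $\dissection_\bullet, \dissection_\bullet'$ with $\dissection_\bullet \ssm \{\delta_\bullet\} = \dissection_\bullet' \ssm \{\delta_\bullet'\}$ and let $\mu_\bullet, \nu_\bullet$ be as in Lemma~\ref{lem:flip}. By Lemma~\ref{lem:factorcvectorFlip} there is a scalar $\lambda < 0$ with $\point{\dissection_\circ}{\dissection_\bullet'} - \point{\dissection_\circ}{\dissection_\bullet} = \lambda \, \cvector{\dissection_\circ}{\dissection_\bullet}{\delta_\bullet}$. Pairing with $-\one$ gives
\[
\dotprod{-\one}{\point{\dissection_\circ}{\dissection_\bullet'} - \point{\dissection_\circ}{\dissection_\bullet}} = -\lambda \, \dotprod{\one}{\cvector{\dissection_\circ}{\dissection_\bullet}{\delta_\bullet}},
\]
and since $-\lambda > 0$ the sign of this quantity is the sign of $\dotprod{\one}{\cvector{\dissection_\circ}{\dissection_\bullet}{\delta_\bullet}}$. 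By Corollary~\ref{coro:signCoherence}(i) (see also Remark~\ref{rem:informalgcvectors}(ii)) the $\b{c}$-vector $\cvector{\dissection_\circ}{\dissection_\bullet}{\delta_\bullet}$ is nonzero and sign-coherent, so $\dotprod{\one}{\cvector{\dissection_\circ}{\dissection_\bullet}{\delta_\bullet}}$ is nonzero and carries the common sign of its coordinates. Thus the $-\one$-orientation of the edge is governed entirely by whether $\cvector{\dissection_\circ}{\dissection_\bullet}{\delta_\bullet}$ is positive or negative.

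It remains to match this sign with the $\SSS$/$\ZZZ$ datum. By the definition of $\signbullet{\cdot}{\dissection_\bullet}{\delta_\bullet}$, the common coordinate sign of $\cvector{\dissection_\circ}{\dissection_\bullet}{\delta_\bullet}$ records whether the path $\mu_\bullet \delta_\bullet \nu_\bullet$ forms an $\SSS$ or a $\ZZZ$ — exactly the configuration used in Section~\ref{subsec:accordionLattice} to orient the flip. Feeding this into the displayed sign computation shows that the edge of $\Acco(\dissection_\circ)$ progresses along $-\one$ in precisely the cases where the accordion lattice orients the flip from the $\SSS$ to the $\ZZZ$, so the two orientations agree on every edge. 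To exclude a global reversal it suffices to check one vertex: by Example~\ref{exm:gcvectors} the extreme dissections $\dissection_\bullet^\mi$ and $\dissection_\bullet^\ma$ give the points with all-negative and all-positive coordinates, which anchors the direction of $-\one$ consistently with the unique source and sink of $\accordionLattice(\dissection_\circ)$.

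The main obstacle is the sign bookkeeping of the last two paragraphs: three independent sign conventions interact — the opposite conventions for $\signcirc$ and $\signbullet$, the strictly negative coefficient $\lambda$ of Lemma~\ref{lem:factorcvectorFlip}, and the minus sign in the functional $-\one$ — and one must verify that they compose to the $\SSS$-to-$\ZZZ$ direction rather than to its reverse. The extreme dissections $\dissection_\bullet^\mi$ and $\dissection_\bullet^\ma$ serve as the anchor that fixes the overall orientation once the per-edge agreement has been established.
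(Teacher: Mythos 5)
Your proposal is correct and follows essentially the same route as the paper: reduce to a per-edge check, apply Lemma~\ref{lem:factorcvectorFlip} to write $\point{\dissection_\circ}{\dissection_\bullet'} - \point{\dissection_\circ}{\dissection_\bullet}$ as a negative multiple of the sign-coherent $\b{c}$-vector $\cvector{\dissection_\circ}{\dissection_\bullet}{\delta_\bullet}$, and observe that its sign (positive exactly when $\mu_\bullet\delta_\bullet\nu_\bullet$ forms a $\SSS$ in $\dissection_\bullet$) makes $\dotprod{-\one}{\cdot}$ increase along increasing flips. The only cosmetic difference is that the paper starts directly from an increasing flip and identifies the $\b{c}$-vector with the characteristic vector $\one_{\accordion_\circ}$, which renders your final ``anchor at $\dissection_\bullet^\mi$'' paragraph unnecessary, since the per-edge sign computation already fixes the orientation globally.
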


\begin{proof}
Consider two adjacent maximal $\dissection_\circ$-accordion dissections~$\dissection_\bullet, \dissection_\bullet'$ such that the flip from~$\dissection_\bullet$ to~$\dissection_\bullet'$ is increasing. Let~$\delta_\bullet \in \dissection_\bullet$ and~${\delta_\bullet' \in \dissection_\bullet'}$ be such that~${\dissection_\bullet \ssm \{\delta_\bullet\} = \dissection_\bullet' \ssm \{\delta_\bullet'\}}$. As observed in Remark~\ref{rem:informalgcvectors}\,(ii), the $\b{c}$-vector~$\cvector{\dissection_\circ}{\dissection_\bullet}{\delta_\bullet}$ is the characteristic vector~$\one_{\accordion_\circ}$ of the set~$\accordion_\circ$ of diagonals of~$\dissection_\circ$ crossed by both~$\delta_\bullet$ and~$\delta_\bullet'$. Applying Lemma~\ref{lem:factorcvectorFlip}, we therefore obtain that
\[
\dotprod{-\one}{\bigpoint{\dissection_\circ}{\dissection_\bullet'} - \bigpoint{\dissection_\circ}{\dissection_\bullet}} = \dotprod{-\one}{\lambda \cdot \bigcvector{\dissection_\circ}{\dissection_\bullet}{\delta_\bullet}} = \lambda \cdot \dotprod{-\one}{\one_{\accordion_\circ}} = -\lambda \cdot |\accordion_\circ|,
\]
for some~$\lambda \in \Z_{<0}$. Thus, the linear functional~$-\one$ indeed orients the edge~$[\point{\dissection_\circ}{\dissection_\bullet}, \point{\dissection_\circ}{\dissection_\bullet'}]$ from~$\point{\dissection_\circ}{\dissection_\bullet}$ to~$\point{\dissection_\circ}{\dissection_\bullet'}$.
\end{proof}

\begin{remark}
Since the $\b{c}$-vector fan~$\cvectorFan(\dissection_\circ)$ refines the $\b{g}$-vector fan~$\gvectorFan(\dissection_\circ)$, there is a natural projection~$\projection$ from the vertices of the $\dissection_\circ$-zonotope~$\Zono(\dissection_\circ)$ to that of the $\dissection_\circ$-accordiohedron~$\Acco(\dissection_\circ)$. In analogy to the acyclic case, one could hope to obtain the accordion lattice as a lattice quotient through this projection. However, the transitive closure of the graph of the $\dissection_\circ$-zonotope~$\Zono(\dissection_\circ)$ oriented in the direction~$-\one$ is not a lattice in general (the first counter-example is the dissection with a central square surrounded by~$4$ triangles). As shown in~\cite{GarverMcConville}, the right objects are not the separable subsets of $\b{c}$-vectors (\ie the vertices of~$\Zono(\dissection_\circ)$) but the biclosed subsets of $\b{c}$-vectors.
\end{remark}

\begin{proposition}
\label{prop:parallelFacets}
The accordiohedron~$\Acco(\dissection_\circ)$ has precisely~$|\dissection_\circ|$ pairs of parallel facets.
\end{proposition}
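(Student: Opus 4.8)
The plan is to read the facets of the accordiohedron off its normal fan and translate parallelism into a condition on $\b{g}$-vectors. By Theorem~\ref{thm:accordiohedron} the normal fan of $\Acco(\dissection_\circ)$ is the $\b{g}$-vector fan $\gvectorFan(\dissection_\circ)$, so the facets of $\Acco(\dissection_\circ)$ correspond to the rays of $\gvectorFan(\dissection_\circ)$, that is, to the nonzero $\b{g}$-vectors, the outer normal of the facet attached to a $\dissection_\circ$-accordion diagonal $\gamma_\bullet$ being $\gvector{\dissection_\circ}{\gamma_\bullet}$. Since distinct facets of a polytope have non-proportional outer normals, two facets are parallel if and only if their $\b{g}$-vectors are opposite (they cannot be positive multiples of one another). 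The statement thus amounts to counting the unordered pairs $\{\b{g}, -\b{g}\}$ of opposite nonzero $\b{g}$-vectors, and I would show there are exactly $|\dissection_\circ|$ of them.

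First I would exhibit $|\dissection_\circ|$ such pairs. For every internal diagonal $\delta_\circ = i_\circ j_\circ \in \dissection_\circ$, Example~\ref{exm:gcvectors} provides two accordion diagonals, $(i+1)_\bullet(j+1)_\bullet$ and $(i-1)_\bullet(j-1)_\bullet$, whose $\b{g}$-vectors are $+\b{e}_{\delta_\circ}$ and $-\b{e}_{\delta_\circ}$ (these are the rotations producing the sink $\dissection_\bullet^\ma$ and the source $\dissection_\bullet^\mi$). This yields the $|\dissection_\circ|$ pairs $\{\b{e}_{\delta_\circ}, -\b{e}_{\delta_\circ}\}$ of opposite coordinate vectors, hence $|\dissection_\circ|$ pairs of parallel facets.

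It then remains to prove these are the only opposite pairs, i.e.\ that $\gvector{\dissection_\circ}{\gamma_\bullet} = -\gvector{\dissection_\circ}{\gamma_\bullet'} \ne 0$ forces the common support $S$ of these two vectors to be a single diagonal of~$\dissection_\circ$. The key is a rigidity property of the signs $\signcirc{\delta_\circ}{\dissection_\circ}{\gamma_\bullet}$: for $\delta_\circ \in S$, this sign depends only on the two edges of $\overline{\dissection}_\circ$ that $\gamma_\bullet$ crosses in the two cells incident to~$\delta_\circ$. When $\delta_\circ$ has another support diagonal adjacent to it along the accordion, the edge crossed on that side is precisely that neighbour, pinned at one endpoint of~$\delta_\circ$; slaloming then forces the edge crossed on the other side to be the unique cell-edge at the opposite endpoint of~$\delta_\circ$, so the $\ZZZ$-versus-$\SSS$ alternative is fixed. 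Hence, as soon as $|S| \ge 2$, every sign on~$S$ is prescribed by $\dissection_\circ$ alone, so any two accordion diagonals with support~$S$ share the same $\b{g}$-vector. This gives $\gvector{\dissection_\circ}{\gamma_\bullet} = \gvector{\dissection_\circ}{\gamma_\bullet'}$, which combined with the assumption $\gvector{\dissection_\circ}{\gamma_\bullet} = -\gvector{\dissection_\circ}{\gamma_\bullet'}$ yields $\gvector{\dissection_\circ}{\gamma_\bullet} = 0$, a contradiction. Therefore $|S| = 1$ and $\gvector{\dissection_\circ}{\gamma_\bullet} = \pm\b{e}_{\delta_\circ}$, which closes the count at exactly $|\dissection_\circ|$ pairs.

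The main obstacle is this rigidity step. Making it airtight requires a careful local analysis at each diagonal of~$S$, tracking the alternation of signs along the accordion and, above all, handling the degenerate $\VVV$ configurations where $\gamma_\bullet$ crosses a diagonal of~$\dissection_\circ$ without slaloming on it, so that $S$ may be a proper subset of the zigzag (as already happens for a diagonal that cuts off a single vertex of a cell). The cleanest route I see is to establish directly that two accordion diagonals inducing the same support of size at least two must cross, in each relevant cell, the same edge of~$\overline{\dissection}_\circ$, forcing equality of the two $\b{g}$-vectors.
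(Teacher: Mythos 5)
Your proposal is correct and follows essentially the same route as the paper's proof: reduce parallelism of facets to oppositeness of $\b{g}$-vectors via the normal fan, exhibit the $|\dissection_\circ|$ pairs of opposite coordinate vectors, and exclude any further pair by the rigidity observation that two $\dissection_\circ$-accordion diagonals whose $\b{g}$-vectors are both nonzero on two common diagonals of~$\dissection_\circ$ must slalom on each of them in the same way. The paper states this last step in exactly the same (terse) form, so no further comparison is needed.
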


\begin{proof}
Two facets of~$\Acco(\dissection_\circ)$ are parallel if and only if the corresponding $\b{g}$-vectors are opposite. We therefore want to prove that the pairs of opposite coordinate vectors are the only pairs of opposite $\b{g}$-vectors. Assume by contradiction that there exist two hollow diagonals~$\delta_\circ, \delta_\circ' \in \dissection_\circ$ and two solid $\dissection_\circ$-diagonals~$\delta_\bullet, \delta_\bullet'$ such that~$\gvector{\dissection_\circ}{\delta_\bullet}$ and~$\gvector{\dissection_\circ}{\delta_\bullet'}$ have non-zero opposite coordinate both on~$\delta_\circ$ and~$\delta_\circ'$. Then both~$\delta_\bullet$ and~$\delta_\bullet'$ cross both~$\delta_\circ$ and~$\delta_\circ'$. But this implies that they both slalom on~$\delta_\circ$ (and on~$\delta_\circ'$) in the same way. Contradiction.
\end{proof}

Recall from Example~\ref{exm:gcvectors} that the $\b{g}$-vectors of the diagonals of~$\dissection_\bullet^\mi$ (resp.~$\dissection_\bullet^\ma$) are the coordinate vectors (resp.~negative of the coordinate vectors). Consider the \defn{$\dissection_\circ$-parallelepiped}
\[
\Para(\dissection_\circ) \eqdef \set{\b{x} \in \R^{\dissection_\circ}}{\dotprod{\gvector{\dissection_\circ}{\delta_\bullet}}{\b{x}}| \le \rhs{\dissection_\circ}{\delta_\bullet}\text{ for all } \delta_\bullet \in \dissection_\bullet^\mi \cup \dissection_\bullet^\ma}
\]
defined by the inequalities of the $\dissection_\circ$-zonotope~$\Zono(\dissection_\circ)$ corresponding to the positive and negative basis vectors. Our next statement follows from Proposition~\ref{prop:parallelFacets} and is illustrated in \fref{fig:exmAccordiohedra}.

\begin{corollary}
For any~$\dissection_\circ$, we have matriochka polytopes: 
\(
\Zono(\dissection_\circ) \subseteq \Acco(\dissection_\circ) \subseteq \Para(\dissection_\circ).
\)
\end{corollary}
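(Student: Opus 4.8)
The plan is to establish the two inclusions separately, in each case by comparing half-space descriptions rather than vertex descriptions, so that both inclusions reduce to results already proved.

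For the left inclusion $\Zono(\dissection_\circ) \subseteq \Acco(\dissection_\circ)$, I would use the half-space presentation of the accordiohedron from Theorem~\ref{thm:accordiohedron}, namely $\Acco(\dissection_\circ) = \bigcap_{\gamma_\bullet} \HS{\dissection_\circ}{\gamma_\bullet}$, the intersection running over all $\dissection_\circ$-accordion diagonals $\gamma_\bullet$. It then suffices to check that $\Zono(\dissection_\circ)$ lies in each half-space $\HS{\dissection_\circ}{\gamma_\bullet}$, that is, that $\dotprod{\gvector{\dissection_\circ}{\gamma_\bullet}}{\b{x}} \le \rhs{\dissection_\circ}{\gamma_\bullet}$ holds for every $\b{x} \in \Zono(\dissection_\circ)$. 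But this is exactly the content of Proposition~\ref{prop:inequalitiesZonotope}: that very inequality defines a facet of $\Zono(\dissection_\circ)$, and in particular it is valid on the whole zonotope. Intersecting over all $\gamma_\bullet$ yields $\Zono(\dissection_\circ) \subseteq \Acco(\dissection_\circ)$ with no further work.

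For the right inclusion $\Acco(\dissection_\circ) \subseteq \Para(\dissection_\circ)$, the point is that $\Para(\dissection_\circ)$ is defined by a subset of the defining half-spaces of $\Acco(\dissection_\circ)$: it keeps only those $\HS{\dissection_\circ}{\delta_\bullet}$ with $\delta_\bullet \in \dissection_\bullet^\mi \cup \dissection_\bullet^\ma$. Since intersecting fewer half-spaces enlarges the region, the inclusion is immediate once these half-spaces are recognized as genuine facets of $\Acco(\dissection_\circ)$. By Example~\ref{exm:gcvectors}, the $\b{g}$-vectors of the diagonals in $\dissection_\bullet^\mi$ and $\dissection_\bullet^\ma$ are exactly the negatives and the positives of the coordinate vectors, so these inequalities are simply lower and upper bounds on each coordinate; hence $\Para(\dissection_\circ)$ is a coordinate box, in particular bounded, which justifies calling it a parallelepiped. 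Proposition~\ref{prop:parallelFacets} then confirms that these $|\dissection_\circ|$ coordinate directions give precisely the parallel facet pairs of $\Acco(\dissection_\circ)$, so $\Para(\dissection_\circ)$ is exactly the intersection of the parallel facet pairs of the accordiohedron.

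Both inclusions are thus short consequences of facts established earlier, and I do not anticipate a real obstacle. The only point requiring care is bookkeeping, since the two inclusions are proved from opposite directions: the left one views $\Acco(\dissection_\circ)$ as an intersection of half-spaces and feeds in the facet-defining inequalities of $\Zono(\dissection_\circ)$, while the right one uses that $\Para(\dissection_\circ)$ discards most of these same half-spaces. I would double-check that the facet inequalities of $\Zono(\dissection_\circ)$ in Proposition~\ref{prop:inequalitiesZonotope} carry the same normal $\gvector{\dissection_\circ}{\gamma_\bullet}$ and the same right-hand side $\rhs{\dissection_\circ}{\gamma_\bullet}$ as the half-spaces $\HS{\dissection_\circ}{\gamma_\bullet}$ defining $\Acco(\dissection_\circ)$ --- which they do by construction --- so that the two families of inequalities line up exactly and the chain $\Zono(\dissection_\circ) \subseteq \Acco(\dissection_\circ) \subseteq \Para(\dissection_\circ)$ closes.
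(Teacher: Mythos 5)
Your proposal is correct and follows essentially the same route as the paper, which justifies the chain by noting that each polytope is obtained from the previous one by deleting facet-defining inequalities: Proposition~\ref{prop:inequalitiesZonotope} makes every half-space $\HS{\dissection_\circ}{\gamma_\bullet}$ valid for $\Zono(\dissection_\circ)$, and $\Para(\dissection_\circ)$ keeps only the sub-family indexed by $\dissection_\bullet^\mi \cup \dissection_\bullet^\ma$. Your extra check that the normals and right-hand sides match between Proposition~\ref{prop:inequalitiesZonotope} and Theorem~\ref{thm:accordiohedron} is exactly the right bookkeeping point.
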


In fact, each polytope in this chain is obtained by deleting facets from the previous one.

Consider now an isometry~$\sigma$ of the plane that preserves the hollow polygon~$\polygon_\circ$ and the solid polygon~$\polygon_\bullet$. For any diagonals and dissections~$\delta_\bullet \in \dissection_\bullet$ and~$\delta_\circ \in \dissection_\circ$, we~have
\begin{itemize}
\item $\delta_\bullet$ is a $\dissection_\circ$-accordion diagonal $\iff$ $\sigma(\delta_\bullet)$ is a $\sigma(\dissection_\circ)$-accordion diagonal,
\item $\dissection_\bullet$ is a $\dissection_\circ$-accordion dissection $\iff$ $\sigma(\dissection_\bullet)$ is a $\sigma(\dissection_\circ)$-accordion dissection,
\item if~$\Sigma : \R^{\dissection_\circ} \to \R^{\sigma(\dissection_\circ)}$ denotes the isometry defined by
\(
\big( \Sigma(\b{x}) \big)_{\sigma(\delta_\circ)} \eqdef \signature(\sigma) \cdot \b{x}_{\delta_\circ},
\)
(where~${\signature(\sigma) = 1}$ if~$\sigma$ is direct and~$-1$ if~$\sigma$ is indirect), then we have
\[
\begin{array}{@{\hspace{1.5cm}}r@{\;=\;}lcr@{\;=\;}l}
\biggvector{\sigma(\dissection_\circ)}{\sigma(\delta_\bullet)} & \Sigma\big(\gvector{\dissection_\circ}{\delta_\bullet}\big),
& \qquad\qquad &
\bigcvector{\sigma(\dissection_\circ)}{\sigma(\dissection_\bullet)}{\sigma(\delta_\bullet)} & \Sigma\big(\cvector{\dissection_\circ}{\dissection_\bullet}{\delta_\bullet}\big), \\[.1cm]
\bigrhs{\sigma(\dissection_\circ)}{\sigma(\delta_\bullet)} & \bigrhs{\dissection_\circ}{\delta_\bullet},
& \text{and} &
\bigpoint{\sigma(\dissection_\circ)}{\sigma(\dissection_\bullet)} & \Sigma\big(\point{\dissection_\circ}{\dissection_\bullet}\big).
\end{array}
\]
\end{itemize}
This immediately implies the following statement.

\begin{proposition}
Any $\polygon_\circ$-preserving isometry~$\sigma : \R^2 \to \R^2$ induces an isometry~$\Sigma : \R^{\dissection_\circ} \to \R^{\sigma(\dissection_\circ)}$
with $\Sigma\big(\Zono(\dissection_\circ)\big) \! = \Zono\big(\sigma(\dissection_\circ)\big)$, $\Sigma\big(\Acco(\dissection_\circ)\big) \! = \Acco\big(\sigma(\dissection_\circ)\big)$ and~$\Sigma\big(\Para(\dissection_\circ)\big) \! = \Para\big(\sigma(\dissection_\circ)\big)$.
\end{proposition}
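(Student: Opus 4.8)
The plan is to exploit that the induced map $\Sigma$ is a linear isometry of $\R^{\dissection_\circ}$ onto $\R^{\sigma(\dissection_\circ)}$ --- a permutation of the coordinates up to the global sign $\signature(\sigma)$ --- so that it commutes with the three polytope operations at play: Minkowski sums, convex hulls, and intersections of half-spaces. Each of $\Zono(\dissection_\circ)$, $\Acco(\dissection_\circ)$ and $\Para(\dissection_\circ)$ is assembled from combinatorial data attached to $\dissection_\circ$ (respectively the $\b{c}$-vectors of $\allcvectors{\dissection_\circ}$, the points $\point{\dissection_\circ}{\dissection_\bullet}$, and a distinguished family of half-spaces $\HS{\dissection_\circ}{\delta_\bullet}$), and the displayed compatibility relations say exactly that $\Sigma$ transports each such piece of data for $\dissection_\circ$ to the corresponding one for $\sigma(\dissection_\circ)$. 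I would therefore treat the three polytopes one at a time, checking that $\Sigma$ matches generators with generators (or facets with facets) bijectively, and then pushing the linear map through the relevant polytope construction.

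First I would handle the zonotope. Writing $\allcvectors{\dissection_\circ} = \bigcup_{\dissection_\bullet} \cvectors{\dissection_\circ}{\dissection_\bullet}$ over all maximal $\dissection_\circ$-accordion dissections, the second listed equivalence gives a bijection $\dissection_\bullet \mapsto \sigma(\dissection_\bullet)$ onto the maximal $\sigma(\dissection_\circ)$-accordion dissections, while the identity $\bigcvector{\sigma(\dissection_\circ)}{\sigma(\dissection_\bullet)}{\sigma(\delta_\bullet)} = \Sigma\big(\cvector{\dissection_\circ}{\dissection_\bullet}{\delta_\bullet}\big)$ shows that $\Sigma$ sends $\allcvectors{\dissection_\circ}$ bijectively onto $\allcvectors{\sigma(\dissection_\circ)}$. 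Since $\Sigma$ is linear it commutes with Minkowski sums, giving $\Sigma\big(\Zono(\dissection_\circ)\big) = \Zono\big(\sigma(\dissection_\circ)\big)$. The accordiohedron is just as direct through its vertex description: the point identity $\bigpoint{\sigma(\dissection_\circ)}{\sigma(\dissection_\bullet)} = \Sigma\big(\point{\dissection_\circ}{\dissection_\bullet}\big)$ together with the same bijection on maximal dissections shows that $\Sigma$ carries the vertex set of $\Acco(\dissection_\circ)$ onto that of $\Acco\big(\sigma(\dissection_\circ)\big)$, and linearity lets $\Sigma$ pass through the convex hull.

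The parallelepiped is where the one genuine subtlety lies, and is what I expect to be the main obstacle. Here $\Para(\dissection_\circ)$ is cut out by the half-spaces $\HS{\dissection_\circ}{\delta_\bullet}$ with $\delta_\bullet$ ranging over $\dissection_\bullet^\mi \cup \dissection_\bullet^\ma$, so I must first check that $\sigma$ maps this union of source and sink dissections onto the corresponding union for $\sigma(\dissection_\circ)$. A direct $\sigma$ preserves the counterclockwise orientation and hence sends $\dissection_\bullet^\mi$ to the source and $\dissection_\bullet^\ma$ to the sink of $\sigma(\dissection_\circ)$, whereas an indirect $\sigma$ exchanges the two roles; the cleanest way to see that the \emph{union} is preserved in either case is to recall that $\dissection_\bullet^\mi$ and $\dissection_\bullet^\ma$ are intrinsically the minimum and maximum of the accordion lattice $\accordionLattice(\dissection_\circ)$, so that any isometry permutes the pair $\{\dissection_\bullet^\mi, \dissection_\bullet^\ma\}$. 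Once the index set is matched, the identities $\biggvector{\sigma(\dissection_\circ)}{\sigma(\delta_\bullet)} = \Sigma\big(\gvector{\dissection_\circ}{\delta_\bullet}\big)$ and $\bigrhs{\sigma(\dissection_\circ)}{\sigma(\delta_\bullet)} = \bigrhs{\dissection_\circ}{\delta_\bullet}$, together with the fact that $\Sigma$ is an isometry fixing the origin (so it preserves inner products and thus transforms a half-space into the one with the transported normal and the same right-hand side), show that $\Sigma\big(\HS{\dissection_\circ}{\delta_\bullet}\big) = \HS{\sigma(\dissection_\circ)}{\sigma(\delta_\bullet)}$. Intersecting over the matched index set then yields $\Sigma\big(\Para(\dissection_\circ)\big) = \Para\big(\sigma(\dissection_\circ)\big)$, completing the argument.
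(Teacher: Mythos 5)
Your proposal is correct and follows essentially the same route as the paper, which simply records the equivariance identities for the $\b{c}$-vectors, $\b{g}$-vectors, heights and points and then declares that the proposition follows immediately; you have merely spelled out the routine verification (including the correct observation that an indirect isometry may swap $\dissection_\bullet^\mi$ and $\dissection_\bullet^\ma$, which is harmless since only their union indexes the facets of $\Para(\dissection_\circ)$).
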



We say that a dissection~$\dissection$ is \defn{$\sigma$-invariant} when~$\sigma(\dissection) = \dissection$. Assume now that~$\sigma$ is a rotation and~$\dissection_\circ$ is $\sigma$-invariant. We call \defn{$\sigma$-invariant $\dissection_\circ$-accordion complex} the simplicial complex~$\accordionComplex^\sigma(\dissection_\circ)$ whose vertices are the crossing-free $\sigma$-orbits of $\dissection_\circ$-accordion diagonals, and whose faces are sets of such orbits whose union is crossing-free. In other words, the faces of~$\accordionComplex^\sigma(\dissection_\circ)$ are $\sigma$-invariant $\dissection_\circ$-accordion dissections, seen as sets of $\sigma$-orbits of diagonals. 

\begin{lemma}
The $\sigma$-invariant $\dissection_\circ$-accordion complex~$\accordionComplex^\sigma(\dissection_\circ)$ is a pseudomanifold.
\end{lemma}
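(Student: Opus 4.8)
The plan is to transfer the pseudomanifold property of $\accordionComplex(\dissection_\circ)$, established in Section~\ref{subsec:pseudomanifold}, to the quotient $\accordionComplex^\sigma(\dissection_\circ)$ by exploiting that the rotation $\sigma$ acts as a simplicial automorphism. Since $\dissection_\circ$ is $\sigma$-invariant, $\sigma$ sends $\dissection_\circ$-accordion diagonals to $\dissection_\circ$-accordion diagonals and preserves crossings, so it induces a simplicial automorphism of $\accordionComplex(\dissection_\circ)$. The faces of $\accordionComplex^\sigma(\dissection_\circ)$ are then exactly the $\sigma$-invariant faces of $\accordionComplex(\dissection_\circ)$, read as sets of $\sigma$-orbits, the dimension of such a face being its number of orbits minus one. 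First I record the orbit structure of diagonals: because $\sigma$ is a rotation, a nontrivial power of $\sigma$ can fix a diagonal only by exchanging its two endpoints, hence only if that power is the central half-turn and the diagonal is a diameter through the center $z$ of $\sigma$; since two distinct diameters always cross, a $\sigma$-invariant dissection contains at most one diameter, and it contains one only when $\sigma$ itself is the half-turn.

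For purity, I first show that a maximal $\sigma$-invariant accordion dissection $\dissection_\bullet$ is maximal as an ordinary accordion dissection, hence has $|\dissection_\circ|$ diagonals and the same cell-sequence as $\dissection_\circ$ (by Lemma~\ref{lem:closeTwoAngles} and the cell-sequence invariance noted after the accordion lattice): if $\dissection_\bullet$ could still be refined, pick a refinable cell; if its $\sigma$-orbit is free, the orbit of a single new diagonal is crossing-free and compatible, contradicting maximality, so the only obstruction sits in the central $\sigma$-fixed cell. Granting this, I count orbits: every orbit of diagonals of $\dissection_\bullet$ is free of size $\mathrm{ord}(\sigma)$ except, when $\sigma$ is the half-turn, a possible central diameter, which is a singleton orbit. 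The number of diameters is $0$ or $1$ and is \emph{forced}: the unique cell that can be fixed by the half-turn is the one containing $z$ (a centrally symmetric region must contain its center), so the number of $\sigma$-fixed cells equals the number of cells $|\dissection_\circ|+1$ modulo $2$, and a diameter occurs if and only if $|\dissection_\circ|$ is odd. Consequently the number of orbits is the same for every maximal $\sigma$-invariant dissection, which is exactly purity.

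For thinness, I remove one orbit $O$ from a maximal $\sigma$-invariant dissection $\dissection_\bullet$ and count the invariant completions of $F \eqdef \dissection_\bullet \ssm O$. By Proposition~\ref{prop:links} the link of $F$ in $\accordionComplex(\dissection_\circ)$ is a join of accordion complexes indexed by the cells of $F$, and $\sigma$ permutes these factors. When $O$ is a singleton diameter, $F$ has ordinary codimension one and the two ordinary completions supplied by Lemma~\ref{lem:flip} are both diameters, since the flip quadrilateral is centrally symmetric and its second diagonal is again a diameter, giving exactly two invariant completions. When $O$ is free and spread over distinct cells, $\sigma$ cyclically permutes the corresponding flip-interval factors, so a $\sigma$-invariant completion must make the same choice in each factor, again yielding exactly the two completions $F \cup O$ and $F \cup O'$, where $O'$ is the $\sigma$-orbit of the common flip partner. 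The remaining case, where the diagonals of $O$ meet the central $\sigma$-fixed cell, reduces to the same statement for the strictly smaller $\sigma$-invariant accordion complex of that central polygon.

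The main obstacle is the central cell, where orbits degenerate and equivariance alone does not conclude. Away from $z$ every orbit is free and lives in freely permuted cells, so both purity and thinness follow mechanically from the ordinary pseudomanifold property; at $z$ one must argue by hand that the central $\sigma$-fixed cell forces the diameter count through the parity computation above, and that the induced $\sigma$-invariant flip in the central region is binary, either because the central orbit is a single diameter flipping to a diameter, or by descending to the smaller central accordion complex. Verifying that the central polygon always admits a $\sigma$-invariant refinement when it is not already minimal, so that maximal invariant dissections are genuinely maximal, is the one point I expect to require a careful case-based geometric check rather than a formal transfer.
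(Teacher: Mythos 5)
Your plan is sound in outline, and your parity computation is a correct way to make precise the paper's assertion that every facet of $\accordionComplex^\sigma(\dissection_\circ)$ contains a diameter if and only if $\dissection_\circ$ does and carries the same number of pairs; the treatment of a singleton diameter orbit under flip (the flip quadrilateral is centrally symmetric, so the new diagonal is again a diameter) is also correct. However, there is a genuine gap, and it is the one you flag yourself: for a rotation $\sigma$ of order $k\ge 3$ you never establish that a maximal $\sigma$-invariant accordion dissection is maximal in $\accordionComplex(\dissection_\circ)$ (on which your orbit count, hence purity, rests), nor that the invariant flip is binary when the flipped orbit meets the central $\sigma$-fixed cell. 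Neither step is routine in this generality: for $k\ge 3$ an accordion diagonal $\delta_\bullet$ of the central cell may well cross $\sigma(\delta_\bullet)$ --- already every internal diagonal of a central cell formed by a single vertex orbit has an orbit that is not crossing-free --- so the mechanism ``add the whole orbit of one new diagonal'' breaks exactly where you need it, and your proposed descent ``to the smaller central accordion complex'' is not a well-founded induction: the smaller complex exhibits the same two difficulties, and you give neither a base case nor a decreasing quantity.

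The missing idea is the reduction the paper performs before doing any counting: if $\sigma$ is not the half-turn, pick a vertex $u_\circ$ of the cell of $\dissection_\circ$ containing the center, let $\underline{\dissection}_\circ$ consist of the diagonals of $\dissection_\circ$ with both endpoints between $u_\circ$ and $\sigma(u_\circ)$, and let $\rho$ be the half-turn about the midpoint of $u_\circ\sigma(u_\circ)$; then $\accordionComplex^\sigma(\dissection_\circ)\cong\accordionComplex^\rho\big(\underline{\dissection}_\circ\cup\rho(\underline{\dissection}_\circ)\big)$, so only the central symmetry needs to be treated. For the half-turn both of your problematic steps become immediate, because a chord of the circle never crosses its image under the half-turn (either it is a diameter and coincides with its image, or both image endpoints land on the same side of the chord). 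Hence any accordion diagonal of the central cell yields, together with its image, a crossing-free invariant refinement, so maximal invariant dissections are indeed maximal ordinary ones; and a centrally symmetric pair is flipped by performing its two individual flips one after the other, which is how the paper settles thinness. If you insist on treating general $k$ directly, you must actually carry out the central-cell analysis you deferred --- for instance by proving that whenever the central cell admits an accordion diagonal it admits one of the form $u_\bullet\sigma(u_\bullet)$, whose orbit is automatically crossing-free --- and this is precisely the case-based geometric check that the paper's folding argument is designed to avoid.
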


\begin{proof}
Assume first that~$\sigma$ is the central symmetry. In this case, there are two possible types of orbits: the long $\dissection_\circ$-accordion diagonals and the centrally symmetric pairs of $\dissection_\circ$-accordion diagonals. One can check that any facet of~$\accordionComplex^\sigma(\dissection_\circ)$ has a long diagonal if and only if~$\dissection_\circ$ has, and has as many centrally symmetric pairs of diagonals as~$\dissection_\circ$. Finally, any orbit in any facet of~$\accordionComplex^\sigma(\dissection_\circ)$ can be flipped: long diagonals can already be flipped in~$\accordionComplex(\dissection_\circ)$, and a centrally symmetric pair of diagonals can be flipped by flipping one after the other its~two~diagonals in~$\accordionComplex(\dissection_\circ)$.

Finally, the general statement follows from this special case. Indeed, if~$\sigma$ is not a central symmetry, let~$\cell_\circ$ denote the cell of~$\dissection_\circ$ containing the center of~$\polygon_\circ$, let~$u_\circ$ be a vertex of~$\cell_\circ$, let~$\underline{\dissection}_\circ$ be the set of diagonals of~$\dissection_\circ$ whose endpoints are between~$u_\circ$ and~$\sigma(u_\circ)$, and let~$\rho$ be the central symmetry around the middle of~$u_\circ \sigma(u_\circ)$. Then~$\accordionComplex^\sigma(\dissection_\circ)$ is isomorphic to~${\accordionComplex^\rho \big( \underline{\dissection}_\circ \cup \rho(\underline{\dissection}_\circ) \big)}$.
\end{proof}

Let~$\Sigma : \R^{\dissection_\circ} \to \R^{\dissection_\circ}$ denote the isometry defined by~$\big( \Sigma(\b{x}) \big)_{\sigma(\delta_\circ)} \eqdef \b{x}_{\delta_\circ}$ and~$\fix{\Sigma}$ denote the linear subspace of fixed points of~$\Sigma$. According to the previous discussion, a maximal $\dissection_\circ$-accordion dissection~$\dissection_\bullet$ is $\sigma$-invariant if and only if~$\point{\dissection_\circ}{\dissection_\bullet} \in \fix{\Sigma}$. We obtain the following statement.

\begin{proposition}
For a $\sigma$-invariant dissection~$\dissection_\circ$, the polytope~$\Acco^\sigma(\dissection_\circ)$ defined equivalently~as
\begin{itemize}
\item the convex hull of~$\point{\dissection_\circ}{\dissection_\bullet}$ for all $\sigma$-invariant maximal $\dissection_\circ$-accordion dissections~$\dissection_\bullet$,
\item the intersection of the $\dissection_\circ$-accordiohedron~$\Acco(\dissection_\circ)$ with the fixed space~$\fix{\Sigma}$,
\end{itemize}
is a polytopal realization of the $\sigma$-invariant accordion complex~$\accordionComplex^\sigma(\dissection_\circ)$.
\end{proposition}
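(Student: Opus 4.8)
The plan is to realize $\accordionComplex^\sigma(\dissection_\circ)$ as the normal fan of the section $\Acco(\dissection_\circ) \cap \fix{\Sigma}$, obtained by restricting the $\b{g}$-vector fan $\gvectorFan(\dissection_\circ)$ to the fixed subspace. First I would record that $\sigma$, being a rotation preserving~$\dissection_\circ$, has finite order, so $\Sigma$ generates a finite group~$G$ of linear isometries of~$\R^{\dissection_\circ}$; by the isometry identities preceding the statement (with $\signature(\sigma)=1$ since $\sigma$ is direct), $\Sigma$ preserves $\Acco(\dissection_\circ)$, permutes its vertices by $\point{\dissection_\circ}{\dissection_\bullet} \mapsto \point{\dissection_\circ}{\sigma(\dissection_\bullet)}$, and preserves $\gvectorFan(\dissection_\circ)$ by sending the cone $\R_{\ge0}\gvectors{\dissection_\circ}{\dissection_\bullet}$ to $\R_{\ge0}\gvectors{\dissection_\circ}{\sigma(\dissection_\bullet)}$. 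Writing $V \eqdef \fix{\Sigma}$ and letting $\pi$ be the orthogonal projection onto~$V$, which coincides with the averaging operator $\frac{1}{|G|}\sum_{g \in G} g$ since $G$ acts orthogonally, I would note that $\Sigma$-invariance of $\Acco(\dissection_\circ)$ gives $\pi\big(\Acco(\dissection_\circ)\big) = \Acco(\dissection_\circ) \cap V$: the inclusion $\subseteq$ holds because $\pi(\b{x})$ averages the $G$-orbit of $\b{x}$ and $V$ is fixed pointwise, and $\supseteq$ is clear. In particular both displayed descriptions of $\Acco^\sigma(\dissection_\circ)$ are polytopes, and it remains to compute the normal fan of $\Acco(\dissection_\circ) \cap V$ inside~$V$.

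Next I would identify this normal fan with the restriction of $\gvectorFan(\dissection_\circ)$ to~$V$. For a linear functional $\ell \in V$, the face $F$ of $\Acco(\dissection_\circ)$ maximizing~$\ell$ is unique, hence $\Sigma$-invariant (as $\Sigma$ fixes~$\ell$ and permutes optimal faces); its barycenter lies in $V \cap \Acco(\dissection_\circ)$ and attains the maximum, so $\max_{\Acco(\dissection_\circ) \cap V} \ell = \max_{\Acco(\dissection_\circ)} \ell$ and the optimal face of the section is $F \cap V$. Consequently the normal fan of $\Acco(\dissection_\circ) \cap V$ is $\set{\Cone \cap V}{\Cone \in \gvectorFan(\dissection_\circ) \text{ is } \sigma\text{-invariant}}$, the $\sigma$-invariant cones being exactly those $\R_{\ge0}\gvectors{\dissection_\circ}{\dissection_\bullet}$ with $\dissection_\bullet$ a $\sigma$-invariant $\dissection_\circ$-accordion dissection. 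The crucial local computation is that, for such a cone~$\Cone$, the isometry $\Sigma$ permutes its rays along the $\sigma$-orbits of the diagonals of~$\dissection_\bullet$, and since the $\b{g}$-vectors $\gvectors{\dissection_\circ}{\dissection_\bullet}$ are linearly independent (Theorem~\ref{thm:gvectorFan}), a nonnegative combination of them is $\Sigma$-fixed if and only if its coefficients are constant along orbits. Hence $\Cone \cap V$ is the simplicial cone
\[
\Cone \cap V = \R_{\ge0} \set{\textstyle\sum_{\delta_\bullet \in O} \gvector{\dissection_\circ}{\delta_\bullet}}{O \text{ a } \sigma\text{-orbit of } \dissection_\bullet},
\]
whose rays are the orbit-sums of $\b{g}$-vectors and whose dimension equals the number of $\sigma$-orbits of~$\dissection_\bullet$.

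It then remains to check that these restricted cones form a complete simplicial fan in~$V$ matching $\accordionComplex^\sigma(\dissection_\circ)$. Completeness follows because every $\b{x} \in V$ lies in the relative interior of a unique cone of $\gvectorFan(\dissection_\circ)$, which is $\Sigma$-invariant since $\Sigma$ fixes~$\b{x}$ and is a fan automorphism; thus the $\sigma$-invariant cones already cover~$V$. For the dimension count, the source dissection $\dissection_\bullet^\mi$ is $\sigma$-invariant and its $\b{g}$-cone is the negative orthant (Example~\ref{exm:gcvectors}), so its intersection with~$V$ is spanned by the orbit-sums of $-\b{e}_{\delta_\circ}$ over the $\sigma$-orbits of $\dissection_\circ$ and is full-dimensional in~$V$; hence $\dim V$ equals the number of $\sigma$-orbits of~$\dissection_\circ$. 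Since the previous lemma shows $\accordionComplex^\sigma(\dissection_\circ)$ is a pure pseudomanifold, all $\sigma$-invariant maximal $\dissection_\circ$-accordion dissections have the same number of $\sigma$-orbits, namely $\dim V$, so every maximal restricted cone is full-dimensional. The restricted fan is therefore a complete simplicial fan in~$V$ whose rays (orbit-sums of $\b{g}$-vectors) biject with the $\sigma$-orbits of $\dissection_\circ$-accordion diagonals and whose maximal cones biject with the facets of $\accordionComplex^\sigma(\dissection_\circ)$, with matching incidences; it thus realizes $\accordionComplex^\sigma(\dissection_\circ)$ and, being the normal fan of $\Acco(\dissection_\circ) \cap V$, identifies the section as the desired realization. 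Finally, the vertices of this section, one per maximal cone, are the points $\point{\dissection_\circ}{\dissection_\bullet}$ for $\sigma$-invariant maximal~$\dissection_\bullet$ (which already lie in~$V$), yielding the convex-hull description. I expect the main obstacle to be precisely this dimension bookkeeping---guaranteeing that intersecting an invariant $\b{g}$-cone with~$V$ drops the dimension to exactly the orbit count and matches $\dim V$---which is where the simpliciality of $\gvectorFan(\dissection_\circ)$ and the purity from the preceding pseudomanifold lemma are essential.
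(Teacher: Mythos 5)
Your argument is correct, but it takes a genuinely different route from the paper's. The paper compares the two candidate polytopes directly: writing $P$ for the convex hull of the points $\point{\dissection_\circ}{\dissection_\bullet}$ over $\sigma$-invariant maximal dissections and $Q = \Acco(\dissection_\circ) \cap \fix{\Sigma}$, it notes $P \subseteq Q$ and then propagates vertex equality along the graph of $Q$: every edge of $Q$ through a known common vertex lies in a face of $\Acco(\dissection_\circ)$ indexed by a $\sigma$-invariant dissection, which by the pseudomanifold property of $\accordionComplex^\sigma(\dissection_\circ)$ (the preceding lemma) refines to a second $\sigma$-invariant maximal dissection whose point is the other endpoint; connectedness gives $P=Q$, and a sphere-to-sphere injection identifies the face complex. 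You instead compute the normal fan of the section, in the spirit of Section~\ref{sec:projection}: group averaging gives $\pi\big(\Acco(\dissection_\circ)\big) = \Acco(\dissection_\circ) \cap \fix{\Sigma}$, the normal fan of the section is $\restriction{\gvectorFan(\dissection_\circ)}{\fix{\Sigma}}$, only $\Sigma$-invariant cones contribute (for a non-invariant cone $C$ one has $C \cap \fix{\Sigma} = \big(\bigcap_k \Sigma^k(C)\big) \cap \fix{\Sigma}$, an invariant face cut by the fixed space, so nothing new appears), and each invariant cone meets $\fix{\Sigma}$ in the simplicial cone on the orbit-sums of its $\b{g}$-vectors. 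This buys more than the paper's proof: explicit rays for the normal fan of $\Acco^\sigma(\dissection_\circ)$ and the identification of the section with a projection. Two details worth tightening: your dimension bookkeeping can be short-circuited, since the point $\sum_{\delta_\bullet \in \dissection_\bullet} \gvector{\dissection_\circ}{\delta_\bullet}$ of an invariant maximal cone is $\Sigma$-fixed and lies in the cone's interior, so the intersection with $\fix{\Sigma}$ is automatically full-dimensional (this actually re-derives purity of $\accordionComplex^\sigma(\dissection_\circ)$ rather than consuming it); and you should still invoke the preceding lemma (or the same interior-point argument) to see that the facets of $\accordionComplex^\sigma(\dissection_\circ)$ are exactly the $\sigma$-invariant maximal $\dissection_\circ$-accordion dissections, which is what matches the maximal cones of your fan to the convex-hull description in the statement.
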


\begin{proof}
Denote by~$P = \conv \set{\point{\dissection_\circ}{\dissection_\bullet}}{\text{$\sigma$-invariant maximal $\dissection_\circ$-accordion dissections~$\dissection_\bullet$}}$ and by ${Q = \Acco(\dissection_\circ) \cap \fix{\Sigma}}$. The inclusion~$P \subseteq Q$ is clear since $\dissection_\bullet$ is $\sigma$-invariant if and only if~$\point{\dissection_\circ}{\dissection_\bullet} \in \fix{\Sigma}$. We now prove the reverse inclusion. For that, consider an arbitrary $\sigma$-invariant maximal $\dissection_\circ$-accordion dissection~$\dissection_\bullet$. Its corresponding point~$\point{\dissection_\circ}{\dissection_\bullet}$ is a common vertex of~$P$ and~$Q$. Moreover, any edge~$e$ of~$Q$ incident to~$\point{\dissection_\circ}{\dissection_\bullet}$ is the intersection of $\fix{\Sigma}$ with a face~$F$ of~$\Acco(\dissection_\circ)$ that corresponds to a $\sigma$-invariant $\dissection_\circ$-dissection. Since~$\accordionComplex^\sigma(\dissection_\circ)$ is a pseudomanifold, this dissection can be refined into another maximal $\sigma$-invariant $\dissection_\circ$-accordion dissection~$\dissection_\bullet'$. The point~$\point{\dissection_\circ}{\dissection_\bullet'}$ belongs to~$F$ and to~$\fix{\Sigma}$ and thus to~$e$. We conclude that if~$v$ is a common vertex of~$P$ and~$Q$, then so are all neighbors of~$v$ in the graph of~$Q$. Propagating this property, we obtain that all vertices of~$Q$ are also vertices of~$P$, so that~$P = Q$. Finally, there is a clear injection from the $\sigma$-invariant accordion complex~$\accordionComplex^\sigma(\dissection_\circ)$ to the boundary complex of~$P = Q$, thus a bijection (since these complexes are two spheres with the same vertex set).
\end{proof}


\section{The $\b{d}$-vector fan}
\label{sec:dvectorFan}

In this section, we discuss the generalization to the $\dissection_\circ$-accordion complex of another classical geometric realization of the associahedron coming from the theory of cluster algebras~\cite{FominZelevinsky-ClusterAlgebrasI, FominZelevinsky-ClusterAlgebrasII, ChapotonFominZelevinsky, CeballosSantosZiegler}. Namely, we define compatibility vectors in analogy with the denominator vectors of cluster variables, and we characterize the reference dissections~$\dissection_\circ$ for which these vectors support a complete simplicial fan realizing the $\dissection_\circ$-accordion complex.

\subsection{$\b{d}$-vectors}

Fix a dissection~$\dissection_\circ$ of the hollow $n$-gon. For a hollow diagonal~$\delta_\circ = i_\circ j_\circ$ and a solid diagonal~$\delta_\bullet$, we denote by
\[
\comp{\delta_\circ}{\delta_\bullet}
\eqdef
\begin{cases}
	-1 & \text{if } \delta_\bullet = (i-1)_\bullet (j-1)_\bullet, \\
	 0 & \text{if } \delta_\bullet \text{ and } (i-1)_\bullet (j-1)_\bullet \text{ do not cross,} \\
	 1 & \text{if } \delta_\bullet \text{ and } (i-1)_\bullet (j-1)_\bullet \text{ cross.}
\end{cases}
\]
For any $\dissection_\circ$-accordion diagonal~$\delta_\bullet$, the \defn{$\b{d}$-vector} of~$\delta_\bullet$ with respect to~$\dissection_\circ$ is the vector
\[
\bigdvector{\dissection_\circ}{\delta_\bullet} = \sum_{\delta_\circ \in \dissection_\circ} \comp{\delta_\circ}{\delta_\bullet} \, \b{e}_{\delta_\circ}.
\]
In other words, our~$\b{d}$-vector~$\b{d}(\dissection_\circ\,|\,\delta_\bullet)$ records the compatibility of the diagonal~$\delta_\bullet$ with the dissection~$\dissection_\bullet^\mi$. For a $\dissection_\circ$-accordion dissection~$\dissection_\bullet$, we define~$\bigdvectors{\dissection_\circ}{\dissection_\bullet} \eqdef \bigset{\bigdvector{\dissection_\circ}{\delta_\bullet}}{\delta_\bullet \in \dissection_\bullet}$.

\begin{example}
Consider the hollow dissection~$\dissection_\circ^\ex = \{3_\circ 7_\circ, 3_\circ 13_\circ, 9_\circ 13_\circ\}$ and the rightmost solid dissection~$\dissection_\bullet^\ex = \{2_\bullet 6_\bullet, 2_\bullet 10_\bullet, 10_\bullet 14_\bullet\}$ of \fref{fig:exmAccordionDissections}. Its $\b{d}$-vectors are given by
\[
\bigdvector{\dissection_\circ^\ex}{2_\bullet 6_\bullet} = - \b{e}_{3_\circ 7_\circ},
\quad
\bigdvector{\dissection_\circ^\ex}{2_\bullet 10_\bullet} = \b{e}_{9_\circ 13_\circ},
\quad\text{and}\quad
\bigdvector{\dissection_\circ^\ex}{10_\bullet 14_\bullet} = \b{e}_{3_\circ 13_\circ} + \b{e}_{9_\circ 13_\circ}.
\]
\end{example}

\subsection{$\b{d}$-vector fan}

We now consider the set of cones
\[
\bigset{\R_{\ge0} \bigdvectors{\dissection_\circ}{\dissection_\bullet}}{\dissection_\bullet \text{ any $\dissection_\circ$-accordion dissection}}
\]
generated by the $\b{d}$-vectors of the $\dissection_\circ$-accordion dissections. We want to characterize the reference hollow dissections~$\dissection_\circ$ for which these cones form a complete simplicial fan realizing the $\dissection_\circ$-accordion complex. We start with a negative result. An \defn{even interior cell} of a dissection~$\dissection$ is a cell with an even number of edges which are all internal diagonals of~$\dissection$.

\begin{proposition}
If the reference hollow dissection~$\dissection_\circ$ contains an even interior cell, then the $\b{d}$-vectors cannot realize the $\dissection_\circ$-accordion complex.
\end{proposition}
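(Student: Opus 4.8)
The plan is to exhibit a single maximal $\dissection_\circ$-accordion dissection whose family of $\b{d}$-vectors is linearly dependent. Indeed, any complete simplicial fan realizing the accordion complex~$\accordionComplex(\dissection_\circ)$ must send each of its maximal faces to a full-dimensional simplicial cone of~$\R^{\dissection_\circ}$, so the $\b{d}$-vectors of \emph{every} maximal dissection would have to form a basis of~$\R^{\dissection_\circ}$; producing one maximal dissection where they are linearly dependent therefore rules out any such realization. I will work with the sink dissection~$\dissection_\bullet^\ma = \set{(i+1)_\bullet (j+1)_\bullet}{i_\circ j_\circ \in \dissection_\circ}$, which is the image of~$\dissection_\circ$ under the clockwise rotation~$i_\circ \mapsto (i+1)_\bullet$.

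Write the even interior cell of~$\dissection_\circ$ as~$\cell_\circ$, with hollow vertices~$a_1, \dots, a_{2k}$ in clockwise order and edges~$e_i \eqdef (a_i)_\circ (a_{i+1})_\circ$ (indices modulo~$2k$), each of which is an internal diagonal of~$\dissection_\circ$. Under the rotation above, $\cell_\circ$ becomes an even interior cell~$\cell_\bullet$ of~$\dissection_\bullet^\ma$ whose edges are the $2k$ diagonals~$\delta_\bullet^i \eqdef (a_i + 1)_\bullet (a_{i+1}+1)_\bullet$, each cutting the corner of~$\cell_\circ$ at~$a_{i+1}$. I claim the alternating relation
\[
\sum_{i=1}^{2k} (-1)^i \, \bigdvector{\dissection_\circ}{\delta_\bullet^i} = \b{0}.
\]
Since~$2k$ is even the signs match up cyclically, so this is a genuine (nonzero-coefficient) dependence among the $\b{d}$-vectors of~$\dissection_\bullet^\ma$, which finishes the proof. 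To establish it I fix a coordinate~$\delta_\circ = x_\circ y_\circ \in \dissection_\circ$ and show that~$\sum_{i=1}^{2k} (-1)^i \comp{\delta_\circ}{\delta_\bullet^i} = 0$. Recalling that~$\comp{\delta_\circ}{\delta_\bullet^i}$ records how~$\delta_\bullet^i$ meets the diagonal~$g \eqdef (x-1)_\bullet (y-1)_\bullet$ of~$\dissection_\bullet^\mi$, this reduces to a crossing count between the fixed chord~$g$ and the edges~$\delta_\bullet^1, \dots, \delta_\bullet^{2k}$ of the convex cell~$\cell_\bullet$.

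The key geometric observation is that~$g$ crosses either no edge of~$\cell_\bullet$ or exactly two \emph{consecutive} edges~$\delta_\bullet^{l-1}, \delta_\bullet^l$. Since~$\cell_\bullet$ is convex and the endpoints of~$g$ lie on the circle, $g$ crosses~$0$ or~$2$ of its edges; that the two crossed edges are adjacent amounts to showing that~$g$ cuts off at most one vertex of~$\cell_\bullet$. This is exactly where evenness enters through the structure of~$\dissection_\circ$: as~$\delta_\circ \in \dissection_\circ$ does not cross~$\cell_\circ$, its endpoints lie in the closure of a single region behind one edge~$e_l$ (or~$\delta_\circ$ is itself a cell edge), and a direct check on the cyclic positions then shows that the only vertex of~$\cell_\bullet$ that~$g$ can separate off is~$(a_l+1)_\bullet$. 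When that vertex is cut off, $g$ meets exactly the two edges incident to it, namely~$\delta_\bullet^{l-1}$ and~$\delta_\bullet^l$, whose indices have opposite parities, so their two equal contributions cancel in the alternating sum. I expect the main difficulty to be bookkeeping rather than conceptual: one must verify that the extra coordinates coming from diagonals of~$\dissection_\circ$ lying inside the neighboring regions still cancel in pairs (they do, precisely because each such diagonal is cut off by two consecutive corner-cutters), and one must dispose of the degenerate possibility that~$g$ equals an edge of~$\cell_\bullet$, which cannot occur since it would force~$\delta_\circ$ to cross the cell edge~$e_l$, contradicting that~$\dissection_\circ$ is non-crossing.
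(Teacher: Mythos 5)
Your proof is correct and follows essentially the same route as the paper's: both arguments exhibit the alternating linear dependence $\sum_{i}(-1)^i\,\bigdvector{\dissection_\circ}{\delta_\bullet^i}=\b{0}$ among the $\b{d}$-vectors of the rotated edges of the even interior cell, all of which lie in the single maximal dissection~$\dissection_\bullet^\ma$, so that the corresponding cone is degenerate. One small point of emphasis: evenness is not what makes $g$ cut off at most one vertex of~$\cell_\bullet$ (that holds for any interior cell, simply because $\delta_\circ$ does not cross~$\cell_\circ$); it is needed only, as you correctly note at the outset, to make the alternating signs cyclically consistent.
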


\begin{proof}
Assume that~$\dissection_\circ$ contains an even interior cell~$\cell_\circ$. Denote its vertices by~$i_\circ^1, \dots, i_\circ^{2p}$ (in clockwise order) and its edges~$\delta_\circ^k \eqdef i_\circ^k i_\circ^{k+1}$ for~$k \in [2p]$ (where~$i^{2p+1} = i^1$ by convention). Denote by~$\dissection_\circ^k$ the set of diagonals of~$\dissection_\circ$ separated form~$\cell_\circ$ by~$\delta_\circ^k$ (including~$\delta_\circ^k$ itself), and let~$\dissection_\bullet^k \eqdef \set{(i-1)_\bullet (j-1)_\bullet}{i_\circ j_\circ \in \dissection_\circ^k}$. Consider the solid diagonals~$\delta_\bullet^k \eqdef (i^k+1)_\bullet (i^{k+1}+1)_\bullet$ for~$k \in [2p]$. Observe that~$\delta_\bullet^k$ only crosses diagonals of~$\dissection_\bullet^{k-1}$ and $\dissection_\bullet^k$, and that~$\delta_\bullet^k$ and~$\delta_\bullet^{k+1}$ cross precisely the same diagonals of~$\dissection_\bullet^k$. Since the cell is even, it ensures that the $\b{d}$-vectors of the diagonals~$\delta_\bullet^k$ for~$k \in [2p]$ satisfy the linear dependence
\[
\sum_{\substack{k \in [2p] \\ k \text{ even}}} \bigdvector{\dissection_\circ}{\delta_\bullet^k} = \sum_{\substack{k \in [2p] \\ k \text{ odd}}} \bigdvector{\dissection_\circ}{\delta_\bullet^k}.
\]
However, as already mentioned in Section~\ref{subsec:accordionLattice}, the diagonals~$\delta_\bullet^k$ for~$k \in [2p]$ all belong to the $\dissection_\circ$-accordion dissection~$\dissection_\bullet^\ma \eqdef \set{(i+1)_\bullet (j+1)_\bullet}{i_\circ j_\circ \in \dissection_\circ}$. Therefore, the cone~$\R_{\ge0} \dvectors{\dissection_\circ}{\dissection_\bullet^\ma}$ is degenerate, so that the $\b{d}$-vectors cannot realize the $\dissection_\circ$-accordion complex.
\end{proof}

\begin{example}
Consider a hollow octagon and the reference dissection~$\dissection_\circ \eqdef \{ 1_\circ 5_\circ, 5_\circ 9_\circ, 9_\circ 13_\circ, 13_\circ 1_\circ \}$ with an interior square cell~$1_\circ 5_\circ 9_\circ 13_\circ$. Then we have
\begin{gather*}
\bigdvector{\dissection_\circ}{2_\bullet 6_\bullet} = \b{e}_{1_\circ 5_\circ} + \b{e}_{5_\circ 9_\circ} \qquad
\bigdvector{\dissection_\circ}{6_\bullet 10_\bullet} = \b{e}_{5_\circ 9_\circ} + \b{e}_{9_\circ 13_\circ} \\
\bigdvector{\dissection_\circ}{10_\bullet 14_\bullet} = \b{e}_{9_\circ 13_\circ} + \b{e}_{13_\circ 1_\circ} \qquad
\bigdvector{\dissection_\circ}{14_\bullet 2_\bullet} = \b{e}_{13_\circ 1_\circ} + \b{e}_{1_\circ 5_\circ}
\end{gather*}
so that there is already a linear dependence
\[
 \bigdvector{\dissection_\circ}{2_\bullet 6_\bullet}
+\bigdvector{\dissection_\circ}{10_\bullet 14_\bullet}
=
 \bigdvector{\dissection_\circ}{6_\bullet 10_\bullet}
+\bigdvector{\dissection_\circ}{14_\bullet 2_\bullet}
\]
among the $\b{d}$-vectors of the $\dissection_\circ$-accordion dissection~$\dissection_\bullet^\ma = \{2_\bullet 6_\bullet, 6_\bullet 10_\bullet, 10_\bullet 14_\bullet, 14_\bullet 2_\bullet\}$.
\end{example}

On the negative side, we have seen that the presence of even interior cells prohibits the $\b{d}$-vectors from forming a complete simplicial fan. The positive side is that the even interior cells are the only obstructions.

\begin{theorem}
\label{thm:dvectorFan}
The collection of cones
\[
\dvectorFan(\dissection_\circ) \eqdef \bigset{\R_{\ge0} \bigdvectors{\dissection_\circ}{\dissection_\bullet}}{\dissection_\bullet \text{ any $\dissection_\circ$-accordion dissection}}
\]
forms a complete simplicial fan, that we call the \defn{$\b{d}$-vector fan} of~$\dissection_\circ$, if and only if~$\dissection_\circ$ contains no even interior cell.
\end{theorem}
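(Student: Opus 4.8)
The plan is to prove the nontrivial ``if'' direction (no even interior cell implies a complete simplicial fan) by exactly the strategy used for Theorem~\ref{thm:gvectorFan}: since $\accordionComplex(\dissection_\circ)$ is a pseudomanifold, it suffices to check that the $\b{d}$-vectors fulfill the two conditions of Proposition~\ref{prop:characterizationFan}, the new feature being that the absence of even interior cells is precisely what makes the sign condition hold. The already-established proposition preceding the theorem supplies the converse, so the two together yield the claimed equivalence.

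For Condition~(1) I would take the source dissection $\dissection_\bullet^\mi$ as the distinguished facet. Straight from the definition of $\comp{\delta_\circ}{\delta_\bullet}$ one computes $\dvector{\dissection_\circ}{(i-1)_\bullet (j-1)_\bullet} = -\b{e}_{i_\circ j_\circ}$, because the diagonals of $\dissection_\bullet^\mi$ are pairwise non-crossing; hence $\dvectors{\dissection_\circ}{\dissection_\bullet^\mi}$ is the negative coordinate basis and $\R_{>0}\dvectors{\dissection_\circ}{\dissection_\bullet^\mi}$ is the open negative orthant. The structural point is that every $\b{d}$-vector has at most one negative coordinate, equal to $-1$, and this occurs exactly when $\delta_\bullet \in \dissection_\bullet^\mi$. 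Consequently a strictly positive combination of the $\b{d}$-vectors of an accordion dissection $\dissection_\bullet$ can have all coordinates negative only if $\dissection_\bullet \supseteq \dissection_\bullet^\mi$, forcing $\dissection_\bullet = \dissection_\bullet^\mi$ by equicardinality. This gives the required disjointness of open cones.

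The heart of the argument, and the main obstacle, is Condition~(2), the linear dependence at a flip. Keeping the notation of Lemma~\ref{lem:flip}, with the quadrilateral $\quadrilateral_\bullet$ of opposite sides $\mu_\bullet,\nu_\bullet$ and diagonals $\delta_\bullet,\delta_\bullet'$, I would run a crossing case analysis describing, for each $\delta_\circ \in \dissection_\circ$, how the reference diagonal $(i-1)_\bullet (j-1)_\bullet$ meets $\quadrilateral_\bullet$. This analysis shows the coordinatewise identity $\comp{\delta_\circ}{\delta_\bullet} + \comp{\delta_\circ}{\delta_\bullet'} = \comp{\delta_\circ}{\mu_\bullet} + \comp{\delta_\circ}{\nu_\bullet}$ for every reference that does not cross \emph{both} of the two sides of $\quadrilateral_\bullet$ distinct from $\mu_\bullet,\nu_\bullet$, while references crossing both of those sides contribute an excess of $+2$ to $\dvector{\dissection_\circ}{\delta_\bullet} + \dvector{\dissection_\circ}{\delta_\bullet'}$. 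When no such reference occurs, one gets the clean additive dependence $\dvector{\dissection_\circ}{\delta_\bullet} + \dvector{\dissection_\circ}{\delta_\bullet'} = \dvector{\dissection_\circ}{\mu_\bullet} + \dvector{\dissection_\circ}{\nu_\bullet}$, whose coefficients of $\delta_\bullet$ and $\delta_\bullet'$ are both $+1$. The delicate task is to control the excess terms when the flipped cells are not triangles: I expect that these excesses are organized by the interior cells of $\dissection_\circ$ swept by the flip, and that expressing them through the $\b{d}$-vectors of the common diagonals in $\dissection_\bullet \cap \dissection_\bullet'$ produces a dependence whose $\delta_\bullet$- and $\delta_\bullet'$-coefficients keep the same sign exactly when no interior cell has even size. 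This is the local shadow of the global obstruction recorded before the theorem: an even interior cell yields the alternating relation $\sum_{k\text{ even}} \dvector{\dissection_\circ}{\delta_\bullet^k} = \sum_{k\text{ odd}} \dvector{\dissection_\circ}{\delta_\bullet^k}$, which both collapses the cone of $\dissection_\bullet^\ma$ and, transported to a flip, reverses a coefficient sign; ruling out such cells restores both non-degeneracy and the sign condition.

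I would then assemble the two verified conditions and invoke Proposition~\ref{prop:characterizationFan} to conclude that $\dvectorFan(\dissection_\circ)$ is a complete simplicial fan whenever $\dissection_\circ$ has no even interior cell. The step I expect to be genuinely technical is the bookkeeping in Condition~(2): correctly tracking which common diagonals enter the dependence for non-triangular cells, and pinning down the parity argument that ties the excess crossings to the even/odd size of the interior cells of $\dissection_\circ$.
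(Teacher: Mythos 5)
Your overall strategy matches the paper's: verify the two conditions of Proposition~\ref{prop:characterizationFan} with $\dissection_\bullet^\mi$ as the distinguished facet, and obtain the ``only if'' direction from the proposition preceding the theorem. Your treatment of Condition~(1) is correct and in fact slightly more explicit than the paper's one-line observation that $\R_{\ge0}\dvectors{\dissection_\circ}{\dissection_\bullet^\mi} = (\R_{\le0})^{\dissection_\circ}$ is the only cone meeting the interior of the negative orthant: the key fact that a $\b{d}$-vector has a negative coordinate only when the diagonal lies in $\dissection_\bullet^\mi$, in which case it equals a negative basis vector, is exactly the right justification.

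The gap is Condition~(2), which you explicitly leave as an expectation rather than an argument, and that is where essentially all of the content of the theorem sits. The clean dependence $\dvector{\dissection_\circ}{\delta_\bullet}+\dvector{\dissection_\circ}{\delta_\bullet'}=\dvector{\dissection_\circ}{\mu_\bullet}+\dvector{\dissection_\circ}{\nu_\bullet}$ holds only when the shifted diagonal $\gamma_\bullet=(i-1)_\bullet(j-1)_\bullet$ (for a hollow diagonal $\gamma_\circ=i_\circ j_\circ$ crossing all of $\delta_\bullet,\delta_\bullet',\mu_\bullet,\nu_\bullet$) still crosses both $\mu_\bullet$ and $\nu_\bullet$. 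The remaining cases require dependences of a genuinely different shape, which your excess-bookkeeping does not produce. When $\gamma_\bullet$ crosses neither $\mu_\bullet$ nor $\nu_\bullet$, the paper argues that $\dvector{\dissection_\circ}{\delta_\bullet}+\dvector{\dissection_\circ}{\delta_\bullet'}$ and the $\b{d}$-vectors of $\dissection_\bullet\cap\dissection_\bullet'$ all have vanishing $\gamma_\circ$-coordinate, so a dependence with equal $\delta_\bullet$- and $\delta_\bullet'$-coefficients exists by a dimension count. When $\gamma_\bullet$ crosses exactly one of $\mu_\bullet,\nu_\bullet$, the dependence is an alternating sum over the edges $\lambda_\bullet^1,\dots$ of the cell of the \emph{solid} dissection $\dissection_\bullet$ containing $\delta_\bullet$ and $\nu_\bullet$, for instance
\[
2\,\dvector{\dissection_\circ}{\delta_\bullet} + \dvector{\dissection_\circ}{\delta_\bullet'} = \dvector{\dissection_\circ}{\mu_\bullet} + \sum_{i} (-1)^{i-1}\dvector{\dissection_\circ}{\lambda_\bullet^i},
\]
where the coefficient of $\delta_\bullet$ is $2$, not $1$; the hypothesis on even interior cells is used exactly here, to guarantee that the interior cell of $\dissection_\bullet^\mi$ being traversed has the parity making the alternating sum terminate with the correct sign on the last term. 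Your proposed organizing principle (coordinatewise excesses of $+2$ from references crossing both sides of $\quadrilateral_\bullet$ other than $\mu_\bullet,\nu_\bullet$) identifies the right phenomenon but does not by itself yield a linear dependence among the $\b{d}$-vectors of $\dissection_\bullet\cup\dissection_\bullet'$: the hard step, which you defer, is to express that excess as a signed combination of $\b{d}$-vectors of the common diagonals and to check the sign of the resulting $\delta_\bullet$- and $\delta_\bullet'$-coefficients. Without that, Proposition~\ref{prop:characterizationFan} cannot be invoked.
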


\begin{proof}
We use the characterization of complete simplicial fans presented in Proposition~\ref{prop:characterizationFan}.

Observe first that~$\dvectors{\dissection_\circ}{\dissection_\bullet^\mi} = (\R_{\le0})^{\dissection_\circ}$ is the only cone of~$\dvectorFan(\dissection_\circ)$ intersecting the interior of the negative orthant~$(\R_{\le0})^{\dissection_\circ}$. Therefore, $\dvectorFan(\dissection_\circ)$ fulfills Condition~(1) in Proposition~\ref{prop:characterizationFan}.

To check Condition~(2), consider two adjacent maximal $\dissection_\circ$-accordion dissections~$\dissection_\bullet$ and~$\dissection_\bullet'$ and let~${\delta_\bullet \in \dissection_\bullet}$ and~$\delta_\bullet' \in \dissection_\bullet'$ be such that~$\dissection_\bullet \ssm \{\delta_\bullet\} = \dissection_\bullet' \ssm \{\delta_\bullet'\}$. Let~$\mu_\bullet$ and~$\nu_\bullet$ be the diagonals of~$\overline{\dissection}_\bullet \cap \overline{\dissection}_\bullet'$ as in Lemma~\ref{lem:flip} (see also \fref{fig:exmFlip}). In other words, $\mu_\bullet$ and~$\nu_\bullet$ are incident to both~$\delta_\bullet$ and~$\delta_\bullet'$, and they are crossed by the hollow diagonal which intersect $\delta_\bullet$ and~$\delta_\bullet'$. Let~$\gamma_\circ = i_\circ j_\circ$ be such a hollow diagonals crossing~$\delta_\bullet, \delta_\bullet', \mu_\bullet$ and~$\nu_\bullet$, and let~$\gamma_\bullet = (i-1)_\bullet (j-1)_\bullet$. We now distinguish three cases:

\begin{itemize}
\item Assume that $\gamma_\bullet$ still crosses~$\mu_\bullet$ and~$\nu_\bullet$. In this case, any diagonal of~$\dissection_\bullet^\mi$ crossing both (resp.~either) $\delta_\bullet$ and (resp.~or) $\delta_\bullet'$ also crosses both (resp.~either) $\mu_\bullet$ and (resp.~or) $\nu_\bullet$. See \fref{fig:proofdvectorFan}\,(left). Therefore, the $\b{d}$-vectors of~$\dissection_\bullet \cup \dissection_\bullet'$ satisfy the linear dependence
\[
\dvector{\dissection_\circ}{\delta_\bullet} + \dvector{\dissection_\circ}{\delta_\bullet'} = \dvector{\dissection_\circ}{\mu_\bullet} + \dvector{\dissection_\circ}{\nu_\bullet}.
\]

\item Assume that~$\gamma_\bullet$ crosses neither~$\mu_\bullet$ nor~$\nu_\bullet$. Then~$\gamma_\bullet$ is incident to both~$\mu_\bullet$ and~$\nu_\bullet$, and therefore is either~$\delta_\bullet$ or~$\delta_\bullet'$, say~$\gamma_\bullet = \delta_\bullet$. Then~$\dvector{\gamma_\circ}{\delta_\bullet} = -1$ while~$\dvector{\gamma_\circ}{\delta_\bullet'} = 1$ (since~$\delta_\bullet'$ crosses~$\delta_\bullet = \gamma_\bullet$), so that~$\dvector{\gamma_\circ}{\delta_\bullet} + \dvector{\gamma_\circ}{\delta_\bullet'} = 0$. Moreover, we have~$\dvector{\gamma_\circ}{\delta_\bullet'} = 0$ for any diagonal~$\varepsilon_\bullet \in \dissection_\bullet \cap \dissection_\bullet'$ since~$\delta_\bullet = \gamma_\bullet$ cannot cross~$\varepsilon_\bullet$ as they both belongs to~$\dissection_\bullet$. Therefore, the set~$\big\{ \dvector{\dissection_\circ}{\delta_\bullet} + \dvector{\dissection_\circ}{\delta_\bullet} \big\} \cup \dvector{\dissection_\circ}{\dissection_\bullet \cap \dissection_\bullet'}$ contains~$|\dissection_\circ|$ vectors of~$\R^{\dissection_\circ}$ whose~$\gamma_\circ$-coordinate all vanish, so that it admits a linear dependence.\\[-.3cm]

\item Otherwise, we can assume that $\gamma_\bullet$ crosses~$\mu_\bullet$ but not~$\nu_\bullet$. Then~$\gamma_\bullet$ has a common endpoint with~$\nu_\bullet$ and~$\delta_\bullet$ (or~$\delta_\bullet'$, but we then permute notations). Changing our initial choice of~$\gamma_\circ$, we can assume that no diagonal of~$\dissection_\bullet^\mi$ separates~$\gamma_\bullet$ from~$\delta_\bullet$. We now denote clockwise 
    \begin{itemize}
    \item by~$\nu_\bullet \defeq \lambda_\bullet^0, \lambda_\bullet^1, \dots, \lambda_\bullet^\ell \eqdef \delta_\bullet$ the edges of the cell~$\cell_\bullet$ of~$\dissection_\bullet$ containing~$\nu_\bullet$ and~$\delta_\bullet$,
    \item by~$\gamma_\bullet \defeq \gamma_\bullet^0, \gamma_\bullet^1, \dots, \gamma_\bullet^k$ the edges of the cell~$\cell_\bullet^\mi$ of~$\dissection_\bullet^\mi$ containing~$\gamma_\bullet$ and crossed by~$\delta_\bullet$.
    \end{itemize}
These notations are illustrated on \fref{fig:proofdvectorFan}. We still distinguish two subcases as in \fref{fig:proofdvectorFan}:
	\begin{itemize}
	\item If $\gamma_\bullet^i$ crosses~$\lambda_\bullet^i$ for all~$i$ as in \fref{fig:proofdvectorFan}\,(middle), then~$\ell = k$ and we have the linear dependence
	\[
	\qquad\qquad 2\dvector{\dissection_\circ}{\delta_\bullet} + \dvector{\dissection_\circ}{\delta_\bullet'} = \dvector{\dissection_\circ}{\mu_\bullet} + \sum_{i \in [\ell-1]} (-1)^{(i-1)}\dvector{\dissection_\circ}{\lambda_\bullet^i}.
	\]
	It is essential here that~$\ell = k$ is even. This is guarantied by the assumption that~$\dissection_\circ$ (and thus~$\dissection_\bullet^\mi$) has no even interior cell, since~$\cell_\bullet^\mi$ is an interior \mbox{cell of~$\dissection_\bullet^\mi$ of size $k$}.
	\item Otherwise, we are in a situation similar to \fref{fig:proofdvectorFan}\,(right). Considering the maximal index~$m$ such that~$\gamma_\bullet^i$ crosses~$\lambda_\bullet^i$ for all~$i \le m$, and we have the linear dependence
	\[
	\qquad\qquad \dvector{\dissection_\circ}{\delta_\bullet} + \dvector{\dissection_\circ}{\delta_\bullet'} = \dvector{\dissection_\circ}{\mu_\bullet} + \sum_{i \in [m]} (-1)^{(i-1)} \dvector{\dissection_\circ}{\lambda_\bullet^i}. \qedhere
	\]
	\end{itemize}
\end{itemize}
\begin{figure}
	\capstart
	\centerline{\includegraphics[scale=1]{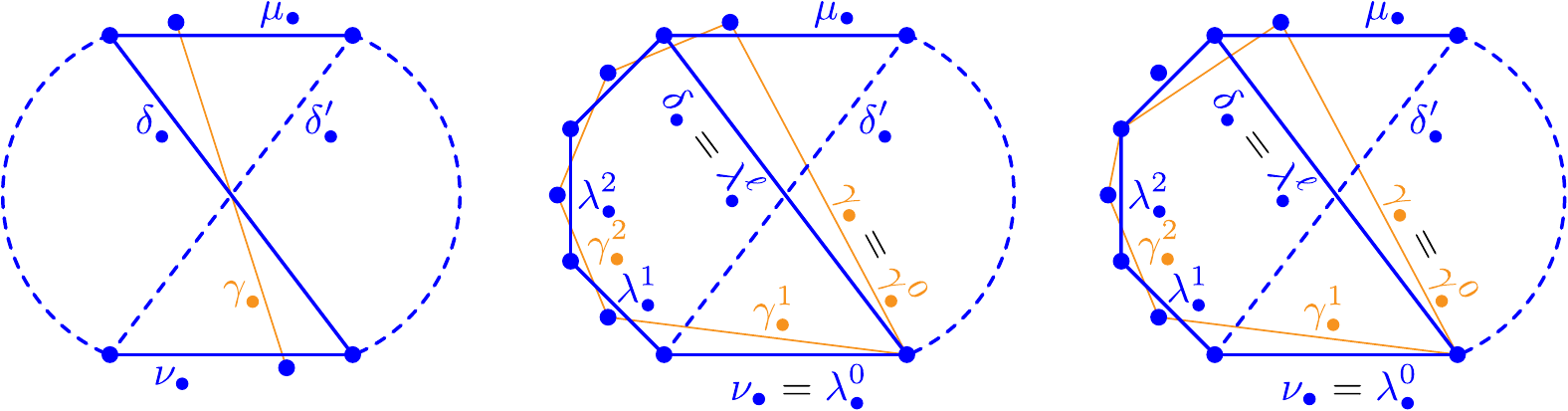}}
	\caption{Illustration of the notations and of the different cases in the proof of Theorem~\ref{thm:dvectorFan}.}
	\label{fig:proofdvectorFan}
\end{figure}
\end{proof}

\begin{example}
Following Example~\ref{exm:specialReferenceDissections}, we observe that special reference dissections give rise to the following relevant fans:
\begin{itemize}
\item For a snake triangulation~$\snake_\circ$, the $\b{d}$-vector fan~$\dvectorFan(\snake_\circ)$ coincides with the type~$A$ cluster fan of S.~Fomin and A.~Zelevinsky~\cite{FominZelevinsky-ClusterAlgebrasII}.
\item For any triangulation~$\triangulation_\circ$, the $\b{d}$-vector fan~$\dvectorFan(\triangulation_\circ)$ was already constructed in~\cite{CeballosSantosZiegler}.
\item For a quadrangulation~$\quadrangulation_\circ$ with no interior quadrangle (equivalently, with no cross), we obtain an alternative realization of the Stokes complexes studied in~\cite{Baryshnikov, Chapoton-quadrangulations}. This was observed by A.-H.~Bateni, T.~Manneville and V.~Pilaud in~\cite{BateniMannevillePilaud}.
\end{itemize}
\fref{fig:exmdvectorFans} illustrates the $\b{d}$-vector fans~$\dvectorFan(\dissection_\circ)$ for the same reference dissections~$\dissection_\circ$ as in \fref{fig:exmgvectorFans}. More precisely, we have represented the stereographic projection of the fans from the point~$[\, -1, -1, -1 \,]$. Therefore, the external face of the projection corresponds to the $\dissection_\circ$-accordion dissection~$\dissection_\bullet^\mi$. We have labeled all vertices of the projection (\ie the rays of the fan) by the corresponding $\dissection_\circ$-accordion diagonals. Compare with \fref{fig:exmgvectorFans}.

\begin{figure}
	\capstart
	\centerline{
		\begin{tabular}{l@{\hspace{1.3cm}}l}
			\includegraphics[scale=1.35]{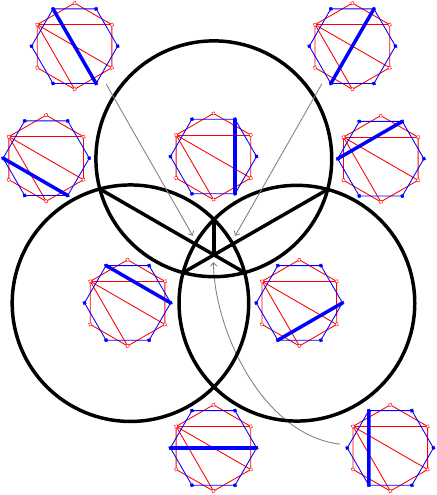} & \includegraphics[scale=1.35]{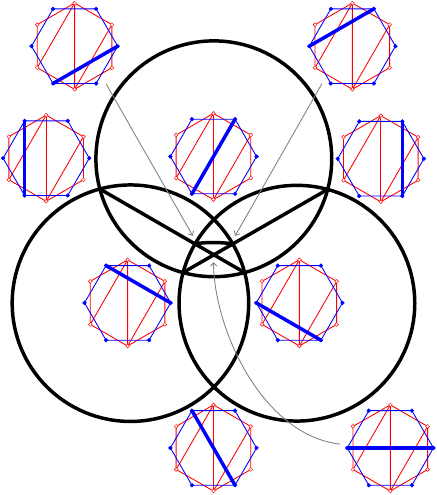} \\[.1cm]
			\includegraphics[scale=1.35]{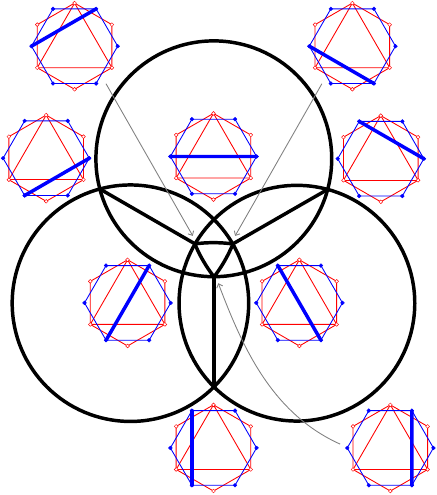} & \includegraphics[scale=1.35]{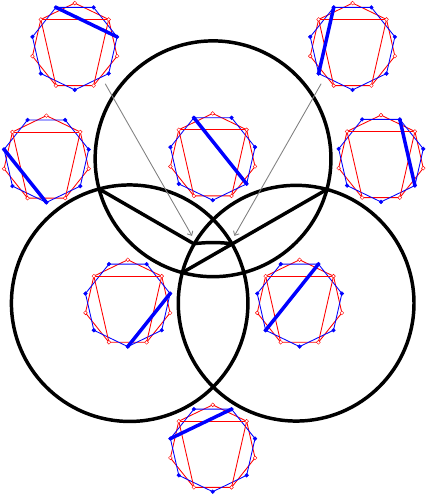}
		\end{tabular}
	}
	\caption{Stereographic projections of the $\b{d}$-vector fans~$\dvectorFan(\dissection_\circ)$ for various reference hollow dissections~$\dissection_\circ$. See \fref{fig:exmgvectorFans} for alternative simplicial fan realizations of these accordion complexes.}
	\label{fig:exmdvectorFans}
\end{figure}
\end{example}

\begin{remark}
To prove that the~$\b{d}$-vector fan~$\dvectorFan(\dissection_\circ)$ is polytopal, we would need to find suitable hyperplanes orthogonal to their rays in order to apply Theorem~\ref{theo:HohlwegLangeThomas}. For the~$\b{g}$-vector fan, these hyperplanes were defined using the height function~$\rhs{\dissection_\circ}{\delta_\bullet}$. It would be natural to use the same height function for the $\b{d}$-vector fan as well. Unfortunately, for this choice of height function, we can only prove Condition~(i) of Theorem~\ref{theo:HohlwegLangeThomas} when~$\dissection_\circ$ is a triangulation (see also~\cite{CeballosSantosZiegler}). We were not able to find suitable right hand sides for any dissection~$\dissection_\circ$.
\end{remark}

\begin{remark}
Our~$\b{d}$-vectors record the compatibility with the dissection~$\dissection_\bullet^\mi$. A priori, we could compute compatibility vectors with respect to any other maximal~$\dissection_\circ$-accordion dissection~$\dissection^\ini_\bullet$. Experiments suggest that the $\b{d}$-vector construction provides a complete simplicial fan as long as neither~$\dissection_\circ$ nor~$\dissection^\ini_\bullet$ contain no even interior cell. We checked it for reference quadrangulations with at most~$5$ diagonals. The linear dependences involved seem however much more complicated than those of the proof of Theorem~\ref{thm:dvectorFan} (in particular, they may involve $\b{d}$-vectors of diagonals not included in the cells containing~$\delta_\bullet$ and~$\delta_\bullet'$).
\end{remark}


\section{Sections and projections}
\label{sec:projection}

Recall that for a fan~$\Fan$ of~$\R^d$ and a linear subspace~$V$ of~$\R^d$, the \defn{section} of~$\Fan$ by~$V$ is the fan~$\restriction{\Fan}{V} \eqdef \set{C \cap V}{C \in \Fan}$. For a polytope~$P \subseteq \R^d$ and a projection~$\pi : \R^d \to V$, the normal fan of the projected polytope~$\pi(P)$ is the section of the normal fan of~$P$ by~$V$ \cite[Lem.~7.11]{Ziegler-polytopes}.
We now consider sections of the $\b{g}$- and $\b{d}$-vector fans by coordinate subspaces. For two dissections~${\dissection_\circ \subset \dissection_\circ'}$, we naturally identify~$\R^{\dissection_\circ}$ with the subspace spanned by~$\set{\b{e}_{\delta_\circ}}{\delta_\circ \in \dissection_\circ}$ in~$\R^{\dissection_\circ'}$.

\subsection{Coordinate sections of the $\b{d}$-vector fan}
\label{subsec:sectiondvectors}

We start by presenting sections of the $\b{d}$-vector fan which are not very surprising. The following lemma is immediate from the definition of \mbox{$\b{d}$-vectors}.

\begin{lemma}
Consider two dissections~$\dissection_\circ \subset \dissection_\circ'$, and a $\dissection_\circ'$-accordion diagonal~$\delta_\bullet$. Then we have ${\dvector{\dissection_\circ}{\delta_\bullet} \in \R^{\dissection_\circ}}$ if and only if~$\delta_\bullet$ does not cross any diagonal of~$\set{(i-1)_\bullet (j-1)_\bullet}{i_\circ j_\circ \in \dissection_\circ' \ssm \dissection_\circ}$.
\end{lemma}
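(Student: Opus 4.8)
The plan is to reduce everything to a coordinate computation, since the $\b{d}$-vector $\bigdvector{\dissection_\circ'}{\delta_\bullet}$ of interest (computed with respect to the larger dissection $\dissection_\circ'$, hence a priori a vector of $\R^{\dissection_\circ'}$) has an entirely explicit coordinate vector. First I would recall that $\R^{\dissection_\circ}$ is identified with the coordinate subspace of $\R^{\dissection_\circ'}$ spanned by $\set{\b{e}_{\delta_\circ}}{\delta_\circ \in \dissection_\circ}$, so that a vector of $\R^{\dissection_\circ'}$ lies in $\R^{\dissection_\circ}$ if and only if all of its coordinates indexed by $\dissection_\circ' \ssm \dissection_\circ$ vanish. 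Since by definition the $\delta_\circ$-coordinate of $\bigdvector{\dissection_\circ'}{\delta_\bullet}$ is exactly $\comp{\delta_\circ}{\delta_\bullet}$, the membership $\bigdvector{\dissection_\circ'}{\delta_\bullet} \in \R^{\dissection_\circ}$ is equivalent to having $\comp{\delta_\circ}{\delta_\bullet} = 0$ for every $\delta_\circ \in \dissection_\circ' \ssm \dissection_\circ$.

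Next I would translate each condition $\comp{\delta_\circ}{\delta_\bullet} = 0$ through the three-case definition of the compatibility coefficient. Writing $\delta_\circ = i_\circ j_\circ$, the coefficient $\comp{\delta_\circ}{\delta_\bullet}$ vanishes precisely when $\delta_\bullet$ and the rotated diagonal $(i-1)_\bullet (j-1)_\bullet$ do not cross, and is nonzero otherwise. Conjoining this over all $\delta_\circ = i_\circ j_\circ \in \dissection_\circ' \ssm \dissection_\circ$ gives exactly the asserted condition that $\delta_\bullet$ crosses none of the diagonals of $\set{(i-1)_\bullet (j-1)_\bullet}{i_\circ j_\circ \in \dissection_\circ' \ssm \dissection_\circ}$; both implications of the equivalence then follow at once.

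The only step needing a moment of care — and therefore the (very mild) obstacle — is the degenerate case in which $\delta_\bullet$ coincides with one of the rotated diagonals $(i-1)_\bullet (j-1)_\bullet$ with $i_\circ j_\circ \in \dissection_\circ' \ssm \dissection_\circ$. There the definition assigns $\comp{\delta_\circ}{\delta_\bullet} = -1 \neq 0$, so that coordinate does not vanish even though $\delta_\bullet$ does not properly cross that diagonal. To keep the equivalence exact I would treat such a coincidence as a (degenerate) crossing in the right-hand condition, equivalently phrasing it as $\comp{\delta_\circ}{\delta_\bullet} = 0$ for all $\delta_\circ \in \dissection_\circ' \ssm \dissection_\circ$; with this reading the lemma is immediate from the definition of $\b{d}$-vectors. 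Everything else is purely mechanical unwinding, so no genuine difficulty arises.
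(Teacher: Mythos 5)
Your proposal is correct and takes essentially the same route as the paper, which simply declares the lemma ``immediate from the definition of $\b{d}$-vectors'': reading off the $\delta_\circ$-coordinates $\comp{\delta_\circ}{\delta_\bullet}$ of $\dvector{\dissection_\circ'}{\delta_\bullet}$ for $\delta_\circ \in \dissection_\circ' \ssm \dissection_\circ$ is exactly the intended argument. Your observation about the degenerate case $\delta_\bullet = (i-1)_\bullet (j-1)_\bullet$ (where the coordinate is $-1$ rather than $0$ even though no proper crossing occurs) is a genuine edge case the paper glosses over, and your convention of counting coincidence as a crossing is the right fix --- it is also what makes the subsequent corollary's description via the link (whose ground set excludes the diagonals $(i-1)_\bullet (j-1)_\bullet$ themselves) come out correctly.
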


\begin{corollary}
For two dissections~$\dissection_\circ \subset \dissection_\circ'$, the face complex of the section of the $\b{d}$-vector fan~$\dvectorFan(\dissection_\circ')$ by~$\R^{\dissection_\circ}$ is isomorphic to the link of the dissection~$\set{(i-1)_\bullet (j-1)_\bullet}{i_\circ j_\circ \in \dissection_\circ' \ssm \dissection_\circ}$ in the \mbox{$\dissection_\circ'$-accordion} complex~$\accordionComplex(\dissection_\circ')$. 
\end{corollary}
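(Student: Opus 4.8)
The plan is to compute the section cone by cone, exploiting a sign-coherence property of the $\b{d}$-vectors that makes the intersection with a coordinate subspace transparent. Write~$\dissection_\bullet^0 \eqdef \set{(i-1)_\bullet (j-1)_\bullet}{i_\circ j_\circ \in \dissection_\circ' \ssm \dissection_\circ}$ for the dissection appearing in the statement, and set~${L \eqdef \set{\delta_\bullet}{\dvector{\dissection_\circ'}{\delta_\bullet} \in \R^{\dissection_\circ}}}$. By the previous lemma, together with the definition of~$\comp{\delta_\circ}{\delta_\bullet}$, one has~$\dvector{\dissection_\circ'}{\delta_\bullet} \in \R^{\dissection_\circ}$ if and only if~$\comp{\delta_\circ}{\delta_\bullet} = 0$ for all~$\delta_\circ \in \dissection_\circ' \ssm \dissection_\circ$, that is, if and only if~$\delta_\bullet$ lies outside~$\dissection_\bullet^0$ and crosses no diagonal of~$\dissection_\bullet^0$ (note that a diagonal of~$\dissection_\bullet^0$ has a~$-1$ coordinate, hence is excluded). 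Thus~$L$ is exactly the vertex set of the link of~$\dissection_\bullet^0$ in~$\accordionComplex(\dissection_\circ')$.

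First I would establish sign-coherence of the $\b{d}$-vectors: for a fixed hollow diagonal~$\delta_\circ = i_\circ j_\circ \in \dissection_\circ'$ and any $\dissection_\circ'$-accordion dissection~$\dissection_\bullet$, the integers~$\comp{\delta_\circ}{\delta_\bullet}$ for~$\delta_\bullet \in \dissection_\bullet$ are either all nonnegative or all nonpositive. Indeed, writing~$\gamma_\bullet \eqdef (i-1)_\bullet (j-1)_\bullet$, the value~$\comp{\delta_\circ}{\delta_\bullet}$ equals~$-1$ if~$\delta_\bullet = \gamma_\bullet$, equals~$+1$ if~$\delta_\bullet$ crosses~$\gamma_\bullet$, and equals~$0$ otherwise; since~$\dissection_\bullet$ is non-crossing it cannot simultaneously contain~$\gamma_\bullet$ and a diagonal crossing~$\gamma_\bullet$, so these values lie all in~$\{-1,0\}$ or all in~$\{0,+1\}$. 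This passes to any face of~$\accordionComplex(\dissection_\circ')$, whose diagonals form a subset of a maximal accordion dissection. I would then compute the intersection of an arbitrary cone~$\Cone = \R_{\ge0} \dvectors{\dissection_\circ'}{\dissection_\bullet}$ with~$\R^{\dissection_\circ}$: the generators being linearly independent, a point~$\b{x} = \sum_{\delta_\bullet \in \dissection_\bullet} \lambda_{\delta_\bullet} \, \dvector{\dissection_\circ'}{\delta_\bullet}$ of~$\Cone$ lies in~$\R^{\dissection_\circ}$ exactly when, for every~$\delta_\circ \in \dissection_\circ' \ssm \dissection_\circ$,
\[
\dotprod{\b{e}_{\delta_\circ}}{\b{x}} = \sum_{\delta_\bullet \in \dissection_\bullet} \lambda_{\delta_\bullet} \, \comp{\delta_\circ}{\delta_\bullet} = 0.
\]
By sign-coherence this sum has all terms of one sign, so it vanishes if and only if~$\lambda_{\delta_\bullet} = 0$ whenever~$\comp{\delta_\circ}{\delta_\bullet} \ne 0$. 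Hence~$\Cone \cap \R^{\dissection_\circ}$ is the subcone generated by those~$\dvector{\dissection_\circ'}{\delta_\bullet}$ with~$\delta_\bullet \in \dissection_\bullet \cap L$; in particular the section is again simplicial, with rays indexed by~$L$.

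Finally I would match faces. The faces of the section are the cones~$\Cone \cap \R^{\dissection_\circ}$, each determined by its ray set~$\dissection_\bullet \cap L$, and it remains to identify these ray sets with the faces of the link. If~$\dissection_\bullet$ is a $\dissection_\circ'$-accordion dissection, then~$\dissection_\bullet \cap L$ consists of non-crossing accordion diagonals lying outside~$\dissection_\bullet^0$ and crossing no diagonal of~$\dissection_\bullet^0$; since~$\dissection_\bullet^0$ is itself a face of~$\accordionComplex(\dissection_\circ')$, the union~$(\dissection_\bullet \cap L) \cup \dissection_\bullet^0$ is a $\dissection_\circ'$-accordion dissection, so~$\dissection_\bullet \cap L$ is a face of the link. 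Conversely, a face~$S$ of the link satisfies~$S \cup \dissection_\bullet^0 \in \accordionComplex(\dissection_\circ')$, and no diagonal of~$\dissection_\bullet^0$ belongs to~$L$; applying the cone computation to~$S \cup \dissection_\bullet^0$ recovers exactly~$S = (S \cup \dissection_\bullet^0) \cap L$. This yields an inclusion-preserving bijection between faces of the section and faces of the link.

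The step I expect to be the main obstacle, and the only place where the specific combinatorics really enters, is the cone computation: for a general subspace the section of a simplicial fan need not be the subcomplex induced by the rays it contains, as interior intersections can create new rays. Sign-coherence of the $\b{d}$-vectors is precisely what forbids this, collapsing each~$\Cone \cap \R^{\dissection_\circ}$ onto the span of the rays already lying in~$\R^{\dissection_\circ}$. I would also note that the very same computation applies verbatim to lower-dimensional cones, so no separate treatment of non-maximal faces is required.
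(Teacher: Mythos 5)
Your proposal is correct and follows the route the paper intends: the paper states this corollary without proof as an immediate consequence of the preceding lemma, and your argument simply supplies the implicit details, most importantly the sign coherence of the $\b{d}$-vectors along a non-crossing dissection (a fixed $\gamma_\bullet = (i-1)_\bullet(j-1)_\bullet$ cannot coexist with a diagonal crossing it), which guarantees that each cone of~$\dvectorFan(\dissection_\circ')$ meets~$\R^{\dissection_\circ}$ in the face spanned by its rays lying in~$\R^{\dissection_\circ}$ --- exactly the role played by Corollary~\ref{coro:signCoherence} in the parallel $\b{g}$-vector statement of Theorem~\ref{thm:section}. The only caveat, inherited from the paper's own phrasing, is that one implicitly assumes~$\dissection_\circ'$ has no even interior cell so that~$\dvectorFan(\dissection_\circ')$ is indeed a (simplicial) fan with linearly independent generators, as your cone computation requires.
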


\subsection{Coordinate sections of the $\b{g}$-vector fan}
\label{subsec:sectiongvectors}

More relevant are the sections of the $\b{g}$-vector fan. They provide an alternative approach to polytopal realizations of the accordion complex based on projected associahedra. This approach relies on the following crucial observation.

\begin{lemma}
\label{lem:deleteDiagonals}
Consider two dissections~$\dissection_\circ \subset \dissection_\circ'$, and a $\dissection_\circ'$-accordion diagonal~$\delta_\bullet$. Then we have $\gvector{\dissection_\circ'}{\delta_\bullet} \in \R^{\dissection_\circ}$ if and only if~$\delta_\bullet$ is a $\dissection_\circ$-accordion diagonal. Moreover, in this case, the \mbox{$\b{g}$-vectors}~$\gvector{\dissection_\circ}{\delta_\bullet}$ and~$\gvector{\dissection_\circ'}{\delta_\bullet}$ coincide.
\end{lemma}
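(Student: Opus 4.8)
The plan is to analyze the support of the $\b{g}$-vector one cell of~$\dissection_\circ$ at a time. Recall from Remark~\ref{rem:informalgcvectors}\,(i) that the support of~$\gvector{\dissection_\circ'}{\delta_\bullet}$ is exactly the zigzag of the $\dissection_\circ'$-accordion crossed by~$\delta_\bullet$, that is, the set of diagonals~$\delta_\circ \in \dissection_\circ'$ on which~$\delta_\bullet$ slaloms. Hence the condition~$\gvector{\dissection_\circ'}{\delta_\bullet} \in \R^{\dissection_\circ}$ is equivalent to~$\signcirc{\delta_\circ}{\dissection_\circ'}{\delta_\bullet} = 0$ for every~$\delta_\circ \in \dissection_\circ' \ssm \dissection_\circ$. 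Since all diagonals of~$\dissection_\circ' \ssm \dissection_\circ$ crossed by~$\delta_\bullet$ lie in the interior of the cells of~$\dissection_\circ$, I would localize the entire argument to a single cell~$\cell_\circ$ of~$\dissection_\circ$ traversed by~$\delta_\bullet$, and split into two cases according to whether~$\delta_\bullet$ enters and exits~$\cell_\circ$ through two incident or two non-incident edges of~$\overline{\dissection}_\circ$.

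For the ``if'' direction, assume~$\delta_\bullet$ is a $\dissection_\circ$-accordion diagonal, so that in every traversed cell~$\cell_\circ$ the entering and exiting edges~$e_\circ, f_\circ$ of~$\overline{\dissection}_\circ$ share a vertex~$v_\circ$. As two edges of the convex cell incident to a common vertex are consecutive, the chord~$\delta_\bullet \cap \cell_\circ$ cuts off exactly the single vertex~$v_\circ$. The key geometric observation is then that every diagonal of~$\dissection_\circ'$ internal to~$\cell_\circ$ and crossed by~$\delta_\bullet$ must be incident to~$v_\circ$, since~$v_\circ$ is the only vertex on its side of the chord. These refining diagonals thus form a fan at~$v_\circ$, and~$\delta_\bullet$ makes a~$\VVV$ on each of them, giving sign~$0$. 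This shows the support avoids~$\dissection_\circ' \ssm \dissection_\circ$, whence~$\gvector{\dissection_\circ'}{\delta_\bullet} \in \R^{\dissection_\circ}$. For the ``moreover'' equality I would use the same fan observation: for each~$\delta_\circ \in \dissection_\circ$, the nearest crossed diagonal of~$\dissection_\circ'$ on each side of~$\delta_\circ$ attaches to~$\delta_\circ$ at the same endpoint as the corresponding diagonal of~$\dissection_\circ$ does. Since~$\signcirc{\delta_\circ}{\dissection_\circ}{\delta_\bullet}$ and~$\signcirc{\delta_\circ}{\dissection_\circ'}{\delta_\bullet}$ depend only on these two attaching endpoints (the $\ZZZ$/$\SSS$/$\VVV$ pattern), the two signs agree, so~$\gvector{\dissection_\circ}{\delta_\bullet} = \gvector{\dissection_\circ'}{\delta_\bullet}$.

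For the ``only if'' direction I argue by contraposition. If~$\delta_\bullet$ is not a $\dissection_\circ$-accordion diagonal, there is a cell~$\cell_\circ$ whose entering and exiting edges~$e_\circ, f_\circ$ are non-incident. Listing the diagonals crossed inside~$\cell_\circ$ in order as~$e_\circ = g_\circ^0, g_\circ^1, \dots, g_\circ^s, g_\circ^{s+1} = f_\circ$, the intermediate~$g_\circ^1, \dots, g_\circ^s$ all lie in~$\dissection_\circ' \ssm \dissection_\circ$, and~$s \ge 1$ because consecutive crossed diagonals of the $\dissection_\circ'$-accordion must be incident while~$e_\circ, f_\circ$ are not. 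The shared vertices of consecutive~$g_\circ^i$ cannot all coincide, for otherwise~$e_\circ$ and~$f_\circ$ would meet at that vertex. Therefore some~$g_\circ^i$ has its two neighbours attached at distinct endpoints, so~$\delta_\bullet$ slaloms on it and~$\signcirc{g_\circ^i}{\dissection_\circ'}{\delta_\bullet} = \pm 1$ with~$g_\circ^i \in \dissection_\circ' \ssm \dissection_\circ$, whence~$\gvector{\dissection_\circ'}{\delta_\bullet} \notin \R^{\dissection_\circ}$.

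The main obstacle is the geometric heart of the ``if'' direction: checking that the incident-edges hypothesis genuinely forces the chord~$\delta_\bullet \cap \cell_\circ$ to isolate the single vertex~$v_\circ$, hence all refining crossed diagonals to emanate from~$v_\circ$. Once this fan picture is in place, the translation into vanishing coordinates, the sign bookkeeping for the equality, and the contrapositive are all routine. I would take care to treat the two extreme cells (where one edge of the cut is a boundary edge of the hollow polygon) on the same footing, using that~$\overline{\dissection}_\circ$ and~$\overline{\dissection}_\circ'$ share the same boundary edges.
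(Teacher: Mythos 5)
Your proposal is correct and follows essentially the same route as the paper's (much terser) proof: both reduce the statement to the observation that the support of~$\gvector{\dissection_\circ'}{\delta_\bullet}$ is the set of slalomed diagonals, and that the $\dissection_\circ$-accordion condition is equivalent to not slaloming on any diagonal of~$\dissection_\circ' \ssm \dissection_\circ$. You additionally spell out the cell-by-cell ``fan at~$v_\circ$'' justification of this equivalence and the verification of the ``moreover'' clause, both of which the paper leaves implicit; these details are accurate.
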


\begin{proof}
Let~$\delta_\circ \in \dissection_\circ' \ssm \dissection_\circ$. By definition, a $\dissection_\circ'$-accordion diagonal $\delta_\bullet$ does not slalom on~$\delta_\circ$ if and only if the $\delta_\circ$-coordinate of~$\gvector{\dissection_\circ}{\delta_\bullet}$ vanishes. Thus, $\delta_\bullet$ is a $\dissection_\circ$-accordion diagonal if and only if the~$\delta_\circ$-coordinate of~$\gvector{\dissection_\circ'}{\delta_\bullet}$ vanishes for all~$\delta_\circ \in \dissection_\circ' \ssm \dissection_\circ$.
\end{proof}

Based on this lemma, we obtain in the following statements an alternative realization on the $\b{g}$-vector fan, which is illustrated on \fref{fig:exmAccordiohedronProjected}.

\begin{figure}[b]
	\capstart
	\centerline{\includegraphics[scale=.28]{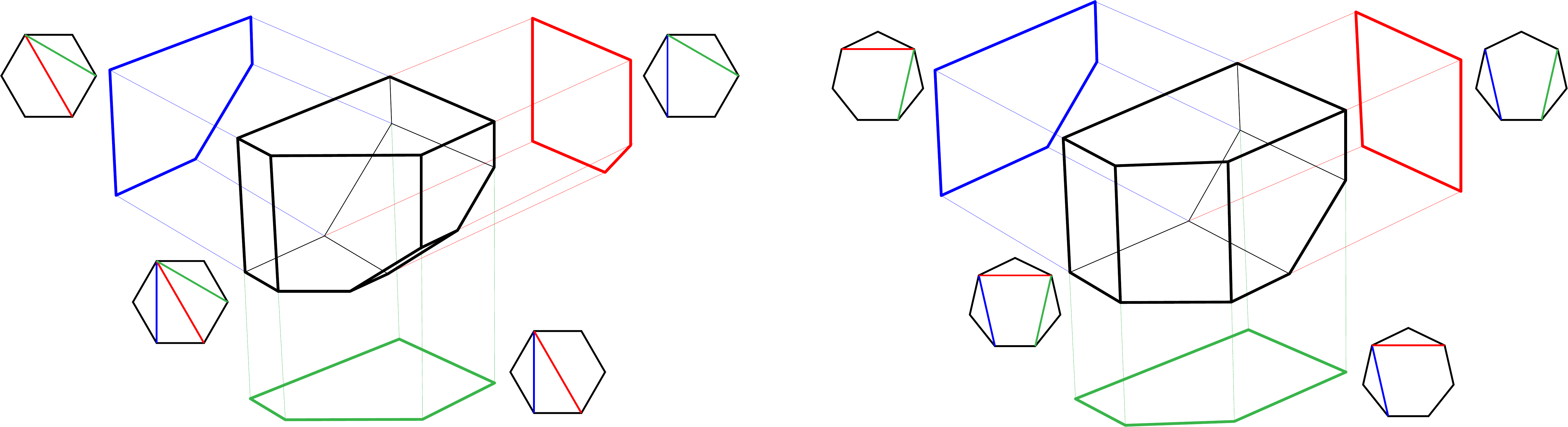}}
	\caption{Projecting accordiohedra on coordinate planes yields smaller accordiohedra.}
	\label{fig:exmAccordiohedronProjected}
\end{figure}

\begin{theorem}
\label{thm:section}
For two dissections~$\dissection_\circ \subset \dissection_\circ'$, the $\b{g}$-vector fan~$\gvectorFan(\dissection_\circ)$ is precisely the set of cones~$\set{C \in \gvectorFan(\dissection_\circ')}{C \subset \R^{\dissection_\circ}}$ and coincides with the section of the $\b{g}$-vector fan~$\gvectorFan(\dissection_\circ')$ by~$\R^{\dissection_\circ}$.
\end{theorem}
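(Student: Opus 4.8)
The plan is to use Lemma~\ref{lem:deleteDiagonals} to match the cones of $\gvectorFan(\dissection_\circ')$ contained in the coordinate subspace~$\R^{\dissection_\circ}$ with the cones of $\gvectorFan(\dissection_\circ)$, and then to promote this identification of a subfan into the statement about sections by a completeness argument. I would split the proof into two parts: first establish the set equality
\[
\gvectorFan(\dissection_\circ) = \bigset{C \in \gvectorFan(\dissection_\circ')}{C \subseteq \R^{\dissection_\circ}},
\]
and then show that this common collection of cones is exactly the section $\restriction{\gvectorFan(\dissection_\circ')}{\R^{\dissection_\circ}}$.

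For the first equality, recall that the cones of $\gvectorFan(\dissection_\circ')$ are the $\R_{\ge0}\gvectors{\dissection_\circ'}{\dissection_\bullet}$ for $\dissection_\circ'$-accordion dissections~$\dissection_\bullet$. Such a cone lies in $\R^{\dissection_\circ}$ if and only if each of its rays $\gvector{\dissection_\circ'}{\delta_\bullet}$ does, which by Lemma~\ref{lem:deleteDiagonals} happens if and only if every $\delta_\bullet \in \dissection_\bullet$ is a $\dissection_\circ$-accordion diagonal; in that case the same lemma gives $\gvector{\dissection_\circ'}{\delta_\bullet} = \gvector{\dissection_\circ}{\delta_\bullet}$, so the cone equals $\R_{\ge0}\gvectors{\dissection_\circ}{\dissection_\bullet}$ and $\dissection_\bullet$, being a crossing-free set of $\dissection_\circ$-accordion diagonals, is a $\dissection_\circ$-accordion dissection. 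This already yields the inclusion $\supseteq$. For the reverse inclusion I need the key combinatorial fact that \emph{every $\dissection_\circ$-accordion diagonal is a $\dissection_\circ'$-accordion diagonal}, so that any $\dissection_\circ$-accordion dissection~$\dissection_\bullet$ is also a $\dissection_\circ'$-accordion dissection and $\R_{\ge0}\gvectors{\dissection_\circ}{\dissection_\bullet} = \R_{\ge0}\gvectors{\dissection_\circ'}{\dissection_\bullet}$ is a cone of $\gvectorFan(\dissection_\circ')$ contained in $\R^{\dissection_\circ}$. This is the main obstacle, but it is purely geometric: if $\delta_\bullet$ is a $\dissection_\circ$-accordion diagonal, then inside each cell~$\cell_\circ$ of~$\dissection_\circ$ that it crosses it enters and exits through two edges incident to a common vertex~$v_\circ$, so the chord $\delta_\bullet \cap \cell_\circ$ separates~$v_\circ$ from all other vertices of~$\cell_\circ$. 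Consequently, when~$\cell_\circ$ is subdivided by the diagonals of~$\dissection_\circ'$, the only diagonals of~$\dissection_\circ'$ crossed by~$\delta_\bullet$ inside~$\cell_\circ$ are those incident to~$v_\circ$; since these all share~$v_\circ$ with one another and with the entry and exit edges, $\delta_\bullet$ enters and exits every cell of~$\dissection_\circ'$ through incident edges, \ie it is a $\dissection_\circ'$-accordion diagonal.

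For the second part, observe that $\gvectorFan(\dissection_\circ)$ is a complete simplicial fan in $\R^{\dissection_\circ}$ by Theorem~\ref{thm:gvectorFan}, while the section $\restriction{\gvectorFan(\dissection_\circ')}{\R^{\dissection_\circ}}$ is a complete fan in $\R^{\dissection_\circ}$, since the slices $C \cap \R^{\dissection_\circ}$ of the complete fan $\gvectorFan(\dissection_\circ')$ cover $\R^{\dissection_\circ}$ and fit together as a fan. By the first part, every cone $C$ of $\gvectorFan(\dissection_\circ)$ is a cone of $\gvectorFan(\dissection_\circ')$ with $C \subseteq \R^{\dissection_\circ}$, hence $C = C \cap \R^{\dissection_\circ}$ is a cone of the section; thus $\gvectorFan(\dissection_\circ)$ is a subfan of the section. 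It remains to see that a complete fan cannot be a proper subfan of another fan on the same ground space. Concretely, let~$N$ be a maximal cone of the section; it is full-dimensional in $\R^{\dissection_\circ}$, so its relative interior meets the relative interior of some maximal cone~$M$ of $\gvectorFan(\dissection_\circ)$, as these cover $\R^{\dissection_\circ}$. Writing $N = C \cap \R^{\dissection_\circ}$ with $C \in \gvectorFan(\dissection_\circ')$, a common relative interior point lies in $\mathrm{relint}(M) \cap C$; since $M \in \gvectorFan(\dissection_\circ')$, this point has $M$ as its minimal containing cone, which forces $M$ to be a face of~$C$, whence $M = M \cap \R^{\dissection_\circ} \subseteq C \cap \R^{\dissection_\circ} = N$. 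As $M$ and $N$ are both full-dimensional cones of the section, $M = N$. Therefore the section and $\gvectorFan(\dissection_\circ)$ have the same maximal cones and hence coincide, which completes the proof.
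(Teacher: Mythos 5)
Your proof is correct and follows the same route as the paper: deduce the set equality from Lemma~\ref{lem:deleteDiagonals} and then upgrade the resulting subfan of the section to the full section via the completeness of~$\gvectorFan(\dissection_\circ)$ from Theorem~\ref{thm:gvectorFan}. The only difference is that you spell out two steps the paper leaves implicit, namely the (correctly proved) fact that every $\dissection_\circ$-accordion diagonal remains a $\dissection_\circ'$-accordion diagonal under refinement, and the general argument that a complete fan cannot be a proper subfan of another fan on the same space.
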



\begin{proof}
Lemma~\ref{lem:deleteDiagonals} immediately implies that~$\gvectorFan(\dissection_\circ) = \set{C \in \gvectorFan(\dissection_\circ')}{C \subset \R^{\dissection_\circ}}$. A priori, it is a subfan of the section $\restriction{\gvectorFan(\dissection_\circ')}{\R^{\dissection_\circ}} = \set{C \cap \R^{\dissection_\circ}}{C \in \gvectorFan(\dissection_\circ')}$. However, since $\gvectorFan(\dissection_\circ)$ is already a complete simplicial fan of~$\R^{\dissection_\circ}$, it coincides with~$\restriction{\gvectorFan(\dissection_\circ')}{\R^{\dissection_\circ}}$. 
\end{proof}

\begin{theorem}
\label{thm:projection}
For two dissections~$\dissection_\circ \subset \dissection_\circ'$, the $\b{g}$-vector fan~$\gvectorFan(\dissection_\circ)$ is realized by the orthogonal projection of the $\dissection_\circ'$-accordiohedron~$\Acco(\dissection_\circ')$ on~$\R^{\dissection_\circ}$, which is equivalently described~by:
\begin{itemize}
\item the convex hull of the points~$\sum_{\delta_\bullet \in \dissection_\bullet} \rhs{\dissection_\circ'}{\delta_\bullet} \cdot \cvector{\dissection_\circ}{\dissection_\bullet}{\delta_\bullet}$ for all~$\dissection_\circ$-accordion dissections~$\dissection_\bullet$,
\item the intersection of the half-spaces~$\set{\b{x} \in \R^{\dissection_\circ}}{\dotprod{\gvector{\dissection_\circ}{\gamma_\bullet}}{\b{x}} \le \rhs{\dissection_\circ'}{\delta_\circ}}$ for all \mbox{$\dissection_\circ$-accor}\-dion diagonals~$\gamma_\bullet$.
\end{itemize}
\end{theorem}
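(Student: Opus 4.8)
The plan is to derive the whole statement from Theorem~\ref{thm:section} together with the general principle recalled at the beginning of this section, namely that the normal fan of an orthogonal projection~$\pi(P)$ of a polytope~$P$ onto a subspace~$V$ is the section of the normal fan of~$P$ by~$V$~\cite[Lem.~7.11]{Ziegler-polytopes}. By Theorem~\ref{thm:accordiohedron}, the normal fan of~$\Acco(\dissection_\circ')$ is~$\gvectorFan(\dissection_\circ')$, so the normal fan of its orthogonal projection~$\pi$ onto~$\R^{\dissection_\circ}$ is the section~$\restriction{\gvectorFan(\dissection_\circ')}{\R^{\dissection_\circ}}$. Theorem~\ref{thm:section} identifies this section with~$\gvectorFan(\dissection_\circ)$, so that~$\pi(\Acco(\dissection_\circ'))$ is already a polytope realizing~$\gvectorFan(\dissection_\circ)$. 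It then only remains to match its vertices and facets with the two announced descriptions.

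For the facet description, I would exploit that an orthogonal projection onto a coordinate subspace preserves support functions in directions lying in that subspace: for any~$\b{n} \in \R^{\dissection_\circ}$ one has~$\dotprod{\b{n}}{\pi(\b{x})} = \dotprod{\b{n}}{\b{x}}$, hence~$h_{\pi(\Acco(\dissection_\circ'))}(\b{n}) = h_{\Acco(\dissection_\circ')}(\b{n})$. By Lemma~\ref{lem:deleteDiagonals}, the rays of~$\gvectorFan(\dissection_\circ)$ are the vectors~$\gvector{\dissection_\circ'}{\gamma_\bullet} = \gvector{\dissection_\circ}{\gamma_\bullet}$ for the~$\dissection_\circ'$-accordion diagonals~$\gamma_\bullet$ whose $\b{g}$-vector lies in~$\R^{\dissection_\circ}$, which are precisely the~$\dissection_\circ$-accordion diagonals. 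Applying the support-function identity to~$\b{n} = \gvector{\dissection_\circ}{\gamma_\bullet}$ and invoking Theorem~\ref{thm:accordiohedron} for~$\dissection_\circ'$ (which gives~$h_{\Acco(\dissection_\circ')}(\gvector{\dissection_\circ'}{\gamma_\bullet}) = \rhs{\dissection_\circ'}{\gamma_\bullet}$) yields exactly the facet~$\dotprod{\gvector{\dissection_\circ}{\gamma_\bullet}}{\b{x}} \le \rhs{\dissection_\circ'}{\gamma_\bullet}$.

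For the vertex description, since the normal fan of the projection is the simplicial fan~$\gvectorFan(\dissection_\circ)$, its vertices are indexed by the maximal~$\dissection_\circ$-accordion dissections~$\dissection_\bullet$, the vertex attached to~$\dissection_\bullet$ being the unique common solution of the facet equations~$\dotprod{\gvector{\dissection_\circ}{\delta_\bullet}}{\b{x}} = \rhs{\dissection_\circ'}{\delta_\bullet}$ for~$\delta_\bullet \in \dissection_\bullet$. As~$\gvectors{\dissection_\circ}{\dissection_\bullet}$ and~$\cvectors{\dissection_\circ}{\dissection_\bullet}$ are dual bases by Proposition~\ref{prop:gvectorscvectorsDualBases}, the computation of the proof of Lemma~\ref{lem:pointInHyp}, carried out verbatim with the heights~$\rhs{\dissection_\circ}{\delta_\bullet}$ replaced by~$\rhs{\dissection_\circ'}{\delta_\bullet}$, shows that this intersection point is~$\sum_{\delta_\bullet \in \dissection_\bullet} \rhs{\dissection_\circ'}{\delta_\bullet} \cdot \cvector{\dissection_\circ}{\dissection_\bullet}{\delta_\bullet}$, which is the claimed vertex.

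The only genuinely delicate point, underlying both descriptions, is the identification of the index set: one must check that the~$\dissection_\circ$-accordion diagonals are exactly the~$\dissection_\circ'$-accordion diagonals whose $\b{g}$-vector lies in~$\R^{\dissection_\circ}$, so that the heights~$\rhs{\dissection_\circ'}{\gamma_\bullet}$ are meaningful and~$\gvector{\dissection_\circ}{\gamma_\bullet} = \gvector{\dissection_\circ'}{\gamma_\bullet}$. I expect this to follow from Lemma~\ref{lem:deleteDiagonals} together with the consequence of Theorem~\ref{thm:section} that every ray of~$\gvectorFan(\dissection_\circ)$ is a ray of~$\gvectorFan(\dissection_\circ')$, using that distinct $\dissection_\circ$-accordion diagonals have distinct $\b{g}$-vectors (as~$\gvectorFan(\dissection_\circ)$ realizes~$\accordionComplex(\dissection_\circ)$ by Theorem~\ref{thm:gvectorFan}). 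Once this bookkeeping is settled, everything reduces to the support-function computation for coordinate projections and to the dual-basis identity already established for~$\Acco(\dissection_\circ')$.
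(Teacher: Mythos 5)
Your proposal is correct and follows essentially the same route as the paper's proof: identify the normal fan of the projection with the section~$\restriction{\gvectorFan(\dissection_\circ')}{\R^{\dissection_\circ}} = \gvectorFan(\dissection_\circ)$ via Theorem~\ref{thm:section} and \cite[Lem.~7.11]{Ziegler-polytopes}, read off the facet inequalities as those of~$\Acco(\dissection_\circ')$ whose normal vectors lie in~$\R^{\dissection_\circ}$ (your support-function computation just makes this step explicit), and recover the vertices by the dual-basis argument of Lemma~\ref{lem:pointInHyp} with the heights~$\rhs{\dissection_\circ'}{\delta_\bullet}$. The bookkeeping point you flag at the end (that $\dissection_\circ$-accordion diagonals are exactly the $\dissection_\circ'$-accordion diagonals with $\b{g}$-vector in~$\R^{\dissection_\circ}$, with coinciding $\b{g}$-vectors) is precisely the content of Lemma~\ref{lem:deleteDiagonals}, so it is already settled.
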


\begin{proof}
Since~$\gvectorFan(\dissection_\circ')$ is the normal fan of~$\Acco(\dissection_\circ')$, Theorem~\ref{thm:section} implies that~$\gvectorFan(\dissection_\circ) = \restriction{\gvectorFan(\dissection_\circ')}{\R^{\dissection_\circ}}$ is the normal fan of the orthogonal projection of~$\Acco(\dissection_\circ')$ on~$\R^{\dissection_\circ}$ \cite[Lem.~7.11]{Ziegler-polytopes}. We therefore just need to prove the given vertex and facet descriptions of this projection. First, since $\gvectorFan(\dissection_\circ) = \restriction{\gvectorFan(\dissection_\circ')}{\R^{\dissection_\circ}}$, the inequalities of the projection of~$\Acco(\dissection_\circ')$ on~$\R^{\dissection_\circ}$ are just the inequalities of~$\Acco(\dissection_\circ')$ whose normal vectors are in~$\R^{\dissection_\circ}$. Finally, the vertex description follows from the inequality description using the same argument as in Lemma~\ref{lem:pointInHyp}.
\end{proof}

\begin{remark}
The projection of the accordiohedron~$\Acco(\dissection_\circ')$ on~$\R^{\dissection_\circ}$ differs from the accordiohedron~$\Acco(\dissection_\circ)$: they have both~$\gvectorFan(\dissection_\circ)$ as normal fan, but their precise geometry is different.
\end{remark}

\begin{corollary}
\label{coro:projectionAssociahedron}
For any hollow dissection~$\dissection_\circ$, the $\b{g}$-vector fan~$\gvectorFan(\dissection_\circ)$ is realized by a projection of an associahedron of~\cite{HohlwegPilaudStella}.
\end{corollary}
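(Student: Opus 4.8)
The plan is to deduce this statement directly from Theorem~\ref{thm:projection} by taking the larger dissection to be a triangulation. First I would complete the reference dissection~$\dissection_\circ$ into an arbitrary hollow triangulation~$\triangulation_\circ \supseteq \dissection_\circ$; any dissection can be refined to a triangulation by adding non-crossing internal diagonals inside its non-triangular cells, so such a~$\triangulation_\circ$ always exists. This yields a pair of dissections~$\dissection_\circ \subset \triangulation_\circ$ to which Theorem~\ref{thm:projection} applies verbatim.

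Next I would recall from Example~\ref{exm:specialReferenceDissections} that for a triangulation~$\triangulation_\circ$ every solid diagonal is a $\triangulation_\circ$-accordion diagonal, so that~$\accordionComplex(\triangulation_\circ)$ is the simplicial associahedron, and from the examples following Theorem~\ref{thm:accordiohedron} that the associated $\triangulation_\circ$-accordiohedron~$\Acco(\triangulation_\circ)$ is precisely the associahedron of~\cite{HohlwegPilaudStella}. In particular~$\Acco(\triangulation_\circ)$ is one of the polytopes whose projection is governed by Theorem~\ref{thm:projection}.

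Finally I would invoke Theorem~\ref{thm:projection} with the choice~$\dissection_\circ' = \triangulation_\circ$: it asserts that the orthogonal projection of~$\Acco(\triangulation_\circ)$ onto the coordinate subspace~$\R^{\dissection_\circ}$ has normal fan equal to the $\b{g}$-vector fan~$\gvectorFan(\dissection_\circ)$. Combining this with the identification of~$\Acco(\triangulation_\circ)$ as an associahedron of~\cite{HohlwegPilaudStella} proves that~$\gvectorFan(\dissection_\circ)$ is realized by a projection of such an associahedron, as claimed.

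There is no genuine obstacle in this argument: the two facts to check are that every dissection embeds into some triangulation (elementary) and that the accordiohedron of that triangulation is the associahedron of~\cite{HohlwegPilaudStella} (already recorded in the examples). The only mild subtlety worth a remark is that the completion~$\triangulation_\circ$ is not canonical, so distinct completions yield a priori distinct associahedra whose projections all realize the same fan~$\gvectorFan(\dissection_\circ)$; this non-uniqueness is harmless, since the corollary only asserts the existence of a realizing projection.
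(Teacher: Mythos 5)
Your proposal is correct and is essentially identical to the paper's own proof, which simply applies Theorem~\ref{thm:projection} to any triangulation~$\triangulation_\circ$ refining~$\dissection_\circ$; you have merely spelled out the two routine verifications (existence of the refinement and the identification of~$\Acco(\triangulation_\circ)$ with the associahedron of~\cite{HohlwegPilaudStella}) that the paper leaves implicit.
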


\begin{proof}
Apply Theorem~\ref{thm:projection} to any triangulation~$\triangulation_\circ$ that refines~$\dissection_\circ$.
\end{proof}

\begin{remark}
\label{rem:simplerProofs}
Approaching accordion complexes as coordinate sections of $\b{g}$-vector fans actually provides more concise (but also less instructive) proofs for Sections~\ref{subsec:pseudomanifold} and~\ref{subsec:gvectorFan}. Namely, consider any dissection~$\dissection_\circ$ and let~$\triangulation_\circ$ be a triangulation that refines~$\dissection_\circ$. 
The sign coherence property for triangulations (see Corollary~\ref{coro:signCoherence}) shows that the section $\restriction{\gvectorFan(\triangulation_\circ)}{\R^{\dissection_\circ}} = \set{C \cap \R^{\dissection_\circ}}{C \in \gvectorFan(\triangulation_\circ)}$ actually coincides with~$\set{C \in \gvectorFan(\triangulation_\circ)}{C \subset \R^{\dissection_\circ}}$. Therefore, this gives an alternative concise proof that the collection of cones~$\set{C \in \gvectorFan(\triangulation_\circ)}{C \subset \R^{\dissection_\circ}}$ forms a complete simplicial fan. Moreover, this fan has the same combinatorics as the $\dissection_\circ$-accordion complex~$\accordionComplex(\dissection_\circ)$ by Lemma~\ref{lem:deleteDiagonals}. We conclude directly that~$\accordionComplex(\dissection_\circ)$ is a pseudomanifold realized by the fan~$\set{C \in \gvectorFan(\triangulation_\circ)}{C \subset \R^{\dissection_\circ}}$ and by the orthogonal projection of the associahedron~$\Asso(\triangulation_\circ)$ on~$\R^{\dissection_\circ}$.
\end{remark}

\subsection{Cluster algebra analogues}
\label{subsec:clusterAlgebras}

The perspective on accordion complexes developed in this section also opens the door to generalizations on arbitrary cluster algebras (finite type or not). Namely, consider an arbitrary cluster~$X_\circ = (x_\circ^1, \dots, x_\circ^m)$ in an arbitrary cluster algebra~$\cA$. For any cluster variable~$y \in \cA$, we denote by~$\gvector{X_\circ}{y} \in \R^m$ and~$\dvector{X_\circ}{y} \in \R^m$ the $\b{g}$- and~$\b{d}$-vectors of~$y$ computed with respect to~$X_\circ$, see~\cite{FominZelevinsky-ClusterAlgebrasI, FominZelevinsky-ClusterAlgebrasIV}. Fix a non-empty proper subset~$I$ of~$[m]$. We consider two natural subcomplexes of the cluster complex of~$\cA$:
\begin{itemize}
\item the subcomplex~$\restrictedComplex{d}{X_\circ}{I}$ induced by the variables~$y$ such that~$\dvector{X_\circ}{y}_i = 0$ for all~$i \in I$,
\item the subcomplex~$\restrictedComplex{g}{X_\circ}{I}$ induced by the variables~$y$ such that~$\gvector{X_\circ}{y}_i = 0$ for all~$i \in I$.
\end{itemize}
It is well-known that the subcomplex~$\restrictedComplex{d}{X_\circ}{I}$ is the cluster complex obtained by freezing all variables~$x_i$ for~$i \in I$. For example in type~$A$, it is a join of simplicial associahedra and it can therefore be realized by a product of smaller associahedra. In contrast, we do not know whether the subcomplex~$\restrictedComplex{g}{X_\circ}{I}$ has been investigated. The present paper dealt with the type~$A$~situation.

\begin{example}
\label{exm:clusterAlgebra}
Let~$\triangulation_\circ$ be a triangulation, with internal diagonals labeled by~$1, \dots, m$. Consider the corresponding type~$A_m$ cluster~$X_\circ$. Then for any non-empty proper subset~$I$ of~$[m]$, the subcomplex~$\restrictedComplex{g}{X_\circ}{I}$ is isomorphic to the $\dissection_\circ$-accordion complex, where~$\dissection_\circ$ is the dissection obtained by deleting in~$\triangulation_\circ$ the diagonals labeled by~$I$.
\end{example}

\begin{example}
Example~\ref{exm:clusterAlgebra} extends to cluster algebras on surfaces~\cite{FominShapiroThurston, FominThurston}, using accordions of dissections of surfaces.
\end{example}

The following statement extends Theorem~\ref{thm:section} to arbitrary cluster algebras.

\begin{theorem}
The subset~$\set{C \in \gvectorFan(X_\circ)}{C \subseteq \R^{[m] \ssm I}}$ of the $\b{g}$-vector fan~$\gvectorFan(X_\circ)$ of~$X_\circ$ coincides with the section~$\restriction{\gvectorFan(X_\circ)}{\R^{[m] \ssm I}} = \set{C \cap \R^{[m] \ssm I}}{C \in \gvectorFan(X_\circ)}$.
\end{theorem}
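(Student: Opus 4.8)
The plan is to reproduce the argument of Remark~\ref{rem:simplerProofs} in the abstract setting, replacing the explicit combinatorial sign analysis of triangulations by the \emph{sign coherence} of $\b{g}$-vectors, which is available in arbitrary cluster algebras (finite type or not) by the work of Gross, Hacking, Keel and Kontsevich. Concretely, I would invoke the $\b{g}$-vector form of sign coherence, which is the abstract analogue of Corollary~\ref{coro:signCoherence}\,(ii): for every cluster $Y$ and every index $i \in [m]$, the $i$-th coordinates of the $\b{g}$-vectors $\gvector{X_\circ}{y}$ for $y \in Y$ all share the same sign. Recall also that the cones of $\gvectorFan(X_\circ)$ are exactly the nonnegative spans $\R_{\ge0} \set{\gvector{X_\circ}{y}}{y \in Y'}$ where $Y'$ ranges over subsets of clusters, the $\b{g}$-vectors of a single cluster being linearly independent, so that each such cone is simplicial and $\gvectorFan(X_\circ)$ is closed under taking faces.

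The key step is to show that for any cone $C = \R_{\ge0} \set{\gvector{X_\circ}{y}}{y \in Y'}$ of $\gvectorFan(X_\circ)$, the intersection $C \cap \R^{[m] \ssm I}$ is precisely the face of $C$ spanned by those generators $\gvector{X_\circ}{y}$ that already lie in $\R^{[m] \ssm I}$. To prove this I would write a point of $C$ as $\b{x} = \sum_{y \in Y'} \lambda_y \gvector{X_\circ}{y}$ with all $\lambda_y \ge 0$. For a fixed $i \in I$, the $i$-th coordinate of $\b{x}$ equals $\sum_{y \in Y'} \lambda_y \, \gvector{X_\circ}{y}_i$, a sum whose terms all share the same sign by sign coherence; requiring this coordinate to vanish therefore forces $\lambda_y = 0$ for every $y$ with $\gvector{X_\circ}{y}_i \ne 0$. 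Intersecting these constraints over all $i \in I$ shows that $\b{x} \in \R^{[m] \ssm I}$ forces $\lambda_y = 0$ whenever $\gvector{X_\circ}{y} \notin \R^{[m] \ssm I}$, which is exactly the claim. Being the span of a subset of the generators of the simplicial cone $C$, this set is a genuine face of $C$, and hence a cone of $\gvectorFan(X_\circ)$ contained in $\R^{[m] \ssm I}$.

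I would then conclude both inclusions. The inclusion $\set{C \in \gvectorFan(X_\circ)}{C \subseteq \R^{[m] \ssm I}} \subseteq \restriction{\gvectorFan(X_\circ)}{\R^{[m] \ssm I}}$ is immediate, since any such cone $C$ equals $C \cap \R^{[m] \ssm I}$ and so belongs to the section. For the reverse inclusion, every cone of the section has the form $C \cap \R^{[m] \ssm I}$ for some $C \in \gvectorFan(X_\circ)$, and the key step identifies it with a face of $C$, hence a cone of $\gvectorFan(X_\circ)$ lying in $\R^{[m] \ssm I}$. The two collections of cones therefore coincide. Note that, in contrast with the proof of Theorem~\ref{thm:section}, completeness of $\gvectorFan(X_\circ)$ is never used, so the argument applies verbatim in infinite type where the $\b{g}$-vector fan need not be complete.

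The main obstacle is conceptual rather than technical: the entire argument rests on the availability of sign coherence in full generality, which is precisely the deep theorem of Gross--Hacking--Keel--Kontsevich; granting it, the remaining reasoning is elementary convex geometry. A secondary point to state carefully is that $C \cap \R^{[m] \ssm I}$ is a genuine face of $C$ (and not merely a subcone), which is what guarantees it is a cone of the fan; this is where simpliciality of the cones of $\gvectorFan(X_\circ)$, together with the fact that the vanishing coordinate conditions select exactly a subset of the generators, is used.
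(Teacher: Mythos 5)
Your proposal is correct and follows essentially the same route as the paper: both inclusions are handled identically, and the key step — that sign coherence of $\b{g}$-vectors (Gross--Hacking--Keel--Kontsevich) forces the coordinate subspace $\R^{[m]\ssm I}$ to meet each simplicial cone of $\gvectorFan(X_\circ)$ in a face — is exactly the paper's argument, which you merely spell out in more detail. Your added remark that completeness of the fan is never needed is a correct and worthwhile observation, but does not change the substance of the proof.
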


\begin{proof}
The inclusion~$\set{C \in \gvectorFan(X_\circ)}{C \subseteq \R^{[m] \ssm I}} \subseteq \restriction{\gvectorFan(X_\circ)}{\R^{[m] \ssm I}}$ is clear. For the reverse inclusion, we use the sign coherence property of $\b{g}$-vectors in cluster algebras, which was conjectured in~\cite[Conj.~6.13]{FominZelevinsky-ClusterAlgebrasIV} and proved in~\cite[Thm.~5.1]{GrossHackingKeelKontsevich} in general. This property implies that the coordinate plane~$\R^{[m] \ssm I}$ intersects any cone~$C$ of~$\gvectorFan(X_\circ)$ in a face~$C'$. This shows that~$C \cap \R^{[m] \ssm I} = C'$ belongs to~$\set{C \in \gvectorFan(X_\circ)}{C \subseteq \R^{[m] \ssm I}}$.
\end{proof}

\begin{corollary}
The subcomplex~$\restrictedComplex{g}{X_\circ}{I}$ induced by the variables~$y$ such that~$\gvector{X_\circ}{y}_i = 0$ for all~$i \in I$ is a pseudomanifold.
\end{corollary}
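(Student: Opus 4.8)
The plan is to realize $\restrictedComplex{g}{X_\circ}{I}$ geometrically as the section fan and then to reduce the pseudomanifold property to a local statement about that fan. By definition, a face of $\restrictedComplex{g}{X_\circ}{I}$ is a face of the cluster complex of~$\cA$ all of whose vertices~$y$ satisfy $\gvector{X_\circ}{y}_i = 0$ for all~$i \in I$, that is, $\gvector{X_\circ}{y} \in \R^{[m] \ssm I}$. Via the $\b{g}$-vector fan, such faces are in bijection with the cones~$C \in \gvectorFan(X_\circ)$ with $C \subseteq \R^{[m] \ssm I}$, which by the previous theorem are precisely the cones of the section $\restriction{\gvectorFan(X_\circ)}{\R^{[m] \ssm I}}$. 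It thus suffices to prove that this section is a \emph{pseudomanifold fan}, meaning a simplicial fan that is pure (all inclusion-maximal cones have the same dimension) and thin (every codimension~$1$ cone lies in exactly two maximal cones).

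First I would record that the ambient fan is itself a pseudomanifold fan. The $\b{g}$-vectors of any cluster form a basis of~$\R^m$, so $\gvectorFan(X_\circ)$ is simplicial; and the cluster complex of~$\cA$ is pure of dimension~$m-1$ and thin, since every variable of a cluster mutates to a unique new variable. Writing $W \eqdef \R^{[m] \ssm I}$ and $d \eqdef m - |I|$, sign coherence (as used in the previous theorem) ensures that~$W$ meets every cone of~$\gvectorFan(X_\circ)$ in a face; hence the cones of the section are faces of cones of~$\gvectorFan(X_\circ)$, so the section is a simplicial fan, and it is full dimensional in~$W$ since the cone of the initial cluster~$X_\circ$ (the positive orthant, generated by the standard basis) meets~$W$ in the positive orthant of~$W$.

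The core of the argument is a link analysis establishing purity and thinness. For thinness, fix a codimension~$1$ cone $\rho = \R_{\ge0} \gvectors{X_\circ}{\tau}$ of the section, where~$\tau$ is a face of the cluster complex with $d-1$ vertices, all with $\b{g}$-vectors in~$W$. The maximal section cones containing~$\rho$ correspond to the vertices~$y$ of the link of~$\tau$ with $\gvector{X_\circ}{y} \in W$. Passing to the normal space $N \eqdef \R^m / \vect(\rho)$, these correspond to the rays of the link fan of~$\tau$ (again a simplicial pseudomanifold fan, being the $\b{g}$-vector fan of a frozen cluster algebra) that lie on the line $\overline{W} \eqdef W / \vect(\rho)$. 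Sign coherence for this frozen cluster algebra forces $\overline{W}$ to meet each cone of the link fan in a face, so $\overline{W}$ meets the link fan in \emph{at most} two opposite rays. Purity is handled by the same mechanism applied to faces of corank larger than~$1$, where $\overline{W}$ becomes a subspace rather than a line.

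The hard part is to upgrade ``at most two'' to ``exactly two'': one must show that both directions of the line~$\overline{W}$ actually lie in the support of the link fan. In finite type this is automatic, since $\gvectorFan(X_\circ)$ is then complete and the section of a complete simplicial fan by a subspace meeting each cone in a face is again complete, recovering the argument of Remark~\ref{rem:simplerProofs}. In infinite type, however, $\gvectorFan(X_\circ)$ and its link fans need not be complete, and a transverse line could in principle escape the support through a limiting, uncovered direction. The main obstacle is therefore to rule this out for the \emph{coordinate} line~$\overline{W}$, using the coordinate nature of~$W$ together with the sign coherence of $\b{g}$-vectors of Gross--Hacking--Keel--Kontsevich; this is what should guarantee that crossing any wall of the section corresponds to a genuine mutation keeping the exchanged variable inside~$W$, and hence that the section is both pure and thin.
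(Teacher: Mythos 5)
Your overall strategy --- realize $\restrictedComplex{g}{X_\circ}{I}$ as the face complex of the coordinate section of $\gvectorFan(X_\circ)$ and read off purity and thinness from the fan --- is exactly the paper's, which states this corollary without further argument: the intended justification is the one you give for finite type, namely that a section of a \emph{complete} simplicial fan by a subspace meeting every cone in a face is again a complete simplicial fan, as in Remark~\ref{rem:simplerProofs}. The merit of your write-up is that you correctly isolate where this breaks when $\gvectorFan(X_\circ)$ is not complete. But your final paragraph does not close that gap: it only asserts that the coordinate nature of $W$ together with sign coherence ``should guarantee'' that the line $\overline{W}$ stays inside the support of the link fan. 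No argument is given, so the proof is incomplete precisely at its declared crux, and sign coherence alone only yields your ``at most two'', never the ``exactly two''.

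Moreover, the missing step cannot be supplied in the stated generality, so your suspicion was well founded. For the Kronecker quiver with exchange matrix $\left(\begin{smallmatrix} 0 & 2 \\ -2 & 0 \end{smallmatrix}\right)$, the $\b{g}$-vectors consist of $\b{e}_1$, $\b{e}_2$, $-\b{e}_2$ and two infinite families converging to the directions $-\b{e}_1+\b{e}_2$ and $-\b{e}_1-\b{e}_2$; the support of $\gvectorFan(X_\circ)$ omits the closed cone spanned by these two limit directions, hence omits the ray $\R_{\ge0}(-\b{e}_1)$. For one of the two singletons $I \subset \{1,2\}$ the coordinate line $\R^{[2]\ssm I}$ therefore exits the support, and $\restrictedComplex{g}{X_\circ}{I}$ is a single vertex. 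Adding a disjoint $A_1$ factor on an index $3$ gives a rank-$3$ example in which $\restrictedComplex{g}{X_\circ}{I}$ has vertices $x_1, x_3, x_3'$ and maximal faces $\{x_1,x_3\}$ and $\{x_1,x_3'\}$, so the codimension-$1$ face $\{x_3\}$ lies in exactly one maximal face and thinness fails. Any correct proof therefore needs either finite type (where completeness makes your argument and the paper's work) or an additional hypothesis guaranteeing that $\R^{[m]\ssm I}$ is contained in the support of $\gvectorFan(X_\circ)$; under such a hypothesis your link analysis does go through, since every codimension-$1$ cone of the section then has points of $W$ on both sides lying in cones of $\gvectorFan(X_\circ)$, each of which meets $W$ in a face containing those points.
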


Moreover, extending the result of C.~Hohlweg, C.~Lange and H.~Thomas~\cite{HohlwegLangeThomas} in the acyclic case, C.~Hohlweg, V.~Pilaud and S.~Stella recently constructed a polytope~$\Asso(X_\circ)$ realizing the $\b{g}$-vector fan~$\gvectorFan(X_\circ)$ in~\cite{HohlwegPilaudStella}. We can use this associahedron to realize  the subcomplex~$\restrictedComplex{g}{X_\circ}{I}$ as a convex polytope, extending Theorem~\ref{thm:projection}.

\begin{corollary}
The orthogonal projection of~$\Asso(X_\circ)$ on~$\R^{[m] \ssm I}$ is a realization of~$\restrictedComplex{g}{X_\circ}{I}$.
\end{corollary}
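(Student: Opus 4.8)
The plan is to mirror the proof of Theorem~\ref{thm:projection}, replacing the accordiohedron by the associahedron~$\Asso(X_\circ)$ and the combinatorial input by the Theorem just established above. The only external ingredient is the standard fact that orthogonal projection of a polytope dualizes to sectioning of its normal fan.

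First I would recall that, by the construction of C.~Hohlweg, V.~Pilaud and S.~Stella in~\cite{HohlwegPilaudStella}, the polytope~$\Asso(X_\circ)$ has normal fan the $\b{g}$-vector fan~$\gvectorFan(X_\circ)$. Writing~$\pi : \R^{[m]} \to \R^{[m] \ssm I}$ for the orthogonal projection onto the coordinate subspace~$\R^{[m] \ssm I}$, the standard duality~\cite[Lem.~7.11]{Ziegler-polytopes} then asserts that the normal fan of the projected polytope~$\pi\big(\Asso(X_\circ)\big)$ is precisely the section~$\restriction{\gvectorFan(X_\circ)}{\R^{[m] \ssm I}}$.

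Next I would invoke the preceding Theorem (the cluster algebra analogue of Theorem~\ref{thm:section}), which identifies this section with the subfan~$\set{C \in \gvectorFan(X_\circ)}{C \subseteq \R^{[m] \ssm I}}$; this is the step where the sign coherence of $\b{g}$-vectors from~\cite{GrossHackingKeelKontsevich} does all the work. It then remains to match this subfan with the complex~$\restrictedComplex{g}{X_\circ}{I}$. Since the rays of~$\gvectorFan(X_\circ)$ are exactly the $\b{g}$-vectors~$\gvector{X_\circ}{y}$ of the cluster variables~$y$, such a ray lies in~$\R^{[m] \ssm I}$ if and only if~$\gvector{X_\circ}{y}_i = 0$ for all~$i \in I$, \ie precisely for the variables inducing~$\restrictedComplex{g}{X_\circ}{I}$. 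As a cone of the simplicial fan~$\gvectorFan(X_\circ)$ is contained in~$\R^{[m] \ssm I}$ if and only if all its generating rays are, the cones of the subfan are in inclusion-preserving bijection with the faces of~$\restrictedComplex{g}{X_\circ}{I}$.

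Combining these observations, $\pi\big(\Asso(X_\circ)\big)$ is a polytope whose normal fan is a complete simplicial fan combinatorially isomorphic to~$\restrictedComplex{g}{X_\circ}{I}$; equivalently its polar dual realizes~$\restrictedComplex{g}{X_\circ}{I}$, which is the assertion. I do not expect a genuine obstacle in this last corollary: the entire difficulty was already absorbed by the preceding Theorem (hence ultimately by~\cite{GrossHackingKeelKontsevich}), and what is left is the formal passage from the section of a normal fan to the projection of the polytope, for which~\cite[Lem.~7.11]{Ziegler-polytopes} suffices. The only point requiring a line of care is that the subfan is complete in~$\R^{[m] \ssm I}$, but this is automatic since the section of the complete fan~$\gvectorFan(X_\circ)$ by a linear subspace is complete in that subspace.
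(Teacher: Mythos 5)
Your argument is correct and follows exactly the route the paper intends: it is the verbatim analogue of the proof of Theorem~\ref{thm:projection}, combining the normal-fan property of~$\Asso(X_\circ)$ from~\cite{HohlwegPilaudStella}, the projection/section duality of~\cite[Lem.~7.11]{Ziegler-polytopes}, and the preceding theorem identifying the section with the subfan of cones contained in~$\R^{[m] \ssm I}$, whose face structure matches~$\restrictedComplex{g}{X_\circ}{I}$ since a simplicial cone lies in a coordinate subspace exactly when all its rays do. The paper leaves this corollary without an explicit proof precisely because the content is this formal assembly, which you have carried out correctly.
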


%


Finally, when oriented in the suitable direction~$v$ (the sum of the positive roots, or equivalently the sum of the fundamental weights), the graph of the generalized associahedron~$\Asso(X_\circ)$ is the Hasse diagram of a Cambrian lattice~\cite{Reading-CambrianLattices}. One can similarly orient the graph of the projection of~$\Asso(X_\circ)$ on~$\R^{[m] \ssm I}$ in the direction of the projection of~$v$ on~$\R^{[m] \ssm I}$. Is the resulting graph the Hasse diagram of a lattice? Combining the results of~\cite{GarverMcConville} with that of the present paper shows that this property holds in type~$A$. We also computationally verified the statement in types~$B_4$, $B_5$, $D_4$ and~$D_5$. Following~\cite{GarverMcConville} it seems promising to construct first a lattice structure on biclosed sets of $\b{c}$-vectors, and to obtain then the graph of the projection of~$\Asso(X_\circ)$ on~$\R^{[m] \ssm I}$ as the Hasse diagram of a lattice quotient.

To conclude, let us mention that the ideas developed in this section have also inspired further investigation of sections of $\b{g}$-vector fans of support $\tau$-tilting complexes of associative algebras, see~\cite{PilaudPlamondonStella} and~\cite[Sect.~4.2.6]{PaluPilaudPlamondon}.


\section*{Acknoledgements}

We thank C.~Hohlweg and S.~Stella for many helpful discussions on realizations of the associahedron~\cite{HohlwegPilaudStella} which were the starting point of this paper. We are grateful to F.~Chapoton for various conversations on quadrangulations and Stokes posets, and to A.~Garver and \mbox{T.~McConville} for introducing us with the accordion complexes during FPSAC'16. Their works~\cite{Chapoton-quadrangulations, GarverMcConville} were highly inspiring and motivating. We also thank N.~Thiery for a question which led to the approach of Section~\ref{subsec:sectiongvectors}, and to P.-G.~Plamondon for discussions on the generalization to cluster algebras presented in Section~\ref{subsec:clusterAlgebras}. Finally, we are grateful to two anonymous referees for their attentive reading and their suggestions on the content and presentation which largely improved our original draft.


\bibliographystyle{alpha}
\bibliography{accordionComplex}
\label{sec:biblio}

\end{document}